\numberwithin{equation}{section}
\newtheorem{Proposition}[equation]{Proposition}
\newtheorem{Lemma}[equation]{Lemma}
\newtheorem{Theorem}[equation]{Theorem}
\newtheorem{Corollary}[equation]{Corollary}
\theoremstyle{definition}  
\newtheorem{Definition}[equation]{Definition}
\newtheorem{Remark}[equation]{Remark}
\newtheorem{Example}[equation]{Example}
\newcommand\Comment[2][\relax]{\space\par\medskip\noindent%
   \fbox{\begin{minipage}{\textwidth}\textbf{Comment\ifx\relax#1\else---#1\fi}\newline%
        #2\end{minipage}}\medskip
}
\def\bi{\text{\boldmath$i$}}
\def\bj{\text{\boldmath$j$}}
\def\b1{\text{\boldmath$1$}}
\def\Fil{{\tt Fil}}
\def\MP{\operatorname{MP}}
\def\minp{{\tt mp}}
\newcommand{\tzero}{{\tiny{0}}}
\def\Shapes{{\mathscr S}}
\def\linebr{{\tt sp}}
\newcommand{\Kos}{{\bf K_*}}
\newcommand{\Resol}{{\bf P_*}}
\newcommand{\BasicRes}{{\bf P_*}}
\newcommand{\ResolPrime}{{\bf P_*'}}
\def\gldim{{\operatorname{gd}\,}}
\def\prdim{{\operatorname{pd}\,}}
\def\pmod#1{\text{ }(\text{\rm mod } #1)\,}
\newcommand{\Hom}{\operatorname{Hom}}
\newcommand{\Ext}{\operatorname{Ext}}
\newcommand{\EXT}{\operatorname{EXT}}
\newcommand{\End}{\operatorname{End}}
\newcommand{\im}{\operatorname{im}}
\newcommand{\id}{\operatorname{id}}
\newcommand{\res}{\operatorname{cont}}
\newcommand{\weight}{\operatorname{wt}}
\newcommand{\soc}{\operatorname{soc}}
\newcommand{\head}{\operatorname{head}}
\newcommand{\cha}{\operatorname{char}}
\newcommand{\St}{\operatorname{St}}
\newcommand{\Stab}{\operatorname{Stab}}
\newcommand{\Z}{\mathbb{Z}}
\def\eps{{\varepsilon}}
\def\phi{{\varphi}}
\def\bsi{\text{\boldmath$\sigma$}}
\def\beps{\text{\boldmath$\varepsilon$}}
\def\btau{\text{\boldmath$\tau$}}
\newcommand{\funF}{{\mathcal F}}
\newcommand{\ga}{\gamma}
\newcommand{\la}{\lambda}
\newcommand{\La}{\Lambda}
\newcommand{\al}{\alpha}
\newcommand{\be}{\beta}
\def\Si{\mathfrak{S}}
\newcommand{\si}{\sigma}
\newcommand{\de}{\delta}
\newcommand{\De}{\Delta}
\newcommand{\ka}{\kappa}
\def\id{\mathop{\mathrm {id}}\nolimits}
\newcommand{\Ind}{{\mathrm {Ind}}}
\newcommand{\rad}{{\mathrm {rad}}}
\newcommand{\Res}{{\mathrm {Res}}}
\newcommand{\Q}{{\mathbb Q}}
\newcommand{\A}{{\mathscr A}}
\newcommand{\Laurent}{{\mathscr A}}
\renewcommand{\mod}{\bmod \,}
\def\h{{\mathfrak h}}
\def\g{{\mathfrak g}}
\def\n{{\mathfrak n}}
\def\dx{x}
\def\dtau{\tau}
\def\NH{N\!\!\:H}
\def\X{M}
\def\bed{\text{\boldmath$i$}}
\def\Par{{\mathscr P}}
\def\umu{{\underline{\mu}}}
\def\unu{{\underline{\nu}}}
\def\f{{\mathbf{f}}}
\def\b{\mathfrak{b}}
\def\k{F}
\def\W{\langle I \rangle}
\def\y{y}
\def\T{{\mathtt T}}
\def\Stab{{\mathtt S}}
\def\spa{\operatorname{span}}
\def\height{{\operatorname{ht}}}
\def\wt{{\operatorname{wt}}}
\def\op{{\mathrm{op}}}
\def\re{{\mathrm{re}}}
\def\im{{\mathrm{im}}}
\def\into{{\hookrightarrow}}
\def\Mod#1{#1\!\operatorname{-Mod}}
\def\mod#1{#1\!\operatorname{-mod}}
\def\proj#1{#1\!\operatorname{-proj}}
\renewcommand\O{\mathcal O}
\def\iso{\stackrel{\sim}{\longrightarrow}}
\def\lan{\langle}
\def\ran{\rangle}
\def\HOM{\operatorname{HOM}}
\def\END{\operatorname{END}}
\def\Stand{\bar\Delta}
\def\CH{{\operatorname{ch}_q\:}}
\def\DIM{{\operatorname{dim}_q\:}}
\def\words{{\langle I\rangle}}
\def\shift{{\tt sh}}
\def\Car{{\tt C}}
\def\cc{{\tt c}}
\def\barinv{\mathtt{b}}
  \gdef\set#1{\mathinner{\lbrace\,{\mathcode`\|"8000%
  \let|\midvert #1}\,\rbrace}}
\def\midvert{\egroup\mid\bgroup}
\colorlet{darkgreen}{green!50!black}
\tikzset{dots/.style={very thick,loosely dotted},
         greendot/.style={fill,circle,color=darkgreen,inner sep=1.5pt,outer sep=0}
}
\def\greendot(#1,#2){\node[greendot] at(#1,#2){}}
\newenvironment{braid}{
  \begin{tikzpicture}[baseline=6mm,blue,line width=1pt, scale=0.4,
                      draw/.append style={rounded corners},
                      every node/.append style={font=\fontsize{5}{5}\selectfont}]%
  }{\end{tikzpicture}
}
\def\Grid(#1,#2){
  \draw[very thin,gray,step=2mm] (0,0)grid(#1,#2);
  \draw[very thin,darkgreen,step=10mm] (0,0)grid(#1,#2);
}
\newcommand\Tableau[2][\relax]{
  \begin{tikzpicture}[scale=0.5,draw/.append style={thick,black}]
    \ifx\relax#1\relax%
    \else 
      \foreach\box in {#1} { \filldraw[blue!30]\box+(-.5,-.5)rectangle++(.5,.5); }
    \fi
    \newcount\row\newcount\col
    \row=0
    \foreach \Row in {#2} {
       \col=1
       \foreach\k in \Row {
          \draw(\the\col,\the\row)+(-.5,-.5)rectangle++(.5,.5);
          \draw(\the\col,\the\row)node{\k};
          \global\advance\col by 1
       }
       \global\advance\row by -1
    }
  \end{tikzpicture}
}
\newcommand\YoungDiagram[2][\relax]{
  \begin{tikzpicture}[scale=0.5,draw/.append style={thick,black}]
    \ifx\relax#1\relax%
    \else 
    \foreach\box in {#1} {
      \filldraw[blue!30]\box rectangle ++(1,1);
    }
    \fi
    \newcount\row
    \row=0
    \foreach \col in {#2} {
       \draw(1,\the\row)grid ++(\col,1);
       \global\advance\row by -1
    }
  \end{tikzpicture}
}
\begin{document}

\title[KLR algebras]{{\bf Representation theory and cohomology of Khovanov-Lauda-Rouquier algebras}}

\author{\sc Alexander S. Kleshchev}
\address{Department of Mathematics\\ University of Oregon\\
Eugene\\ OR~97403, USA}
\email{klesh@uoregon.edu}


\subjclass[2000]{20C08, 20C30, 05E10}

\thanks{Research supported by the NSF grant no. DMS-1161094 and the Humboldt Foundation.}

\begin{abstract}
This expository paper is based on the lectures given at the program `Modular Representation Theory of Finite and $p$-adic Groups' at the National University of Singapore. We are concerned with recent results on representation theory and cohomology of KLR algebras, with  emphasis on standard module theory. 
\end{abstract}

\maketitle

\section{Set up and motivation}

This expository paper is based on the lectures given at the program `Modular Representation Theory of Finite and $p$-adic Groups' at the National University of Singapore. We are concerned with recent results on representation theory and cohomology of KLR algebra, with  emphasis on standard module theory, as developed in \cite{KR2}, \cite{Kato}, \cite{McN},  \cite{Kcusp}, \cite{BKM}. Some proofs are given, but often we just review  or illustrate the results.  Other topics in the theory of KLR algebras are nicely reviewed in \cite{BrExp}.

\subsection{KLR algebras}
In this paper we will be mainly concerned with KLR algebras of finite 
Lie type. So let $\Car=(\cc_{ij})_{i,j\in I}$ be a {\em Cartan matrix} of {\em finite type}. 
As in \cite[\S 1.1]{Kac}, let $(\h,\Pi,\Pi^\vee)$ be a realization of the Cartan matrix $\Car$, so we have simple roots $\{\al_i\mid i\in I\}$, simple coroots $\{\al_i^\vee\mid i\in I\}$, and a bilinear form $(\cdot,\cdot)$ on $\h^*$ such that $\cc_{ij}=2(\al_i,\al_j)/(\al_i,\al_i)$ for all $i,j\in I$. We normalize $(\cdot,\cdot)$ so that $(\be,\be)=2$ if $\be$ is a short  root. The fundamental dominant weights $\{\La_i\mid i\in I\}$ have the property that $\lan\La_i,\al_j^\vee\ran=\de_{i,j}$, where $\lan\cdot,\cdot\ran$ is the natural pairing between $\h^*$ and $\h$. We have the set of  dominant weights $P_+=\sum_{i\in I}\Z_{\geq 0}\cdot\La_i$ and the positive part of the root lattice 
$Q_+ := \bigoplus_{i \in I} \Z_{\geq 0} \al_i$. For $\alpha \in Q_+$, we write $\height(\alpha)$ for the sum of its 
coefficients when expanded in terms of the $\al_i$'s. 

Denote $\A:=\Z[q,q^{-1}]$ for an indeterminate $q$. For $n\in\Z_{\geq 0}$, define
$$[n]_q:=\frac{q^n-q^{-n}}{q-q^{-1}},\  [n]_q^!:=\prod_{m=1}^n[m]_q.
$$
Given in addition $\alpha \in Q_+$ and a simple root $\al_i$, we let $d_\alpha := (\alpha,
\alpha)/ 2$ and set  
\begin{align*}
q_\al:=q^{d_\al},\  [n]_\al:= [n]_{q_\al},\  [n]_\al^!:=[n]_{q_\al}^!,
q_i:=q_{\al_i}\ [n]_i:=[n]_{\al_i},\ [n]_i^!:=[n]_{\al_i}^!.
\end{align*}


Sequences of elements of $I$ will be called {\em words}. The set of all words is denoted $\words$. If $\bi=i_1\dots i_d$ is a word, we denote $|\bi|:=\al_{i_1}+\dots+\al_{i_d}\in Q_+$. We refer to $|\bi|$ as the {\em content} of the word $\bi$. For any $\al\in Q_+$ we denote 
$$\words_\al:=\{\bi\in\words \mid |\bi|=\al\}.$$ 
If $\al$ is of height $d$, then the symmetric group $\Si_d$ with simple permutations $s_1,\dots,s_{d-1}$ acts transitively on $\words_\alpha$ from the left by place permutations.

Let $F$ be an arbitrary  
field. Define the polynomials 
$\{Q_{ij}(u,v)\in F[u,v]\mid i,j\in I\}$ in the variables $u,v$   
as follows. 
Choose signs $\eps_{ij}$ for all $i,j \in I$ with $\cc_{ij}
< 0$  so that $\eps_{ij}\eps_{ji} = -1$.
Then set: 
\begin{equation}\label{EArun}
Q_{ij}(u,v):=
\left\{
\begin{array}{ll}
0 &\hbox{if $i=j$;}\\
1 &\hbox{if $\cc_{ij}=0$;}\\
\eps_{ij}(u^{-\cc_{ij}}-v^{-\cc_{ji}}) &\hbox{if $\cc_{ij}<0$.}
\end{array}
\right.
\end{equation}
Fix 
$\al\in Q_+$ of height $d$. 
The {\em KLR-algebra} $R_\al$ is an associative graded unital $F$-algebra, given by the generators
\begin{equation}\label{EKLGens}
\{1_\bi\mid \bi\in \words_\al\}\cup\{y_1,\dots,y_{d}\}\cup\{\psi_1, \dots,\psi_{d-1}\}
\end{equation}
and the following relations for all $\bi,\bj\in \words_\al$ and all admissible $r,t$:
\begin{equation}
1_\bi  1_\bj = \de_{\bi,\bj} 1_\bi ,
\quad{\textstyle\sum_{\bi \in \words_\alpha}} 1_\bi  = 1;\label{R1}
\end{equation}
\begin{equation}\label{R2PsiY}
y_r 1_\bi  = 1_\bi  y_r;\quad y_r y_t = y_t y_r;
\end{equation}
\begin{equation}
\psi_r 1_\bi  = 1_{s_r\bi} \psi_r;\label{R2PsiE}
\end{equation}
\begin{equation}
(y_t\psi_r-\psi_r y_{s_r(t)})1_\bi  
= \de_{i_r,i_{r+1}}(\de_{t,r+1}-\de_{t,r})1_\bi;
\label{R6}
\end{equation}
\begin{equation}
\psi_r^21_\bi  = Q_{i_r,i_{r+1}}(y_r,y_{r+1})1_\bi 
 \label{R4}
\end{equation}
\begin{equation} 
\psi_r \psi_t = \psi_t \psi_r\qquad (|r-t|>1);\label{R3Psi}
\end{equation}
\begin{equation}
\begin{split}
&(\psi_{r+1}\psi_{r} \psi_{r+1}-\psi_{r} \psi_{r+1} \psi_{r}) 1_\bi  
\\=
&
\de_{i_r,i_{r+2}}\frac{Q_{i_r,i_{r+1}}(y_{r+2},y_{r+1})-Q_{i_r,i_{r+1}}(y_r,y_{r+1})}{y_{r+2}-y_r}1_\bi.
\end{split}
\label{R7}
\end{equation}
The {\em grading} on $R_\al$ is defined by setting:
$$
\deg(1_\bi )=0,\quad \deg(y_r1_\bi )=(\al_{i_r},\al_{i_r}),\quad\deg(\psi_r 1_\bi )=-(\al_{i_r},\al_{i_{r+1}}).
$$

\vspace{2 mm}

These algebras were defined in \cite{KL1,KL2,R}. It is pointed out in \cite{KL2} and \cite[\S3.2.4]{R} that up to isomorphism the graded $F$-algebra $R_\al$ depends only on the Cartan matrix and $\al$. 

Fix in addition a dominant weight $\La\in P_+$. The corresponding {\em cyclotomic KLR algebra} $R_\al^\La$ is the quotient of $R_\al$ by the following ideal:
\begin{equation}\label{ECyclot}
J_\al^\La:=(y_1^{\lan\La,\al_{i_1}^\vee\ran}1_\bi \mid \bi= i_1\dots i_d\in\words_\al). 
\end{equation}

For a graded algebra $R$, denote by $\Mod{R}$ the abelian category of all graded left $R$-modules,
denoting (degree-preserving) homomorphisms in this category
by $\hom_R$. We write $\cong$ for the isomorphism in this category, and $\simeq$ for the isomorphism in the category of usual modules. 
Let $\mod{R}$ denote
the abelian subcategory of all
finite dimensional graded left $R$-modules and
 $\proj{R}$ denote the additive subcategory  of 
all finitely generated projective graded left $R$-modules. 

We also consider the Grothendieck groups 
 $[\mod{R}]$ and $[\proj{R}]$.
We view $[\mod{R}]$ and $[\proj{R}]$ as $\Laurent$-modules via 
$q^m[M]:=[q^m M]$,
where $q^m M$ denotes the module obtained by 
shifting the grading up by $m$:
$
(q^mM)_n=M_{n-m}.
$
More generally, given a formal Laurent series $f(q) = \sum_{n \in \Z} f_n
q^n$ with coefficients $f_n \in \Z_{\geq 0}$,
$f(q) V$ denotes $\bigoplus_{n \in \Z} q^n V^{\oplus f_n}$.
If $V$ is a locally finite dimensional graded vector space (i.e. the dimension of each graded component $V_n$ is finite),
its {\em graded dimension} is $
\DIM V := \sum_{n \in \Z} (\dim V_n) q^n.
$
Given $M, L \in \mod{R}$ with $L$ irreducible, 
we write $[M:L]_q$ for the corresponding {\em  graded composition multiplicity},
i.e. 
$
[M:L]_q := \sum_{n \in \Z} a_n q^n,
$ 
where $a_n$ is the multiplicity
of $q^n L$ in a graded composition series of $M$.


Given $\alpha, \beta \in Q_+$, we set $
R_{\alpha,\beta} := R_\alpha \otimes 
R_\beta$.  
There is an injective non-unital algebra homomorphism 
$R_{\alpha,\beta}\,\into\, R_{\alpha+\beta},\ 1_\bi \otimes 1_\bj\mapsto 1_{\bi\bj}$,
where $\bi\bj$ is the concatenation of $\bi$ and $\bj$. The image of the identity
element of $R_{\alpha,\beta}$ under this map is
$
1_{\alpha,\beta}:= \sum_{\bi \in \words_\alpha,\,\bj \in \words_\beta} 1_{\bi\bj}.
$ 
We consider the induction and restriction functors: 
\begin{align*}
\Ind_{\alpha,\beta} &:= R_{\alpha+\beta} 1_{\alpha,\beta}
\otimes_{R_{\alpha,\beta}} ?:\Mod{R_{\alpha,\beta}} \rightarrow \Mod{R_{\alpha+\beta}},\\
\Res_{\alpha,\beta} &:= 1_{\alpha,\beta} R_{\alpha+\beta}
\otimes_{R_{\alpha+\beta}} ?:\Mod{R_{\alpha+\beta}}\rightarrow \Mod{R_{\alpha,\beta}},
\end{align*}
which preserve the categories of finite dimensional and finitely generated projective modules.
For $M\in\Mod{R_{\al}},N\in\Mod{R_\be}$, denote  
$
M\circ N:=\Ind_{\al,\be}
M \boxtimes N. 
$ 
Then $[\proj{R}]:=\bigoplus_{\al\in Q_+}\proj{R_\al}$ and $[\mod{R}]:=\bigoplus_{\al\in Q_+}\mod{R_\al}$ are $Q_+$-graded $\A$-algebras,
with multiplication coming from the
induction product $\circ$. 

\begin{Example} \label{ExNH}
{\rm 
For $m \geq 1$ and $i \in I$,
the KLR algebra $R_{m\alpha_i}$ is the
{\em nil-Hecke algebra} $\NH_m$, which is given by generators
$y_1,\dots,y_m$ and 
$\psi_1,\dots,\psi_{m-1}$ and relations:
$y_i y_j = y_j y_i$, $\psi_i y_j = y_j \psi_i$ for $j \neq
i,i+1$, $\psi_i y_{i+1} = y_i \psi_i + 1$, $y_{i+1} \psi_i
= \psi_i y_i + 1$, $\psi_i^2 = 0$,
together with the usual type ${\tt A_m}$ braid relations for
$\psi_1,\dots,\psi_{m-1}$.
It is well known
that the nil-Hecke algebra is a matrix algebra over its center;
see e.g. \cite[\S2]{R2} or \cite[\S4]{KLM} for recent expositions.
Moreover,
writing $w_0$ for the longest element of $\Si_m$,
the degree zero 
element
\begin{equation}\label{idemp}
e_m := \y_2 \y_3^2 \cdots \y_m^{m-1} \psi_{w_{0}}
\end{equation} 
is a primitive idempotent, hence
$P(\alpha_i^m) := q_i^{m(m-1)/2}R_{m\alpha_i} e_m$ is an indecomposable projective $R_{m\alpha_i}$-module.
The degree shift has been chosen so that 
irreducible head $L(\alpha_i^m)$ of $P(\alpha_i^m)$ has graded dimension $[m]_i^!$.
Thus 
$R_{m\alpha_i}\cong [m]^!_i \,P(\alpha_i^m)$ as a left module.
}
\end{Example}

\subsection{Some motivation}\label{SMotiv}
The first reason why representation theory of KLR algebras is interesting is that it can be used to categorify quantum groups. One way to make this statement more precise is as follows. 
Let $\f$ be the quantized enveloping algebra over the field $\Q(q)$   
associated to $\Car$ with standard generators $\{\theta_i\mid i\in I\}$, cf. \cite{Lubook}. 
It is naturally $Q_+$-graded: 
$
\f = \bigoplus_{\alpha
  \in Q_+} \f_\alpha.
$ 
Khovanov and Lauda showed that there is a
unique $Q_+$-graded algebra isomorphism
\begin{equation}\label{EGamma}
\gamma: \f \stackrel{\sim}{\rightarrow}
\Q(q)
\otimes_{\A} [\proj{R}],
\ 
\theta_i \mapsto [R_{\alpha_i}],
\end{equation}
where $R_{\alpha_i}$ is the left regular module over the algebra $R_{\al_i}$. 
If $\Car$ is symmetric and $F$ has characteristic zero, 
Rouquier \cite{R2}  and Varagnolo and Vasserot \cite{VV} have
shown further that $\gamma$ maps 
the canonical basis of $\f$ to
the basis for $[\proj{R}]$ arising from the 
isomorphism classes of graded self-dual 
indecomposable projective modules. Taking a dual map to $\gamma$ yields another algebra isomorphism
\begin{equation}\label{EGamma*}
\gamma^*:\Q(q) \otimes_{\A} [\mod{R}] 
\stackrel{\sim}{\rightarrow} \f^*.
\end{equation}
If $\Car$ is symmetric and $F$ has characteristic zero, 
this sends the basis for $[\mod{R}]$ arising from isomorphism
classes of graded self-dual irreducible $R_\al$-modules
to the dual canonical basis for $\f$. For some further details concerning Khovanov-Lauda-Rouquier categorification see \S\ref{SSKLRCat}.

Another motivation for studying representation theory of KLR algebras is the following fact first proved in \cite{BKyoung}: cyclotomic KLR algebras of finite and affine types $\mathtt A$ are explicitly isomorphic to blocks of cyclotomic Hecke algebras. The main reason this is interesting is that now we can  transport the grading from KLR algebras to cyclotomic Hecke algebras, and the resulting grading on Hecke algebras turns out to be very important, see for example \cite{BKW,BKgrdec,HM}. 

As yet another illustration, we now construct explicitly the irreducible modules for all semisimple cyclotomic Hecke algebras (both degenerate and non-degenerate). This is of course just a version of Young's orthogonal form, but the reader might  appreciate how much simpler the construction via KLR algebras is. 

We give the necessary definitions. Until the end of this subsection we assume that the Cartan matrix $\Car$ is either of type ${\tt A_\infty}$ (this is equivalent to working with sufficiently large finite type $\tt A$) or of affine type ${\tt A_{e-1}^{(1)}}$ (above we only defined the KLR algebras for finite Lie types, but the definition for ${\tt A_{e-1}^{(1)}}$ is really the same). When $\Car={\tt A_{\infty}}$, we set $e=0$ so that in both finite and affine types $\tt A$ we can  identify the set $I$ with $\Z/e\Z$. 

Fix an ordered tuple $\kappa=(k_1,\dots,k_l)\in I^l$ such that $\La=\La_{k_1}+\dots+\La_{k_l}$.  An {\em $l$-multipartition} of $d$ is an ordered $l$-tuple
of partitions $\umu = (\mu^{(1)} , \dots,\mu^{(l)})$ such that $\sum_{m=1}^l
|\mu^{(m)}|=d$.  We refer to $\mu^{(m)}$ as the $m$th component of $\umu$. 
Let $\Par^\kappa_d$ be the set of all $l$-multipartitions of $d$. Of course, $\Par^\kappa_d$ only depends on $l$, and not on $\kappa$, but as soon as we consider contents of nodes of multipartitions, the dependence on~$\kappa$ becomes  essential. The {\em Young diagram} of the multipartition $\umu = (\mu^{(1)} , \dots,\mu^{(l)})\in \Par^\ka$ is 
$$
\{(a,b,m)\in\Z_{>0}\times\Z_{>0}\times \{1,\dots,l\}\mid 1\leq b\leq \mu_a^{(m)}\}.
$$

The elements of this set
are the {\em nodes} or {\em boxes of $\umu$}. More generally, a {\em node} is any element of $\Z_{>0}\times\Z_{>0}\times \{1,\dots,l\}$. 
Usually, we identify the multipartition~$\umu$ with its 
Young diagram and visualize it as a column vector of Young diagrams. 
For example, $((3,1),\emptyset,(4,2))$ is the Young diagram
\begin{align*}
&\YoungDiagram{3,1} \\ &\emptyset\\ &\YoungDiagram{4,2}
\end{align*}

To each node $A=(a,b,m)$ 
we associate its {\em content}, which is an element of $I=\Z/e\Z$ defined as follows $\res A:=\res^\kappa A=k_m+(b-a)\pmod{e}$.
Define the {\em weight of $\umu$} to be $
\weight(\umu):=\sum_{A\in\umu}\al_{\res A} \in Q_+$.
For $\al\in Q_+$, denote
$
\Par^\kappa_\al:=\{\umu\in\Par^\kappa\mid \weight(\umu)=\al\}.
$
We call a partition {\em separating} if for any two nodes $A=(a_1,a_2,m)$, $B=(b_1,b_2,n)$ of $\umu$, we have that $\res^\kappa A=\res^\kappa B$ implies that $A$ and $B$ are on the same diagonal of the same component, i.e. $m=n$ and $a_2-a_1=b_2-b_1$. 

Let $\umu=(\mu^{(1)},\dots,\mu^{(l)})\in\Par^\ka_d$. 
A {\em $\umu$-tableau} 
$\T=(\T^{(1)},\dots,\T^{(l)})$ is obtained by 
inserting the integers $1,\dots,d$ into the boxes of $\umu$, allowing no repeats. The group $\Si_d$ acts on the set of $\umu$-tableaux from the left by acting on the entries of the tableaux. 
Let $\T^\umu$ be the $\umu$-tableau in which the numbers $1,2,\dots,d$ appear in order from left to right along the successive rows,
working from top row to bottom row. 
Set 
\begin{equation}\label{EResSeq}
\bi^\T=\bi^{\kappa,\T}=i_1^\T \dots i_d^\T\in I^d,
\end{equation}
where $i_r^\T$ is the content of the node occupied by 
$r$ in $\T$ for all $1\leq r\leq d$. 
A $\umu$-tableau $\T$ is called {\em standard}  if its entries increase from left to right along the rows and from top to bottom  along the columns within each component of $\T$. 
Let $\St(\umu)$ be the set of standard $\umu$-tableaux.

Let $\al\in Q_+$ be of height $d$ and fix a separating multipartition $\umu\in\Par^\ka_\al$. Consider a formal vector space
$
S(\umu):=\bigoplus_{\T\in\St(\umu)} F\cdot v_\T
$
on basis $\{v_\T\mid \T\in\St(\umu)\}$ labeled by the standard $\umu$-tableaux and concentrated in degree zero. Define the following action of the generators of $R_\al^\La$ on $S(\umu)$:
\begin{equation}\label{EAction}
1_\bi v_\T =\de_{\bi,\bi^\T}v_\T,\quad y_sv_\T=0,\quad
\psi_rv_\T=
\left\{
\begin{array}{ll}
v_{s_r\T} &\hbox{if $s_r\T$ is standard,}\\
0 &\hbox{otherwise.}
\end{array}
\right.
\end{equation}

\begin{Theorem} \label{TYoung}
Suppose that $\umu$ is separating. The formulas (\ref{EAction}) define a (graded) action of $R^\La_\al$ on $S(\umu)$. Moreover, $S(\umu)$ is an irreducible $R_\al$-module, and $S(\umu)\not\cong S(\unu)$ whenever $\umu\neq \unu$. 
\end{Theorem}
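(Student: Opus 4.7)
The plan is to prove the three assertions in order: that (\ref{EAction}) defines a graded $R_\al^\La$-action on $S(\umu)$, that $S(\umu)$ is irreducible, and that distinct separating multipartitions give nonisomorphic modules. The cyclotomic defining relation is immediate because $y_1$ acts as zero on $S(\umu)$, so once an $R_\al$-action is established it descends to $R_\al^\La$. Relations (\ref{R1}), (\ref{R2PsiY}), (\ref{R2PsiE}), and (\ref{R3Psi}) are then a matter of direct inspection, using the intertwining identity $\bi^{s_rT}=s_r\bi^T$, and the fact that $S(\umu)$ is concentrated in a single degree reduces the grading check to verifying that $\psi_r$ is degree-preserving on the vectors $v_T$ where it acts nontrivially.

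Two combinatorial facts, both consequences of the separating hypothesis, carry the rest of the relation check: (i) for any standard $T$ and any admissible $r$ one has $i_r^T\neq i_{r+1}^T$; and (ii) for any standard $T$ and any admissible $r$, the tableau $s_rT$ is standard if and only if $\cc_{i_r^T,i_{r+1}^T}=0$. For (i), if $i_r^T=i_{r+1}^T$ then the boxes holding $r$ and $r{+}1$ have equal content; by separating they lie on a common diagonal of a common component, but any interior lattice point of the rectangle they span forces an entry strictly between $r$ and $r{+}1$. For (ii), row- or column-neighbors in a single component always produce adjacent residues, while separating rules out coincidental adjacency across components. Fact (i) kills both sides of (\ref{R6}), and (ii) makes (\ref{R4}) immediate, since $\psi_r^2 v_T$ equals $v_T$ or $0$ according as $s_rT$ is standard or not, while $Q_{i_r^T,i_{r+1}^T}(0,0)$ is $1$ or $0$ according as $\cc_{i_r^T,i_{r+1}^T}=0$ or not.

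The main obstacle is the braid relation (\ref{R7}). With the $y$-variables acting as zero, the right-hand side reduces to the constant term of the divided difference $(Q_{i_r,i_{r+1}}(y_{r+2},y_{r+1})-Q_{i_r,i_{r+1}}(y_r,y_{r+1}))/(y_{r+2}-y_r)$, which is $\pm 1$ precisely when $\cc_{i_r,i_{r+1}}=-1$ and zero otherwise, further cut down by $\de_{i_r,i_{r+2}}$. The left-hand side must be computed by tracking how the two three-fold compositions $\psi_{r+1}\psi_r\psi_{r+1}$ and $\psi_r\psi_{r+1}\psi_r$ act on the triple of boxes occupied by $r,\,r{+}1,\,r{+}2$ in $T$. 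I would handle this by an exhaustive case analysis on the relative geometry of that triple --- three boxes no pair of which is a row- or column-neighbor; a single neighbor pair with the third box elsewhere; and hook configurations within a single component --- invoking (i), (ii), and separating at each step to prune cases and to exclude cross-component interferences. This case analysis is the technical heart of the argument.

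For irreducibility, I would first observe that separating makes $T\mapsto\bi^T$ injective on $\St(\umu)$: along a fixed diagonal of a single component, the boxes of a given content must be filled in top-down increasing order in any standard tableau, so $\bi^T$ uniquely determines $T$. Hence $1_{\bi^T}$ picks out a scalar multiple of $v_T$ from any vector supported on $v_T$, and irreducibility reduces to the connectedness of the graph on $\St(\umu)$ whose edges are pairs $\{T,s_rT\}$ (both standard) --- a classical fact that extends to multipartitions componentwise. For non-isomorphism, the set $\{\bi^T:T\in\St(\umu)\}$ of residue words $\bi$ for which $1_\bi$ acts nontrivially on $S(\umu)$ is a module invariant; among separating multipartitions this set, or equivalently the distinguished word $\bi^{T^\umu}$, determines $\umu$, giving $S(\umu)\not\cong S(\unu)$ whenever $\umu\neq\unu$.
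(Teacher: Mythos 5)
Your proposal follows the paper's argument closely: the same reduction to verifying the KLR relations on the word vectors, the same two combinatorial consequences of the separating hypothesis (no repeated residue for $r,r{+}1$ or $r,r{+}2$, and $\cc_{i_r^\T,i_{r+1}^\T}=0$ when $s_r\T$ is standard), and the same mechanism for irreducibility via injectivity of $\T\mapsto\bi^\T$ together with connectedness of the admissible-transposition graph, with non-isomorphism read off from the distinct word sets. The only notable deviations are cosmetic: you flag the left side of the braid relation (\ref{R7}) as needing a case check on the geometry of the triple of boxes (which the paper leaves implicit, and which is in fact a short routine verification not requiring the separating hypothesis), whereas the paper supplies an explicit inductive proof of the connectedness claim by peeling off the last box of the last row, a fact you instead cite as classical.
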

\begin{proof}
To prove the first statement we need to observe that the defining relations of $R_\al$ hold for the linear operators defined by (\ref{EAction}). The relations (\ref{R1})--(\ref{R2PsiE}) are clear. To see that (\ref{R6}) holds it suffices to observe that in a standard tableau, $r$ and $r+1$ can never occupy boxes on the same diagonal of the same component. As $\umu$ is separating, it follows that $\bi^\T_r\neq \bi^\T_{r+1}$ for all $\T\in\St(\umu)$, which implies (\ref{R6}). To see (\ref{R4}), if $r$ and $r+1$ occupy adjacent nodes in $\T$, then $s_r\T$ is not standard, and in this case we get $\psi_r^2 v_\T=0=Q_{i^\T_r,i^\T_{r+1}}(y_r,y_{r+1})v_\T$ as required. On the other hand, if $r$ and $r+1$ do not occupy adjacent nodes in $\T$, then $s_r\T$ is standard, 
${\tt c}_{i_r^\T,i_{r+1}^\T}=0$, 
and  $\psi_r^2 v_\T=v_\T=Q_{i^\T_r,i^\T_{r+1}}(y_r,y_{r+1})v_\T$, again as required. The relation (\ref{R3Psi}) holds trivially. Finally, to check the relation (\ref{R7}), it is enough to notice that we never have $i_r^\T=i_{r+2}^\T$ for a standard $\umu$-tableau $\T$ under the assumption that $\umu$ is separating. 

To see that $S(\umu)$ is irreducible, note first that $\Stab\neq \T$ for standard $\umu$-tableaux $\Stab$ and $\T$ implies that $\bi^\T\neq \bi^\Stab$, so acting with the idempotents $1_\bi$ yields projections to each $1$-dimensional subspace $F\cdot v_\T$ spanned by the basis elements $v_\T$. So to prove the  irreducibility of $S(\umu)$ it suffices to show that for any standard $\umu$-tableaux $\T$ and $\Stab$, there exists a series of admissible transpositions which takes $\T$ to $\Stab$, which means that there exist $1\leq k_1,\dots,k_l<d$ such that $s_{k_l}s_{k_{l-1}}\dots s_{k_1}\T=\Stab$ and $s_{k_m}s_{k_{m-1}}\dots s_{k_1}\T$ is standard for all $m=1,\dots,l$. The existence of such a sequence follows from the following: 

\vspace{1mm}
{\em Claim}. For any standard $\umu$-tableau $\T$ there exists  a series of admissible transpositions which takes $\T$ to $\T^\umu$.

\vspace{1mm}
To prove the Claim, let $A$ be the last box of the last row of $\umu$. In $\T^\umu$, the box $A$ is occupied by $d$. In $\T$, the box $A$ is occupied by some number $k\leq d$. Note that in $\T$, the numbers 
$k +1$ and $k $ do not lie on adjacent diagonals. 
So we can apply an admissible transposition to swap $k $ and $k +1$, then to swap $k +1$ and $k +2$, etc. As a result, we get a new standard $\umu$-tableau in which $A$ is occupied by $d$. Next, remove $A$ together with $d$, and apply induction. \end{proof}

The pair $(\La,d)\in P_+\times \Z_{\geq 0}$ is {\em separating} if all miltipartitions $\umu\in\Par^\kappa_d$ are separating. This notion is well-defined, since it does not depend on the choice of $\kappa=(k_1,\dots,k_l)$ such that $\La=\La_{k_1}+\dots+\La_{k_l}$. If $(\La,d)$ is separating, then all multipartitions $\umu\in\Par^\kappa_d$ have different contents, and the algebra $\bigoplus_{\al\in Q_+, \height(\al)=d}R^\La_\al$ is a semisimple algebra, with each $R^\La_\al$ being zero or simple. We have mentioned above that by the main result of \cite{BKyoung}, this algebra is isomorphic to a cyclotomic Hecke algebra $\bigoplus_{\al\in Q_+, \height(\al)=d}H^\La_\al$. This cyclotomic Hecke algebra is semisimple if and only if $(\La,d)$ is separating. Thus in all cases where a cyclotomic Hecke algebra is semisimple, Theorem~\ref{TYoung} yields an easy construction of all its irreducible representations via the isomorphism of \cite{BKyoung}. 

\section{Basic representation theory of KLR algebras}

\subsection{Semiperfect and Laurentian algebras}\label{SSGeneral}
We begin with some generalities on graded algebras. 
All gradings will be $\Z$-gradings. Let $H$ be a graded algebra over a ground field $F$. All modules, ideals, etc. are assumed to be graded, unless otherwise stated. In particular, $\rad\, V$ (resp.\ $\soc V$) is the intersection of all maximal (graded) submodules (resp. the sum of all irreducible (graded) submodules) of $V$. 
All idempotents are assumed to be degree zero. We denote by $N(H)$ the (graded) Jacobson radical of $H$.

For modules $U$ and $V$,
we write
$\hom_H(U, V)$
for homogeneous $H$-module homomorphisms, and set 
$\HOM_H(U, V):=\bigoplus_{n \in \Z} \HOM_H(U, V)_n$, 
where
$$
\HOM_H(U, V)_n := \hom_H(q^n U, V) = \hom_H(U, q^{-n}V).
$$
We define $\operatorname{ext}^d_H(U,V)$ and
$\EXT^d_H(U,V)$ similarly.
If $U$ is finitely generated, then $\HOM_H(U,V)=\Hom_H(U,V)$, where $\Hom_H(U,V)$ denoted the homomorphisms in the ungraded category. We have a similar fact for $\Ext^d$ provided $U$ has a resolution by finitely generated projective modules, in particular if $U$ is finitely generated and $H$ is Noetherian.

For an $H$-module $V$ denote by $Z(V)$ the largest   submodule of $V$ with the trivial zero degree component, i.e. $Z(V)_0=0$. Define $\overline{V}:=V/Z(V)$.

\begin{Lemma} \label{LStandard}%
 {\rm \cite{NvO}} 
 Let $V$ be an irreducible graded $H$-module, and $W$ be an irreducible $H_0$-module. 
\begin{enumerate}
\item[{\rm (i)}] If $V_n\neq 0$ for some $n\in\Z$, then $V_n$ is irreducible as an $H_0$-module. 
\item[{\rm (ii)}] The graded $H$-module $X:=\overline{H\otimes_{H_0} W}$ is irreducible, and $X_0\cong W$ as $H_0$-modules. 
\item[{\rm (iii)}] If $V_0\neq 0$, then we have $V\cong \overline{H\otimes_{H_0} V_0}$.
\end{enumerate}
\end{Lemma}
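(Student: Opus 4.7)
The plan is to handle the three parts in sequence, using (i) and (ii) in the proof of (iii). All three rely only on elementary graded submodule arithmetic together with the definition of $Z$.

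For (i), the strategy is to show that any nonzero $u \in V_n$ generates $V_n$ as an $H_0$-module. Given such a $u$, the cyclic graded submodule $Hu$ is nonzero, so by irreducibility of $V$ we have $Hu = V$. Taking the degree-$n$ component gives $H_0 u = (Hu)_n = V_n$, so every nonzero element of $V_n$ is an $H_0$-generator, forcing $V_n$ to be irreducible over $H_0$.

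For (ii), I would set $Y := H \otimes_{H_0} W$, noting that $Y_0 = H_0 \otimes_{H_0} W \cong W$ and that $Y$ is generated by $Y_0$ over $H$. Since $Z(Y)_0 = 0$ by definition, the quotient $X = \overline{Y}$ satisfies $X_0 \cong Y_0 \cong W$. For the irreducibility of $X$, the key point is the submodule correspondence: any nonzero graded submodule $X' \subseteq X$ lifts to a graded submodule $Y' \supseteq Z(Y)$ with $Y' \neq Z(Y)$. By the maximality of $Z(Y)$ as a graded submodule whose degree zero part vanishes, we must have $Y'_0 \neq 0$; since $Y_0 \cong W$ is $H_0$-irreducible this forces $Y'_0 = Y_0$. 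Then $Y' \supseteq H \cdot Y_0 = Y$, so $Y' = Y$ and $X' = X$, proving $X$ is irreducible.

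For (iii), I would first invoke (i) to conclude that $V_0$ is an irreducible $H_0$-module. The inclusion $V_0 \hookrightarrow V$ is a map of $H_0$-modules, so the universal property of induction yields a graded $H$-homomorphism $\varphi \colon H \otimes_{H_0} V_0 \to V$, which is surjective since its image is a nonzero graded submodule of the irreducible $V$. The kernel $K$ of $\varphi$ satisfies $K_0 = 0$, because in degree zero $\varphi$ restricts to the identity $V_0 \to V_0$. Hence $K \subseteq Z(H \otimes_{H_0} V_0)$, and we obtain a further surjection $V \cong (H \otimes_{H_0} V_0)/K \twoheadrightarrow \overline{H \otimes_{H_0} V_0} = X$. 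Both source and target are irreducible (the target by (ii), and nonzero since $X_0 \cong V_0 \neq 0$), so this surjection is an isomorphism. The only real point requiring care is the bookkeeping in (ii) around the maximality of $Z(Y)$ and the submodule correspondence, but it is formal rather than genuinely difficult.
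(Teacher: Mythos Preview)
Your proof is correct and follows essentially the same approach as the paper's. The only cosmetic difference is in part (iii): you show $K_0=0$ so $K\subseteq Z(Y)$, giving a surjection $V\twoheadrightarrow\overline{Y}$, whereas the paper (implicitly using $Z(V)=0$ so $\varphi(Z(Y))=0$) produces the surjection in the other direction $\overline{Y}\twoheadrightarrow V$; since both modules are irreducible, either direction suffices.
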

\begin{proof}
(i) is clear.

(ii) First of all note that 
$$X_0=(\overline{H\otimes_{H_0} W})_0\cong ({H\otimes_{H_0} W})_0\cong H_0\otimes_{H_0}W\cong W\neq 0.
$$
Note that $H\otimes_{H_0} W$ is generated as an $H$-module by its degree zero part $1\otimes W$, hence $X$ is also generated by its degree zero part $X_0$. Moreover, $X_0$ is irreducible as an $H_0$-module, so $X$ is generated by any non-zero vector in $X_0$. Now to prove the irreducibility of $X$ it suffices to take any homogeneous vector $v$, say of degree $n$, and prove that $H_{-n}v\neq 0$. Well, otherwise $Hv$ is a graded submodule of $X$ which avoids $X_0$, a contradiction. 

(iii) By (i) we have that $V_0$ is an irreducible $H$-module, and we have that the $H$-module $\overline{H\otimes_{H_0} W}$ is isomorphic to $V$, because it is irreducible by (ii) and  surjects onto $V$. 
\end{proof}

Now we assume that $H$ is (graded) {\em semiperfect}, i.e. every finitely generated (graded) $H$-module has a (graded) projective cover. By \cite{Das}, this is equivalent to $H_0$ being semiperfect, and is also equivalent to the fact that the following two properties hold: (1) $H/N(H)$ is (graded) semisimple Artinian; (2) idempotents lift from $H/N(H)$ to $H$. 
We fix a complete irredundant set of irreducible $H$-modules up to isomorphism and degree shift:
$$\{L(\pi)\mid \pi\in\Pi\},$$
and for each $\pi\in\Pi$, we fix a projective cover $P(\pi)$ of $L(\pi)$. 

By the semiperfectness of $H$, we have $H/N(H)$ is (graded) left Artinian, so the set $\Pi$ is finite. 
Moreover, if $\End_H(L(\pi))$ is finite dimensional over $F$ 
then by the graded version of the Wedderburn-Artin Theorem \cite[2.10.10]{NvO} the irreducible module $L(\pi)$ is finite dimensional. Finally, if $\End_H(L(\pi))=F$ for all $\pi\in \Pi$, i.e. if $H$ is {\em Schurian}, then $H/N(H)$ is a finite direct product of (graded) matrix algebras over $F$ and we have 
$$
{}_HH=\bigoplus_{\pi\in \Pi} (\DIM L(\pi))P(\pi). 
$$

A graded algebra $H$ is called {\em Laurentian} if each of its graded components $H_n$ is finite dimensional and $H_n=0$ for $n\ll0$. In this case $\DIM H$ as well as $\DIM V$ for any finitely generated $H$-module are Laurent series. 

\begin{Lemma} \label{CLaurent}
Let $H$ be a Laurentian algebra. Then:
\begin{enumerate}
\item[{\rm (i)}] $H$ has only finitely many irreducible modules up to isomorphism and degree shift;
\item[{\rm (ii)}] all irreducible $H$-modules are finite dimensional;
\item[{\rm (iii)}] $H$ is semiperfect.
\end{enumerate}  
\end{Lemma}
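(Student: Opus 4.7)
My plan is to prove the parts in the order (iii), (i), (ii), with the Laurentian hypothesis feeding in through Lemma~\ref{LStandard} at each step.

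Part (iii) is immediate. Since $H$ is Laurentian, $H_0$ is finite-dimensional over $F$ and therefore Artinian, hence semiperfect in the usual ungraded sense. By Dascalescu's criterion \cite{Das} quoted in the previous paragraph, graded semiperfectness of $H$ is equivalent to semiperfectness of $H_0$, giving (iii).

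For (i), let $V$ be an irreducible graded $H$-module. Cyclicity of $V$ on any nonzero homogeneous vector together with $H_k = 0$ for $k \ll 0$ forces $V$ to be bounded below in degree, so after a shift we may assume $V_0 \neq 0$. Lemma~\ref{LStandard}(i),(iii) then identifies $V$ with $\overline{H \otimes_{H_0} V_0}$, determined up to isomorphism by the irreducible $H_0$-module $V_0$; and Lemma~\ref{LStandard}(ii) shows every irreducible $H_0$-module appears as such $V_0$. Since $H_0$ is finite-dimensional and hence has only finitely many irreducibles, (i) follows.

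For (ii), the main obstacle, fix an irreducible $L$ shifted so that $L_0 \neq 0$ and $L_{<0} = 0$, and set $D := \END_H(L)$, a graded division ring by graded Schur. Because $L = H L_0$, any $f \in D_n$ is determined by $f|_{L_0}\colon L_0 \to L_n$, giving an injection $D_n \hookrightarrow \Hom_F(L_0, L_n)$; hence $D_n = 0$ whenever $L_n = 0$, and in particular for $n < 0$. The support $\{n : D_n \neq 0\}$ of any graded division ring is a subgroup of $\Z$ (closed under addition, since products of nonzero homogeneous elements in a division ring are nonzero, and closed under negation, since the inverse of a degree-$n$ element has degree $-n$); the only subgroup of $\Z$ contained in $\Z_{\geq 0}$ is $\{0\}$, so $D$ is concentrated in degree zero. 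Now if $m, n$ lie in the support of $L$ and $L_m \cong L_n$ as $H_0$-modules, then $q^{-m}L$ and $q^{-n}L$ are irreducible graded $H$-modules with isomorphic degree-zero components, so Lemma~\ref{LStandard}(iii) produces an $H$-isomorphism $q^{-m}L \cong q^{-n}L$, i.e.\ a nonzero element of $D_{n-m}$, forcing $n = m$. Thus the homogeneous components $L_n$ (for $n$ in the support of $L$) are pairwise non-isomorphic irreducible $H_0$-modules, and since $H_0$ has only finitely many irreducibles, the support of $L$ is finite; each $L_n$ being finite-dimensional, $L$ itself is finite-dimensional. I expect the delicate step to be the degree-zero concentration of $\END_H(L)$; once this is in hand, comparison with the finitely many $H_0$-irreducibles immediately bounds the support of $L$.
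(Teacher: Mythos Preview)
Your proof is correct. Parts (i) and (iii) match the paper's argument essentially verbatim.

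For (ii) you take a genuinely different and more elaborate route than the paper. The paper simply observes that an irreducible $V$ is cyclic on any nonzero homogeneous vector; choosing a generator in the lowest nonzero degree shows $V$ is bounded below, while choosing a generator in a very high degree (if one existed) would force the lowest component to vanish, so $V$ is bounded above as well---two lines, no endomorphism rings. Your argument instead analyses $D=\END_H(L)$, shows it is concentrated in degree zero via the subgroup-of-$\Z$ trick, and then uses Lemma~\ref{LStandard}(iii) to deduce that the nonzero components $L_n$ are pairwise non-isomorphic $H_0$-modules, hence finite in number. This is sound and yields the pleasant by-product that distinct homogeneous components of $L$ are non-isomorphic as $H_0$-modules, but it is considerably more machinery than the situation demands. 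The paper's ``bounded below by cyclicity, bounded above by irreducibility'' argument is the one to internalise here.
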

\begin{proof}
(i) Since $H_0$ is finite dimensional, it has only finitely many irreducible modules. It now follows from Lemma~\ref{LStandard} that up to a degree shift, $H$ has only finitely many irreducible graded modules. 

(ii) Let $V$ be an irreducible $H$-module. Then each $V_n$ is irreducible over $H_0$ by Lemma~\ref{LStandard}. So each $V_n$ is finite dimensional. On the other hand, since $V$ is cyclic and $H$ is Laurentian, $V$ has to be bounded below. Now, $V$ also has to be bounded above, since it is irreducible and $H$ is Laurentian.

(iii) follows from \cite[Theorem 3.5]{Das} since $H_0$ is semiperfect being finite dimensional. 
\end{proof}

\subsection{\boldmath Formal characters}\label{SSBasicRep} 
Fix $\al\in Q_+$ with $\height(\al)=d$. The results of the previous subsection apply to the KLR algebra $R_\al$, since it is easily seen to be Schurian, see e.g. \cite[Corollary 3.19]{KL1}, and is also  Laurentian for example in view of the following Basis Theorem: 

\begin{Theorem}\label{TBasis}{\cite[Theorem 2.5]{KL1}}, \cite[Theorem 3.7]{R} 
For each element $w\in \Si_d$ fix a reduced expression $w=s_{r_1}\dots s_{r_m}$ and set 
$
\psi_w:=\psi_{r_1}\dots \psi_{r_m}.
$ 
The elements 
$$ \{\psi_w y_1^{m_1}\dots y_d^{m_d}1_\bi \mid w\in \Si_d,\ m_1,\dots,m_d\in\Z_{\geq 0}, \ \bi\in \words_\al\}
$$ 
form an $F$-basis of  $R_\al$. 
\end{Theorem}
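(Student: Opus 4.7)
The plan is to split the argument into two independent parts: a spanning argument via the defining relations, and linear independence via a faithful ``polynomial'' representation. Both parts are standard for quiver Hecke algebras, and the hardest work is concentrated in verifying the KLR relations on the polynomial representation.

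For spanning, I would show by successive rewriting that any monomial in the generators can be expressed as a linear combination of the claimed elements. Using \eqref{R1}--\eqref{R2PsiE} one pushes all idempotents to the right, leaving a single $1_\bi$ at the end. Relation \eqref{R6} commutes each $y_t$ past each $\psi_r$ at the cost of a term with strictly fewer $\psi$'s, so one reduces to monomials of the form $\psi_{r_1}\cdots \psi_{r_k}\, y_1^{m_1}\cdots y_d^{m_d}\, 1_\bi$. An induction on $k$ using relation \eqref{R4} (which replaces $\psi_r^2$ by a polynomial in the $y$'s) and the braid relation \eqref{R7} (which replaces any non-reduced $\psi$-expression by a reduced one modulo strictly shorter $\psi$-words multiplied by polynomials) terminates and produces the claimed spanning set.

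For linear independence I would construct an action of $R_\al$ on the polynomial module
\begin{equation*}
V_\al := \bigoplus_{\bi \in \words_\al} F[y_1,\dots,y_d]\cdot 1_\bi,
\end{equation*}
where $1_\bj$ acts as projection to the $\bj$-summand, each $y_r$ acts by multiplication, and $\psi_r$ acts on the $1_\bi$-summand by a twisted divided difference: when $i_r=i_{r+1}$ it is the Demazure operator $f\mapsto (s_rf-f)/(y_{r+1}-y_r)$ landing in the same $1_\bi$-summand, and when $i_r\neq i_{r+1}$ it sends $f\cdot 1_\bi$ to a scalar multiple of $(s_rf)\cdot 1_{s_r\bi}$, the scalar being a chosen factor in a decomposition of $Q_{i_r,i_{r+1}}(y_r,y_{r+1})$ so that $\psi_r^2$ acts as multiplication by $Q_{i_r,i_{r+1}}(y_r,y_{r+1})$. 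Consistency of this choice uses the symmetry $Q_{ij}(u,v)=Q_{ji}(v,u)$ implicit in \eqref{EArun} together with the sign condition $\eps_{ij}\eps_{ji}=-1$.

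Once the action is defined, one checks the relations \eqref{R1}--\eqref{R7} on $V_\al$; the first few are routine, and the main obstacle is the cubic braid relation \eqref{R7}, where both sides are delicate polynomial expressions and one must split into cases based on the pattern of equalities among $i_r$, $i_{r+1}$, $i_{r+2}$, using the explicit form of $Q_{ij}$ from \eqref{EArun}. Faithfulness, and hence linear independence of the spanning set, is then established by evaluating a putative dependence $\sum c_{w,\bm,\bi}\,\psi_w\, y_1^{m_1}\cdots y_d^{m_d}\, 1_\bi$ on the vector $1\cdot 1_\bi \in V_\al$: for a longest $w \in \Si_d$ appearing with a non-zero coefficient, the leading lexicographic $y$-monomial produced in the $1_{w\bi}$-summand identifies the triple $(w,\bm,\bi)$ uniquely, forcing every coefficient to vanish and completing the argument.
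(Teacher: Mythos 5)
The paper itself states this theorem without proof, citing \cite[Theorem 2.5]{KL1} and \cite[Theorem 3.7]{R}, and your overall strategy -- spanning via the defining relations plus faithfulness of a polynomial representation on $\bigoplus_{\bi}F[y_1,\dots,y_d]1_\bi$ -- is exactly the route taken in those sources; the rewriting argument and the construction of the action (including the factorization of $Q_{i_r,i_{r+1}}$, which is consistent precisely because $Q_{ij}(u,v)=Q_{ji}(v,u)$) are fine in outline, granting that the case analysis for the braid relation \eqref{R7} still has to be carried out. The genuine gap is in your last step. Evaluating a putative dependence on the single vector $1\cdot 1_\bi$ does not suffice: whenever $i_r=i_{r+1}$ the generator $\psi_r$ acts by a Demazure operator, which kills constants (and all symmetric polynomials). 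Already in the nil-Hecke case $\al=2\al_i$, $\bi=(i,i)$, the element $\psi_1 1_\bi$ sends $1\cdot 1_\bi$ to $\partial_1(1)=0$, so this evaluation cannot recover its coefficient. Moreover, for words with repeated letters the map $w\mapsto w\bi$ is not injective and equal-letter crossings do not change the word component at all, so the $1_{w\bi}$-summand receives contributions from many different $w$ (in the nil-Hecke case from all of them), and there is no reason the ``leading lexicographic monomial'' of the longest $w$ survives cancellation -- as the example shows, it can simply be zero.

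The standard repair is to treat the dependence as an identity of operators on all of $F[y_1,\dots,y_d]1_\bi$ and pass to the localization, i.e.\ to the skew group ring $F(y_1,\dots,y_d)\rtimes \Si_d$. There each $\psi_w$, acting through the polynomial representation, equals $c_w\,w$ plus an $F(y_1,\dots,y_d)$-combination of $u$ with $u<w$ in the Bruhat order, where $c_w$ is a nonzero rational function depending only on $w$ and $\bi$ (a product of factors $1/(y_a-y_b)$ and nonzero polynomials $Q_{ij}$), not on the monomial $y_1^{m_1}\cdots y_d^{m_d}$. Choosing $w$ of maximal length among those occurring with a nonzero coefficient, the coefficient of $w$ in the dependence is $c_w\sum_{\bm}c_{w,\bm,\bi}\,y_1^{m_1}\cdots y_d^{m_d}$, which forces those coefficients to vanish, and downward induction on length finishes the argument (equivalently, one may evaluate on sufficiently general polynomials rather than on the constant $1$). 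With this replacement, and with the verification of \eqref{R7} actually performed, your proposal becomes the proof given in \cite{KL1}.
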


There exists a homogeneous algebra anti-involution 
\begin{equation}\label{ECircledast}
\tau:R_\al\longrightarrow R_\al,\quad 1_\bi\mapsto 1_\bi,\quad y_r\mapsto y_r,\quad \psi_s\mapsto \psi_s  
\end{equation}
for all $\bi\in \words_\al,\ 1\leq r\leq d$, and $1\leq s<d$. If $M=\bigoplus_{d\in\Z}M_d$ is a finite dimensional 
graded $R_\al$-module, then the {\em graded dual}
$M^\circledast$ is the graded $R_\al$-module such that $(M^\circledast)_n:=\Hom_F(M_{-n},F)$, for all
$n\in\Z$, and the $R_\al$-action is given by $(xf)(m)=f(\tau(x)m)$, for all $f\in M^\circledast, m\in M, x\in
R_\al$.

For every irreducible module $L$, there is a unique choice of the grading shift so that we have $L^\circledast \cong L$ \cite[\S3.2]{KL1}. When speaking of irreducible $R_\al$-modules we often assume by fiat that the shift has been chosen in this way. 

For $\bi\in \words_\al$ and $M\in\mod{R_\al}$, the {\em $\bi$-word space} of $M$ is
$
M_\bi:=1_\bi M.
$
We have the word space decomposition: 
$$
M=\bigoplus_{\bi\in \words_\al}M_\bi.
$$
We say that $\bi$ is a {\em word of $M$} if $M_\bi\neq 0$.
Note from the relations that 
$
\psi_r M_\bi\subset M_{s_r \bi}.
$
Define the {\em  (graded formal) character} of $M$ as follows: 
\begin{equation*}
\CH M:=\sum_{\bi\in \words_\al}(\DIM M_\bi) \bi \in \A\words_\al.
\end{equation*}
The  character map $\CH: \mod{R_\al}\to \A\words_\al$ factors through to give an {\em injective} $ \A$-linear map 
$
\CH: [\mod{R_\al}]\to  \A\words_\al, 
$ see \cite[Theorem 3.17]{KL1}. 

Let $\bi=i_1\dots i_d$ and $\bj=i_{d+1}\dots i_{d+f}$ be two elements of $\words$. 
 Define the {\em quantum shuffle product}: 
 \begin{equation*}
\bi\circ\bj:=\sum q^{-e(\sigma)}i_{\sigma(1)}\dots i_{\sigma(d+f)} \in\A\words,
\end{equation*}
where the sum is over all $\sigma\in \Si_{d+f}$ such that $\sigma^{-1}(1)<\dots<\sigma^{-1}(d)$ and $\sigma^{-1}(d+1)<\dots<\sigma^{-1}(d+f)$, and 
$$
e(\sigma):=\sum_{k\leq d<m,\ \sigma^{-1}(k)>\sigma^{-1}(m)} \cc_{i_{\si(k)}, i_{\sigma(m)}}.
$$ 
This defines an $\A$-algebra structure on the $\A$-module $\A\words$, which consists of all finite formal $\A$-linear combinations of elements $\bi\in \words$.

In view of \cite[Lemma 2.20]{KL1}, we have
\begin{equation}\label{ECharShuffle}
\CH(M_1\circ\dots\circ M_n)=\CH(M_1)\circ\dots\circ \CH(M_n).
\end{equation}

\subsection{Crystal operators and extremal words}\label{SSCOES}

The theory of crystal operators has been developed in \cite{KL1}, \cite{LV} and \cite{KK} following ideas of Grojnowski \cite{Gr}, see also \cite{Kbook}. We review necessary facts for reader's convenience. 

Let $\al\in Q_+$ and $i\in I$. By Example~\ref{ExNH}, $R_{n\al_i}$ is a nil-Hecke algebra with unique irreducible module $L(\al_i^n)$ with $\DIM L(\al_i^n)=[n]^!_{i}$. 
We have functors 
\begin{align*}
&e_i: \mod{R_\al}\to\mod{R_{\al-\al_i}},\ M\mapsto \Res^{R_{\al-\al_i,\al_i}}_{R_{\al-\al_i}}\circ \Res_{\al-\al_i,\al_i}M,
\\
&f_i: \mod{R_\al}\to\mod{R_{\al+\al_i}},\ M\mapsto \Ind_{\al,\al_i}M\boxtimes L(\al_i).
\end{align*}
If $L\in\mod{R_\al}$ is irreducible, we define
$$
\tilde f_i L:=\head (f_i L),\quad \tilde e_i L:=\soc (e_i L).
$$
A fundamental fact is that $\tilde f_i L$ is again irreducible and $\tilde e_i L$ is irreducible or zero. 
We refer to $\tilde e_i$ and $\tilde f_i$ as the crystal operators. These are operators on $B\cup\{0\}$, where $B$ is the set of isomorphism classes of  irreducible $R_\al$-modules for all $\al\in Q_+$.  
Define
$
\wt:B\to P,\ [L]\mapsto -\al
%
$
if $L\in\mod{R_\al}$.

\begin{Theorem} \label{TLV} {\rm \cite{LV}} 
The set $B$ with the operators $\tilde e_i,\tilde f_i$ and the function $\wt$ is the crystal graph of the negative part $U_q(\n_-)$ of the quantized enveloping algebra of $\g$ of Lie type $\Car$.  
\end{Theorem}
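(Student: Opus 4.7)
The plan is to apply the Kashiwara--Saito characterization of the crystal $B(\infty)$: if a crystal has a distinguished highest-weight element of weight $0$, is generated by the $\tilde f_i$ from that element, and satisfies a short list of combinatorial axioms (weight compatibility, the $\vare_i, \varphi_i$ relations, and a Mackey-type compatibility between $\tilde e_i$ and $\tilde f_j$ for $i \neq j$), then it is canonically isomorphic to the crystal of $U_q(\n_-)$. I would verify these axioms for $(B, \tilde e_i, \tilde f_i, \wt)$ and then invoke uniqueness.

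First, I would assemble the basic properties of the operators $\tilde e_i, \tilde f_i$ on $B\cup\{0\}$, all of which are established in \cite{KL1}. Namely: $\tilde f_i L$ is always irreducible; $\tilde e_i L$ is irreducible or zero; $\tilde e_i \tilde f_i L \cong L$ always, and $\tilde f_i \tilde e_i L \cong L$ whenever $\tilde e_i L \neq 0$. These come out of the adjunction between $\Ind_{\al, \al_i}$ and $\Res_{\al, \al_i}$, together with the Mackey filtration on $\Res_{\be,\al_i}\Ind_{\al,\al_i}$ and Frobenius reciprocity applied to the only irreducible $R_{n\al_i}$-module $L(\al_i^n)$ from Example~\ref{ExNH}. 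Define $\vare_i([L]) := \max\{n \geq 0 \mid \tilde e_i^n L \neq 0\}$, and $\varphi_i([L]) := \vare_i([L]) + \langle \al_i^\vee, \wt([L])\rangle$. The weight compatibility $\wt(\tilde f_i L) = \wt(L) - \al_i$ is immediate from the definition, and the identities $\vare_i(\tilde e_i L) = \vare_i(L) - 1$, $\vare_i(\tilde f_i L) = \vare_i(L) + \delta$ (for a controllable $\delta \in \{0,1\}$) follow from the shuffle identity (\ref{ECharShuffle}) applied to the character.

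Next, I would verify the Kashiwara--Saito compatibility axiom between $\tilde e_i$ and $\tilde f_j$ for $i \neq j$. This is the core combinatorial content. One shows, via a Mackey argument, that $\tilde e_i f_j L$ contains $f_j \tilde e_i L$ as a subquotient in a controlled way, and analyzing heads and socles gives $\tilde e_i \tilde f_j L = \tilde f_j \tilde e_i L$ (up to the usual caveat when $\tilde e_i L = 0$). This uses the nil-Hecke identification of Example~\ref{ExNH} together with the filtration of $\Res_{\al, \al_j}\Ind_{\al-\al_i+\al_j,\al_i}(\tilde e_i L \boxtimes L(\al_i))$ coming from double coset representatives for $\Si_{d-1} \times \Si_1$ inside $\Si_d$.

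Finally, I would invoke Kashiwara--Saito uniqueness. The trivial module $\mathbf 1 \in \mod{R_0}$ is the unique element of weight $0$ in $B$, and by the Basis Theorem~\ref{TBasis} together with the character map being injective (see the discussion after (\ref{ECharShuffle})), every irreducible in $B$ is obtained from $\mathbf 1$ by a sequence of $\tilde f_i$'s. Hence there is a surjective morphism of crystals $B(\infty) \onto B$, which the axioms above force to be an isomorphism. The main obstacle is the second step: verifying the Mackey-type axiom requires a careful analysis of the socle of $\Res_{\al, \al_j}\Ind$ and tracking how the nil-Hecke idempotent $e_m$ of (\ref{idemp}) interacts with the branching, since the naive identity $\tilde e_i \tilde f_j = \tilde f_j \tilde e_i$ has to be refined in the boundary case where $\vare_i(\tilde f_j L) > \vare_i(L)$.
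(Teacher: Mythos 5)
The paper gives no proof of this theorem; it simply cites \cite{LV}, so you are on your own to supply one. Your high-level plan --- verify that $B$ is a highest-weight crystal generated by the $\tilde f_i$ from the trivial module, then invoke a Kashiwara--Saito style uniqueness statement --- is indeed the route taken in \cite{LV} (and before that by Grojnowski and in \cite{Kbook} for Hecke algebras). The bookkeeping you describe for $\wt$, $\vare_i$, $\varphi_i$ and for $\tilde e_i \tilde f_i L \cong L$, $\tilde f_i \tilde e_i L \cong L$ is correct and is exactly what gets established from the Mackey theorem and the nil-Hecke structure.

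However, the core step is misidentified. The Kashiwara--Saito characterization of $B(\infty)$ does \emph{not} rest on a commutation rule between $\tilde e_i$ and $\tilde f_j$ with $i \neq j$ within a single crystal structure; such an identity is not among the axioms, and it is not a clean unconditional fact about $B(\infty)$ in any case. What the criterion actually requires is a \emph{second}, ``dual'' or $*$-crystal structure $(\tilde e_i^*, \tilde f_i^*)$ on $B$, and then compatibility relations between the two structures: for $i \neq j$ one proves $\tilde f_i \tilde f_j^* = \tilde f_j^* \tilde f_i$, and for $i = j$ one proves a finer relation controlling $\vare_i$ against $\vare_i^*$ and $\langle \al_i^\vee, \wt \rangle$. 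In the KLR setting the unstarred operators come from induction and restriction on the \emph{right} ($\Ind_{\al,\al_i}$, $\Res_{\al-\al_i,\al_i}$), while the starred ones come from induction and restriction on the \emph{left} ($\Ind_{\al_i,\al}$, $\Res_{\al_i,\al-\al_i}$). Your proposal never introduces $\tilde e_i^*$ and $\tilde f_i^*$ at all, so the compatibility check you outline (``$\tilde e_i \tilde f_j L = \tilde f_j \tilde e_i L$ up to the usual caveat'') is aimed at the wrong relation. This is not a cosmetic issue: the $*$-operators and the $i=j$ compatibility between the two crystal structures carry the real content that forces $B \cong B(\infty)$, and without them the uniqueness argument cannot be run. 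To repair the proof, define the $*$-versions of the functors and crystal operators, re-derive their basic properties symmetrically (this is parallel to what you already wrote), and then verify the genuine Kashiwara--Saito bicrystal axioms, paying particular attention to the $i = j$ case.
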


For any $M\in\mod{R_\al}$, we define
$$
\eps_i(M):=\max\{k\geq 0\mid e_i^k(M)\neq 0\}.
$$
Then $\eps_i(M)$ is also the length of the longest `$i$-tail' of words of $M$, i.e. the maximum of $k\geq 0$ such that $j_{d-k+1}=\dots=j_d=i$ for some word $\bj=j_1\dots j_d$ of $M$.

\begin{Proposition} \label{PCryst1} {\rm \cite{LV,KL1}} 
Let $L$ be an irreducible $R_\al$-module, $i\in I$, and $\eps=\eps_i(L)$. 
\begin{enumerate}
\item[{\rm (i)}] $\tilde e_i\tilde f_iL\cong L$ and if $\tilde e_i L\neq 0$ then $\tilde f_i\tilde e_iL\cong L$; 
\item[{\rm (ii)}] $\eps=\max\{k\geq 0\mid \tilde e_i^k(L)\neq 0\}$;
\item[{\rm (iii)}] $\Res_{\al-\eps\al_i,\eps\al_i}L\cong \tilde e_i^\eps L\boxtimes L(\al_i^\eps)$.
\end{enumerate}
\end{Proposition}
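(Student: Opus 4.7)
The natural strategy is to establish (iii) first---the structural description of the restriction at the longest $i$-tail---and then deduce (ii) and (i) as corollaries combined with adjunction arguments. All three parts rest on the fact from Example~\ref{ExNH} that $R_{\eps\al_i}=\NH_\eps$ is a matrix algebra over its (polynomial) center, with unique graded irreducible $L(\al_i^\eps)$ and distinguished primitive idempotent $e_\eps$.

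For (iii), set $K := \Res_{\al-\eps\al_i,\,\eps\al_i} L$, regarded as a module over $R_{\al-\eps\al_i}\otimes\NH_\eps$. By the definition of $\eps$, no word of $L$ has an $i$-tail longer than $\eps$, so the idempotent $1_{\al-\eps\al_i,\eps\al_i}$ picks out precisely those word spaces of $L$ whose words end in $i^\eps$; in particular $K\neq 0$. Using the matrix structure of $\NH_\eps$, any $\NH_\eps$-module $V$ is recovered as $V\cong (Ve_\eps)\otimes_{e_\eps\NH_\eps e_\eps}\NH_\eps e_\eps$, so the plan is to identify $Ke_\eps$ with $\tilde e_i^\eps L$ and to observe that $\NH_\eps e_\eps$ realizes $L(\al_i^\eps)$ after the degree shift in Example~\ref{ExNH}. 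The identification $Ke_\eps\cong \tilde e_i^\eps L$ is carried out by induction on $\eps$, using transitivity of restriction and the definition $\tilde e_i = \soc\circ e_i$.

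Part (ii) follows directly: once (iii) exhibits $K$ as $\tilde e_i^\eps L\boxtimes L(\al_i^\eps)$ accounting for all words of $L$ ending in $i^\eps$, the module $\tilde e_i^\eps L$ can have no $i$-tails, so $e_i(\tilde e_i^\eps L)=0$ and therefore $\tilde e_i^{\eps+1}L=0$. Conversely, applying (iii) to each intermediate module shows that one step of $\tilde e_i$ strictly decreases the longest $i$-tail by exactly one, so $\tilde e_i^k L\neq 0$ for all $k\leq\eps$.

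For (i), let $L' := \tilde f_i L$. Adjunction yields a non-zero map $L\boxtimes L(\al_i)\to\Res_{\al,\al_i}L'$, so $\eps_i(L')\geq\eps+1$, while a Mackey filtration on $\Res_{\al,\al_i}\Ind_{\al,\al_i}(L\boxtimes L(\al_i))$ combined with (iii) applied to $L$ gives the reverse bound $\eps_i(L')\leq\eps+1$. Now apply (iii) to $L'$: $\Res_{\al,\,\al_i^{\eps+1}}L'\cong \tilde e_i^{\eps+1}L'\boxtimes L(\al_i^{\eps+1})$. Comparing this with the $\tilde e_i^\eps L\boxtimes L(\al_i^{\eps+1})$ quotient of $\Res_{\al,\,\al_i^{\eps+1}}\Ind_{\al,\al_i}(L\boxtimes L(\al_i))$ produced by Mackey lets one read off $\tilde e_i L'\cong L$. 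The dual statement $\tilde f_i\tilde e_i L\cong L$ when $\tilde e_iL\neq 0$ is proved analogously, starting from $L'' := \tilde e_iL$ and using Frobenius reciprocity in the opposite direction. The main obstacle throughout is the identification step inside (iii): although the matrix structure of $\NH_\eps$ immediately peels off the factor $L(\al_i^\eps)$ on the right, matching the residual $R_{\al-\eps\al_i}$-module $Ke_\eps$ with the iterated socle $\tilde e_i^\eps L$ requires a delicate induction that tracks how the socle of each $e_i$ behaves after the idempotent truncation.
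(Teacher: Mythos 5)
The paper itself gives no proof of Proposition~\ref{PCryst1}; it is stated as a citation to \cite{LV,KL1}, which are the primary sources. So there is no in-paper argument to compare against, and the proposal must be assessed on its own terms.

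Your outline---prove (iii) first, then read off (ii) as a consequence and deduce (i) by Frobenius reciprocity and a Mackey filtration---is the standard architecture, and the Morita reduction via $e_\eps\NH_\eps e_\eps$ is a sensible way to peel off the nil-Hecke factor. However, the one step you flag as ``a delicate induction'' is not a technical loose end: it \emph{is} the theorem. Showing that $K:=\Res_{\al-\eps\al_i,\eps\al_i}L$ is isomorphic to $\tilde e_i^\eps L\boxtimes L(\al_i^\eps)$ requires two nontrivial facts that the Morita picture does not deliver by itself: (a) the central generator of $e_\eps\NH_\eps e_\eps$ acts by zero on $Ke_\eps$ (equivalently, the $y$'s in the last $\eps$ strands act by zero, not merely nilpotently), and (b) $Ke_\eps$ is \emph{irreducible} as an $R_{\al-\eps\al_i}$-module. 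Without (a) you only get that $Ke_\eps$ is some finite-dimensional graded $F[z]$-module, which need not be a direct sum of copies of the trivial module; and without (b) you cannot identify $Ke_\eps$ with the iterated socle $\tilde e_i^\eps L=\soc(e_i\soc(e_i\cdots))$---the socle could a priori sit strictly inside $Ke_\eps$. Both (a) and (b) are established in \cite{LV} (following Grojnowski's strategy and its treatment in \cite{Kbook}) by a genuine argument combining the shuffle/Mackey analysis with adjointness, and they are what make $\tilde f_iL$ have irreducible head and $\tilde e_iL$ have irreducible socle in the first place. Your proof of (i) silently invokes exactly this irreducibility-of-socle fact (``read off $\tilde e_iL'\cong L$''), so it is circular until (iii) is fully proved.

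Concretely, the gap to close is the structure lemma: if $L$ is irreducible with $\eps_i(L)=\eps$, then $\Res_{\al-\eps\al_i,\eps\al_i}L$ is an \emph{irreducible} $R_{\al-\eps\al_i,\eps\al_i}$-module. Once that is in hand, it must decompose as $N\boxtimes L(\al_i^\eps)$ with $N$ irreducible (since $L(\al_i^\eps)$ is the unique graded irreducible over $\NH_\eps$), and the identification $N\cong\tilde e_i^\eps L$ then follows by an easy adjointness induction along the lines you sketch. I would also correct a small slip: $\NH_\eps e_\eps$ is the indecomposable projective $P(\al_i^\eps)$, not $L(\al_i^\eps)$; what you want is that, once $z$ is known to act by zero on $Ke_\eps$, the Morita inverse gives $Ke_\eps\otimes_F\bigl(\NH_\eps e_\eps/z\NH_\eps e_\eps\bigr)$, and it is this quotient that realizes $L(\al_i^\eps)$ after the shift of Example~\ref{ExNH}.
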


Let $i\in I$. Consider the map  
$
\theta_i^*:\words\to\words
$ such that for $\bj=j_1\dots j_d \in\words$, we have
\begin{equation}\label{ETheta*}
\theta_i^*(\bj)=
\left\{
\begin{array}{ll}
j_1,\dots,j_{d-1} &\hbox{if $j_d=i$;}\\
0 &\hbox{otherwise.}
\end{array}
\right.
\end{equation}
We extend $\theta_i^*$ by linearity to a map $\theta_i^*:\A\words\to\A\words$. 

Let $x$ be an element of $\A\words$. Define 
$$\eps_i(x):=\max\{k\geq 0\mid (\theta_i^*)^{k}(x)\neq 0\}.$$ 
A word $i_1^{a_1}\dots i_b^{a_b}\in\words$, with  $a_1,\dots,a_b\in\Z_{\geq 0}$, is called {\em extremal} for $x$ if 
$$a_b=\eps_{i_b}(x),\ a_{b-1}=\eps_{i_{b-1}}((\theta_{i_b}^*)^{a_b}(x))\ ,\ \dots\ ,\ a_1=\eps_{i_1}\big((\theta_{i_2}^*)^{a_2}\dots (\theta_{i_b}^*)^{a_b}(x)\big).$$ 
A word $i_1^{a_1}\dots i_b^{a_b}\in\words_\al$ is called {\em extremal} for $M\in\mod{R_\al}$ if it is an extremal word for $\CH M\in\A\words$, in other words, if  
$$a_b=\eps_{i_b}(M),\ a_{b-1}=\eps_{i_{b-1}}(\tilde e_{i_b}^{a_b}M)\ ,\ \dots\ ,\  a_1=\eps_{i_1}(\tilde e_{i_2}^{a_2}\dots\tilde e_{i_b}^{a_b}M).
$$   

The following useful result, which is a version of \cite[Corollary 2.17]{BKdurham}, describes the multiplicities of extremal word spaces in irreducible modules. We denote by $1_F$ the trivial module $F$ over the trivial algebra $R_0\cong F$.

\begin{Lemma} \label{LExtrMult}
Let $L$ be an irreducible $R_\al$-module, and $\bi=i_1^{a_1}\dots i_b^{a_b}\in\words_\al$ be an extremal word for $L$. Then 
$\DIM L_\bi=[a_1]^!_{i_1}\dots [a_b]^!_{i_b}$, and 
$$L\cong \tilde f_{i_b}^{a_b} \tilde f_{i_{b-1}}^{a_{b-1}}\dots\tilde f_{i_1}^{a_1}1_F.$$  Moreover, $\bi$ is not an extremal word for any irreducible module $L'\not\cong L$. 
\end{Lemma}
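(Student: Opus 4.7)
The plan is to induct on $b$ (equivalently, on $\height(\alpha)$), peeling off the final block $i_b^{a_b}$ at each step. The base case $b=0$ is immediate: the empty word is the unique word of the trivial module $1_F$ over $R_0\cong F$, and the empty product of the $[a_j]_{i_j}^!$ equals $1$.

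For the inductive step, the key tool is Proposition~\ref{PCryst1}(iii). Since $\bi$ is extremal, $\eps_{i_b}(L)=a_b$, so
\[
\Res_{\alpha-a_b\alpha_{i_b},\,a_b\alpha_{i_b}}L\;\cong\;\tilde e_{i_b}^{a_b}L\boxtimes L(\alpha_{i_b}^{a_b}).
\]
Writing $\bi=\bi'\cdot i_b^{a_b}$ with $\bi'=i_1^{a_1}\dots i_{b-1}^{a_{b-1}}$, the idempotent $1_\bi$ factors as $1_{\bi'}\otimes 1_{i_b^{a_b}}$ inside $R_{\alpha-a_b\alpha_{i_b}}\otimes R_{a_b\alpha_{i_b}}\hookrightarrow R_\alpha$, so
\[
\DIM L_\bi \;=\; \DIM(\tilde e_{i_b}^{a_b}L)_{\bi'}\cdot \DIM L(\alpha_{i_b}^{a_b})_{i_b^{a_b}}
\;=\; [a_b]^!_{i_b}\cdot \DIM(\tilde e_{i_b}^{a_b}L)_{\bi'},
\]
using that $L(\alpha_{i_b}^{a_b})$ is supported on the single word $i_b^{a_b}$ with graded dimension $[a_b]^!_{i_b}$ (Example~\ref{ExNH}). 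The definition of an extremal word directly says that $\bi'$ is extremal for the irreducible module $\tilde e_{i_b}^{a_b}L$, so induction gives $\DIM(\tilde e_{i_b}^{a_b}L)_{\bi'}=[a_1]^!_{i_1}\dots [a_{b-1}]^!_{i_{b-1}}$, completing the character formula.

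For the isomorphism, induction gives $\tilde e_{i_b}^{a_b}L\cong \tilde f_{i_{b-1}}^{a_{b-1}}\cdots\tilde f_{i_1}^{a_1}1_F$, and applying $\tilde f_{i_b}^{a_b}$ to both sides together with the identity $\tilde f_{i_b}^{a_b}\tilde e_{i_b}^{a_b}L\cong L$ (which follows by $a_b$ applications of Proposition~\ref{PCryst1}(i), each of which is legitimate since $\tilde e_{i_b}^{k}L\neq 0$ for $k\leq a_b$) yields the stated expression. Uniqueness in (iii) is then a one-line consequence: if $\bi$ were also extremal for an irreducible $L'$, the same construction forces $L'\cong \tilde f_{i_b}^{a_b}\cdots\tilde f_{i_1}^{a_1}1_F\cong L$.

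The main obstacle I expect is keeping the bookkeeping of the restriction clean, in particular justifying that the word space $L_\bi\subset L$ is identified with the $(\bi',i_b^{a_b})$-weight space of the restriction and nothing extra; this requires that no word of $L$ has an $i_b$-tail longer than $a_b=\eps_{i_b}(L)$, which is exactly the content of $\eps_i(L)$ equaling the length of the longest $i$-tail (recorded just before Proposition~\ref{PCryst1}). Everything else is a direct application of the crystal structure on $B$ from Theorem~\ref{TLV} and the Mackey-type decomposition in Proposition~\ref{PCryst1}(iii).
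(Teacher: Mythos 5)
Your argument is correct and is exactly the route the paper takes: its proof simply says "follows easily from Proposition~\ref{PCryst1}" (citing the analogous statement in \cite{BKdurham}), and your induction on $b$, peeling off the block $i_b^{a_b}$ via Proposition~\ref{PCryst1}(ii),(iii) and reassembling with $\tilde f_{i_b}^{a_b}\tilde e_{i_b}^{a_b}L\cong L$ from part (i), is precisely the elaboration intended. The bookkeeping point you worry about is harmless, since $1_\bi$ is a summand of the parabolic idempotent $1_{\al-a_b\al_{i_b},\,a_b\al_{i_b}}$, so $L_\bi\cong(\tilde e_{i_b}^{a_b}L)_{\bi'}\otimes L(\al_{i_b}^{a_b})_{i_b^{a_b}}$ directly.
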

\begin{proof}
Follows easily from Proposition~\ref{PCryst1}, cf. \cite[Theorem 2.16]{BKdurham}. 
\end{proof}

\begin{Corollary} \label{CExtrNew}
Let $M\in\mod{R_\al}$, and $\bi=i_1^{a_1}\dots i_b^{a_b}\in\words_\al$ be an extremal word for $M$. Then we can write $\DIM M_\bi=m[a_1]^!_{i_1}\dots [a_b]^!_{i_b}$ for some $m\in\A$. Moreover, if $L\cong \tilde f_{i_b}^{a_b} \tilde f_{i_{b-1}}^{a_{b-1}}\dots\tilde f_{i_1}^{a_1}1_F$ and $L^\circledast\cong L$, then we have $[M:L]_q=m$. 
\end{Corollary}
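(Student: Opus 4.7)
The plan is to expand $\CH M$ in terms of characters of irreducibles and show that the only composition factor contributing to the $\bi$-word space is $L$. Since the character map $[\mod{R_\al}]\to\A\words_\al$ is injective, the characters of the (graded, self-dually normalized) irreducibles are $\A$-linearly independent, so we may write $\CH M = \sum_{L'} [M:L']_q\,\CH L'$, whence $\DIM M_\bi = \sum_{L'}[M:L']_q\,\DIM L'_\bi$. If I can show that $L'_\bi=0$ for every composition factor $L'\not\cong L$, then Lemma~\ref{LExtrMult} supplies $\DIM L_\bi=[a_1]^!_{i_1}\cdots[a_b]^!_{i_b}$, and both parts of the corollary follow with $m=[M:L]_q\in\A$.

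The heart of the argument is therefore to prove: if $L'$ is an irreducible composition factor of $M$ with $L'_\bi\neq 0$, then $\bi$ is extremal for $L'$, whence $L'\cong L$ by the last sentence of Lemma~\ref{LExtrMult}. I would do this by induction on $b$. The key observation is that in the expansion $\CH M=\sum_{L''}[M:L'']_q\CH L''$ each coefficient lies in $\Z_{\geq 0}[q,q^{-1}]$ and each $\CH L''$ has coefficients in $\Z_{\geq 0}[q,q^{-1}]$, while $\theta_i^*$ preserves non-negativity; hence $(\theta_i^*)^k(\CH M)=0$ forces $(\theta_i^*)^k(\CH L')=0$, giving $\eps_{i_b}(L')\leq \eps_{i_b}(M)=a_b$. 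Combined with $\eps_{i_b}(L')\geq a_b$ (which follows from $L'_\bi\neq 0$, since $\bi$ has an $i_b$-tail of length $a_b$), this yields $\eps_{i_b}(L')=a_b$. Proposition~\ref{PCryst1}(iii) then supplies the clean identity
$$(\theta_{i_b}^*)^{a_b}\CH L' \;=\; [a_b]^!_{i_b}\,\CH(\tilde e_{i_b}^{a_b} L'),$$
and the parallel identity for each composition factor of $M$ expresses $(\theta_{i_b}^*)^{a_b}(\CH M)$ as a non-negative $\A$-combination of such characters. In particular, $\tilde e_{i_b}^{a_b}L'$ is an irreducible whose character is a non-negative summand of $(\theta_{i_b}^*)^{a_b}(\CH M)/[a_b]^!_{i_b}$, and the extremality of $i_1^{a_1}\dots i_{b-1}^{a_{b-1}}$ for the latter (built into the extremality of $\bi$ for $M$) lets the induction close on $b-1$.

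The main obstacle is reconciling the module-theoretic $\tilde e_i$ (defined only on irreducibles) with the character-level $\theta_i^*$ (which is what governs extremality for a general $M$). Proposition~\ref{PCryst1}(iii) handles the translation via $(\theta_i^*)^{\eps_i(L')}\CH L'=[\eps_i(L')]^!_i\CH(\tilde e_i^{\eps_i(L')}L')$, but one must verify at each inductive stage that the residual ``ambient'' character still decomposes as a non-negative $\A$-combination of irreducible characters whose extremality condition matches the reduced tail. Once this bookkeeping is in place, Lemma~\ref{LExtrMult} finishes the argument immediately.
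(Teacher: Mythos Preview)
Your proposal is correct and is essentially the argument the paper has in mind: the paper's proof consists solely of ``Apply Lemma~\ref{LExtrMult}, cf.\ \cite[Corollary 2.17]{BKdurham}'', and what you have written is precisely the unpacking of that citation. The core idea---use non-negativity of $[M:L']_q$ and of the coefficients of $\CH L'$ to force $\eps_{i_b}(L')\le a_b$, combine with the obvious bound $\eps_{i_b}(L')\ge a_b$ coming from $L'_\bi\neq 0$, then invoke Proposition~\ref{PCryst1}(iii) to strip the $i_b$-tail and induct---is exactly the intended route, and your identification of the bookkeeping issue (that $(\theta_{i_b}^*)^{a_b}\CH M$ must again be a non-negative $\A$-combination of irreducible characters) is the one nontrivial point, which you handle correctly since the only surviving summands come from those $L''$ with $\eps_{i_b}(L'')=a_b$, each contributing $[M:L'']_q\,[a_b]^!_{i_b}\,\CH(\tilde e_{i_b}^{a_b}L'')$.
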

\begin{proof}
Apply Lemma~\ref{LExtrMult}, cf. \cite[Corollary 2.17]{BKdurham}. 
\end{proof}

Now we establish some useful `multiplicity-one results'. The first one shows that in every irreducible module there is a word space with a one dimensional graded component:

\begin{Lemma} \label{LMultOneWeight}
Let $L$ be an irreducible $R_\al$-module, and $\bi=i_1^{a_1}\dots i_b^{a_b}\in\words_\al$ be an extremal word for $L$. Set 
$N:=\sum_{m=1}^b a_m(a_m-1)(\al_{i_m},\al_{i_m})/4.$ 
Then $\dim 1_\bi L_N=\dim 1_\bi L_{-N}=1$. 
\end{Lemma}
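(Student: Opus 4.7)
\bigskip

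The plan is to read off the two claimed dimensions directly from the graded dimension of $1_\bi L$ provided by Lemma~\ref{LExtrMult}. By that lemma,
\[
\DIM 1_\bi L=\DIM L_\bi=[a_1]^!_{i_1}\,[a_2]^!_{i_2}\cdots[a_b]^!_{i_b},
\]
so the whole argument reduces to locating the maximal and minimal powers of $q$ appearing on the right-hand side, together with their coefficients.

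First I would recall that $[n]_{q_i}=q_i^{n-1}+q_i^{n-3}+\dots+q_i^{-(n-1)}$ is a palindromic polynomial in $q_i$ whose top and bottom coefficients both equal $1$. Taking the product over $m=1,\dots,n$ shows that $[n]_{q_i}^!$ is again palindromic, with top monomial $q_i^{n(n-1)/2}$ and bottom monomial $q_i^{-n(n-1)/2}$, each with coefficient $1$. Substituting $q_i=q^{d_{i}}$ and remembering $d_i=(\al_i,\al_i)/2$, the top (resp.\ bottom) monomial of $[n]^!_i$ is $q^{(\al_i,\al_i)n(n-1)/4}$ (resp.\ $q^{-(\al_i,\al_i)n(n-1)/4}$) with coefficient $1$.

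Multiplying these $b$ factors, the top monomial of $\DIM L_\bi$ is
\[
q^{\sum_{m=1}^{b}(\al_{i_m},\al_{i_m})a_m(a_m-1)/4}=q^{N},
\]
with coefficient equal to the product of the individual top coefficients, namely $1$; symmetrically, the bottom monomial is $q^{-N}$ with coefficient $1$. (The symmetry is also forced by the $q\leftrightarrow q^{-1}$ invariance of $\DIM L_\bi$ coming from the convention $L^\circledast\cong L$.) Since the coefficient of $q^n$ in $\DIM 1_\bi L$ is exactly $\dim 1_\bi L_n$, we conclude $\dim 1_\bi L_N=\dim 1_\bi L_{-N}=1$.

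There is essentially no obstacle here: once Lemma~\ref{LExtrMult} is granted, the statement is a purely combinatorial observation about the extreme terms of a product of quantum factorials. The only thing to be careful about is the normalization $[n]_i=[n]_{q_i}$ with $q_i=q^{d_{\al_i}}$, so that $d_{\al_i}n(n-1)/2$ equals $(\al_{i},\al_{i})n(n-1)/4$, matching the $N$ appearing in the statement.
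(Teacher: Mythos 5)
Your proof is correct and takes essentially the same route as the paper: invoke Lemma~\ref{LExtrMult} to get $\DIM 1_\bi L=[a_1]^!_{i_1}\cdots[a_b]^!_{i_b}$, then read off the extreme coefficients of the product of quantum factorials. The paper simply asserts that the conclusion "follows immediately" from this formula; you have supplied the short palindromicity/degree computation that makes the immediacy explicit.
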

\begin{proof}
This follows immediately from the equality $\DIM 1_\bi L=[a_1]^!_{i_1}\dots [a_b]^!_{i_b}$, which comes from Lemma~\ref{LExtrMult}. 
\end{proof}

The following result shows that any induction product of irreducible modules always has a multiplicity one composition factor.

\begin{Proposition} \label{PProdIrrMult1}
Suppose that $n\in\Z_{>0}$ and for $r=1,\dots,n$, we have $\al^{(r)}\in Q_+$, an irreducible $R_{\al^{(r)}}$-module $L^{(r)}$, and  $\bi^{(r)}:=i_1^{a^{(r)}_1}\dots i_k^{a^{(r)}_k}\in\words_{\al^{(r)}}$ is an extremal word for $L^{(r)}$. Denote $a_t:=\sum_{r=1}^na^{(r)}_t$ for all $1\leq t\leq k$. 
Then 
$
\bj:=i_1^{a_1}\dots i_k^{a_k}
$
is an extremal word for $L^{(1)}\circ \dots \circ L^{(n)}$, 
and the graded multiplicity of the $\circledast$-self-dual irreducible module 
$$N\simeq\tilde f_{i_k}^{a_k} \tilde f_{i_{k-1}}^{a_{k-1}}\dots\tilde f_{i_1}^{a_1}1_F$$ 
in $L^{(1)}\circ \dots \circ L^{(n)}$ is $q^{m}$,
where
$$m:=-\textstyle\sum_{1\leq t<u\leq n}\left(\sum_{1\leq r< s\leq k}a_r^{(u)}a_s^{(t)}(\al_{i_r},\al_{i_s})+\frac{1}{2}\sum_{r=1}^k a_r^{(t)}a_r^{(u)}(\al_{i_r},\al_{i_r})\right). 
$$
In particular, the ungraded multiplicity of $N$ in $L^{(1)}\circ \dots \circ L^{(n)}$ is one.  
\end{Proposition}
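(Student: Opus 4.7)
Plan: apply Corollary~\ref{CExtrNew} to $M := L^{(1)}\circ\cdots\circ L^{(n)}$ and the word $\bj$. Granted that $\bj$ is extremal for $M$ and that $\DIM M_\bj = q^m\,[a_1]^!_{i_1}\cdots[a_k]^!_{i_k}$ with $m$ as stated, the Corollary yields the graded multiplicity $q^m$ of $N$ in $M$, and hence ungraded multiplicity one. Both required facts reduce to a character-level computation via the shuffle formula~\eqref{ECharShuffle}: $\CH M = \CH L^{(1)}\circ\cdots\circ\CH L^{(n)}$ in $\A\words$.

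For the extremality of $\bj$, the key observation is that the length of the longest $i$-tail of any word appearing in a quantum shuffle $x_1\circ\cdots\circ x_n$ is at most $\sum_r\eps_i(x_r)$, with equality achieved by shuffling the individual $i$-tails to the end of the result. Applied with $i=i_k$ to $\CH L^{(r)}$, this gives $\eps_{i_k}(\CH M) = \sum_r a_k^{(r)} = a_k$. The iterated conditions $a_s = \eps_{i_s}\bigl((\theta_{i_{s+1}}^*)^{a_{s+1}}\cdots(\theta_{i_k}^*)^{a_k}\,\CH M\bigr)$ for $s < k$ follow by expressing $(\theta_i^*)^a$ of a shuffle product as an $\A$-linear combination of shuffles of $(\theta_i^*)$-powers of the factors (a Leibniz-type formula), and applying the same observation together with the extremality of each $\bi^{(r)}$ for $L^{(r)}$ (Lemma~\ref{LExtrMult}).

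For the dimension of $M_\bj$, I first identify the words $\bk^{(r)}$ of $L^{(r)}$ which can be shuffle-combined to $\bj = i_1^{a_1}\cdots i_k^{a_k}$. Because each shuffle preserves the internal order of each $\bk^{(r)}$ and $\bj$'s letters are clustered into blocks, each $\bk^{(r)}$ itself must be arranged as $i_1^{b_1^{(r)}}\cdots i_k^{b_k^{(r)}}$, and the content condition $|\bk^{(r)}|=\alpha^{(r)}$ pins $\bk^{(r)}=\bi^{(r)}$. Hence only the extremal word spaces contribute, and by Lemma~\ref{LExtrMult}
\[
\DIM M_\bj \;=\; W\cdot \prod_{r=1}^n \prod_{s=1}^k [a_s^{(r)}]^!_{i_s},
\]
where $W$ is the coefficient of $\bj$ in the shuffle $\bi^{(1)}\circ\cdots\circ\bi^{(n)}$. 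The weight $W$ splits into: (i) within each block $s$, a quantum multinomial in the parameter $q_{i_s}$ counting interleavings of the $a_s^{(r)}$ letters $i_s$ across words; and (ii) a cross-block monomial contribution from the fixed inversions in which, for each $t<u$ and each $r<s$, the $a_r^{(u)}\,a_s^{(t)}$ pairs consisting of a word-$u$ letter in the earlier block $r$ and a word-$t$ letter in the later block $s$ are all inverted, producing the factor $q^{-a_r^{(u)}a_s^{(t)}(\alpha_{i_r},\alpha_{i_s})}$.

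The main obstacle is the final bookkeeping. Using the standard identity expressing each quantum multinomial as the ratio of $[\cdot]^!_{i_s}$ factorials times a normalising $q$-power, the within-block factors absorb the product $\prod_{r,s}[a_s^{(r)}]^!_{i_s}$ into $\prod_s [a_s]^!_{i_s}$ at the cost of residual $q$-powers totalling $\prod_{s,\,t<u} q^{-\frac12(\alpha_{i_s},\alpha_{i_s})a_s^{(t)}a_s^{(u)}}$, which is precisely the self-block half-factor in the statement. Combining this residue with the cross-block inversion weights from (ii) reproduces the displayed exponent $m$, and an appeal to Corollary~\ref{CExtrNew} completes the argument.
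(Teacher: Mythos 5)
Your proposal follows the paper's proof essentially step for step: pass to characters via the shuffle formula~\eqref{ECharShuffle}, identify $\bi^{(1)}\otimes\cdots\otimes\bi^{(n)}$ as the unique contributing tensor of words, evaluate the shuffle coefficient of $\bj$ (the paper calls this ``an elementary computation,'' and your within-block quantum-multinomial plus cross-block inversion bookkeeping carries it out correctly), and invoke Corollary~\ref{CExtrNew}. The only wrinkle is that the content condition $|\bk^{(r)}|=\alpha^{(r)}$ pins down $\bk^{(r)}=\bi^{(r)}$ only when $i_1,\dots,i_k$ are pairwise distinct; in general the rigorous justification of this step is the iterated $i$-tail argument you already used for extremality (since $\bj$ ends in $i_k^{a_k}$ and $\eps_{i_k}(L^{(r)})=a_k^{(r)}$ with $\sum_r a_k^{(r)}=a_k$, each $\bk^{(r)}$ ends in $i_k^{a_k^{(r)}}$; pass to $\tilde e_{i_k}^{a_k^{(r)}}L^{(r)}$ and recurse), which is exactly the content behind the paper's ``easy to see.''
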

\begin{proof}
By Lemma~\ref{LExtrMult}, the multiplicity of $\bi^{(r)}$ in $\CH L^{(r)}$ is  $[a^{(r)}_1]_{i_1}^{!}\dots [a^{(r)}_k]_{i_k}^!$. By (\ref{ECharShuffle}), we have 
$$\CH(L^{(1)}\circ \dots \circ L^{(n)})=\CH(L^{(1)})\circ \dots \circ \CH(L^{(n)}).$$ 
It is easy to see that the word $\bj$  
is an extremal word for $L^{(1)}\circ \dots \circ L^{(n)}$, and that $\bj$ can be obtained only from the shuffle product $\bi^{(1)}\circ \dots\circ \bi^{(n)}$. An elementary  computation shows that $\bj$ appears in $\bi^{(1)}\circ \dots\circ \bi^{(n)}$ with multiplicity  $q^{m}[a_1]_{i_1}^!\dots [a_k]_{i_k}^!$. 
Now apply Corollary~\ref{CExtrNew}. 
\end{proof}


\begin{Corollary} \label{CPowerIrr}
Let $L$ be an irreducible $R_\al$-module and $n\in\Z_{>0}$. Then there is an irreducible $R_{n\al}$-module $N$ which appears in $L^{\circ n}$ with graded multiplicity $q_\al^{-n(n-1)/2}$. In particular, the ungraded multiplicity of $N$ is one.  
\end{Corollary}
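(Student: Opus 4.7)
The plan is to apply Proposition~\ref{PProdIrrMult1} directly with $n$ equal copies of the same irreducible module. Let $\bi=i_1^{b_1}\dots i_k^{b_k}\in\words_\al$ be an extremal word for $L$, which exists by Lemma~\ref{LExtrMult}. Then setting $L^{(1)}=\dots=L^{(n)}=L$ and $\bi^{(1)}=\dots=\bi^{(n)}=\bi$ in the proposition, all the exponents $a_r^{(t)}$ coincide with $b_r$, so the proposition produces a $\circledast$-self-dual irreducible module $N\simeq \tilde f_{i_k}^{nb_k}\cdots \tilde f_{i_1}^{nb_1}1_F$ appearing in $L^{\circ n}$ with graded multiplicity $q^m$, where
\begin{equation*}
m = -\sum_{1\leq t<u\leq n}\left(\sum_{1\leq r<s\leq k} b_r b_s(\al_{i_r},\al_{i_s}) + \frac{1}{2}\sum_{r=1}^k b_r^2(\al_{i_r},\al_{i_r})\right).
\end{equation*}

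The only remaining step is a short bilinear-form calculation. Since $\bi$ is a word of $L\in\mod{R_\al}$, its content is $\al=\sum_r b_r\al_{i_r}$, and expanding $(\al,\al)$ gives
\begin{equation*}
d_\al = \tfrac{1}{2}(\al,\al) = \sum_{1\leq r<s\leq k} b_r b_s(\al_{i_r},\al_{i_s}) + \tfrac{1}{2}\sum_{r=1}^k b_r^2(\al_{i_r},\al_{i_r}),
\end{equation*}
which is exactly the bracketed expression above. Since the number of pairs $(t,u)$ with $1\leq t<u\leq n$ equals $\binom{n}{2}$, this yields $m=-\tfrac{n(n-1)}{2}d_\al$, so $q^m=q_\al^{-n(n-1)/2}$, as required. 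The ungraded multiplicity statement is then immediate by specializing $q=1$.

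The only thing that needs care is matching the two summands of the formula for $m$ with the off-diagonal and diagonal contributions to $(\al,\al)/2$; everything else is bookkeeping built on top of Proposition~\ref{PProdIrrMult1}.
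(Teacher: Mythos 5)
Your proof is correct and follows essentially the same route as the paper, which simply invokes Proposition~\ref{PProdIrrMult1} with $L^{(1)}=\dots=L^{(n)}=L$. The only addition is that you carry out explicitly the bilinear-form bookkeeping identifying the bracketed sum with $d_\al=(\al,\al)/2$, a computation the paper leaves implicit.
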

\begin{proof}
Apply Proposition~\ref{PProdIrrMult1} with $L^{(1)}=\dots= L^{(n)}=L$. 
\end{proof}

\subsection{Khovanov-Lauda-Rouquier categorification}\label{SSKLRCat}
We recall the Khovanov-Lauda-Rouquier categorification of the quantized enveloping algebra $\f$ obtained in \cite{KL1,KL2,R}, and briefly mentioned in \S\ref{SMotiv}. 
Let $\f_\A\subset \f$ be the $\A$-form of the Lusztig's quantum group $\f$ corresponding to the Cartan matrix $\Car$. This $\A$-algebra is generated by the divided powers $\theta_i^{(n)}=\theta_i^n /[n]_{i}^!$ of the standard generators. 
The algebra $\f_\A$ has a $Q_+$-grading $\f_\A=\oplus_{\al\in Q_+}(\f_\A)_\al$ so that each $\theta_i$ is in degree $\al_i$. 

There is a bilinear form $(\cdot,\cdot)$ on $\f$ defined in \cite[$\S$1.2.5, $\S$33.1.2]{Lubook}. Let $\f_\A^*= \left\{y \in \f\:\big|\:(x,y)\in\A\text{ for all }x \in \f_\A\right\}$. Let $(\theta_i^*)^{(n)}$ be the map dual to the map $\mathbf{f}_\A\to \mathbf{f}_\A,\ x\mapsto x\theta_i^{(n)}$. Finally, there is a coproduct $r$  on $\f$ such that $\f$ is a twisted unital and counital bialgebra. Moreover, for all $x,y,z\in \f$ we have 
\begin{equation}\label{EDefPropForm}
(xy,z)=(x\otimes y,r(z)).
\end{equation}

The field $\Q(q)$ possesses a unique automorphism called the
{\em bar-involution} such that $\overline{q} = q^{-1}$.
With respect to this involution, 
let $\barinv:\f \rightarrow \f$
be the anti-linear algebra automorphism
such that $\barinv(\theta_i) = \theta_i$ for all $i \in I$.
Also let 
$\barinv^*:\f \rightarrow \f$
be the adjoint anti-linear
map to $\barinv$ 
with respect to Lusztig's form, so 
$(x, \barinv^*(y)) = \overline{(\barinv(x), y)}$
for all $x, y \in \f$.
The maps $\barinv$ and $\barinv^*$ preserve $\f_\A$ and $\f_\A^*$, respectively. 

Let $[\mod{R}] = \bigoplus_{\alpha \in Q_+} [\mod{R_\alpha}]$
denote the Grothendieck ring, which is an $\A$-algebra via induction product. Similarly the functors of restriction define a coproduct $r$  on $[\mod{R}]$. This product and coproduct make $[\mod{R}]$ into a twisted unital and counital bialgebra \cite[Proposition 3.2]{KL1}.

 In \cite{KL1,KL2} an explicit $\A$-bialgebra isomorphisms
$
\ga^*:[\mod{R}] \stackrel{\sim}{\rightarrow}
\f_\A^*
$
is constructed (this has already been mentioned in (\ref{EGamma*})). In fact \cite{KL1} establishes a dual isomorphism $\ga$,  see \cite[Theorem 4.4]{KR2} for all details on this. Moreover, $\ga^*([V^\circledast])=\barinv^*(\ga^*([V]))$, and we have a commutative triangle
\begin{equation}\label{ETriangle}
\begin{pb-diagram}
\node{}\node{\A\words}
\node{} \\
\node{[\mod{R}]} \arrow[2]{e,t}{\ga^*}
\arrow{ne,t}{\CH}
\node{}\node{\mathbf{f}_\A^*}
\arrow{nw,t}{\iota}
\end{pb-diagram},
\end{equation}
where the map $\iota$ is defined as follows: 
$$\iota(x)=\sum_{\bi=i_1 \dots i_d\in\words}(x,\theta_{i_1}\dots\theta_{i_d})\bi\qquad(x\in \mathbf{f}_\A^*).$$ 

\begin{Lemma} \label{LExtrDCB}
Let $v^*$ be a dual canonical basis element of $\f$, and $\bi=i_1^{a_1}\dots i_k^{a_k}$ be an extremal word of $\iota(v^*)$ in the sense of \S\ref{SSCOES}. Then $\bi$ appears in $\iota(v^*)$  with coefficient $[a_1]_{i_1}^!\dots[a_k]_{i_k}^!$. 
\end{Lemma}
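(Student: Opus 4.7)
The plan is to transfer the statement from the quantum side to the module side via the Khovanov-Lauda-Rouquier categorification, and then invoke Lemma~\ref{LExtrMult} directly. The key observation is that the extremal-word notion was designed in \S\ref{SSCOES} to make sense both for elements of $\A\words$ and for modules in such a way that the two notions coincide under the character map: $\bi$ is extremal for $M\in\mod{R_\al}$ precisely when $\bi$ is extremal for $\CH M\in\A\words$.

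First I would use the deep result from \cite{R2,VV} (whose hypotheses --- $\Car$ symmetric, $\cha F=0$ --- are implicit here since we are discussing the dual canonical basis) that $\ga^*$ sends the basis of $[\mod{R}]$ coming from isomorphism classes of $\circledast$-self-dual irreducibles to the dual canonical basis of $\f$. Hence there is a unique $\circledast$-self-dual irreducible $R_\al$-module $L$ such that $\ga^*([L]) = v^*$, where $\al$ is the grading of $v^*$.

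Next I would read off the commutative triangle (\ref{ETriangle}): it gives
\[
\iota(v^*) \;=\; \iota(\ga^*([L])) \;=\; \CH L \;=\; \sum_{\bj \in \words_\al} (\DIM L_\bj)\,\bj.
\]
In particular, the coefficient of any word $\bj$ in $\iota(v^*)$ equals $\DIM L_\bj$, and the definition of an extremal word for the element $\iota(v^*)\in\A\words$ matches the definition of an extremal word for $L$. Therefore $\bi = i_1^{a_1}\dots i_k^{a_k}$ is an extremal word for the irreducible module $L$.

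Finally, Lemma~\ref{LExtrMult} applies to $L$ and $\bi$ and yields $\DIM L_\bi = [a_1]_{i_1}^!\dots[a_k]_{i_k}^!$, which by the display above is exactly the coefficient of $\bi$ in $\iota(v^*)$. The only subtle point --- and it is genuinely the crux --- is the appeal to the Rouquier--Varagnolo--Vasserot identification of the dual canonical basis with the self-dual irreducibles; once that is in place, the rest is essentially a translation through (\ref{ETriangle}) followed by a direct application of Lemma~\ref{LExtrMult}.
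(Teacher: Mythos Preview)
Your argument is correct in simply-laced types, but the claim that the hypotheses ``$\Car$ symmetric, $\cha F=0$'' are \emph{implicit here since we are discussing the dual canonical basis} is mistaken. The dual canonical basis exists for every finite type, and the lemma is a purely quantum-group statement about $\f^*_\A$ that is needed (and applied in Proposition~\ref{PDualPBWCat}) for all finite types, including $\mathtt{BCFG}$. The Rouquier--Varagnolo--Vasserot theorem you invoke, however, is only known for symmetric $\Car$; so your proof leaves a genuine gap in the non-simply-laced case. (The characteristic restriction is harmless, since the statement does not involve $F$ and you may simply choose $F=\C$ when appealing to categorification.)

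The paper's proof avoids this restriction by staying entirely on the quantum side: it inducts on $a_1+\dots+a_k$, uses the compatibility $\iota\big((\theta_i^*)^{(n)}x\big)=(\theta_i^*)^{(n)}\iota(x)$, and appeals to Kashiwara's result \cite[Proposition~5.3.1]{KaG} that applying a divided power $(\theta_{i_k}^*)^{(a_k)}$ to a dual canonical basis element (with $a_k=\eps_{i_k}$) yields another dual canonical basis element. This argument is valid in all finite types. Your approach is conceptually clean when it applies --- it really is just Lemma~\ref{LExtrMult} transported through the triangle~(\ref{ETriangle}) --- but it trades a short elementary induction for a deep geometric theorem, and loses the non-symmetric cases in the process.
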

\begin{proof}
Apply induction on $a_1+\dots+a_k$. The induction base is  $a_1+\dots+a_k=0$, in which case $v^*=1\in\f^*_\A$ and $\iota(1)$ is the empty word. 
Recall the map $\theta_i^*:\A\words\to\A\words$ from (\ref{ETheta*}). For all $x\in\f^*_\A$ we have 
$
\iota((\theta_i^*)^{(n)}(x))=(\theta_i^*)^{(n)}(\iota(x))
$, where in the right hand side $(\theta_i^*)^{(n)}=(\theta_i^*)^{n}/[n]_{\al_i}^!$. 
By \cite[Proposition 5.3.1]{KaG}, 
$(\theta_{i_k}^*)^{(a_{i_k})} (v^*)$
is again a dual canonical basis element, and by induction, the word $i_1^{a_1}\dots i_{k-1}^{a_{k-1}}$ appears in $\iota((\theta_{i_k}^*)^{(a_{i_k})} (v^*))$ with coefficient $[a_1]_{i_1}^!\dots[a_{k-1}]_{i_{k-1}}^!$. The result follows. 
\end{proof}



\section{Standard module theory}
We want to classify the irreducible $R_\al$-modules using a standard module theory. This was first done in  \cite{KR2} and then substantially developed and generalized in \cite{McN}. Here we mainly follow the approach of \cite{McN}, with an occasional idea from \cite{Kcusp}. 

\subsection{Convex orders and cuspidal systems}
The theory depends on a choice of a convex order on the set $\Phi_+$ of {\em positive roots} (we always mean the system of  positive roots corresponding to our fixed choice of the simple roots $\al_i$). 
We also denote by $W$  the Weyl group of the {\em root system} $\Phi$. It is a Coxeter group with standard generators $\{r_i\mid i\in I\}$. 

A {\em convex order} on $\Phi_+$ is a total order $\prec$
such that
$$
\beta, \gamma, 
\beta+\gamma \in \Phi_+,\ \beta \prec \gamma \quad\Rightarrow \quad
\beta \prec \beta + \gamma \prec \gamma.
$$
By \cite{Papi}, there is a bijection between convex orders on  
$\Phi_+$ and reduced expressions for the longest element $w_0$ of $W$ which works as follows: 
given a reduced expression
$w_0 = r_{i_1} \cdots r_{i_N}$ the corresponding 
convex order on $\Phi_+$ is given by
$$
\alpha_{i_1} \prec
r_{i_1}(\alpha_{i_2})
\prec
r_{i_1} r_{i_2}(\alpha_{i_3})
\prec\cdots\prec r_{i_1} \cdots r_{i_{N-1}}(\alpha_{i_N}).
$$
This allows one to prove the following easy lemma, see 
\cite[Lemma 2.4]{BKM}:

\begin{Lemma}\label{l1}
Suppose we are given positive roots $\alpha, \beta_1,\dots,\beta_k, \gamma_1,\dots,\gamma_l$
such that $\beta_i \preceq \alpha \preceq \gamma_j$
for all $i$ and $j$.
We have that 
$\beta_1+\cdots+\beta_k = \gamma_1+\cdots + \gamma_l$
if and only if $k=l$
 and
$\beta_1=\cdots=\beta_k=\gamma_1=\cdots=\gamma_l = \alpha$.
\end{Lemma}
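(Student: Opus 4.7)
The reverse implication is immediate, so the content is in showing that the equality $\beta_1+\dots+\beta_k=\gamma_1+\dots+\gamma_l$ forces every root on either side to equal $\alpha$ (whence $k=l$). My plan is to exploit the bijection between convex orders and reduced expressions for $w_0$ just recalled, using a suitable Weyl group element to separate the signs of the roots $\preceq\alpha$ from the roots $\succ\alpha$.

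First I fix a reduced expression $w_0=r_{i_1}\dots r_{i_N}$ realizing $\prec$ and write $\Phi_+=\{\beta^{(1)}\prec\dots\prec\beta^{(N)}\}$ with $\beta^{(s)}:=r_{i_1}\dots r_{i_{s-1}}(\alpha_{i_s})$; let $m$ be the index such that $\alpha=\beta^{(m)}$, and set $v:=r_{i_1}\dots r_{i_m}$. Repeatedly cancelling the adjacent equal reflections inside $v^{-1}\cdot r_{i_1}\dots r_{i_{s-1}}$, a direct computation gives $v^{-1}(\beta^{(s)})=-r_{i_m}\dots r_{i_{s+1}}(\alpha_{i_s})\in -\Phi_+$ for every $s\leq m$ (in particular $v^{-1}(\alpha)=-\alpha_{i_m}$), while $v^{-1}(\beta^{(s)})=r_{i_{m+1}}\dots r_{i_{s-1}}(\alpha_{i_s})\in \Phi_+$ for every $s>m$; in each case one identifies the resulting expression as a subword of a reduced expression to confirm that the image is indeed a positive root. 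Thus $v^{-1}$ is negative on the roots $\preceq\alpha$ and positive on the roots $\succ\alpha$.

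Now let $p:=|\{i:\beta_i=\alpha\}|$ and $q:=|\{j:\gamma_j=\alpha\}|$, and reindex so that $\beta_1,\dots,\beta_{k-p}\prec\alpha$ and $\gamma_1,\dots,\gamma_{l-q}\succ\alpha$. Applying $v^{-1}$ to the given identity and rearranging yields
$$
(q-p)\,\alpha_{i_m}\;=\;\sum_{i=1}^{k-p}\bigl(-v^{-1}(\beta_i)\bigr)\;+\;\sum_{j=1}^{l-q}v^{-1}(\gamma_j),
$$
in which every summand on the right is a positive root. If $q>p$, then comparing simple-root coefficients shows that each such positive root must be a nonnegative multiple of $\alpha_{i_m}$ and hence equal to $\alpha_{i_m}$ itself; but $v^{-1}(\gamma_j)=\alpha_{i_m}$ would force $\gamma_j=v(\alpha_{i_m})=-\alpha\notin \Phi_+$, and $-v^{-1}(\beta_i)=\alpha_{i_m}$ would force $\beta_i=\alpha$, contradicting $\beta_i\prec\alpha$. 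If $q<p$, the left side lies in $-Q_+\setminus\{0\}$ while the right lies in $Q_+$, which is equally impossible. Hence $p=q$ and the right-hand sum is zero, so there are no $\beta_i\prec\alpha$ and no $\gamma_j\succ\alpha$; consequently $\beta_1=\dots=\beta_k=\gamma_1=\dots=\gamma_l=\alpha$ with $k=p=q=l$.

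The delicate step is the sign-separation claim in the second paragraph; once this is established, everything else reduces to an elementary positivity argument in the root lattice.
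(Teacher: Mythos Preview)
Your proof is correct. The paper does not actually give a proof of this lemma, deferring instead to \cite[Lemma~2.4]{BKM}, but the approach you take---using the Weyl group element $v=r_{i_1}\cdots r_{i_m}$ whose inversion set is precisely $\{\beta\in\Phi_+:\beta\preceq\alpha\}$ to separate signs, then comparing in $Q_+$---is exactly the argument hinted at by the sentence ``This allows one to prove the following easy lemma,'' and is essentially the proof given in \cite{BKM}.

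One minor remark: in the case $q>p$, your phrasing implicitly assumes there is at least one summand on the right to derive a contradiction; strictly speaking, if there are no summands the contradiction is simply $0=(q-p)\alpha_{i_m}\neq 0$, while if there is at least one summand your argument applies verbatim. Either way the case is excluded, so the logic is sound.
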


Now we give a key definition:

\begin{Definition} \label{DCus}
{\rm 
A {\em cuspidal system} (for a fixed convex preorder) is the following data: an irreducible $R_\rho$-module $L_\rho$ assigned to every positive $\rho\in \Phi_+^\re$, with the following property: 

\vspace{1mm}
\noindent
(Cus) if $\be,\ga\in Q_+$ are non-zero elements such that $\rho=\be+\ga$ and $\Res_{\be,\ga}L_\rho\neq 0$, then $\beta$ is a sum of positive roots less than $\rho$ and $\ga$ is a sum of positive roots greater than $\rho$. 
}
\end{Definition}

\vspace{1mm}
It is not obvious that a cuspidal system exists or is unique (for a fixed convex order). This will be proved later. 

Let us fix a convex order $\prec$ on $\Phi_+$ and an element $\al\in Q_+$. A {\em root partition} of $\al$ is a weakly decreasing tuple 
\begin{equation}\label{ERP}
(\be_1\succeq\be_2\succeq\dots\succeq \be_n)
\end{equation} 
of positive roots such that $\be_1+\be_2+\dots+ \be_n=\al$. 
The set of root partitions of $\al$ is denoted by $\Pi(\al)$. 
For example, if $\rho$ is a positive root, there always is a trivial root partition $(\rho)\in\Pi(\rho)$. 

Sometimes we use other notations for  root partitions. 
Let $$\rho_1\succ\dots\succ\rho_N$$ be all positive roots taken in decreasing order. Collecting together equal terms of the root partition (\ref{ERP}), we can write it as 
$$
\pi=(\rho_1^{m_1},\dots,\rho_N^{m_N})
$$
with $\sum_{n=1}^Nm_n\rho_n$ or simply as a tuple 
$$\pi=(m_1,\dots,m_N)$$ of nonnegative integers such that $\al=\sum_{n=1}^Nm_n\rho_n$.

The left lexicographic order on $\Pi(\al)$ is denoted $\leq_l$ and the right lexicographic order on $\Pi(\al)$ is denoted $\leq_r$. We will also use the following {\em bilexicographic} partial order on $\Pi(\al)$: 
$$
\pi\leq \si\qquad\text{if and only if}\qquad \pi\leq_l \si \ \text{and}\ \pi\geq_r \si.
$$

Let $\g$ be the finite dimensional complex semisimple Lie algebra with Cartan matrix $\Car$. The positive subalgebra $\n_+\subset \g$ has a basis consisting of {\em root vectors} $\{E_\rho\mid \rho\in\Phi_+\}$. To a root partition $\pi=(m_1,\dots,m_N)$, we assign a PBW monomial 
$
E_{\pi}:=E_{\rho_1}^{m_1}\dots E_{\rho_N}^{m_N}.$
Then $\{E_{\pi}\mid\pi\in\Pi(\al)\}$ is a basis of 
 the weight space $U(\n_+)_\al$. In particular, $|\Pi(\al)|=\dim U(\n_+)_\al$. In view of the isomorphism $\ga^*$ from (\ref{EGamma*}), we conclude:

\begin{Lemma} \label{LAmount}
The number of irreducible $R_\al$-modules (up to isomorphism and degree shift) is $|\Pi(\al)|$.  
\end{Lemma}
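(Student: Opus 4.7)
The plan is to count irreducibles using the Grothendieck group and transport the count across the KLR categorification to the quantized enveloping algebra, where the PBW theorem provides the desired answer.

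First I would recall that the Grothendieck group $[\mod{R_\al}]$, viewed as an $\A$-module via the degree-shift action $q^m[M]=[q^m M]$, is free with $\A$-basis given by a complete set of representatives $\{[L]\}$ of isomorphism classes of irreducible graded $R_\al$-modules modulo degree shift. By Lemma~\ref{CLaurent}(i) (or the Schurian/Laurentian properties of $R_\al$ stated at the start of \S\ref{SSBasicRep}) this basis is finite, so the quantity we must count is precisely $\rank_{\A} [\mod{R_\al}]$.

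Next I would invoke the Khovanov-Lauda-Rouquier categorification, in the dual form (\ref{EGamma*}): the $\A$-algebra isomorphism $\ga^*: [\mod{R}] \stackrel{\sim}{\to} \f^*_\A$ preserves the $Q_+$-grading, so upon tensoring with $\Q(q)$ it restricts to a $\Q(q)$-linear isomorphism
\begin{equation*}
\Q(q)\otimes_{\A}[\mod{R_\al}] \;\iso\; \f^*_\al.
\end{equation*}
Hence $\rank_{\A}[\mod{R_\al}]=\dim_{\Q(q)}\f^*_\al$. Since Lusztig's form $(\cdot,\cdot)$ on $\f$ is non-degenerate on each weight space $\f_\al$, and $\f^*_\al$ is the $\A$-lattice inside $\f_\al$ dual to $\f_\A$ with respect to this form, we have $\dim_{\Q(q)}\f^*_\al = \dim_{\Q(q)}\f_\al$.

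Finally I would compare this with the classical dimension. Choose any reduced expression for $w_0\in W$ compatible with the fixed convex order, so that by Lusztig's PBW construction (in finite type) one obtains quantum root vectors $\{F_\rho\mid \rho\in\Phi_+\}\subset \f$ and divided-power PBW monomials $F_\pi = F_{\rho_1}^{(m_1)}\cdots F_{\rho_N}^{(m_N)}$ indexed by root partitions $\pi=(m_1,\dots,m_N)\in\Pi(\al)$, forming a $\Q(q)$-basis of $\f_\al$. Thus $\dim_{\Q(q)}\f_\al=|\Pi(\al)|$, which matches the classical identity $\dim_\C U(\n_+)_\al=|\Pi(\al)|$ arising from the usual PBW basis $\{E_\pi\}$ mentioned just before the statement. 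Combining the three equalities gives the lemma.

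There is no serious obstacle here: the argument is essentially an accounting exercise, and the non-trivial input, namely the categorification isomorphism (\ref{EGamma*}), is quoted from \cite{KL1,KL2,R}. The only point that requires mild care is reconciling $\f^*_\al$ with $\f_\al$; this is handled by the non-degeneracy of Lusztig's bilinear form, and is where working over $\Q(q)$ (as opposed to over $\A$) becomes important.
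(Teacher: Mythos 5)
Your proposal is correct and follows essentially the same route as the paper: the number of irreducibles is the $\A$-rank of $[\mod{R_\al}]$, which via the categorification isomorphism $\ga^*$ equals the dimension of the $\al$-weight space of $\f$ (equivalently $\f^*$), and this is $|\Pi(\al)|$ by a PBW count. The only cosmetic difference is that you invoke Lusztig's quantum PBW basis of $\f_\al$, while the paper counts via the classical PBW basis $\{E_\pi\}$ of $U(\n_+)_\al$; these give the same number since the quantum and classical weight-space dimensions agree.
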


\subsection{Standard modules}
We continue to work with a fixed convex preorder $\prec$ on $\Phi_+$.  Let $\{L_\rho \mid \rho\in\Phi_+\}$ be a cuspidal system for $\prec$ (we are yet to prove that it exists!). Fix $\al\in Q_+$ and a root partition $\pi=(m_1,\dots,m_N)\in\Pi(\al)$. We define an integer
\begin{equation}\label{EShift}
\shift(\pi):=\sum_{k=1}^N (\rho_k,\rho_k)m_k(m_k-1)/4.
\end{equation}
Set 
$$
|\pi|=(m_1\rho_1,\dots,m_N\rho_N)\in Q_+^N.
$$
The corresponding parabolic subalgerba is
$$
R_{|\pi|}=R_{m_1\rho_1,\dots,m_N\rho_N}\cong R_{m_1\rho_1}\otimes\dots\otimes R_{m_N\rho_N}\subseteq R_\al.
$$

Next, we define the $R_{|\pi|}$-module
\begin{equation}
L_\pi:=q^{\shift(\pi)}\,L_{\rho_1}^{\circ m_1} \boxtimes \dots\boxtimes L_{\rho_N}^{\circ m_N} , 
\end{equation}
and we define the {\em proper standard module}
\begin{equation}\label{EStand}
\Stand(\pi):= \Ind_{|\pi|} L_{\pi}\cong q^{\shift(\pi)}\,L_{\rho_1}^{\circ m_1} \circ \dots\circ L_{\rho_N}^{\circ m_N} \in\mod{R_\al}.
\end{equation}
Also introduce the {\em proper costandard module}
\begin{equation}\label{EBarNabla}
\bar\nabla(\pi) := \bar\Delta(\pi)^\circledast.
\end{equation}

It will become clear in Lemma~\ref{LLMMuSeldDual} why we apply the shift by $\shift(\pi)$ in our definitions. 

\begin{Lemma} \label{LDualInd}
Let $\underline{\ga}:=(\ga_1,\dots,\ga_n)\in Q_+^n$, and $V_m\in\mod{R_{\ga_m}}$ for $m=1,\dots,n$. 
Denote
$
d(\underline{\ga})=\sum_{1\leq m<k\leq n}(\ga_m,\ga_k).
$ 
Then 
$
(V_1\circ\dots\circ V_n)^\circledast\cong 
q^{d(\underline{\ga})}(V_n^\circledast\circ\dots\circ V_1^\circledast).
$
\end{Lemma}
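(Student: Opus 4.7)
The plan is to reduce the general $n$ case to $n=2$ by an easy induction, and then to prove the base case via tensor--hom adjunction together with a Frobenius-type identification of the coinduction $\Coind_{\al,\be}$ with an induction from the opposite parabolic.

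\textbf{Inductive step.} Associativity of $\circ$ allows us to write $V_1\circ\cdots\circ V_n=(V_1\circ\cdots\circ V_{n-1})\circ V_n$, and applying the $n=2$ case with $\al:=\ga_1+\cdots+\ga_{n-1}$ and $\be:=\ga_n$ gives
\[(V_1\circ\cdots\circ V_n)^\circledast\cong q^{(\al,\ga_n)}\,V_n^\circledast\circ(V_1\circ\cdots\circ V_{n-1})^\circledast.\]
The inductive hypothesis then replaces the second factor by $q^{d'}V_{n-1}^\circledast\circ\cdots\circ V_1^\circledast$ with $d'=\sum_{1\leq m<k\leq n-1}(\ga_m,\ga_k)$. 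Since $(\al,\ga_n)=\sum_{m<n}(\ga_m,\ga_n)$, the combined exponent is exactly $d(\underline{\ga})$, as desired.

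\textbf{Base case $n=2$.} For finite-dimensional $V\in\mod{R_\al}$ and $W\in\mod{R_\be}$, the first move is to rewrite $(V\circ W)^\circledast$ as a coinduction. Since $\tau$ restricts to $\tau\otimes\tau$ on $R_{\al,\be}$, we have $(V\boxtimes W)^\circledast=V^\circledast\boxtimes W^\circledast$ as $R_{\al,\be}$-modules, and tensor--hom adjunction yields
\[(V\circ W)^\circledast\cong\Hom_{R_{\al,\be}}\bigl(R_{\al+\be}\cdot 1_{\al,\be},\,V^\circledast\boxtimes W^\circledast\bigr)=:\Coind_{\al,\be}(V^\circledast\boxtimes W^\circledast).\]
It therefore suffices to establish the natural isomorphism
\[\Coind_{\al,\be}(M)\cong q^{(\al,\be)}\,\Ind_{\be,\al}(\mathrm{sw}\,M)\qquad(M\in\mod{R_{\al,\be}}),\]
where $\mathrm{sw}\colon\mod{R_{\al,\be}}\iso\mod{R_{\be,\al}}$ swaps tensor factors; feeding in $M=V^\circledast\boxtimes W^\circledast$ then delivers $(V\circ W)^\circledast\cong q^{(\al,\be)}(W^\circledast\circ V^\circledast)$.

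\textbf{The Frobenius identification, and the main obstacle.} To produce this identification, let $a=\height(\al)$, $b=\height(\be)$, and let $w_{\al,\be}\in\Si_{a+b}$ be the longest minimal-length coset representative of $\Si_a\times\Si_b$, which permutes the first $a$ strands past the last $b$. By Theorem~\ref{TBasis}, $R_{\al+\be}\cdot 1_{\al,\be}$ is a free right $R_{\al,\be}$-module with basis indexed by such representatives, and similarly for $R_{\al+\be}\cdot 1_{\be,\al}$ over $R_{\be,\al}$. The relation $\psi_r1_\bi=1_{s_r\bi}\psi_r$ implies $\psi_{w_{\al,\be}}1_{\be,\al}=1_{\al,\be}\psi_{w_{\al,\be}}$, and summing the contributions $-(\al_{i_r},\al_{i_{r+1}})$ over the $ab$ strand crossings of $w_{\al,\be}$ gives total degree $-(\al,\be)$, independent of the underlying word. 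Right multiplication by $\psi_{w_{\al,\be}}$ therefore defines a homogeneous bimodule map $R_{\al+\be}\cdot 1_{\al,\be}\to R_{\al+\be}\cdot 1_{\be,\al}$ of degree $-(\al,\be)$ (after identifying the right $R_{\al,\be}$-action with a right $R_{\be,\al}$-action via $\mathrm{sw}$), which a rank comparison over the parabolic subalgebra upgrades to a bimodule isomorphism. Substituting this into the Hom description of $\Coind_{\al,\be}$ converts it into $\Ind_{\be,\al}$ with the advertised shift. The main obstacle is precisely this last bit of bookkeeping: verifying that the intertwiner built from $\psi_{w_{\al,\be}}$ is a genuine bimodule isomorphism compatible with $\tau$, and that the degree shift is exactly $(\al,\be)$ with no stray corrections; everything else is a formal application of adjunction and freeness.
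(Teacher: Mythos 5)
Your reduction to $n=2$, and the identification $(V\circ W)^\circledast\cong\Coind_{\al,\be}(V^\circledast\boxtimes W^\circledast)$ via $\tau$ and tensor--hom adjunction, are both fine, and this is exactly the skeleton of the paper's argument, which simply invokes \cite[Theorem 2.2]{LV} together with uniqueness of adjoints as in the proof of \cite[Corollary 3.7.4]{Kbook}. The genuine gap is in your ``Frobenius identification''. Right multiplication by $\psi_{w_{\al,\be}}$ is \emph{not} a bimodule isomorphism $R_{\al+\be}1_{\al,\be}\to R_{\al+\be}1_{\be,\al}$ (up to shift), and no such isomorphism exists: if it did, it would induce natural isomorphisms $V\circ W\cong q^m\,W\circ V$ for all modules, which already fails for $R_{\al_1+\al_2}$ in type ${\tt A_2}$, where $L_{\al_1}\circ L_{\al_2}$ and $L_{\al_2}\circ L_{\al_1}$ are non-isomorphic uniserial modules with head and socle interchanged (see the exact sequences (\ref{sesonea})--(\ref{sestwoa})). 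Concretely, with $\al=\al_1$, $\be=\al_2$ the square of right multiplication by $\psi_1$ is right multiplication by $Q_{12}(y_1,y_2)$, which is not invertible, and a degree count shows $1_{21}$ is not in the image of $x\mapsto x\psi_1$; so the map is not surjective. The ``rank comparison'' cannot rescue this: a homogeneous map between graded-free modules of the same graded rank over the parabolic need not be bijective (multiplication by $y_1$ on $F[y_1]$ has exactly this defect). An in-paper illustration of the same phenomenon is Theorem~\ref{inj3}, where the analogous $\psi_w$-multiplication map between induced standard modules is injective with nonzero cokernel $[p_{\be,\ga}+1]_q\,\Delta(\rho)$. (Even the claim that $x\mapsto x\psi_{w_{\al,\be}}$ intertwines the two parabolic actions needs an argument, since $\psi$-products for length-additive factorizations agree only up to lower-order correction terms.)

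What is true, and what you actually need, is the functorial statement for finite-dimensional modules: $\Ind_{\al,\be}(A\boxtimes B)\cong q^{(\al,\be)}\,\Coind_{\be,\al}(B\boxtimes A)$. This is precisely \cite[Theorem 2.2]{LV}, and its proof is a genuine theorem (an explicit natural transformation whose bijectivity is checked using the Mackey filtration), not bookkeeping, and in particular it is not induced by an isomorphism of the representing bimodules. So either cite that theorem --- at which point your argument coincides with the paper's --- or replace the ``bimodule isomorphism'' step by a proof of the functor isomorphism along those lines.
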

\begin{proof}
Follows from \cite[Theorem 2.2]{LV} by uniqueness of adjoint functors as in the proof of \cite[Corollary 3.7.4]{Kbook}
\end{proof}

Recall that for every irreducible module $L$, there is a unique choice of the grading shift so that we have $L^\circledast \cong L$, and unless otherwise stated,  we assume that the shift has been chosen in this way. This in particular applies to the modules $L_\rho$ of our cuspidal system.

\begin{Lemma} \label{LPowerCuspSelfDual}
Let $\rho\in\Phi_+^\re$, $L_\rho$ be the corresponding cuspidal module, and $n\in \Z_{>0}$. Then 
$$
(L_\rho^{\circ n})^\circledast\cong 
q_\rho^{n(n-1)}
L_\rho^{\circ n}.$$
In particular, the module $q_\rho^{n(n-1)/2} L_\rho^{\circ n}$ is $\circledast$-self-dual. 
\end{Lemma}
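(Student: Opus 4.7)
The plan is to apply Lemma~\ref{LDualInd} directly with $V_1 = \cdots = V_n = L_\rho$, and then chase grading shifts. Since all factors are equal to $L_\rho$, the reversed tensor product on the right-hand side of that lemma coincides with the original, so the only nontrivial content is the quantum shift $q^{d(\underline{\ga})}$.

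First, I would take $\ga_1 = \cdots = \ga_n = \rho$ in Lemma~\ref{LDualInd} and compute
$$
d(\underline{\ga}) \;=\; \sum_{1 \leq m < k \leq n} (\rho,\rho) \;=\; \binom{n}{2}(\rho,\rho) \;=\; n(n-1)\, d_\rho,
$$
so $q^{d(\underline{\ga})} = q_\rho^{n(n-1)}$. Next, I would invoke the standing self-duality convention for irreducibles: since $L_\rho$ is an irreducible with grading shift chosen so that $L_\rho^\circledast \cong L_\rho$, the reversed product $L_\rho^\circledast \circ \cdots \circ L_\rho^\circledast$ is isomorphic to $L_\rho^{\circ n}$. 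Combining these two facts, Lemma~\ref{LDualInd} yields
$$
(L_\rho^{\circ n})^\circledast \;\cong\; q_\rho^{n(n-1)}\, L_\rho^{\circ n},
$$
which is the first assertion.

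For the ``in particular'' part, I would use that graded duality flips the sign of any degree shift, i.e.\ $(q^m V)^\circledast \cong q^{-m} V^\circledast$. Applying this with $V = L_\rho^{\circ n}$ and $m = n(n-1)d_\rho/2$ gives
$$
\bigl(q_\rho^{n(n-1)/2} L_\rho^{\circ n}\bigr)^\circledast \;\cong\; q_\rho^{-n(n-1)/2}(L_\rho^{\circ n})^\circledast \;\cong\; q_\rho^{-n(n-1)/2} \cdot q_\rho^{n(n-1)} L_\rho^{\circ n} \;\cong\; q_\rho^{n(n-1)/2} L_\rho^{\circ n},
$$
verifying self-duality of the shifted module. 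There is no real obstacle here beyond careful bookkeeping of the shifts; essentially the result is the $V_1 = \cdots = V_n = L_\rho$ specialization of Lemma~\ref{LDualInd} combined with the self-duality of cuspidal modules. This also explains, retrospectively, the shift $q_\rho^{n(n-1)/2}$ that appears built into $\shift(\pi)$ in the definition of the proper standard module.
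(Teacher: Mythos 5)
Your proof is correct and is essentially the paper's own argument spelled out in more detail: the paper simply invokes the self-duality convention $L_\rho^\circledast \cong L_\rho$ and Lemma~\ref{LDualInd}, and you have carried out the explicit shift computation $d(\underline{\ga}) = \binom{n}{2}(\rho,\rho) = n(n-1)d_\rho$ and the grading bookkeeping that the paper leaves implicit.
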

\begin{proof}
Recall that our standard choice of shifts of irreducible modules is so that $L_\rho^{\circledast}\cong L_\rho$. Now the result follows from Lemma~\ref{LDualInd}. 
\end{proof}

\begin{Lemma} \label{LLMMuSeldDual}
We have $L_{\pi}^\circledast\cong L_{\pi}$
\end{Lemma}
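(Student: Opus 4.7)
The plan is to compute $L_\pi^\circledast$ directly using Lemma~\ref{LPowerCuspSelfDual} on each factor, then verify that the accumulated grading shift equals $2\,\shift(\pi)$, which exactly cancels against the shift by $-\shift(\pi)$ coming from dualizing $q^{\shift(\pi)}$.

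First I would recall that $\circledast$ commutes with external tensor product without any twist: there is a natural isomorphism
\[
(V_1\boxtimes\cdots\boxtimes V_N)^\circledast\;\cong\; V_1^\circledast\boxtimes\cdots\boxtimes V_N^\circledast
\]
of $R_{m_1\rho_1}\otimes\cdots\otimes R_{m_N\rho_N}$-modules, since the anti-involution $\tau$ of $R_\alpha$ restricts to the tensor product of the anti-involutions on the individual factors $R_{m_k\rho_k}$, and there is no braiding to account for. Combined with the identity $(q^mM)^\circledast=q^{-m}M^\circledast$, applying $\circledast$ to
\[
L_\pi=q^{\shift(\pi)}\,L_{\rho_1}^{\circ m_1}\boxtimes\cdots\boxtimes L_{\rho_N}^{\circ m_N}
\]
yields
\[
L_\pi^\circledast\;\cong\;q^{-\shift(\pi)}\,(L_{\rho_1}^{\circ m_1})^\circledast\boxtimes\cdots\boxtimes (L_{\rho_N}^{\circ m_N})^\circledast.
\]

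Next I would apply Lemma~\ref{LPowerCuspSelfDual} to each factor: since each cuspidal module $L_{\rho_k}$ is $\circledast$-self-dual by our standing convention, the lemma gives
\[
(L_{\rho_k}^{\circ m_k})^\circledast\;\cong\;q_{\rho_k}^{m_k(m_k-1)}\,L_{\rho_k}^{\circ m_k}.
\]
Substituting $q_{\rho_k}=q^{(\rho_k,\rho_k)/2}$, the total grading shift picked up from the $N$ factors is
\[
\sum_{k=1}^{N}(\rho_k,\rho_k)\,m_k(m_k-1)/2\;=\;2\,\shift(\pi).
\]

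Finally, combining the shift $-\shift(\pi)$ from dualizing $q^{\shift(\pi)}$ with the shift $+2\,\shift(\pi)$ from the factors gives an overall shift of $+\shift(\pi)$, so
\[
L_\pi^\circledast\;\cong\;q^{\shift(\pi)}\,L_{\rho_1}^{\circ m_1}\boxtimes\cdots\boxtimes L_{\rho_N}^{\circ m_N}\;=\;L_\pi,
\]
as required. There is no real obstacle here: the whole point of introducing the shift $\shift(\pi)$ in the definition~\eqref{EShift} was precisely to absorb the $q_{\rho_k}^{m_k(m_k-1)}$ twists produced by Lemma~\ref{LPowerCuspSelfDual}, so the argument is just a bookkeeping verification of this design choice.
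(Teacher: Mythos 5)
Your proof is correct and takes the same route as the paper: both invoke Lemma~\ref{LPowerCuspSelfDual} and observe that the shift $\shift(\pi)$ was engineered to cancel the twists $q_{\rho_k}^{m_k(m_k-1)}$. The paper phrases this by noting that each factor $q_{\rho_k}^{m_k(m_k-1)/2}L_{\rho_k}^{\circ m_k}$ is $\circledast$-self-dual and then the external tensor product of self-dual modules is self-dual; your version distributes the shift explicitly and verifies that it totals to $2\,\shift(\pi)$, which is the same bookkeeping written out in full.
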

\begin{proof}
Recall that our standard choice of shifts of irreducible modules is so that $L_\rho^{\circledast}\cong L_\rho$. Let $\rho\in\Phi_+$ and $n\in \Z_{>0}$. Then by Lemma~\ref{LPowerCuspSelfDual}, we have that the module $q_\rho^{n(n-1)/2}L_\rho^{\circ n}$ is $\circledast$-self-dual. 
The result follows. 
\end{proof}

\subsection{Restrictions of proper standard modules}
We recall the Mackey Theorem of Khovanov and Lauda \cite[Proposition~2.18]{KL1}. 
Given $x\in \Si_n$ and $\underline{\ga}=(\ga_1,\dots,\ga_n)\in Q_+^n$, we denote 
\begin{align*}
x\underline{\ga}&:=(\ga_{x^{-1}(1)},\dots,\ga_{x^{-1}(n)})\in Q_+^n. 
\\
s(x,\underline{\ga})&:=-\sum_{1\leq m<k\leq n,\ x(m)>x(k)}(\ga_m,\ga_k)\in\Z.
\end{align*}

Writing $R_{\underline{\ga}}$ for $R_{\ga_1,\dots,\ga_n}$, there is an obvious natural algebra isomorphism 
$$
\phi^x:R_{x\underline{\ga}}\to R_{\underline{\ga}}
$$
permuting the components. Composing with this isomorphism, we get a functor
$$
\mod{R_{\underline{\ga}}}\to \mod{R_{x\underline{\ga}}},\  M\mapsto {}^{\phi^x}M.
$$
Making an additional shift, we get a functor 
\begin{equation}\label{ETwist}
\mod{R_{\underline{\ga}}}\to \mod{R_{x\underline{\ga}}},\  M\mapsto {}^xM:=q^{s(x,\underline{\ga})}({}^{\phi^x}M).
\end{equation}

\begin{Theorem} \label{TMackeyKL}
Let $\underline{\ga}=(\ga_1,\dots,\ga_n)\in Q_+^n$ and $\underline{\be}=(\be_1,\dots,\be_m)\in Q_+^m$ with $\ga_1+\dots+\ga_n=\be_1+\dots+\be_m=:\al$. Then for any $M\in\mod{R_{\underline{\ga}}}$ we have that $\Res_{\underline{\be}}\,\Ind_{\underline{\ga}} M$ has  filtration with factors of the form
$$
\Ind_{\al^{1}_1,\dots,\al^{n}_{1}\,;\,\dots\,;\,\al^{1}_{m},\dots,\al^{n}_{m}}^{\,\be_1\,;\,\dots\,;\,\be_m}
{}^{x(\underline{\al})}\big(\Res_{\al^{1}_1,\dots,\al^{1}_{m}\,;\,\dots\,;\,\al^{n}_{1},\dots,\al^{n}_{m}}^{\,\ga_1\,;\,\dots\,;\,\ga_n}
\,M \big)
$$
with $\underline{\al}=(\al^a_b)_{1\leq a\leq n,\ 1\leq b\leq m}$ running over all tuples of elements of $Q_+$ such that  $\sum_{b=1}^m\al^{a}_b=\ga_a$ for all $1\leq a\leq n$ and $\sum_{a=1}^n\al^{a}_b=\be_b$ for all $1\leq b\leq m$, and $x(\underline{\al})$ is the permutation of $mn$ which maps 
$$
(\al^{1}_1,\dots,\al^{1}_{m};\al^{2}_1,\dots,\al^{2}_{m};\dots;\al^{n}_{1},\dots,\al^{n}_{m})
$$
to
$$
(\al^{1}_1,\dots,\al^{n}_{1};\al^{1}_2,\dots,\al^{n}_{2};\dots;\al^{1}_{m},\dots,\al^{n}_{m}).
$$
\end{Theorem}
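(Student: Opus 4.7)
The plan is to obtain the filtration by analyzing the $(R_{\underline{\be}},R_{\underline{\ga}})$-bimodule structure of $1_{\underline{\be}} R_\al 1_{\underline{\ga}}$ and then tensoring over $R_{\underline{\ga}}$ with $M$. Concretely, $\Res_{\underline{\be}} \Ind_{\underline{\ga}} M = 1_{\underline{\be}} R_\al 1_{\underline{\ga}} \otimes_{R_{\underline{\ga}}} M$, so it suffices to produce a bimodule filtration of $1_{\underline{\be}} R_\al 1_{\underline{\ga}}$ whose associated graded pieces are precisely the bimodules
\[
\Ind_{\al^{1}_1,\dots,\al^{n}_{1}\,;\,\dots\,;\,\al^{1}_{m},\dots,\al^{n}_{m}}^{\,\be_1\,;\,\dots\,;\,\be_m}\;{}^{x(\underline{\al})}\!\Big(R_{\al^{1}_1,\dots,\al^{1}_{m}\,;\,\dots\,;\,\al^{n}_{1},\dots,\al^{n}_{m}}\Big),
\]
indexed by the matrices $\underline{\al}=(\al^a_b)$ of the statement; exactness of induction and restriction then yields the desired filtration after applying $\otimes_{R_{\underline{\ga}}} M$.

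The key combinatorial input is the parametrization of double cosets $\Si_{\underline{\be}}\backslash \Si_d/\Si_{\underline{\ga}}$. Writing $d=\height(\al)$, such double cosets are in bijection with the matrices $\underline{\al}$ satisfying the row- and column-sum constraints, and each double coset has a unique minimal length representative $x(\underline{\al})\in\Si_d$; geometrically, $x(\underline{\al})$ rearranges a block decomposition of $\{1,\dots,d\}$ into rows of a matrix into the same decomposition read by columns. Using the Basis Theorem (Theorem~\ref{TBasis}), together with the commutation relation (\ref{R6}) between $y$'s and $\psi$'s and the idempotent absorption (\ref{R2PsiE}), the space $1_{\underline{\be}} R_\al 1_{\underline{\ga}}$ has an $F$-basis indexed by pairs $(x, \text{poly})$ with $x$ ranging over $\Si_{\underline{\be}}$-$\Si_{\underline{\ga}}$ double coset representatives (suitably extended by parabolic elements), which permits us to filter by the length $\ell(x(\underline{\al}))$ of the double coset representative.

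For each $\underline{\al}$, we identify the corresponding subquotient as a bimodule by moving $\psi_{x(\underline{\al})}$ past elements of $R_{\underline{\be}}$ and $R_{\underline{\ga}}$ using the braid-type relation (\ref{R7}) and the square relation (\ref{R4}). The terms produced by the second summand in (\ref{R7}) and by the polynomial $Q_{i_r,i_{r+1}}(y_r,y_{r+1})$ in (\ref{R4}) live in lower filtration degree (the permutation has strictly smaller length), so modulo lower terms one gets a clean bimodule isomorphism; the explicit reorganization produces the composite $\Ind \circ {}^{x(\underline{\al})} \circ \Res$ with parabolic sizes specified by rows and columns of $\underline{\al}$.

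The degree shift $s(x(\underline{\al}),\underline{\al})$ in the twist $^{x(\underline{\al})}(-)$ must match the total degree of $\psi_{x(\underline{\al})}1_\bi$, which by the grading rule $\deg(\psi_r 1_\bi)=-(\al_{i_r},\al_{i_{r+1}})$ equals $-\sum(\al_{i_s},\al_{i_t})$ summed over crossings of the permutation; a direct count shows this sum is exactly $s(x(\underline{\al}),\underline{\al})=-\sum_{a<a',\;x(a)>x(a')}(\ga_a,\ga_{a'})$ when the crossings are grouped by the blocks $\al^a_b$. Tracking these shifts carefully through the straightening is the main technical obstacle, since each application of (\ref{R4}) or (\ref{R7}) introduces correction terms whose degrees must be accounted for; the point is that all such correction terms strictly decrease the length of the underlying permutation and therefore lie in the next filtration piece, leaving only the leading bimodule summand at each step.
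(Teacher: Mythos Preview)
The paper does not prove Theorem~\ref{TMackeyKL}; it simply records it as \cite[Proposition~2.18]{KL1}. Your sketch is exactly the standard proof from that reference: filter the $(R_{\underline{\be}},R_{\underline{\ga}})$-bimodule $1_{\underline{\be}}R_\al 1_{\underline{\ga}}$ by double cosets using the basis of Theorem~\ref{TBasis}, identify each subquotient via the straightening relations, and tensor with $M$.

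Two small points to tighten. First, your formula for the degree shift is written as a sum over pairs $a<a'$ with weights $(\ga_a,\ga_{a'})$, but the permutation $x(\underline{\al})$ acts on the $mn$ blocks $\al^a_b$, not on the $n$ blocks $\ga_a$; the correct expression is
\[
s\big(x(\underline{\al}),(\al^1_1,\dots,\al^1_m;\dots;\al^n_1,\dots,\al^n_m)\big)=-\sum_{a<a',\ b>b'}(\al^a_b,\al^{a'}_{b'}),
\]
the inversions of $x(\underline{\al})$ being precisely the pairs with $a<a'$ and $b>b'$. Second, ``exactness of induction and restriction'' is not what justifies passing the filtration through $-\otimes_{R_{\underline{\ga}}}M$; rather, each term of the bimodule filtration is free as a right $R_{\underline{\ga}}$-module (one reads this off from the basis of Theorem~\ref{TBasis}), so tensoring with $M$ preserves short exact sequences of subquotients. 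With these corrections your argument is the one in \cite{KL1}.
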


We use the Mackey Theorem to study restrictions of proper standard modules:

\begin{Proposition}\label{P1}
Let $\pi,\si\in\Pi(\al)$. Then:
\begin{enumerate}
\item[{\rm (i)}] $\Res_{|\si|} \Stand(\si)\cong L_{\si}$.
\item[{\rm (ii)}] $\Res_{|\pi|} \Stand(\si)\neq 0$ implies $\pi\leq \si$. 
\end{enumerate}
\end{Proposition}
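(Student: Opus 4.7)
The plan is to apply Mackey's theorem (Theorem~\ref{TMackeyKL}) to $\Res_{|\pi|}\Ind_{|\si|}L_\si$, producing a filtration whose subquotients are indexed by $N\times N$ matrices $\underline{\al}=(\al^a_b)$ of elements of $Q_+$ satisfying $\sum_b\al^a_b=m_a\rho_a$ and $\sum_a\al^a_b=n_b\rho_b$; the subquotient attached to $\underline{\al}$ vanishes unless $\Res_{\al^a_1,\dots,\al^a_N}L_{\rho_a}^{\circ m_a}\neq 0$ for every~$a$.

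The technical crux is an iterated cuspidality lemma: if the above restriction is nonzero, then for each $b$ the partial sum $\al^a_1+\cdots+\al^a_b$ is a sum of positive roots $\preceq\rho_a$ and $\al^a_{b+1}+\cdots+\al^a_N$ is a sum of positive roots $\succeq\rho_a$. One proves this by writing $L_{\rho_a}^{\circ m_a}=\Ind L_{\rho_a}^{\boxtimes m_a}$, applying Mackey to the two-part restriction at position~$b$, and invoking the defining cuspidality property (Cus) on each of the $m_a$ copies of $L_{\rho_a}$.

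Apply this to row~$1$: since $\rho_1$ is the largest positive root, every $\al^1_b$ must be a non-negative integer multiple $c^1_b\rho_1$. The column-$1$ sum then reads $(n_1-c^1_1)\rho_1=\sum_{a\geq 2}\al^a_1$, where the right-hand side is a sum of positive roots $\preceq\rho_a\prec\rho_1$. Lemma~\ref{l1} with $\alpha=\rho_1$ forces both sides to vanish, so $c^1_1=n_1$, $\al^a_1=0$ for $a\geq 2$, and the row-$1$ total gives $n_1\leq m_1$. A mirror argument using row~$N$ and column~$N$ yields $\al^N_N=n_N\rho_N$, $\al^a_N=0$ for $a<N$, and $n_N\leq m_N$. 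Part~(i) now follows: when $\pi=\si$ the equality $n_1=m_1$ forces every $c^1_b=0$ for $b\geq 2$, so row~$1$ and column~$1$ of $\underline{\al}$ are supported only on $\al^1_1=m_1\rho_1$; induction on $N$ shows that $\underline{\al}$ must be the diagonal matrix $\diag(m_1\rho_1,\dots,m_N\rho_N)$, for which the attached permutation is trivial and the shift $s(x,\underline{\al})$ vanishes, so the Mackey subquotient is precisely $L_\si$. For~(ii), the inequalities $n_1\leq m_1$ and $n_N\leq m_N$ combine via a strip-and-induct scheme: equality at either end lets one delete the corresponding row and column and apply the induction hypothesis to the smaller pair $\pi',\si'$; strict inequality at either end produces the required strict lexicographic comparison directly. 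Either way one obtains $\pi\leq\si$ in the bilexicographic order.

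The main obstacle is the iterated cuspidality lemma together with its synthesis with Lemma~\ref{l1}: one must argue carefully that each $\al^a_1$ with $a\geq 2$ is a sum of positive roots \emph{strictly} less than $\rho_1$, not merely $\preceq\rho_1$, before Lemma~\ref{l1} can close the forcing. Once this is in place, the remainder is bookkeeping with row and column sums and a routine double induction on $N$.
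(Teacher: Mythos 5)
Your route is genuinely different from the paper's. The paper first reduces, by symmetry between $\leq_l$ and $\leq_r$, to showing that $\pi\geq_l\si$ together with $\Res_{|\pi|}\Stand(\si)\neq 0$ forces $\pi=\si$; then applies the Mackey theorem with the \emph{finest} parabolic $\Ind_{\rho_1,\dots,\rho_1,\rho_2,\dots}L_{\rho_1}^{\boxtimes n_1}\boxtimes\cdots\boxtimes L_{\rho_N}^{\boxtimes n_N}$ and isolates the first nonzero block $m_a\rho_a$ of $|\pi|$, which under the lex assumption dominates every $\rho_k$ with $n_k\neq 0$, so that (Cus) and Lemma~\ref{l1} close the forcing in one Mackey layer, recursing on $\height(\al)$. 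You instead keep the coarse $N\times N$ Mackey matrix for $\Res_{|\pi|}\Ind_{|\si|}$, upgrade (Cus) for a single cuspidal $L_\rho$ to an ``iterated cuspidality'' statement for $L_\rho^{\circ n}$ by a second Mackey application, and read off both $m_1\leq n_1$ and $m_N\leq n_N$ unconditionally from the extremal row/column, stripping and recursing. This saves the bilexicographic reduction step at the cost of the extra Mackey layer; the two arguments are comparably involved and both are sound in spirit.

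Two points need care. First, a bookkeeping issue: with $\underline\gamma=|\si|$ and $\underline\beta=|\pi|$ in Theorem~\ref{TMackeyKL}, the row sums are $n_a\rho_a$ and the column sums are $m_b\rho_b$, so the extremal-row computation yields $m_1\leq n_1$ and $m_N\leq n_N$, not the reverse; with this orientation $m_1<n_1$ indeed gives $\pi<_l\si$, and $m_N<n_N$ gives $\pi>_r\si$ (recall that in the $(m_1,\dots,m_N)$ encoding the right-lex order on root partitions runs \emph{against} the ordinary tuple order, because appending more copies of the $\prec$-smallest root $\rho_N$ makes the root sequence right-lex smaller). Second, and more seriously, the induction should be on $\height(\al)$, not on $N$, with Proposition~\ref{P1} itself as the inductive hypothesis. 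Your row-1 argument that every $\al^1_b$ is a multiple of $\rho_1$ works precisely because $\rho_1$ is the \emph{global} maximum, so ``sum of roots $\succeq\rho_1$'' forces multiples of $\rho_1$. After stripping row and column~$1$, the analogous claim for row~$2$ fails: ``sum of roots $\succeq\rho_2$'' still admits $\rho_1$, and in fact the iterated cuspidality constraints allow $\al^2_b$ to decompose as a sum including a root $\delta\prec\rho_2$ provided the complementary tail absorbs a root $\succ\rho_2$ (e.g.\ if $\rho_2=\delta+\gamma$ with $\delta\prec\rho_2\prec\gamma$). So one cannot simply re-run the extremal-row argument with $\rho_2$ in the lead role. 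The correct closure is: when $m_1=n_1$ (or $m_N=n_N$), both the corresponding row and column of $\underline\al$ are supported on the single diagonal entry, so the Mackey factor decomposes as a tensor product whose second tensorand is a Mackey factor for $\Res_{|\pi'|}\Stand(\si')$ with $\al'=\al-m_1\rho_1$ (resp.\ $\al-m_N\rho_N$); now the inductive hypothesis for the smaller $\al'$ gives $\pi'\leq\si'$ directly, which combined with the equal entry yields the claim. This is exactly the same induction the paper runs, just organized around a different Mackey decomposition.
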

\begin{proof}
Write $\pi=(m_1,\dots,m_N)$, $\si=(n_1,\dots, n_N)$. 
Let $\Res_{|\pi|} \Stand(\si)\neq 0$. It suffices to prove that $\pi\geq_l \si$ or $\pi\leq_r \si$ implies that $\pi=\si$ and  $\Res_{|\pi|} \Stand(\si)\cong L_{\si}$. We may assume that $\pi\geq_l \si$, the case $\pi\leq _r \si$ being similar. We apply induction on $\height(\al)$. 
Pick the minimal $a$ with $m_a\neq 0$. Let 
$$\pi'=(0,\dots,0,m_{a+1},\dots, m_N)\in\Pi(\al-m_a\rho_a)$$ 
and 
$$\si'=(0,\dots,0,n_{a+1},\dots, n_N)\in\Pi(\al-n_a\rho_a).$$ 
By Theorem~\ref{TMackeyKL}, $\Res_{|\pi|} \Stand(\si)$ has  filtration with factors of the form 
$
\Ind^{m_a\rho_a;|\pi'|}_{\kappa_1,\dots,\kappa_c;\underline{\ga}}V,
$
where $m_a\rho_a=\kappa_1+\dots+\kappa_c$, with $\kappa_1,\dots,\kappa_c\in Q_+\setminus\{0\}$, and $\underline{\ga}$ is a refinement of $|\pi'|$. Moreover, the module $V$ is obtained by twisting and degree shifting as in (\ref{ETwist}) of a module obtained by restriction of 
$
L_{\rho_1}^{\boxtimes n_1}\boxtimes\dots\boxtimes  L_{\rho_N}^{\boxtimes n_N}
$
to a parabolic which has $\kappa_1,\dots,\kappa_c$ in the beginnings of the corresponding blocks. In particular, if $V\neq 0$, then for each $b=1,\dots,c$ we have that $\Res_{\kappa_b,\rho_k-\kappa_b}L_{\rho_k}\neq 0$ for some $k=k(b)$ with $n_k\neq 0$. 

Let $1\leq b\leq c$. If $\Res_{\kappa_b,\rho_k-\kappa_b}L_{\rho_k}\neq 0$, then by the definition of cuspidal modules, $\kappa_b$ is a sum of roots $\preceq \rho_k$. Moreover, since $\pi\geq_l \si$ and $n_k\neq0$, we have that $\rho_k\preceq\rho_a$. Thus $\kappa_b$ is a sum of roots $\preceq \rho_a$. Using Lemma~\ref{l1}, we conclude that $c=m_a$ and $\kappa_b=\rho_a=\rho_{k(b)}$ for all $b=1,\dots,c$. Hence $n_a\geq m_a$. Since  $\pi\geq _l \si$, we conclude that $n_a=m_a$, and 
$$
\Res_{|\pi|} \Stand(\si)\simeq L_{\rho_a}^{\circ m_a}\boxtimes \Res_{|\pi'|} \Stand(\si'). 
$$
Since $\height(\al-m_a\rho_a)<\height(\al)$, we can apply the inductive hypothesis. 
\end{proof}

\subsection{Classification of irreducible modules}\label{SRough}
We continue to work with a fixed convex preorder $\preceq$ on $\Phi_+$.  
In this subsection we prove the following theorem:

\begin{Theorem} \label{THeadIrr} 
For a given convex preorder, there exists a unique cuspidal system $\{L_\rho\mid \rho\in \Phi_+\}$. Moreover: 
\begin{enumerate}
\item[{\rm (i)}] For every root partition $\pi$, the proper standard module  
$
\Stand(\pi)
$ has an irreducible head; denote this irreducible module $L(\pi)$. 

\item[{\rm (ii)}] $\{L(\pi)\mid \pi\in \Pi(\al)\}$ is a complete and irredundant system of irreducible $R_\al$-modules up to isomorphism.

\item[{\rm (iii)}] $L(\pi)^\circledast\cong L(\pi)$.  

\item[{\rm (iv)}] $[\Stand(\pi):L(\pi)]_q=1$, and $[\Stand(\pi):L(\si)]_q\neq 0$ implies $\si\leq \pi$. 

\item[{\rm (v)}] $\Res_{|\pi|}L(\pi)\cong L_{\pi}$ and $\Res_{|\si|}L(\pi)\neq 0$ implies $\si\leq \pi$.  

\item[{\rm (vi)}] $L_\rho^{\circ n}$ is irreducible for all $\rho\in \Phi_+$ and all $n\in\Z_{>0}$.  
\end{enumerate}
\end{Theorem}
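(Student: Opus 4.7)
The plan is to argue by strong induction on $\height(\alpha)$, proving (i)--(vi) together. The base case $\height(\alpha) \leq 1$ is immediate (the only root partition is $(\alpha_i)$ and the nil-Hecke algebra has a unique irreducible up to shift). For the inductive step, the argument splits according to whether $\alpha$ is itself a positive root.

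First, suppose $\alpha \notin \Phi_+$ (the \textbf{composite case}). Every $\pi \in \Pi(\alpha)$ uses only roots of height strictly less than $\height(\alpha)$, so by the inductive hypothesis each cuspidal $L_{\rho_k}$ exists and inductive (vi) makes each $L_{\rho_k}^{\circ m_k}$ irreducible; hence $L_\pi$ is irreducible and $\bar\Delta(\pi) = \Ind_{|\pi|} L_\pi$ is defined. The key observation for (i) is that $\bar\Delta(\pi)$ is generated as an $R_\alpha$-module by $1_{|\pi|} \bar\Delta(\pi)$, which by Proposition~\ref{P1}(i) equals $L_\pi$; so any proper submodule $K$ satisfies $\Res_{|\pi|} K \subsetneq L_\pi$, and irreducibility of $L_\pi$ forces $\Res_{|\pi|} K = 0$. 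The sum of proper submodules is therefore proper, giving an irreducible head $L(\pi)$ with $\Res_{|\pi|} L(\pi) \cong L_\pi$. The $L(\pi)$ are pairwise non-isomorphic: $L(\pi) \cong L(\sigma)$ gives $L_\sigma = \Res_{|\sigma|} L(\pi) \neq 0$, so by exactness $\Res_{|\sigma|} \bar\Delta(\pi) \neq 0$, and Proposition~\ref{P1}(ii) yields $\sigma \leq \pi$; symmetry gives $\pi = \sigma$. Lemma~\ref{LAmount} then forces the $|\Pi(\alpha)|$ distinct $L(\pi)$ to exhaust the irreducibles, proving (ii); statements (iv) and the remainder of (v) follow from the same restriction analysis applied to composition factors. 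For (vi), when $\alpha = n\rho$, applying (iv) to $\pi = (\rho^n)$ and a short check of the bilexicographic order (using monotonicity of root partitions) shows the only $\sigma \leq (\rho^n)$ is $(\rho^n)$ itself, so Corollary~\ref{CPowerIrr} forces $\bar\Delta((\rho^n))$, and hence $L_\rho^{\circ n}$, to be irreducible.

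Now suppose $\alpha = \rho \in \Phi_+$ (the \textbf{root case}). The composite-case argument applied to the non-trivial partitions of $\rho$ (all using only smaller-height roots) produces $|\Pi(\rho)| - 1$ pairwise non-isomorphic irreducibles $L(\pi)$; by Lemma~\ref{LAmount} exactly one further irreducible exists, and I \emph{define} $L_\rho := L((\rho)) = \bar\Delta((\rho))$. Verifying cuspidality of this newly-defined $L_\rho$ is the main obstacle. The rough strategy: given $\Res_{\beta,\gamma} L_\rho \neq 0$ with $\rho = \beta + \gamma$ and $\beta, \gamma \neq 0$, pick an irreducible submodule $L(\tau) \boxtimes L(\tau') \hookrightarrow \Res_{\beta,\gamma} L_\rho$; by Frobenius reciprocity $L_\rho$ is a quotient of $L(\tau) \circ L(\tau')$, hence a composition factor of $\bar\Delta(\tau) \circ \bar\Delta(\tau')$. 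When the concatenation $(\tau, \tau')$ is a root partition $\pi \neq (\rho)$ of $\rho$, this product is a shift of $\bar\Delta(\pi)$, and the already-proved (iv) gives $(\rho) \leq \pi$ --- contradicting the definition of $L_\rho$ as not occurring in any $\bar\Delta(\pi)$ with $\pi \neq (\rho)$. Excluding the remaining configurations via iterated use of Lemma~\ref{l1} and the convex order forces all parts of $\tau$ to lie $\prec \rho$ and all parts of $\tau'$ to lie $\succ \rho$, yielding (Cus).

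Finally, for (iii) I would use the extremal-word characterization of Lemma~\ref{LExtrMult}: the $\circledast$-dual has the same word-space supports as $L(\pi)$ with bar-invariant graded dimensions, and so shares every extremal word with $L(\pi)$; since Lemma~\ref{LExtrMult} determines an irreducible uniquely (with its canonical self-dual shift) from any extremal word, $L(\pi) \cong L(\pi)^\circledast$. Uniqueness of the cuspidal system is then automatic from the classification: any alternative candidate $L_\rho'$ satisfying (Cus) is an irreducible $R_\rho$-module whose cuspidality precludes its appearance as a subquotient of any $\bar\Delta(\pi)$ with $\pi \neq (\rho)$, so it must coincide with our $L_\rho$.
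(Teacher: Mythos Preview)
Your overall inductive strategy is correct and matches the paper's, but the composite case has a circularity. You claim ``inductive (vi) makes each $L_{\rho_k}^{\circ m_k}$ irreducible,'' yet when $\alpha = n\rho$ with $n>1$ and $\pi = (\rho^n)$, the factor $L_\rho^{\circ n}$ lives over $R_{n\rho} = R_\alpha$, so the inductive hypothesis does not cover it. Your head argument for (i) then uses irreducibility of $L_\pi$ for this $\pi$, and your proof of (vi) ``appl[ies] (iv) to $\pi = (\rho^n)$,'' which in turn relied on (i) for that same $\pi$ --- closing a circle. The fix (and this is what the paper does) is to first prove (i)--(v) only for root partitions with at least two distinct parts, where induction genuinely gives irreducibility of each $L_{\rho_k}^{\circ m_k}$; this already produces $|\Pi(\alpha)|-1$ pairwise non-isomorphic irreducibles when $\alpha = n\rho$. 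One then argues directly that no such $L(\pi)$ can be a composition factor of $\bar\Delta((\rho^n))$ (using Proposition~\ref{P1}(ii) and minimality of $(\rho^n)$ from Lemma~\ref{l1}, \emph{not} (iv)), so every composition factor is the single missing irreducible, and Corollary~\ref{CPowerIrr} forces the multiplicity to be one.

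Your cuspidality sketch in the root case is also too thin. The concatenation $(\tau,\tau')$ is almost never weakly decreasing, so the real content is exactly the step you wave away as ``excluding the remaining configurations via iterated use of Lemma~\ref{l1}.'' The paper's argument (Lemma~\ref{LMcNamara}) first reduces to $\beta\in\Phi_+$ chosen \emph{maximal} with $\Res_{\beta,\rho-\beta}L_\rho\neq 0$, then takes the largest part $\rho_l$ occurring on the $\gamma$-side, uses adjunction to push $L_\beta\boxtimes L_{\rho_l}$ up and extract a composition factor inside $\Res_{\beta+\rho_l,\gamma-\rho_l}L_\rho$, and invokes maximality of $\beta$ together with Lemma~\ref{l1} to force $\rho_l\preceq\beta$; this makes $L_\rho$ a quotient of a nontrivial proper standard module, contradicting its definition. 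That maximality trick is the idea your sketch is missing.
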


The rest of \S\ref{SRough} is devoted to the proof of Theorem~\ref{THeadIrr}, which goes by induction on $\height(\al)$. To be more precise, we prove the following statements for all $\al\in Q_+$ by induction on $\height(\al)$: 
\begin{enumerate}
\item[{\rm (1)}] For each $\rho\in\Phi_+$ with $\height(\rho)\leq\height(\al)$ there exists a unique up to isomorphism irreducible $R_\rho$-module $L_\rho$ which satisfies the property (Cus) of Definition~\ref{DCus}. Moreover, $L_\rho$ also satisfies the property (vi) of Theorem~\ref{THeadIrr} if $\height(n\rho)\leq\height(\al)$.

\item[{\rm (2)}] The proper standard modules $\bar\De(\pi)$ for all $\pi\in\Pi(\al)$, defined as in (\ref{EStand}) using the modules from (1), satisfy the properties (i)--(v) of Theorem~\ref{THeadIrr}. 
\end{enumerate}

The induction starts with $\height(\al)=0$, and for $\height(\al)=1$ the theorem is also clear since $R_{\al_i}$ is a polynomial algebra, which has only the trivial irreducible (graded) representation $L_{\al_i}$. The inductive assumption will stay valid throughout \S~\ref{SRough}.

\subsubsection{} 
In the following proposition, we exclude the case where the proper standard module is of the form $L_\rho^{\circ n}$. The excluded cases will be dealt with in  \S\S~\ref{SSCusp} and \ref{SSPower}.

\begin{Proposition} \label{PHeadIrr} 
Let $\pi=(m_1,\dots,m_N)\in \Pi(\al)$, 
and suppose that 
there are $1\leq k\neq l\leq N$ such that $m_k\neq 0$ and $m_l\neq 0$. 
\begin{enumerate}
\item[{\rm (i)}] 
$
\Stand(\pi)
$ has an irreducible head; denote this irreducible module $L(\pi)$. 

\item[{\rm (ii)}] If $\pi\neq \si$, then $L(\pi)\not\simeq L(\si)$. 

\item[{\rm (iii)}] $L(\pi)^\circledast\cong L(\pi)$.  

\item[{\rm (iv)}] $[\Stand(\pi):L(\pi)]_q=1$, and $[\Stand(\pi):L(\si)]_q\neq 0$ implies $\si\leq \pi$. 

\item[{\rm (v)}] $\Res_{|\pi|}L(\pi)\simeq L_{\pi}$ and $\Res_{|\si|}L(\pi)\neq 0$ implies $\si\leq \pi$.  
\end{enumerate}
\end{Proposition}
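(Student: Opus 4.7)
The plan is to use Frobenius reciprocity
$$\HOM_{R_\alpha}(\bar\Delta(\pi),M)\cong \HOM_{R_{|\pi|}}(L_\pi,\Res_{|\pi|}M)$$
together with Proposition~\ref{P1} to pin down $L(\pi)$ via its restriction to $R_{|\pi|}$. Because $\pi$ has at least two nonzero parts, every summand $m_k\rho_k$ with $m_k\neq 0$ satisfies $\height(m_k\rho_k)<\height(\alpha)$, and the inductive hypothesis (vi) ensures each $L_{\rho_k}^{\circ m_k}$ is irreducible over $R_{m_k\rho_k}$; hence $L_\pi$ is irreducible over the parabolic $R_{|\pi|}$ as an outer tensor product of irreducibles.

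For (i) and (v), let $H$ be the semisimple head of $\bar\Delta(\pi)$. Proposition~\ref{P1}(i) gives $\Res_{|\pi|}\bar\Delta(\pi)\cong L_\pi$, and by exactness $\Res_{|\pi|}H$ is a nonzero quotient of $L_\pi$, hence $\cong L_\pi$. Since $H$ is semisimple, $\HOM(\bar\Delta(\pi),H)=\End(H)$, and Frobenius yields $\End(H)\cong \HOM(L_\pi,L_\pi)\cong F$ of total dimension one. This forces $H$ to consist of a single irreducible summand, which we name $L(\pi)$. Property (v) is then immediate: exactness of restriction combined with Proposition~\ref{P1}(ii) gives $\Res_{|\sigma|}L(\pi)=0$ for $\sigma\not\leq\pi$, while $\Res_{|\pi|}L(\pi)$ is a nonzero quotient of $L_\pi$, hence $\cong L_\pi$. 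Property (ii) follows from (v) by bilexicographic antisymmetry: $L(\pi)\simeq L(\sigma)$ (even up to a grading shift) produces $\Res_{|\pi|}L(\sigma)\neq 0$ and $\Res_{|\sigma|}L(\pi)\neq 0$, so $\pi\leq\sigma\leq\pi$.

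For (iii), the anti-involution $\tau$ of $R_\alpha$ fixes the $1_\bi$, $y_r$, $\psi_s$ and hence preserves the embedding $R_{|\pi|}\hookrightarrow R_\alpha$, yielding a natural isomorphism $\Res_{|\pi|}(M^\circledast)\cong(\Res_{|\pi|}M)^\circledast$ of graded modules. Applying this to $M=L(\pi)$ with Lemma~\ref{LLMMuSeldDual} gives $\Res_{|\pi|}(L(\pi)^\circledast)\cong L_\pi^\circledast\cong L_\pi$; writing $L(\pi)^\circledast\cong q^dL(\pi)$ and comparing restrictions forces $d=0$. This is precisely the role of the shift $\shift(\pi)$ in the definition of $\bar\Delta(\pi)$: it calibrates $L_\pi$, and hence $L(\pi)$, to be $\circledast$-self-dual with no residual shift.

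For (iv), apply $\Res_{|\pi|}$ to $\bar\Delta(\pi)$ in the graded Grothendieck group to obtain
$$[L_\pi]=\sum_L[\bar\Delta(\pi):L]_q\,[\Res_{|\pi|}L],$$
with all multiplicities in $\mathbb Z_{\geq 0}[q,q^{-1}]$, so no cancellation can occur. Only $L=L(\pi)$ has $\Res_{|\pi|}L\cong L_\pi$, giving $[\bar\Delta(\pi):L(\pi)]_q=1$. If $[\bar\Delta(\pi):L(\sigma)]_q\neq 0$, then applying $[\Res_{|\sigma|}\,\cdot\,]$ forces $\Res_{|\sigma|}\bar\Delta(\pi)\neq 0$ via the contribution $\Res_{|\sigma|}L(\sigma)\cong L_\sigma$, so Proposition~\ref{P1}(ii) yields $\sigma\leq\pi$. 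The main obstacle is precisely in this last step: it requires $\Res_{|\sigma|}L(\sigma)\cong L_\sigma$ for every $\sigma\in\Pi(\alpha)$, including single-part partitions $\sigma=(n\rho)$ with $n\rho=\alpha$, whose handling belongs to the cuspidal-power analysis of \S\S\ref{SSCusp}--\ref{SSPower}; the induction therefore demands either a forward reference to those sections or a careful reorganization along the bilexicographic order. A secondary delicate point is the grading bookkeeping in (iii), which relies crucially on the normalization of $\shift(\pi)$.
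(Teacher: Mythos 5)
Your argument is essentially the paper's own: adjointness of $\Ind_{|\pi|}$ and $\Res_{|\pi|}$, irreducibility of $L_\pi$ from the inductive hypothesis, Proposition~\ref{P1} driving (i), (iv), (v), antisymmetry of the bilexicographic order for (ii), and Lemma~\ref{LLMMuSeldDual} for (iii); your packaging of (i) via $\END$ of the head and of (iv) via positivity in the Grothendieck group differs only cosmetically from the paper's multiplicity-one count.

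Two remarks. In (iii) the step ``writing $L(\pi)^\circledast\cong q^dL(\pi)$'' is not justified as stated: a priori the dual could be a shift of a \emph{different} irreducible, and knowing $\Res_{|\pi|}(L(\pi)^\circledast)\cong L_\pi$ together with (v) does not by itself rule this out. The quickest repair uses the adjunction you already set up:
$$
\HOM_{R_\al}\big(\bar\Delta(\pi),L(\pi)^\circledast\big)\cong\HOM_{R_{|\pi|}}\big(L_\pi,\Res_{|\pi|}L(\pi)^\circledast\big)\cong\END_{R_{|\pi|}}(L_\pi)=F,
$$
concentrated in degree zero, so $L(\pi)^\circledast$ is a degree-zero irreducible quotient of $\bar\Delta(\pi)$ and hence is isomorphic to its head $L(\pi)$; no separate bookkeeping of the shift $d$ is needed (the role of $\shift(\pi)$ is already exhausted in making $L_\pi$ self-dual). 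Secondly, the obstacle you flag in (iv) --- needing $\Res_{|\si|}L(\si)\cong L_\si$ for the one-part partitions $(\rho)$ or $(\rho^n)$ with $\rho$, respectively $n\rho$, equal to $\al$ --- is exactly how the paper organizes the induction: such $\si$ occur only when $\al$ itself is a root or a multiple of one, they are the unique minimal elements of $\Pi(\al)$ by Lemma~\ref{l1}, so $\si\leq\pi$ holds for them automatically, and the corresponding irreducibles are only constructed afterwards in \S\S\ref{SSCusp}--\ref{SSPower}. So your proof fits the paper's inductive scheme as is, with the forward reference you anticipate rather than any reorganization.
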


\begin{proof}
(i) and (v) If $L$ is an irreducible quotient of $\bar\De(\pi)=\Ind_{|\pi|}L_{\pi}$, then by adjointness of $\Ind_{|\pi|}$ and $\Res_{|\pi|}$ and the irreducibility of the $R_{|\pi|}$-module $L_{\pi}$, which holds by the inductive assumption, we conclude  
that  $L_{\pi}$ is a submodule of $\Res_{|\pi|} L$. 
On the other hand, by Proposition~\ref{P1}(i) the multiplicity of $L_{\pi}$ in $\Res_{|\pi|} \bar\De(\pi)$ is $1$, so (i) follows. Note that we have also proved the first statement in (v), while the second statement in (v) follows from Proposition~\ref{P1}(ii) and the exactness of the functor $\Res_{|\pi|}$. 

(iv) By (v), $\Res_{|\si|}L(\si)\cong L_{\si}\neq 0$. Therefore, if $L(\si)$ is a composition factor of $\Stand(\pi)$, then $\Res_{|\si|}\Stand(\pi)\neq 0$ by exactness of $\Res_{|\si|}$. By Proposition~\ref{P1}, we then have $\si\leq \pi$ and (iv). 

(ii) If $L(\pi)\simeq L(\si)$, then we deduce from (iv) that $\pi\leq \si$ and $\si\leq \pi$, whence $\pi=\si$. 

(iii) follows from (v) and Lemma~\ref{LLMMuSeldDual}. 
\end{proof}

\subsubsection{}\label{SSCusp}
We now assume that $\al=\rho_k\in\Phi_+$.
There is a {\em trivial} root partition $(\rho_k)\in\Pi(\al)$. 
Proposition~\ref{PHeadIrr} yields $|\Pi(\al)|-1$ irreducible $R_\al$-modules, namely the ones which correspond to the {\em non-trivial}\, root partitions $\pi\in\Pi(\al)$. We define the cuspidal module $L_\al$ to be the missing irreducible $R_\al$-module, cf. Lemma~\ref{LAmount}. Then, of course, we have that $\{L(\pi)\mid\pi\in\Pi(\al)\}$ is a complete and irredundant system of irreducible $R_\al$-modules up to isomorphism. We now prove that $L_\al$ satisfies the property (Cus) and is uniquely determined by it:

\begin{Lemma} \label{LMcNamara}
Let $\al=\rho_k\in\Phi_+$. If  $\be,\ga\in Q_+$ are non-zero elements such that $\al=\be+\ga$ and $\Res_{\be,\ga}L_\al\neq 0$, then $\beta$ is a sum of roots less than $\al$ and $\ga$ is a sum of roots greater than $\al$. Moreover, this property characterizes $L_\al$ among the irreducible $R_\al$-modules uniquely up to isomorphism and degree shift.   
\end{Lemma}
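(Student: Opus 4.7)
My proof plan has two parts: showing $L_\al$ satisfies (Cus), and showing it is the unique such irreducible. I would handle uniqueness first, since it is essentially a direct bookkeeping argument.

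\emph{Uniqueness.} Suppose $L$ is an irreducible $R_\al$-module satisfying (Cus). By Proposition~\ref{PHeadIrr}(ii) together with our definition of $L_\al$, the set $\{L(\pi)\mid\pi\in\Pi(\al)\}$ exhausts all irreducibles, so $L\cong L(\pi)$ for some $\pi\in\Pi(\al)$. Suppose for contradiction that $\pi=(m_1,\dots,m_N)$ is non-trivial; since $\al=\rho_k$, any non-trivial partition forces $m_k=0$. Let $a$ be the smallest and $b$ the largest index with $m_a,m_b>0$. Proposition~\ref{PHeadIrr}(v) gives $\Res_{|\pi|}L\ne 0$; coarsening this restriction yields both $\Res_{m_a\rho_a,\,\al-m_a\rho_a}L\ne 0$ and $\Res_{\al-m_b\rho_b,\,m_b\rho_b}L\ne 0$. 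Applying (Cus) to the first, $m_a\rho_a$ is a sum of positive roots $\prec\al$, so by Lemma~\ref{l1} (with $\beta_i$'s the summands and $\gamma_j$'s the $m_a$ copies of $\rho_a$) we conclude $\rho_a\preceq\al$, and since $\rho_a\ne\rho_k$ in fact $\rho_a\prec\al$. Symmetrically $\rho_b\succ\al$. But the roots are listed in decreasing order, so $\rho_b\preceq\rho_a$, giving $\al\prec\rho_b\preceq\rho_a\prec\al$, a contradiction. Therefore $\pi=(\al)$ and $L\cong L_\al$.

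\emph{Existence.} Suppose $\Res_{\be,\ga}L_\al\ne 0$ with $\be,\ga\in Q_+$ non-zero. The plan is to pick an irreducible submodule $L(\sigma)\boxtimes L(\tau)$ in the socle of $\Res_{\be,\ga}L_\al$, with $\sigma\in\Pi(\be)$ and $\tau\in\Pi(\ga)$ (available by the inductive classification, since $\height(\be),\height(\ga)<\height(\al)$). By the adjunction $(\Ind_{\be,\ga},\Res_{\be,\ga})$, the irreducible $L_\al$ is a quotient of $L(\sigma)\circ L(\tau)$, hence of $\Stand(\sigma)\circ\Stand(\tau)$. Writing $\sigma=(\beta_1\succeq\dots\succeq\beta_p)$ and $\tau=(\gamma_1\succeq\dots\succeq\gamma_q)$, the goal is to show $\beta_1\prec\al$ and $\gamma_q\succ\al$ (neither equality can hold without forcing $\ga=0$ or $\be=0$); this yields the desired decompositions of $\be$ and $\ga$.

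\emph{Main obstacle.} The clean sub-case is when the concatenated sequence $(\beta_1,\dots,\beta_p,\gamma_1,\dots,\gamma_q)$ is already weakly decreasing, i.e.\ $\beta_p\succeq\gamma_1$: then $\Stand(\sigma)\circ\Stand(\tau)\cong\Stand(\pi)$ up to a grading shift, where $\pi=\sigma\cup\tau$ sorted. Since $L_\al$ is an irreducible quotient, $L_\al\cong L(\pi)$, but $\pi$ has $p+q\ge 2$ parts, hence is non-trivial, contradicting the very definition of $L_\al$. The technical difficulty is the remaining case $\beta_p\prec\gamma_1$, in which the induction product is in the wrong order and does not equal any $\Stand(\pi)$ outright. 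My plan here, following the strategy of \cite{McN,Kcusp}, is to combine three ingredients: (a) self-duality $L_\al\cong L_\al^\circledast$ together with Lemma~\ref{LDualInd}, which additionally exhibits $L_\al$ as a \emph{submodule} of $L(\tau)\circ L(\sigma)$; (b) the Mackey filtration of Theorem~\ref{TMackeyKL} applied to $\Res_{|\pi|}\bigl(L(\sigma)\circ L(\tau)\bigr)$; and (c) the bilex-order composition structure in Proposition~\ref{PHeadIrr}(iv) applied to the non-trivial $\pi=\sigma\cup\tau$. Combining these should force any ``misplaced'' root $\beta_i\succeq\al$ or $\gamma_j\preceq\al$ to produce a Mackey subquotient realising $L_\al$ as a composition factor of some $\Stand(\pi')$ with $\pi'$ non-trivial and $L_\al$ in its head, again contradicting the defining property of $L_\al$. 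This bilex/duality/Mackey interplay is the principal technical step of the proof.
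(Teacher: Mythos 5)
Your uniqueness argument is fine (it is a fleshed-out version of the paper's one-line remark that, by Proposition~\ref{PHeadIrr}(v) and Lemma~\ref{l1}, no $L(\pi)$ with $\pi$ non-trivial can satisfy (Cus)), and your set-up for the existence part is correct, including the easy sub-case $\beta_p\succeq\gamma_1$, where $\Stand(\sigma)\circ\Stand(\tau)$ is itself a proper standard module and the contradiction is immediate. But the remaining case $\beta_p\prec\gamma_1$ is exactly where the whole content of the lemma lies, and there you have only a plan, not a proof: "combining (a) self-duality, (b) Mackey, (c) bilexicographic triangularity should force a misplaced root to produce $L_\al$ in the head of some $\Stand(\pi')$" is not an argument. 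The difficulty is that knowing some $\beta_i\succeq\al$ or $\gamma_j\preceq\al$ does not by itself exhibit $L_\al$ as a quotient (rather than a mere subquotient) of a proper standard module for a non-trivial root partition, and a Mackey filtration of $\Res_{|\pi|}\bigl(L(\sigma)\circ L(\tau)\bigr)$ gives subquotients, not head constituents; so the claimed contradiction with the definition of $L_\al$ does not materialize from the listed ingredients alone.

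The paper closes this gap with two specific devices you do not have. First, it proves only the statement about $\be$ (the $\ga$-side being symmetric) and reduces, via the minimal non-zero part of $\sigma$ and transitivity of restriction, to the case where $\be$ is a single positive root and $L(\sigma)=L_\be$ is cuspidal. Second — and this is the key trick — it chooses $\be$ \emph{maximal} among positive roots with $\Res_{\be,\ga}L_\al\neq 0$. Then, peeling off the largest root $\rho_l$ of $\tau$ and using adjunction, one gets a non-zero map out of $(L_\be\circ L_{\rho_l})\boxtimes V$; if $\rho_l=\ga$ one concludes $\be\prec\al\prec\ga$ by convexity (else $L_\al$ would be the head of $L_\be\circ L_\ga$), and if $\rho_l\neq\ga$ the maximality of $\be$ forces every root occurring in a suitable composition factor of $L_\be\circ L_{\rho_l}$ to be $\preceq\be$, so Lemma~\ref{l1} gives $\rho_l\preceq\be$ and then $L_\al$ is a quotient of the genuine proper standard module $L_\be\circ\Stand(\tau)$, a contradiction. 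Without the maximality choice (or some substitute for it), your case $\beta_p\prec\gamma_1$ remains open, so as it stands the proposal does not prove the first (and main) assertion of the lemma.
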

\begin{proof}
We prove that $\beta$ is a sum of roots less than $\al$, the proof that $\ga$ is a sum of roots greater than $\al$ being similar. Let $L(\pi)\boxtimes L(\si)$ be an irreducible submodule of $\Res_{\be,\ga} L_\al$, so that $\pi=(m_1,\dots,m_N)\in\Pi(\be)$ and $\si=(n_1,\dots,n_N)\in\Pi(\ga)$. Let $a$ be minimal with $m_a\neq 0$.  Then $\Res_{\rho_a,\be-\rho_a}L(\pi)\neq 0$, and hence $\Res_{\rho_a,\ga+\be-\rho_a}L_\al\neq0$. If we can prove that $\rho_a$ is a sum of roots less than $\al$, then by convexity, $\rho_a$ is a root less than $\al$,  whence, by the minimality of $a$, we have that $\beta$ is a sum of roots less than $\al$. 
So we may assume from the beginning that $\be$ is a root and $L(\pi)=L_\be$. Moreover, we may assume that $\be$ is the maximal positive root for which $\Res_{\be,\ga} L_\al\neq 0$. 

Now, let $l$ be the minimal with $n_l\neq 0$. Then we have a non-zero map 
$$L_\be\boxtimes L_{\rho_l}\boxtimes V\to \Res_{\be,\kappa,\ga-{\rho_l}}L_\al,$$ 
for some $0\neq V\in\mod{R_{\ga-{\rho_l}}}$. By adjunction, this yields a non-zero map
$$
f: (\Ind_{\be,{\rho_l}} L_\be\boxtimes L_{\rho_l})\boxtimes V\to \Res_{\be+{\rho_l},\ga-{\rho_l}}L_\al.
$$

If ${\rho_l}=\ga$, then we must have $\be\prec\ga$, for otherwise $L_\al$ is a quotient of the proper standard module $L_\be\circ L_\ga$, which contradicts the definition of the cuspidal module $L_\al$. Now, since $\al=\be+{\rho_l}$, we have by convexity that $\be\prec\al\prec\ga$, in particular $\be\prec\al$ as desired. 

Next, let ${\rho_l}\neq\ga$, and pick a composition factor  $L(\pi')$ of $\Ind_{\be,{\rho_l}} L_\be\boxtimes L_{\rho_l}$, which is not in the kernel of $f$. Write $\pi'=(m_1',\dots,m_N')\in \Pi(\be+\rho_l)$. By the assumption on the maximality of $\beta$, we have $\rho_c\preceq \be$ whenever $m_c'>0$. 
Thus $\be+{\rho_l}$ is a sum of roots $\preceq \be$. Lemma~\ref{l1} implies that ${\rho_l}\preceq \be$, and so by adjointness, $L_\al$ is a quotient of the proper standard module $L_\beta\circ\bar\De(\si)$, which is a contradiction. 

The second statement of the lemma is clear since, in view of Proposition~\ref{PHeadIrr}(v) and Lemma~\ref{l1}, the irreducible modules $L(\pi)$, corresponding to non-trivial root partitions $\pi\in \Pi(\al)$, do not satisfy the property (Cus). 
\end{proof}

\subsubsection{}\label{SSPower}
Assume now that $\al=n\rho_k$ for some $\rho_k\in\Phi_+$ and $n\in\Z_{>1}$.

\begin{Lemma} \label{LCuspPower} 
The induced module $L_{\rho_k}^{\circ n}$ is irreducible. 
\end{Lemma}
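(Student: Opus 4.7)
The plan is to combine Proposition~\ref{P1}(ii) with Lemma~\ref{LAmount}, Proposition~\ref{PHeadIrr}, and Corollary~\ref{CPowerIrr}. Setting $\si=(\rho_k^n)$, we have $\bar\Delta(\si)=q^{\shift(\si)}L_{\rho_k}^{\circ n}$, so Proposition~\ref{P1}(ii) gives that $\Res_{|\pi|}L_{\rho_k}^{\circ n}\neq 0$ forces $\pi\leq\si$ in the bilexicographic order on $\Pi(n\rho_k)$. A short check then shows this already implies $\pi=\si$: writing $\pi=(m_1,\dots,m_N)$, the condition $\pi\leq_l\si$ forces $m_j=0$ for $j<k$, and then $\sum_j m_j\rho_j=n\rho_k$ rearranges to $(n-m_k)\rho_k=\sum_{j>k}m_j\rho_j$. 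Since every $\rho_j$ on the right satisfies $\rho_j\prec\rho_k$, Lemma~\ref{l1} (taking $\alpha=\rho_k$, the $\beta$'s to be the roots on the right, and the $\gamma$'s to be the $n-m_k$ copies of $\rho_k$ on the left) forces the right-hand side to vanish; hence $m_k=n$ and $\pi=\si$.

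Next I would combine this with Proposition~\ref{PHeadIrr}(v). For every $\pi\neq\si$ in $\Pi(n\rho_k)$, the hypothesis of Proposition~\ref{PHeadIrr} is satisfied (since $\si$ is the only root partition of $n\rho_k$ concentrated at a single positive root, as no other positive root can be a positive rational multiple of $\rho_k$). Thus $L(\pi)$ is defined and $\Res_{|\pi|}L(\pi)\cong L_\pi\neq 0$. By exactness of $\Res_{|\pi|}$, no such $L(\pi)$ can appear as a composition factor of $L_{\rho_k}^{\circ n}$. Lemma~\ref{LAmount} counts $|\Pi(n\rho_k)|$ irreducible $R_{n\rho_k}$-modules up to isomorphism and shift, and Proposition~\ref{PHeadIrr}(ii) supplies $|\Pi(n\rho_k)|-1$ pairwise non-isomorphic ones; hence every composition factor of $L_{\rho_k}^{\circ n}$ is a grading shift of the unique remaining irreducible $L^*$.

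To conclude, Corollary~\ref{CPowerIrr} supplies an irreducible $N$ with $[L_{\rho_k}^{\circ n}:N]_q=q_{\rho_k}^{-n(n-1)/2}$, a single monomial. Since every composition factor of $L_{\rho_k}^{\circ n}$ is a shift of $L^*$, the module $N$ must itself be such a shift, and therefore $[L_{\rho_k}^{\circ n}:L^*]_q$ is also a monomial. The graded composition series of $L_{\rho_k}^{\circ n}$ thus has length exactly one, and the lemma follows. The main obstacle is the opening step --- verifying via Lemma~\ref{l1} that the bilexicographic condition $\pi\leq(\rho_k^n)$ admits only $\pi=(\rho_k^n)$; once that is in hand, the remainder is a formal application of the classification and counting results already established.
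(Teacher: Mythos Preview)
Your proof is correct and follows essentially the same approach as the paper's own argument: both use Lemma~\ref{l1} to see that $\si=(\rho_k^n)$ is the unique minimal element of $\Pi(n\rho_k)$, then combine Proposition~\ref{PHeadIrr}(v) with Proposition~\ref{P1}(ii) and the count from Lemma~\ref{LAmount} to conclude that every composition factor of $L_{\rho_k}^{\circ n}$ is a shift of one fixed irreducible, and finally invoke Corollary~\ref{CPowerIrr} to pin down the multiplicity as a single monomial. You have simply made explicit the details the paper leaves implicit, notably the verification that $\pi\leq\si$ forces $\pi=\si$ and the reason Proposition~\ref{PHeadIrr} applies to every $\pi\neq\si$.
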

\begin{proof}
In view of Proposition~\ref{PHeadIrr}, we have the irreducible modules $L(\pi)$ for all root partitions $\pi\in\Pi(\al)$, except for $\pi=\si:=(\rho_k^n)$, for which $\bar\De(\si)=L_{\rho_k}^{\circ n}$. By Lemma~\ref{l1}, $\si$ is the unique minimal element of $\Pi(\al)$. 
By Proposition~\ref{PHeadIrr}(v), we conclude that 
$L_{\rho_k}^{\circ n}$ has only one composition factor $L$ appearing with certain multiplicity $c(q)\in\A$, and such that $L\not\cong L(\pi)$ for all $\pi\in\Pi(\al)\setminus\{\si\}$. Finally, by Corollary~\ref{CPowerIrr}, we conclude that $L_{\rho_k}^{\circ n}\simeq L$. 
\end{proof}

The proof of Theorem~\ref{THeadIrr} is now complete.

\subsection{\boldmath Reduction modulo $p$}
In this subsection we work with two fields: $F$ of characteristic $p>0$ and $K$ of characteristic $0$. We use the corresponding indices to distinguish between the two situations. Given an irreducible $R_\al(K)$-module $L_K$ for a root partition $\pi\in\Pi(\al)$ we can pick a (graded) $R_\al(\Z)$-invariant lattice $L_\Z$ as follows: 
pick a homogeneous word vector $v\in L_K$ and set $L_\Z:=R_\al(\Z)v$. 
The lattice $L_\Z$ can be used to {\em reduce modulo $p$}:
$$
\bar L:=L_\Z\otimes_\Z F.
$$
In general, the $R_\al(F)$-module $\bar L$ depends on the choice of the lattice $L_\Z$. However, we have $\CH \bar L=\CH L_K$, so by linear independence of  characters of irreducible $R_\al(F)$-modules, composition multiplicities of irreducible $R_\al(F)$-modules in $\bar L$ are well-defined. In particular, we have  well-defined {\em decomposition numbers}
$$
d_{\pi,\si}:=[\bar L(\pi):L_F(\si)]_q\qquad (\pi,\si\in \Pi(\al)),
$$
which depend only on the characteristic $p$ of $F$, since prime fields are splitting fields for irreducible modules over KLR algebras.

\begin{Lemma} \label{LMultOneRed}
Let $L_K$ be an irreducible $R_\al(K)$-module and let $\bi=i_1^{a_1}\dots i_b^{a_b}$ be an extremal word for $L_K$. Let $N$ be the irreducible $\circledast$-selfdual $R_\al(F)$-module defined by
$
N:=\tilde f_{i_k}^{a_k}\dots\tilde f_{i_1}^{a_1}1_F.
$ 
Then $[\bar L:N]_q=1$. 
\end{Lemma}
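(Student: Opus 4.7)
The plan is to reduce this directly to Corollary~\ref{CExtrNew} by transferring extremality from $L_K$ to $\bar L$ via the equality of characters.

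First I would record the character invariance under reduction modulo $p$: since $\bar L = L_\Z \otimes_\Z F$ is obtained from the lattice $L_\Z = R_\al(\Z)v$, and since the word space $1_\bj L_\Z$ is a free $\Z$-module of the same rank as the dimension of $1_\bj L_K$, we get
\begin{equation*}
\CH \bar L = \CH L_K.
\end{equation*}
This is exactly the statement already used in the paragraph defining the decomposition numbers $d_{\pi,\si}$.

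Next I would apply Lemma~\ref{LExtrMult} to $L_K$: since $\bi = i_1^{a_1}\cdots i_b^{a_b}$ is extremal for $L_K$, we have
\begin{equation*}
\DIM (L_K)_\bi = [a_1]^!_{i_1}\cdots [a_b]^!_{i_b}.
\end{equation*}
Combined with the character equality above, this gives $\DIM \bar L_\bi = [a_1]^!_{i_1}\cdots [a_b]^!_{i_b}$. Moreover, extremality of $\bi$ is a property of $\CH M \in \A\words$ (see the definition preceding Lemma~\ref{LExtrMult}), so from $\CH \bar L = \CH L_K$ the word $\bi$ is automatically an extremal word for $\bar L$ as well.

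Finally I would invoke Corollary~\ref{CExtrNew} applied to the finite-dimensional $R_\al(F)$-module $\bar L$ with extremal word $\bi$: it yields $m \in \A$ with $\DIM \bar L_\bi = m[a_1]^!_{i_1}\cdots [a_b]^!_{i_b}$, and the identification $[\bar L : N]_q = m$ for the $\circledast$-self-dual irreducible module $N = \tilde f_{i_b}^{a_b}\cdots \tilde f_{i_1}^{a_1}1_F$ hypothesized in the statement. Comparing with the dimension computed above forces $m = 1$, completing the proof.

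There is no real obstacle here: everything is assembled from the character invariance under reduction and the already-established multiplicity formula for extremal word spaces. The only mild subtlety worth spelling out is that extremality is defined purely in terms of $\CH$, so it transfers automatically from $L_K$ to $\bar L$; once this is noted, Corollary~\ref{CExtrNew} does all the work.
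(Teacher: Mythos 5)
Your proof is correct and is essentially the paper's own argument (``Reduction modulo $p$ preserves formal characters, so the result follows from Corollary~\ref{CExtrNew}''), simply spelled out in more detail. In particular, the observation that extremality is a property of $\CH M$ and therefore transfers from $L_K$ to $\bar L$ is exactly the point the paper is using implicitly.
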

\begin{proof}
Reduction modulo $p$ preserves formal characters, so the result follows from Corollary~\ref{CExtrNew}. 
\end{proof}

\begin{Proposition} \label{PRedModP}
Let $\pi,\si\in \Pi(\al)$. Then $d_{\pi,\si}\neq 0$ implies $\si\leq \pi$. In particular, reduction modulo $p$ of any cuspidal module is an irreducible cuspidal module again: $\bar L_{\rho}\simeq L_{\rho,F}$. 
\end{Proposition}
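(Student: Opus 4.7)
The plan is to combine two facts: reduction modulo $p$ preserves formal characters (as recorded just before Lemma~\ref{LMultOneRed}), and Theorem~\ref{THeadIrr}(v) describes the restriction of $L(\pi)$ to the parabolic $R_{|\sigma|}$. These two inputs will let me transport a composition factor relation in characteristic $p$ back to a restriction non-vanishing in characteristic $0$.

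For the first assertion, suppose $d_{\pi,\sigma}\neq 0$, so $L_F(\sigma)$ is a composition factor of $\bar L(\pi)$. By Theorem~\ref{THeadIrr}(v) applied over $F$, $\Res_{|\sigma|}L_F(\sigma)\cong L_{\sigma,F}\neq 0$; exactness of the restriction functor then forces $\Res_{|\sigma|}\bar L(\pi)\neq 0$. But $\Res_{|\sigma|}M=1_{|\sigma|}M$ is a direct sum of word spaces, so its non-vanishing is determined by $\CH M$. Since $\CH\bar L(\pi)=\CH L_K(\pi)$, I deduce $\Res_{|\sigma|}L_K(\pi)\neq 0$, and Theorem~\ref{THeadIrr}(v) applied now over $K$ yields $\sigma\leq\pi$.

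For the second assertion I will specialize to $\pi=(\rho)$, so $\bar L(\pi)=\bar L_\rho$ and every composition factor has the form $L_F(\sigma)$ with $\sigma\leq(\rho)$. Writing $\rho=\rho_k$ and $\sigma=(n_1,\dots,n_N)\in\Pi(\rho_k)$, the condition $\sigma\leq_l(\rho)$ forces $n_j=0$ for $j<k$ and $n_k\in\{0,1\}$. The case $n_k=1$ immediately gives $\sigma=(\rho)$; the case $n_k=0$ would express $\rho_k$ as a sum of positive roots strictly smaller than $\rho_k$, contradicting Lemma~\ref{l1} (apply it with $\alpha=\rho_k$, $\gamma_1=\alpha$, and $\beta_i$'s the $\rho_j$'s for $j>k$ repeated with multiplicity $n_j$). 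Hence the only possible composition factor of $\bar L_\rho$ is $L_{\rho,F}=L_F((\rho))$. To pin down the multiplicity, I will fix an extremal word $\bi=i_1^{a_1}\dots i_b^{a_b}$ for $L_\rho$ over $K$ and invoke Lemma~\ref{LMultOneRed}: the irreducible $F$-module $N:=\tilde f_{i_b}^{a_b}\cdots\tilde f_{i_1}^{a_1}1_F$ satisfies $[\bar L_\rho:N]_q=1$. Since $L_{\rho,F}$ is the only candidate composition factor, $N\cong L_{\rho,F}$ and so $\bar L_\rho$ has composition length one, giving $\bar L_\rho\simeq L_{\rho,F}$.

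The first assertion will be essentially formal given the compatibility of restriction with word-space decompositions. The main obstacle will be the irreducibility claim in the second assertion: the classification-based argument alone only shows that at most one isomorphism class of irreducibles occurs in $\bar L_\rho$, but a priori $\bar L_\rho$ could still be a non-trivial self-extension of $L_{\rho,F}$. Ruling this out requires an independent multiplicity input, and Lemma~\ref{LMultOneRed}, applied to an extremal word of the characteristic-zero cuspidal module, is precisely the right tool.
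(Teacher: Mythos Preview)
Your proof is correct and follows essentially the same approach as the paper's: use Theorem~\ref{THeadIrr}(v) together with preservation of formal characters under reduction modulo $p$ to bound the composition factors, then invoke Lemma~\ref{LMultOneRed} for the multiplicity-one statement in the cuspidal case. Your write-up is considerably more explicit than the paper's terse two-line proof---in particular you spell out the character argument for the general $d_{\pi,\sigma}$ statement and the minimality of $(\rho)$ in $\Pi(\rho)$ via Lemma~\ref{l1}---but the underlying ideas are the same.
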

\begin{proof}
By Theorem~\ref{THeadIrr}(v), which holds over any field, we conclude that any composition factor of $\bar L_{\rho}$ is isomorphic to $L_{\rho,F}$ up to a degree shift. Now use Lemma~\ref{LMultOneRed}. 
\end{proof}

\subsection{PBW bases and canonical bases}
We now return to the algebra $\f$ and recall some results on its  PBW bases. For a fixed convex order on $\Phi_+$, 
Lusztig used a certain braid group action to define {\em root vectors}
$\{r_\rho\:|\:\rho \in \Phi_+\}$ in $\f$. The corresponding {\em dual root vectors}  
\begin{equation}\label{drv}
r_\rho^* := (1-q_\rho^{2}) r_\rho \qquad(\rho\in\Phi_+)
\end{equation}
are invariant under $\barinv^*$. 

For $\pi = (m_1,\dots,m_N) \in \Pi(\al)$,
we set
\begin{equation}\label{rlambda}
r_\pi := \frac{r_{\rho_1}^{m_1}}{[m_1]_{\rho_1}^!} \dots \frac{r_{\rho_N}^{m_N}}{[m_N]_{\rho_N}^!},
\qquad
r_\pi^* := q^{\shift(\pi)} (r_{\rho_1}^*)^{m_1} \cdots (r_{\rho_N}^*)^{m_N}.
\end{equation}

\begin{Theorem}\cite{Lubook} \label{pbw}
Let $\al\in Q_+$. Then  
$\left\{r_{\pi}\:|\:\pi \in \Pi(\al)\right\}$
and 
$\left\{r^*_{\pi}\:|\:\pi \in \Pi(\al)\right\}$
 are a pair of dual bases for the free $\A$-modules 
$(\f_{\A})_\al$ and $(\f_{\A}^*)_\al$ respectively.
\end{Theorem}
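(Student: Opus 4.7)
The plan is to appeal to Lusztig's construction in \cite{Lubook}, splitting the statement into two halves: that $\{r_\pi \mid \pi \in \Pi(\alpha)\}$ is an $\A$-basis of $(\f_\A)_\alpha$, and that $\{r^*_\pi\}$ is the dual basis with respect to Lusztig's form. The first half is the standard PBW theorem for the $\A$-form of quantum groups: the braid group action used to define $r_\rho$ produces a family of root vectors which $q$-commute in a triangular fashion compatible with the convex order corresponding to the chosen reduced expression for $w_0$. Ordered monomials in these root vectors then span $(\f_\A)_\alpha$ (and are linearly independent by a dimension count using $|\Pi(\alpha)| = \dim U(\n_+)_\alpha$).

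For the duality statement, I would first reduce to computing the pairing on PBW monomials. The key ingredients are:
\begin{enumerate}
\item[(a)] the normalization $(r_\rho, r_\rho) = 1/(1 - q_\rho^2)$ (so $(r_\rho, r_\rho^*) = 1$), which is Lusztig's computation for the simple root case, transported along the braid group action which preserves the form;
\item[(b)] orthogonality $(r_\rho, r_{\rho'}) = 0$ for distinct positive roots $\rho \neq \rho'$;
\item[(c)] the Hopf pairing relation (\ref{EDefPropForm}) $(xy, z) = (x \otimes y, r(z))$, combined with the fact that for a root vector $r_\rho$ with $\rho$ minimal in the support, the coproduct $r(r_\rho)$ has the form $r_\rho \otimes 1 + 1 \otimes r_\rho + (\text{terms of weights strictly between})$, and analogously for products of dual root vectors.
\end{enumerate}

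Using these, the pairing $(r_\pi, r_\sigma^*)$ can be evaluated by peeling off one factor at a time, starting from the smallest (or largest) root in the convex order. Convexity plus Lemma \ref{l1} guarantees that the only monomial contributions to $r(r_{\rho_k}^{m_k})$ that can pair nontrivially with a pure $r_\rho^*$-power coming from another index are the ``diagonal'' ones; everything off-diagonal is forced into weights that are sums of roots on the wrong side of $\rho_k$ and hence annihilates by triangularity. This recursion collapses $(r_\pi, r_\sigma^*)$ to a product of single-root pairings, each of which is computed by the $q$-binomial identity $(r_\rho^n / [n]_\rho^!,\, (r_\rho^*)^n) = q^{-(\rho,\rho)n(n-1)/4}$ (before inserting the $q^{\shift(\pi)}$ twist) coming from (a) and the standard formula for $r(r_\rho^n)$. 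The shift $\shift(\pi)$ in (\ref{rlambda}) is precisely chosen to cancel these factors and produce $\delta_{\pi,\sigma}$.

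The main obstacle is the bookkeeping in the coproduct computation: showing that only diagonal contributions survive requires carefully tracking weights of intermediate terms and invoking convexity at each step. Once that triangular vanishing is in place, the diagonal computation is a routine induction on $\sum_k m_k$ reducing to the rank-one $q$-binomial identity. Since Lusztig works this out in detail in \cite{Lubook}, my proposal is to present the statement as an immediate consequence with a brief sketch of the pairing computation, citing the relevant propositions for (a)--(c).
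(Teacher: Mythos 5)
The paper offers no proof of this theorem: it is stated as a citation to Lusztig's book, so there is no in-paper argument to compare against. Evaluating your sketch on its own terms as an account of the underlying Lusztig result, it is essentially correct and identifies the right ingredients: the PBW basis theorem for $(\f_\A)_\al$ via the braid group construction and the dimension count $|\Pi(\al)| = \dim U(\n_+)_\al$; the rank-one pairing computation $(r_\rho^n/[n]_\rho^!,\ (r_\rho^*)^n) = q_\rho^{-n(n-1)/2}$, which is exactly cancelled by the twist $q^{\shift(\pi)}$ built into (\ref{rlambda}); and the off-diagonal vanishing via the Hopf pairing (\ref{EDefPropForm}) together with the convex-order triangularity of the coproduct of the root vectors.

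Two points you should tighten if you were to flesh this out. First, "$r(r_\rho)$ has the form $r_\rho \otimes 1 + 1 \otimes r_\rho + (\text{terms of weights strictly between})$" is imprecise: the correct statement is that the non-primitive terms $x \otimes y$ decompose so that $x$ involves only roots $\succeq \rho$ and $y$ only roots $\preceq \rho$ (or vice versa depending on conventions) — a convex-order filtration, not a weight interval — and that is what lets Lemma~\ref{l1} force all cross terms to vanish against products of $r_{\rho'}^*$. Second, "transported along the braid group action which preserves the form" is an oversimplification: the $T_i$ operators do not preserve $\f$ itself, and the compatibility of the Lusztig form with the root vectors is established by a separate inductive argument in \cite[\S 38]{Lubook} rather than by direct invariance. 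Neither caveat affects the structure of the argument, and since the paper relies entirely on the citation, the level of detail you give is appropriate.
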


One can use the
$\barinv^*$-invariance of the dual root vectors together with
the Levendorskii-Soibelman formula \cite[Proposition 5.5.2]{LS} or  \cite[Proposition 1.9]{Lubraid} to deduce:
\begin{equation}\label{tr1}
\barinv^*(r_\pi^*) = r_\pi^* + (\text{a $\Z[q,q^{-1}]$-linear
  combination of $r_\si^*$ for $\si < \pi$}).
\end{equation}
\begin{equation}\label{tr2}
\barinv(r_\pi) = r_\pi + (\text{a $\Z[q,q^{-1}]$-linear
  combination of $r_\si$ for $\si > \pi$}).
\end{equation}
In view (\ref{tr1})--(\ref{tr2}) 
and Lusztig's Lemma, there exist unique bases
$\{b_\pi\:|\:\pi \in \Pi(\al)\}$ and $\{b_\pi^*\:|\:\pi \in
\Pi(\al)\}$
for $(\f_{\A})_\al$ and $(\f_{\A}^*)_\al$, respectively, such that
\begin{align}
\barinv(b_\pi) &= b_\pi,
 &b_\pi &= r_\pi + \text{(a $q \Z[q]$-linear combination of
  $r_\si$ for $\si > \pi$)},\\
\barinv^*(b^*_\pi) &= b^*_\pi,
&b_\pi^*  &= r^*_\pi
 + \text{(a $q \Z[q]$-linear combination of
  $r^*_\si$ for $\si < \pi$).}\label{tri}
\end{align}
These are the canonical and dual canonical bases, respectively (cf. 
\cite{Lu}
in simply-laced types
or \cite{Saito} in non-simply-laced types).

\subsection{Cuspidal modules and dual PBW bases}\label{SSCMDRE}
We continue to work with a fixed convex order $\prec $  on $\Phi_+$. 
Suppose that we are given elements
\begin{equation}\label{ERootElements}
\{E_\rho^*\in(\f_\A^*)_\rho\mid\rho\in\Phi_+\}.
\end{equation}
If $\pi=(m_1,\dots,m_N)$ is a root partition, define the corresponding {\em dual PBW monomial}
$$
E^*_\pi:=q^{\shift(\pi)}(E_{\rho_1}^*)^{m_1}\dots(E_{\rho_N}^*)^{m_N} \in\f_\A^*.
$$
We say that (\ref{ERootElements}) is a {\em dual PBW family}\,  if the following properties are satisfied: 
\begin{enumerate}
\item[{\rm (i)}] (`convexity') if $\be\succ\ga$ are positive roots then 
$E_\ga^* E_\be^*-q^{-(\be,\ga)}E_\be^* E_\ga^*$ is an $\A$-linear combination of elements $E^*_\pi$ with $\pi<(\be,\ga)\in\Pi(\be+\ga)$;

\item[{\rm (ii)}] (`basis') $\{E^*_\pi\mid\pi\in\Pi(\al)\}$ is an $\A$-basis of $(\f^*_\A)_\al$ for all $\al\in Q_+$; 

\item[{\rm (iii)}] (`orthogonality') $$(E^*_{\pi},E^*_{\si})=\de_{\pi,\si}\prod_{k=1}^N((E_{\rho_k}^*)^{m_k},(E_{\rho_k}^*)^{m_k});$$

\item[{\rm (iv)}] (`bar-triangularity') $\barinv^*(E^*_{\pi}) = E^*_{\pi} +$ an $\A$-linear combination of dual PBW monomials $E^*_{\si}$ for $\si < \pi$.


\end{enumerate}

The following result shows in particular that the elements $E_\rho^*$ of the dual PBW family are determined uniquely up to signs (for a fixed preorder $\preceq$):

\begin{Lemma} \label{LREUnique}
Assume that (\ref{ERootElements}) is a dual PBW family. Then:
\begin{enumerate}
\item[{\rm (i)}] The elements of (\ref{ERootElements}) are $\barinv^*$-invariant.
\item[{\rm (ii)}] Suppose that we are given another family $\{{}'E_\rho^*\in(\f_\A^*)_\rho\mid\rho\in\Phi_+\}$ of $\barinv^*$-invariant elements which satisfies the basis and orthogonality properties. 
Then $E_\rho^*=\pm\,{}'E_\rho^*$ for all $\rho\in\Phi_+^\re$.\end{enumerate}
\end{Lemma}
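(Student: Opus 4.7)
The plan is to reduce both parts to a single combinatorial observation: for each $\rho\in\Phi_+$, the trivial root partition $(\rho)\in\Pi(\rho)$ is incomparable with every other element of $\Pi(\rho)$ in the bilexicographic order $\leq$. To establish this, consider a non-trivial partition $\sigma=(\beta_1\succeq\cdots\succeq\beta_n)$ of $\rho$ with $n\geq 2$. Lemma~\ref{l1} applied to $\rho=\beta_1+\cdots+\beta_n$ rules out both the possibility that all $\beta_i\succeq\rho$ and the possibility that all $\beta_i\preceq\rho$, so necessarily $\beta_1\succ\rho\succ\beta_n$ (strictly, since equality of some $\beta_i$ with $\rho$ would force the remaining parts to sum to zero in $Q_+$). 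Writing $\rho=\rho_k$ in the listing $\rho_1\succ\cdots\succ\rho_N$ and encoding partitions as tuples $(m_1,\dots,m_N)$, this forces the first non-zero coordinate of $\sigma$ to lie in some position $<k$ and the last in some position $>k$, whereas $(\rho)$ has its unique non-zero coordinate at position $k$. Thus $\sigma>_l(\rho)$ and $\sigma>_r(\rho)$ hold simultaneously, and neither direction of $\leq$ can be satisfied.

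Given this, part (i) is immediate: applying bar-triangularity (iv) with $\pi=(\rho)$, the sum over $\sigma<(\rho)$ is empty, so $\barinv^*(E^*_\rho)=E^*_\rho$.

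For part (ii) I will induct on $\height(\rho)$. The base case $\height(\rho)=1$, with $\rho=\alpha_i$ simple, has $\Pi(\rho)=\{(\rho)\}$, so $(\f^*_\A)_\rho$ is a free $\A$-module of rank one; then ${}'E^*_\rho=a\,E^*_\rho$ for a unit $a\in\A^\times=\{\pm q^k:k\in\Z\}$, and $\barinv^*$-invariance of both sides forces $a=\bar{a}$, hence $a=\pm 1$. For the inductive step, every non-trivial $\pi=(\beta_1,\dots,\beta_n)\in\Pi(\rho)$ has $\height(\beta_i)<\height(\rho)$ for each $i$ since $n\geq 2$, so the inductive hypothesis yields ${}'E^*_{\beta_i}=\pm E^*_{\beta_i}$, and the definition of the PBW monomial then gives ${}'E^*_\pi=\pm E^*_\pi$.

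Now expand ${}'E^*_\rho=\sum_{\pi\in\Pi(\rho)}a_\pi\,E^*_\pi$ in the $\A$-basis $\{E^*_\pi\}$ (so $a_\pi\in\A$). Orthogonality of the primed family gives $({}'E^*_\rho,{}'E^*_\sigma)=0$ for every $\sigma\in\Pi(\rho)\setminus\{(\rho)\}$; substituting ${}'E^*_\sigma=\pm E^*_\sigma$ and invoking orthogonality of the $E$-family then forces $a_\sigma=0$, so ${}'E^*_\rho=a\,E^*_\rho$ with $a=a_{(\rho)}\in\A$. The change-of-basis matrix between $\{E^*_\pi\}$ and $\{{}'E^*_\pi\}$ is diagonal with entries $\pm 1$ off the $(\rho)$-coordinate and $a$ at the $(\rho)$-coordinate, so its determinant $\pm a$ must be a unit in $\A$; thus $a=\pm q^k$. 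Applying $\barinv^*$ to ${}'E^*_\rho=a\,E^*_\rho$ and using part (i) for both families yields $\bar{a}=a$, which combined with $a=\pm q^k$ forces $a=\pm 1$. The one substantive obstacle is the combinatorial incomparability claim in the first paragraph; once that is in hand, the remainder is linear-algebraic bookkeeping.
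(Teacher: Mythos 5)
Your argument for (ii) is essentially the paper's: induction on $\height(\rho)$, expansion of ${}'E_\rho^*$ in the basis $\{E^*_\pi\}$, orthogonality to kill the coefficients at non-trivial $\pi$, the two-bases/unit argument to get $\pm q^k$, and $\barinv^*$-invariance to force $k=0$. Two points need repair. The substantive one is your opening combinatorial claim that $(\rho)$ is \emph{incomparable} with every non-trivial $\sigma\in\Pi(\rho)$: this rests on reading the right-lexicographic comparison colexicographically on the exponent tuples, which is the opposite of the convention the paper uses. Internally, the paper forces $(\rho)$ to be comparable with (indeed strictly below) every other element of $\Pi(\rho)$: for a minimal pair $(\beta,\gamma)$, the exact sequence (\ref{sesoneas}) gives $[\,\bar\Delta((\beta,\gamma)):L_\rho]_q\neq 0$, so Theorem~\ref{THeadIrr}(iv) yields $(\rho)\le(\beta,\gamma)$; likewise Lemma~\ref{LCuspPower} treats $(\rho_k^n)$ as the unique minimal (in fact least) element of $\Pi(n\rho_k)$, which is incompatible with blanket incomparability. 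So with the intended convention one has $\sigma>_l(\rho)$ but $(\rho)\geq_r\sigma$, i.e.\ $(\rho)<\sigma$. Fortunately, part (i) only needs that no $\sigma$ satisfies $\sigma<(\rho)$, i.e.\ minimality of $(\rho)$ (this is exactly what the paper invokes, via convexity and Lemma~\ref{l1}), and that already follows from the left-lex half of your own computation: $\sigma>_l(\rho)$ strictly rules out $\sigma\le_l(\rho)$, hence $\sigma\le(\rho)$ is impossible under either reading. Weaken your claim to this and (i) is fine; note also that (ii) never actually uses the incomparability, so nothing else is affected.

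The minor point: in (ii), from $a_\sigma\,(E^*_\sigma,E^*_\sigma)=0$ you may conclude $a_\sigma=0$ only if $(E^*_\sigma,E^*_\sigma)\neq 0$. This nonvanishing is not part of the orthogonality axiom as stated; it follows from the non-degeneracy of Lusztig's form on each weight space together with the basis and orthogonality properties (if $(E^*_\sigma,E^*_\sigma)=0$ then $E^*_\sigma$ would be orthogonal to the whole weight space), and the paper invokes exactly this non-degeneracy when it forms $X_\pi=(E^*_\pi,E^*_\pi)^{-1}E^*_\pi$. Add that sentence and your proof matches the paper's step for step; the determinant formulation of the unit argument is just a restatement of "both are parts of $\A$-bases."
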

\begin{proof}
(i) The convexity of $\prec$ implies that for $\rho\in\Phi_+$ the root partition $(\rho)\in \Pi(\rho)$ is a minimal element of $\Pi(\rho)$. So the bar-triangularity property (iv) implies that  the elements of a dual PBW family are $\barinv^*$-invariant. 

(ii) We apply induction on $\height(\rho)$, the induction base being clear. By the basis property of dual PBW families, we can write  
\begin{equation}\label{E081212}
{}'E_\rho^*=cE_\rho^*+\sum_{\pi\in\Pi(\rho)\setminus\{(\rho)\}}c_{\pi}E^*_{\pi}\qquad(c,c_{\pi}\in\A).
\end{equation}

Fix for a moment a root partition $\pi\in \Pi(\rho)\setminus\{(\rho)\}$. 
By the orthogonality property of dual PBW families and non-degeneracy of the form $(\cdot,\cdot)$, 
the element $X_{\pi}:=\frac{1}{(E^*_\pi,E^*_\pi)} E^*_{\pi}$ satisfies $(E^*_{\si},X_{\pi})=\de_{\si,\pi}$ for all $\si\in\Pi(\rho)$. So pairing the right hand side of (\ref{E081212}) with $X_{\pi}$ yields $c_{\pi}$. On the other hand, by the inductive assumption, $E^*_{\pi}=\pm {}'E^*_{\pi}$. So using the orthogonality property for the primed family in (ii), we must have $({}'E_\rho^*,X_{\pi})=0$ for all  $\pi\in\Pi(\rho)\setminus\{(\rho)\}$. So $c_{\pi}=0$. 
Thus ${}'E_\rho^*=cE_\rho^*$. Furthermore, the elements ${}'E_\rho^*$ and $E_\rho^*$ belong to the algebra $\f_\A^*$ and are parts of its $\A$-bases, whence  ${}'E_\rho^*=\pm q^n E_\rho^*$. Since both ${}'E_\rho^*$ and $E_\rho^*$ are $\barinv^*$-invariant, we conclude that $n=0$. 
\end{proof}

\begin{Proposition}\label{PPBWFamily} 
The following set of elements in $\f_\A^*$
$$
\{E_\rho^*:=\ga^*([L_\rho])\mid\rho\in\Phi_+\}
$$
is a dual PBW family. 
\end{Proposition}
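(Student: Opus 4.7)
The central observation is that the isomorphism $\gamma^*:[\mod{R}]\iso\f_\A^*$ of \eqref{EGamma*} is a $Q_+$-graded bialgebra isomorphism, intertwining induction with multiplication, restriction with the coproduct $r$, and the graded duality $\circledast$ with $\barinv^*$. Combined with the definition of $\bar\De(\pi)$ in \eqref{EStand} (and in particular with the degree shift $q^{\shift(\pi)}$ matching the one in \eqref{rlambda}), this gives the key identification
\[
E^*_\pi \;=\; \gamma^*([\bar\De(\pi)])\qquad(\pi\in\Pi(\al)).
\]
My plan is to verify axioms (i)--(iv) by transporting the module-theoretic facts of \S\ref{SRough} to $\f_\A^*$ via $\gamma^*$.

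Axioms (ii) and (iv) are essentially bookkeeping. Theorem~\ref{THeadIrr}(iv) makes the transition matrix from $\{[\bar\De(\pi)]\}_{\pi\in\Pi(\al)}$ to the irreducible basis $\{[L(\pi)]\}_{\pi\in\Pi(\al)}$ unitriangular in the bilex order $\leq$, so $\{[\bar\De(\pi)]\}$ is also an $\A$-basis and (ii) follows on applying $\gamma^*$. For (iv), Theorem~\ref{THeadIrr}(iii) makes each $[L(\si)]$ $\circledast$-invariant, so writing $[\bar\De(\pi)] = [L(\pi)] + \sum_{\si<\pi} m_{\pi,\si}(q)[L(\si)]$ and applying $\circledast$ yields $[\bar\nabla(\pi)] = [L(\pi)] + \sum_{\si<\pi} \overline{m_{\pi,\si}(q)}[L(\si)]$. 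Re-expanding the $[L(\si)]$ in the unitriangular $\{[\bar\De(\tau)]\}$-basis gives $[\bar\nabla(\pi)] \equiv [\bar\De(\pi)] \pmod{\spa_\A\{[\bar\De(\tau)]:\tau<\pi\}}$, and applying $\gamma^*$ produces bar-triangularity.

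For the orthogonality axiom (iii) I would induct on $\height(\al)$, using the Hopf-type compatibility $(xy,z) = (x\otimes y, r(z))$ from \eqref{EDefPropForm}, which under $\gamma^*$ corresponds to Frobenius reciprocity between $\Ind_{|\pi|}$ and $\Res_{|\pi|}$. Proposition~\ref{P1} says $\Res_{|\pi|}\bar\De(\si)\neq 0$ forces $\pi\leq\si$, with $\Res_{|\pi|}\bar\De(\pi)\cong L_\pi$; combining this triangularity with the cuspidality axiom (Cus), which controls the $(\be,\ga)$-summands of the coproduct $r(E^*_{\rho_k})$, collapses the pairing $(E^*_\pi,E^*_\si)$ to the required product $\de_{\pi,\si}\prod_k((E^*_{\rho_k})^{m_k},(E^*_{\rho_k})^{m_k})$.

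The convexity axiom (i) is the main obstacle. Translating via $\gamma^*$, one must show
\[
[L_\ga\circ L_\be] \;-\; q^{-(\be,\ga)}\,[L_\be\circ L_\ga] \;\in\; \spa_\A\{[\bar\De(\pi)] : \pi<(\be,\ga)\}
\]
for $\be\succ\ga$ in $\Phi_+$ (note that $\shift((\be,\ga))=0$, so $\bar\De(\be,\ga)=L_\be\circ L_\ga$). First I would verify that every composition factor $L(\pi)$ of $L_\be\circ L_\ga$ or $L_\ga\circ L_\be$ satisfies $\pi\leq(\be,\ga)$: Theorem~\ref{THeadIrr}(v) gives $\Res_{|\pi|}L(\pi)\neq0$, and then the Mackey filtration of Theorem~\ref{TMackeyKL}, together with the cuspidality property of $L_\be$ and $L_\ga$ and Lemma~\ref{l1}, forces the maximal root appearing in $\pi$ to be $\preceq\be$ and (by the symmetric argument applied to the opposite lex direction) the minimal to be $\succeq\ga$. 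Second, to pin down the coefficient of $[\bar\De(\be,\ga)]=[L_\be\circ L_\ga]$ in $[L_\ga\circ L_\be]$ as $q^{-(\be,\ga)}$, one computes the graded multiplicity of the head $L(\be,\ga)=\head(L_\be\circ L_\ga)$ inside $L_\ga\circ L_\be$ by applying Corollary~\ref{CExtrNew} to a suitably concatenated extremal word and expanding $\CH(L_\ga\circ L_\be)=\CH(L_\ga)\circ\CH(L_\be)$ through the quantum shuffle \eqref{ECharShuffle}; direct bookkeeping of the shuffle exponent produces exactly $q^{-(\be,\ga)}$. This convexity step is the module-categorical shadow of Lusztig's quantum commutation relation for PBW elements, and is where the combinatorial subtleties of the convex order genuinely enter the argument.
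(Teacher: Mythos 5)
Your overall strategy is the paper's: identify $E^*_\pi=\ga^*([\bar\De(\pi)])$ (shifts matching) and transport Theorem~\ref{THeadIrr}(iii),(iv), Proposition~\ref{P1} and the compatibility (\ref{EDefPropForm}) through $\ga^*$. Your arguments for (ii) and (iv) are exactly the intended ones, and your orthogonality argument is essentially the paper's; the one point you should make explicit there is how the case $\pi<\si$ is excluded, since Proposition~\ref{P1} only kills the pairing when $\pi\not\leq\si$. The paper closes that case simply by the symmetry of Lusztig's form, $(E^*_\pi,E^*_\si)=(E^*_\si,E^*_\pi)$, computed via $\Res_{|\si|}\bar\De(\pi)$; an appeal to ``cuspidality controlling the coproduct'' plus induction is vaguer and would need to be spelled out.

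The genuine divergence, and the gap, is in the convexity axiom (i). The paper gets it in one line from Lemma~\ref{LDualInd}: $(L_\be\circ L_\ga)^\circledast\cong q^{(\be,\ga)}\,L_\ga\circ L_\be$, so $[L_\ga\circ L_\be]=q^{-(\be,\ga)}[\bar\nabla(\be,\ga)]$; since every $[L(\si)]$ is $\circledast$-fixed, $[\bar\nabla(\be,\ga)]-[\bar\De(\be,\ga)]$ lies in the span of the $[L(\si)]$ with $\si<(\be,\ga)$, and unitriangularity (Theorem~\ref{THeadIrr}(iv)) converts this to the span of the $[\bar\De(\si)]$, $\si<(\be,\ga)$ --- no Mackey analysis of the reversed product and no shuffle computation are needed, and the coefficient $q^{-(\be,\ga)}$ falls out automatically. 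Your route instead (a) re-proves a mirrored version of Proposition~\ref{P1} for $L_\ga\circ L_\be$ (doable, but duplicated work), and (b) claims the multiplicity $[L_\ga\circ L_\be:L(\be,\ga)]_q$ can be read off from Corollary~\ref{CExtrNew} applied to ``a suitably concatenated extremal word.'' This step is not justified as stated: Corollary~\ref{CExtrNew} only computes the multiplicity of the specific self-dual irreducible $\tilde f_{i_b}^{a_b}\cdots\tilde f_{i_1}^{a_1}1_F$ attached to the chosen extremal word, and for a product of cuspidal modules this is typically \emph{not} the head $L(\be,\ga)$: already in rank two with $\be+\ga\in\Phi_+$, a natural extremal word of the product has associated crystal monomial the cuspidal module $L_{\be+\ga}$, which occurs with multiplicity a nontrivial power of $q$ (cf.\ the exponent in Proposition~\ref{PProdIrrMult1}), while the head occurs with multiplicity $1$. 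So you would have to prove that some extremal word of $L_\ga\circ L_\be$ has associated irreducible equal to $L(\be,\ga)$, and then do the exponent bookkeeping --- precisely the work that Lemma~\ref{LDualInd} (which you already invoke for $\bar\nabla$) renders unnecessary. Your asserted value $q^{-(\be,\ga)}$ is correct, but the clean proof of it is the duality argument.
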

\begin{proof}
Under the categorification map $\ga^*$, the graded duality $\circledast$ corresponds to $\barinv^*$, so $\ga^*([L])$ is $\barinv^*$-invariant for any $\circledast$-self-dual $R_\al$-module $L$. 
Moreover, under $\ga^*$, the induction product corresponds to the product in $\f_\A^*$, so the convexity condition (i) follows from Theorem~\ref{THeadIrr}(iv) and Lemma~\ref{LDualInd}. Now, note that $E^*_{\pi}=\ga^*([\bar\De(\pi)])$, so the conditions (ii) and (iv) follow from Theorem~\ref{THeadIrr}(iv) again. It remains to establish the orthogonality property (iii). Let $\pi=(m_1,\dots,m_N)$. Under $\ga^*$, the coproduct $r$ corresponds to the map on the Grothendieck group induces by $\Res$. So using (\ref{EDefPropForm}), we get 
$$
(E^*_{\pi},E^*_{\si})=\big((E_{\rho_1}^*)^{m_1}
\otimes \dots  \otimes (E_{\rho_{N}}^*)^{m_{N}}, \ga^*([\Res_{|\pi|}\bar\De(\si)])\big).
$$
By Proposition~\ref{P1}, $\Res_{|\pi|}\bar\De(\si)=0$ unless $\pi\leq\si$, and for $\pi=\si$ we have 
$$\Res_{|\pi|}\bar\De(\si)=L_{\rho_1}^{\circ m_1}\boxtimes\dots \boxtimes L_{\rho_{N}}^{\circ m_{N}}.$$ Since the form $(\cdot,\cdot)$  is symmetric, the orthogonality  follows from the preceding remarks.

\end{proof}

It is shown in Lusztig~\cite{Lubook} and  \cite{Saito} that $\{r_\rho^*\mid \rho\in \Phi_+\}$ is a dual PBW family. Since the dual PBW families are unique up to a sign by Lemma~\ref{LREUnique}, it follows that $\ga^*([L_\rho])=\pm r^*_\rho$ for all $\rho\in\Phi_+$. In fact:

\begin{Proposition}\label{PDualPBWCat}
For every $\rho\in\Phi_+$ we have that $\ga^*([L_\rho])= r^*_\rho=b_{(\rho)}^*$ is a dual canonical basis element.
\end{Proposition}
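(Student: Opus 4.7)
The plan is to combine three ingredients already in place: the uniqueness of dual PBW families (Lemma~\ref{LREUnique}), the minimality of the trivial root partition, and the positivity of coefficients of extremal words (Lemmas~\ref{LExtrMult} and~\ref{LExtrDCB}).

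First I would observe that by Proposition~\ref{PPBWFamily} the family $\{\gamma^*([L_\rho])\mid\rho\in\Phi_+\}$ is a dual PBW family, while $\{r_\rho^*\mid\rho\in\Phi_+\}$ is also a dual PBW family by the cited results of Lusztig \cite{Lubook} and Saito. Both consist of $\barinv^*$-invariant elements satisfying the basis and orthogonality properties, so Lemma~\ref{LREUnique}(ii) immediately yields
\[
\gamma^*([L_\rho]) = \pm\, r_\rho^*
\]
for every $\rho\in\Phi_+$.

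Next I would show $r_\rho^* = b_{(\rho)}^*$. By convexity of $\preceq$, the trivial root partition $(\rho)\in\Pi(\rho)$ is the unique minimal element of $\Pi(\rho)$ in the bilexicographic order (as already recorded in the proof of Lemma~\ref{LREUnique}(i)). The defining property (\ref{tri}) expresses
\[
b_{(\rho)}^* = r_{(\rho)}^* + \sum_{\sigma < (\rho)} c_\sigma\, r_\sigma^*,\qquad c_\sigma\in q\Z[q],
\]
and since there are no root partitions $\sigma < (\rho)$, the sum is empty. Observing $\shift((\rho))=0$, so $r_{(\rho)}^*=r_\rho^*$, this gives $b_{(\rho)}^* = r_\rho^*$.

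Finally, to pin down the sign, I would exploit positivity on an extremal word. Fix an extremal word $\bi = i_1^{a_1}\cdots i_k^{a_k}$ for $L_\rho$. By Lemma~\ref{LExtrMult}, the coefficient of $\bi$ in $\CH L_\rho = \iota(\gamma^*([L_\rho]))$ equals $[a_1]_{i_1}^!\cdots[a_k]_{i_k}^!$. On the other hand $b_{(\rho)}^*$ is a dual canonical basis element by construction, and since $\iota(b_{(\rho)}^*) = \pm\CH L_\rho$ the operators $(\theta_i^*)^n$ act the same up to sign, so $\bi$ remains extremal for $\iota(b_{(\rho)}^*)$. Lemma~\ref{LExtrDCB} then forces the coefficient of $\bi$ in $\iota(b_{(\rho)}^*) = \iota(r_\rho^*)$ to equal the same positive quantity $[a_1]_{i_1}^!\cdots[a_k]_{i_k}^!$. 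Comparing the two expressions for the coefficient of $\bi$ under $\iota(\gamma^*([L_\rho])) = \pm\iota(r_\rho^*)$ rules out the minus sign and completes the proof.

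The only step that is not essentially bookkeeping is the sign determination; I expect this to be the main obstacle, but it dissolves once one realizes that extremal-word multiplicities are positive on both sides, as guaranteed by Lemmas~\ref{LExtrMult} and~\ref{LExtrDCB}.
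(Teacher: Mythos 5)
Your proposal is correct and follows the same route as the paper's proof: uniqueness of dual PBW families (Lemma~\ref{LREUnique}) gives $\gamma^*([L_\rho])=\pm r_\rho^*$, minimality of the trivial root partition $(\rho)$ together with (\ref{tri}) gives $r_\rho^*=b_{(\rho)}^*$, and the sign is fixed by comparing a positive coefficient using Lemma~\ref{LExtrDCB} and the commutative triangle (\ref{ETriangle}). Your sign argument via Lemma~\ref{LExtrMult} on a chosen extremal word is a slightly more explicit version of the paper's appeal to positivity of one word coefficient of $\iota(b_{(\rho)}^*)$ at $q=1$, but it is the same idea.
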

\begin{proof}
By (\ref{tri}), we have $r_\rho^*=b_{(\rho)}^*$ is a dual canonical basis element. Now, in view of the commutativity of the triangle (\ref{ETriangle}), to show that $E_\rho^*= b_{(\rho)}^*$, it suffices to know that for an arbitrary element $b^*$ of the dual canonical basis, there exists at least one word $\bi\in\words$ such that the coefficient of $\bi$ in $\iota(b^*)$ evaluated at $q=1$ is positive. But this follows from Lemma~\ref{LExtrDCB}.
\end{proof}

\section{Homological properties of KLR algebras}\label{SHPKLRA} 
We now review some `standard homological properties' of KLR algebras of finite Lie type. We continue to work with a fixed convex order $\prec$ on $\Phi_+$. We mainly follow \cite{BKM}, to where we refer the reader for detailed proofs. 

\subsection{Finiteness of global dimension}
First of all, we record a key fundamental fact:

\begin{Theorem} 
If the Cartan matrix\, $\Car$ is of finite type, the KLR algebra $R_\al(\Car)$ has  global dimension equal to $\height(\al)$ (as a graded algebra). 
\end{Theorem}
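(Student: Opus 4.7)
The plan is to bound $\gldim R_\al$ both above and below by $\height(\al)$, following the strategy of \cite{BKM}.

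For the upper bound, I would refine the proper standard modules $\bar\De(\pi)$ into ``big'' standard modules $\De(\pi)$, each equipped with a free action of a graded polynomial algebra $\La_\pi$ satisfying $\bar\De(\pi) \cong \De(\pi) \otimes_{\La_\pi} F$, and verify that $R_\al$ becomes affine quasi-hereditary in the sense that every indecomposable projective $P(\pi)$ admits a filtration by copies of $\De(\sigma)$ with $\sigma \geq \pi$ in the bilexicographic order. The technical crux will be to show that each cuspidal module $L_\rho$ has projective dimension exactly $\height(\rho)$ over $R_\rho$: for simple roots this is immediate from $R_{\al_i} = F[y]$, and for non-simple real roots it will follow from the Koszul resolution of $L_\rho$ obtained by killing the $\La_\rho$-action on $\De_\rho$. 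Because induction along $R_{|\pi|} \hookrightarrow R_\al$ is exact and preserves projectives, these resolutions will assemble into a resolution of
\[
\bar\De(\pi) \cong q^{\shift(\pi)} L_{\rho_1}^{\circ m_1} \circ \cdots \circ L_{\rho_N}^{\circ m_N}
\]
of total length $\sum_k m_k \height(\rho_k) = \height(\al)$. Finally, an induction on the bilexicographic order on $\Pi(\al)$, using Theorem~\ref{THeadIrr}(iv) to peel off the simple head $L(\pi)$ of $\bar\De(\pi)$ from a tail with only strictly smaller standard sections, will transfer the bound $\prdim L(\pi) \leq \height(\al)$ to every simple.

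For the lower bound, it suffices to exhibit a single simple of projective dimension $\height(\al)$. When $\al = m\al_i$ this is classical: Example~\ref{ExNH} identifies $R_{m\al_i}$ with the nil-Hecke algebra, which is Morita equivalent to its center $F[e_1,\dots,e_m]$, a polynomial ring in $m$ variables, and hence has global dimension $m = \height(m\al_i)$. For general $\al$, I would take any $\leq$-minimal $\pi \in \Pi(\al)$: Theorem~\ref{THeadIrr}(iv) forces $\bar\De(\pi) = L(\pi)$ (no strictly smaller composition factors are available), and the freeness of the $\La_\pi$-action on $\De(\pi)$ prevents the Koszul resolution above from being shortened, yielding $\prdim L(\pi) = \height(\al)$.

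The main obstacle will be constructing the big standards $\De(\pi)$ and verifying that $\End_{R_\al}(\De(\pi)) \cong \La_\pi$ is a polynomial ring of the correct Krull dimension, together with the existence of a $\De$-filtration on each projective with the appropriate ordering. This is the technical heart of \cite{BKM}, and rests on the restriction calculations of Proposition~\ref{P1} combined with the PBW and canonical basis identifications of Proposition~\ref{PDualPBWCat}.
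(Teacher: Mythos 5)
Your outline follows the route of \cite{McN} and \cite{BKM}, which is also what the paper does for this theorem: in the body the general finite-type statement is only cited (Kato for ${\tt ADE}$ in characteristic zero, McNamara for ${\tt BCFG}$, the appendix of \cite{BKM} for ${\tt ADE}$ over any field), while the paper's own self-contained argument is the elementary type ${\tt A}$ one in its last section. Measured against either, your sketch has a genuine gap at the final step of the upper bound. Peeling the simple head off $\bar\De(\pi)$ does not transfer the bound: from $0\to\rad\bar\De(\pi)\to\bar\De(\pi)\to L(\pi)\to 0$, the inductive hypothesis $\prdim L(\si)\le d$ for $\si<\pi$ together with $\prdim\bar\De(\pi)\le d$ only gives $\prdim L(\pi)\le d+1$, because killing $\EXT^{d+1}(L(\pi),M)$ requires the vanishing of $\EXT^{d}(\rad\bar\De(\pi),M)$, which you do not have. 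This is precisely why the paper runs the induction through the proper \emph{costandard} module $\bar\nabla(\pi)=\bar\De(\pi)^\circledast$, in which $L(\pi)$ is the socle: in $0\to L(\pi)\to\bar\nabla(\pi)\to\bar\nabla(\pi)/L(\pi)\to0$ the degree shift in the long exact sequence falls on $\bar\nabla(\pi)/L(\pi)$, whose factors are $L(\si)$ with $\si<\pi$, and the bound $\prdim L(\pi)\le d$ comes out. Some such dual device is indispensable and is missing from your argument.

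Two further points. Your claim that $\prdim_{R_\rho}L_\rho\le\height(\rho)$ ``follows from the Koszul resolution obtained by killing the $\La_\rho$-action'' only reduces the problem, via $0\to q_\rho^2\De(\rho)\to\De(\rho)\to L_\rho\to0$, to showing $\prdim\De(\rho)\le\height(\rho)-1$; that numerical bound is not a formal consequence of the affine quasi-hereditary structure but comes from the recursive mapping-cone construction over minimal pairs, i.e.\ the short exact sequence of Theorem~\ref{inj3} and the construction in \S\ref{sses} (replaced, in the paper's type ${\tt A}$ argument, by explicit resolutions through the basic algebra $B_n$); this ingredient should be named, not subsumed under ``the technical heart of \cite{BKM}.'' Finally, the lower bound for general $\al$ as you state it (``freeness of the $\La_\pi$-action prevents the resolution from being shortened'') is not a proof: a resolution of length $d$ bounds $\prdim$ above, and to get equality you must either check minimality or exhibit a nonvanishing $\EXT^d$. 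The paper's argument is both cleaner and uniform: by the Basis Theorem $R_\al$ is free over its polynomial subalgebra $F[y_1,\dots,y_d]$, so $\gldim R_\al\ge d$ by Hilbert's syzygy theorem combined with the McConnell--Roos theorem, making the nil-Hecke special case unnecessary.
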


The finiteness of the global dimension of $R_\al(\Car)$ (as a graded algebra) was first proved by Kato \cite{Kato} for the case $\cha F=0$ and $\Car$ of finite ${\tt ADE}$ types. For an arbitrary $F$ and $\Car$ of finite ${\tt BCFG}$ types McNamara \cite{McN} computed the global dimension explicitly as $\height(\al)$. In \cite[Appendix]{BKM}, it was verified that the methods of \cite{McN} also lead to the same answer for  finite ${\tt ADE}$ types over any field, not surprisingly the case $E_8$ being the most difficult. 

Still for $\Car$ of finite type, the algebras $R_\al(\O)$ are affine quasi-hereditary. This is shown in \cite{KLM} for finite type ${\tt A}$ and in \cite{KLoub} for other finite Lie types. From this we have the following slight generalization: if $\O$ is a commutative ring of finite global dimension, then  $R_\al(\O,\Car)$  also has finite global dimension, even as an ungraded algebra. 
 
Finally, it can be checked that for a fixed $\Car$, the algebras $R_\al(F,\Car)$ have finite global dimension for all $\al\in Q_+$ if and only if  $\Car$ is of finite type. 


\subsection{Standard modules}
Throughout this subsection, $\rho\in \Phi_+$ is a fixed positive root.  
Recall the cuspidal module $L_\rho$. The proof of the following result relies on the finiteness of the global dimension of $R_\al$. 

\begin{Theorem}\label{source}{\rm \cite[\S4]{McN}}
Let $d \geq 1$. Then 
$$
\DIM\EXT^d_{R_\rho}(L_\rho, L_\rho) =
\left\{
\begin{array}{ll}
q_\rho^{2} &\hbox{if $d=1;$}\\
0 &\hbox{if $d \geq 2$.}
\end{array}
\right.
$$
\end{Theorem}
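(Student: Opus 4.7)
The plan is to derive the Ext computation from a two-term ``Koszul-type'' resolution of $L_\rho$ built from a single affine standard module. Concretely, invoking the affine quasi-hereditary theory of \cite{KLM,KLoub}, I would produce a module $\Delta(\rho) \in \mod{R_\rho}$ equipped with a homogeneous endomorphism $y_\rho$ of degree $(\rho,\rho) = 2d_\rho$ such that $y_\rho$ is injective, $\End_{R_\rho}(\Delta(\rho)) = F[y_\rho]$, and $L_\rho \cong \Delta(\rho)/y_\rho \Delta(\rho)$. This yields the short exact sequence
\begin{equation*}
0 \longrightarrow q_\rho^{2}\Delta(\rho) \xrightarrow{\,y_\rho\,} \Delta(\rho) \longrightarrow L_\rho \longrightarrow 0.
\end{equation*}
Alongside this I would prove the Ext-orthogonality statement $\HOM_{R_\rho}(\Delta(\rho), L_\rho) = F$ and $\EXT^d_{R_\rho}(\Delta(\rho), L_\rho) = 0$ for all $d \geq 1$.

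Granted these ingredients, the computation is formal. Applying the functor $\HOM_{R_\rho}(-, L_\rho)$ to the displayed sequence produces a long exact sequence of Ext-groups whose $\Delta(\rho)$-columns vanish in positive Ext-degree, which immediately forces $\EXT^d_{R_\rho}(L_\rho, L_\rho) = 0$ for all $d \geq 2$. In the remaining low-degree portion, the middle map $\HOM(\Delta(\rho), L_\rho) \to \HOM(q_\rho^{2}\Delta(\rho), L_\rho)$ is induced by composition with $y_\rho$, and vanishes because $y_\rho \in R_\rho$ acts as zero on the cuspidal module $L_\rho$. What is left is an isomorphism $\EXT^1_{R_\rho}(L_\rho, L_\rho) \cong \HOM_{R_\rho}(q_\rho^{2}\Delta(\rho), L_\rho)$, a one-dimensional graded space of graded dimension $q_\rho^{2}$.

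The main obstacle is producing $\Delta(\rho)$ and establishing the Ext-orthogonality with $L_\rho$ in positive Ext-degree; this is where finiteness of $\gldim R_\rho$ plays a decisive role. The natural approach is to realise $\Delta(\rho)$ as a projective object in a Serre quotient of $\mod{R_\rho}$ obtained by killing the irreducibles attached to the non-trivial root partitions of $\rho$ (those involving roots $\succ\rho$ in our convex order). Finite global dimension is precisely what allows projectivity in the quotient to be upgraded to Ext-vanishing in the full category against modules, such as $L_\rho$, whose composition factors avoid the quotienting subcategory. Constructing $\Delta(\rho)$ concretely proceeds by parabolic induction from the subalgebra associated with an extremal word of $L_\rho$ using a nil-Hecke style idempotent as in Example~\ref{ExNH}, and the verification that the endomorphism algebra is precisely $F[y_\rho]$ and that the defining short exact sequence holds requires an inductive analysis of the cuspidal modules for roots $\succ\rho$ together with the Mackey filtration of Theorem~\ref{TMackeyKL}.
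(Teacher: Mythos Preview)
The paper does not give its own proof of this theorem; it cites \cite[\S4]{McN} and remarks only that finite global dimension is the key ingredient. Note that in the paper's exposition the logical order is the opposite of yours: Theorem~\ref{source} is the input, and the module $\Delta(\rho)$ together with the short exact sequence (\ref{ses3}) and the Ext-vanishing of Theorem~\ref{rm}(iii) are deduced from it (``This theorem allows one to extend $L_\rho$ by $q_\rho^2 L_\rho$, then by $q_\rho^4 L_\rho$, \dots''). So within this paper's development you cannot simply quote the properties of $\Delta(\rho)$ from Theorem~\ref{rm} to prove Theorem~\ref{source}; that would be circular.

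That said, your proposed order is essentially how McNamara's argument actually runs: one works in the Serre \emph{subcategory} (not quotient) of finitely generated $R_\rho$-modules all of whose composition factors are $\simeq L_\rho$, uses finite global dimension of $R_\rho$ to show this subcategory has a projective generator $\Delta(\rho)$ covering $L_\rho$, identifies $\END_{R_\rho}(\Delta(\rho))$ with a polynomial ring in one variable of degree $2d_\rho$, and then your long-exact-sequence derivation in the second paragraph is exactly right. So the strategy matches the cited one; only the packaging relative to this paper's exposition is reversed.

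A few inaccuracies in your sketch. First, $y_\rho$ is an endomorphism of $\Delta(\rho)$, not an element of $R_\rho$; the reason the connecting map vanishes is simply that the image of $y_\rho$ equals the kernel of $\Delta(\rho)\twoheadrightarrow L_\rho$. Second, your final paragraph is off the mark: $\Delta(\rho)$ is not produced by parabolic induction from an extremal word together with a nil-Hecke idempotent---that construction is how one passes from $\Delta(\rho)^{\circ m}$ to the divided power module $\Delta(\rho^m)$ once $\Delta(\rho)$ already exists, see (\ref{divided}). And invoking \cite{KLM,KLoub} for a ready-made $\Delta(\rho)$ risks circularity, since those references build on \cite{BKM} and \cite{McN}.
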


This theorem allows one to extend $L_\rho$ by $q_\rho^2L_\rho$, then by $q_\rho^4L_\rho$, etc. to get in the limit the modules $\Delta(\rho)$ with the following properties:

\begin{Theorem}\label{rm}{\rm \cite[Theorem 3.4, Corollary 3.4]{BKM}}
There is
a short exact sequence
\begin{equation}\label{ses3}
0 \longrightarrow q_\rho ^2 \Delta(\rho )
{\longrightarrow}
\Delta(\rho ) \longrightarrow L_\rho \longrightarrow 0.
\end{equation}
Moreover:
\begin{itemize}
\item[(i)]
$\Delta(\rho )$ is a cyclic module, and in the Grothendieck group we have 
\begin{equation}\label{EDeGrGr}
[\Delta(\rho )] = \frac{1}{1-q_\rho^2}\,[L_\rho ] ;
\end{equation}
\item[(ii)] $\Delta(\rho)$ has irreducible head isomorphic to
  $L_\rho$;
\item[(iii)] 
we have that 
$\EXT^d_{R_\rho}(\Delta(\rho), V) =0$ 
for $d \geq 1$ and
any finitely generated $R_\rho$-module $V$ with
  all composition factors $\simeq L_\rho$;
\item[(iv)]
$\END_{R_\rho}(\Delta(\rho)) \cong F[x]$ for $x$ in
degree $2d_\rho$.

\item[(v)]
The functor 
$\HOM_{R_\rho}(\Delta(\rho), -)$ defines an equivalence
from
the category of finitely generated graded
$R_\rho$-modules with all composition factors $\simeq L_\rho$ to the category of finitely generated
graded $\k[x]$-modules (viewing $\k[x]$ as a graded algebra with $\deg(x) =2 d_\rho$).
\end{itemize}
\end{Theorem}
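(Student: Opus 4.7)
The plan is to construct $\Delta(\rho)$ as an inverse limit of iterated non-split self-extensions of $L_\rho$ built up using Theorem \ref{source}. Since $\EXT^1_{R_\rho}(L_\rho, L_\rho)$ is one-dimensional and concentrated in a single degree, I take the unique non-trivial self-extension $0 \to q_\rho^2 L_\rho \to M_1 \to L_\rho \to 0$. Inductively, given $M_n$ in a short exact sequence $0 \to q_\rho^{2n}L_\rho \to M_n \to M_{n-1} \to 0$, I apply $\HOM(-, q_\rho^{2(n+1)}L_\rho)$ to it: the vanishing $\EXT^2(L_\rho, L_\rho) = 0$ together with an inductively established $\EXT^2(M_{n-1}, L_\rho) = 0$ ensures via the long exact sequence that $\EXT^1(M_n, q_\rho^{2(n+1)}L_\rho)$ surjects onto $\EXT^1(q_\rho^{2n}L_\rho, q_\rho^{2(n+1)}L_\rho) \cong F$, and I take $M_{n+1}$ to represent a non-trivial lift. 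The resulting quotient maps $M_{n+1} \twoheadrightarrow M_n$ form an inverse system whose graded components stabilize for $n \gg 0$, so $\Delta(\rho) := \varprojlim_n M_n$ is a well-defined Laurentian $R_\rho$-module. The short exact sequence (\ref{ses3}) then arises from the self-similar structure: the kernel of the canonical projection $\Delta(\rho) \twoheadrightarrow L_\rho = M_0$ is identified with $q_\rho^2 \Delta(\rho)$ by matching up the truncated towers. Iterating (\ref{ses3}) in the Grothendieck group yields $[\Delta(\rho)] = (1-q_\rho^2)^{-1}[L_\rho]$, proving the character formula in (i); cyclicity follows because a lift of a generator of $L_\rho$ generates every $M_n$ by graded Nakayama, hence also $\Delta(\rho)$. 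Part (ii) is immediate because the construction makes $L_\rho$ the head of every $M_n$, hence of the inverse limit.

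For (iii), I apply $\HOM(-, V)$ to (\ref{ses3}). By Theorem \ref{source} and induction on composition length, $\EXT^{d \geq 2}(L_\rho, V) = 0$ for any finitely generated $V$ with composition factors $\simeq L_\rho$, so the long exact sequence yields for $d \geq 2$ the graded isomorphism $\EXT^d(\Delta(\rho), V) \cong \EXT^d(q_\rho^2\Delta(\rho), V) \cong q_\rho^{-2}\EXT^d(\Delta(\rho), V)$. Since $\Delta(\rho)$ is cyclic, $V$ is finite dimensional, and $R_\rho$ has finite global dimension, $\EXT^d(\Delta(\rho), V)$ is a finite-dimensional graded vector space, and this graded periodicity forces it to vanish. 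For $d = 1$, one analyzes the low-degree portion of the same long exact sequence, showing via Theorem \ref{source} and the head structure of $\Delta(\rho)$ that the connecting map into $\EXT^1(\Delta(\rho), V)$ is zero, producing the same periodic identity $\EXT^1(\Delta(\rho), V) \cong q_\rho^{-2}\EXT^1(\Delta(\rho), V)$, hence the vanishing.

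For (iv), I apply $\HOM(\Delta(\rho), -)$ to (\ref{ses3}); by (iii) the resulting sequence is short exact, and since $\HOM(\Delta(\rho), L_\rho) \cong F$, we deduce $\END(\Delta(\rho)) \cong F \oplus q_\rho^2 \END(\Delta(\rho))$ as graded vector spaces, so $\END(\Delta(\rho))$ has graded dimension $(1-q_\rho^2)^{-1}$. The inclusion $q_\rho^2\Delta(\rho) \hookrightarrow \Delta(\rho)$ gives a distinguished element $x \in \END(\Delta(\rho))_{2d_\rho}$, and its $n$-th power corresponds to the non-zero inclusion $q_\rho^{2n}\Delta(\rho) \hookrightarrow \Delta(\rho)$, so the powers $1, x, x^2, \ldots$ are linearly independent and span $\END(\Delta(\rho))$, giving the algebra isomorphism $\END(\Delta(\rho)) \cong F[x]$. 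Part (v) then follows by a standard Morita-type argument: the functor $\HOM(\Delta(\rho), -)$ is exact on the relevant subcategory by (iii), sends $\Delta(\rho)$ to the free rank-one $F[x]$-module by (iv), and an inverse functor is given by $-\otimes_{F[x]}\Delta(\rho)$. The main obstacle I anticipate is the coherent construction of the tower $\{M_n\}$, ensuring the inductively chosen Ext classes assemble into a well-defined inverse system and that the inverse limit genuinely realizes the claimed self-similar structure; the $d = 1$ analysis in (iii) also requires care, but once these steps are in place everything else flows mechanically from Theorem \ref{source} and the finite global dimension of $R_\rho$.
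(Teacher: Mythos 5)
Your overall strategy --- iterate the self-extension supplied by Theorem~\ref{source} and pass to the limit --- is exactly the construction this paper sketches and attributes to \cite{BKM}, so the approach is the intended one. But two load-bearing steps are asserted rather than proved, and one of them, as written, does not cover the case you later rely on. The identification $\ker(\Delta(\rho)\to L_\rho)\cong q_\rho^2\Delta(\rho)$, i.e.\ the exact sequence (\ref{ses3}) itself, does not come for free from ``matching up the truncated towers'': you need a uniqueness statement for the modules $M_n$. It is within reach of your own computation --- the long exact sequence together with $\EXT^1(L_\rho,q_\rho^{2j}L_\rho)=0$ for $j\geq 2$ gives $\EXT^1(M_{n-1},q_\rho^{2(n+1)}L_\rho)=0$, hence $\EXT^1(M_n,q_\rho^{2(n+1)}L_\rho)\cong F$, so the lift is unique up to scalar and one can then show inductively that $\ker(M_{n+1}\to L_\rho)\cong q_\rho^2 M_n$ compatibly with the transition maps --- but this is the heart of the theorem and must actually be carried out, not merely anticipated.

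The second gap is in (iii). For $d=1$ the crux is precisely the point you wave at: one must prove that the connecting map $\HOM(q_\rho^2\Delta(\rho),V)\to\EXT^1(L_\rho,V)$ is surjective; for $V$ a shift of $L_\rho$ this holds because $\head\Delta(\rho)\cong L_\rho$ forces $\Delta(\rho)/q_\rho^2\rad\Delta(\rho)$ to be a non-split self-extension of $L_\rho$, hence to span the one-dimensional $\EXT^1(L_\rho,q_\rho^2L_\rho)$; only with this does the long exact sequence give the isomorphism $\EXT^1(\Delta(\rho),V)\cong q_\rho^{-2}\EXT^1(\Delta(\rho),V)$ rather than a mere surjection, which by itself proves nothing. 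More seriously, your argument for (iii) explicitly assumes $V$ finite dimensional (you use finite-dimensionality of the $\EXT$ spaces and induction on composition length), whereas the statement concerns finitely generated $V$, and your own proof of (iv) applies (iii) to $V=q_\rho^2\Delta(\rho)$, which has infinite length. The finiteness that actually does the work is that $\EXT^d(\Delta(\rho),V)$ is bounded below with finite-dimensional graded components (since $\Delta(\rho)$ admits a resolution by finitely generated projectives and $V$ is Laurentian), so that the periodicity isomorphism contradicts boundedness below; the inputs $\EXT^{\geq 2}(L_\rho,V)=0$ and the surjectivity of the connecting map must likewise be established for infinite-length $V$, where induction on length is unavailable. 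Once these points are repaired, (i), (ii), (iv) and the Morita-type argument for (v) do go through essentially as you describe.
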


\begin{Remark} \label{RR'}
{\rm 
In ${\tt ADE}$ types, there is a more elementary construction of $\De(\rho)$. For any $\al\in Q_+$ of height $d$, let  
$R_\al'$ be the subalgebra of
$R_\alpha$
generated by 
$$\{1_\bi\:|\:\bi \in \W_\alpha\}\cup \{y_1 -
y_2,\dots,y_{d-1}-y_d\}
\cup \{\psi_1,\dots,\psi_{d-1}\}.
$$
Denote by $L'_\rho$ the restriction of $L_\rho$ from $R_\rho$ to $R_\rho'$. Then $\De(\rho)\cong R_\rho \otimes_{R'_\rho} L'_\rho$.
}
\end{Remark}

By (\ref{drv}) and (\ref{EDeGrGr}), the module  
$\Delta(\rho )$ categorifies the root vector $r_\rho $. Compare this to  Proposition~\ref{PDualPBWCat}, which shows that $\bar\De(\rho)=L_\rho$ categorifies the dual root vector $r_\rho^*$. Next, we explain how to category the divided powers $r_\rho^m/[m]_\rho^!$ for all $m\in\Z_{\geq 0}$. 
For this we need to compute the endomorphism algebra of
$\Delta(\rho)^{\circ m}$.

Choose a non-zero homogeneous vector $v_\rho$ of minimal degree in $\Delta(\rho)$. It generates $\De(\rho)$ as an $R_\rho$-module. The proof of the following lemma is based on the Mackey Theorem and splitting coming from Theorem~\ref{rm}(iii). 

\begin{Lemma}\label{dpl1} {\rm \cite[Lemma 3.6]{BKM}}
Let $w \in \Si_{2n}$ be the permutation 
mapping $(1,\dots,n, n+1,\dots,2n)$ to $(n+1,\dots,2n,1,\dots,n)$.
There is a unique 
$R_{2\rho}$-module homomorphism
$$
\tau:
\Delta(\rho) \circ \Delta(\rho) \rightarrow \Delta(\rho) \circ
\Delta(\rho)
$$
of degree $-2d_\rho$
such that $\tau(1_{\rho,\rho} \otimes (v_\rho \otimes v_\rho))
= \psi_w 1_{\rho,\rho} \otimes (v_\rho \otimes v_\rho)$.
\end{Lemma}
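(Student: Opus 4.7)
I would first dispose of uniqueness, then build $\tau$ via Frobenius reciprocity combined with a Mackey-filtration analysis and an Ext-vanishing splitting.

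For uniqueness, note that $v_\rho$ generates $\De(\rho)$ over $R_\rho$ by Theorem~\ref{rm}(i), so $\De(\rho)\circ\De(\rho) = \Ind_{\rho,\rho}(\De(\rho)\boxtimes\De(\rho))$ is cyclically generated over $R_{2\rho}$ by $1_{\rho,\rho}\otimes(v_\rho\otimes v_\rho)$, and any $R_{2\rho}$-homomorphism is determined by its value on this vector. For existence, Frobenius reciprocity reduces the problem to producing an $R_{\rho,\rho}$-homomorphism
\[
\tilde\tau:\ \De(\rho)\boxtimes\De(\rho) \longrightarrow \Res_{\rho,\rho}(\De(\rho)\circ\De(\rho))
\]
of degree $-2d_\rho$ that sends $v_\rho\otimes v_\rho$ to $\psi_w 1_{\rho,\rho}\otimes(v_\rho\otimes v_\rho)$. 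The degree matches: since $w$ crosses every letter of the first $\rho$-block with every letter of the second, $\deg(\psi_w 1_{\rho,\rho}) = -(\rho,\rho) = -2d_\rho$.

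Next I would analyze the target via Theorem~\ref{TMackeyKL}. The Mackey factors are indexed by $2\times 2$ matrices $\underline\al = (\al^a_b)$ whose rows and columns all sum to $\rho$, i.e.\ by a single parameter $\be := \al^1_1 \in Q_+$ with $\al^1_2 = \al^2_1 = \rho - \be$ and $\al^2_2 = \be$; the $\be$-factor is built from $\Res_{\be,\rho-\be}\De(\rho)\,\boxtimes\,\Res_{\rho-\be,\be}\De(\rho)$, twisted and reinduced. Every composition factor of $\De(\rho)$ is $\simeq L_\rho$ up to shift (by iterating the short exact sequence (\ref{ses3})), so exactness of restriction and finite-dimensionality of each graded component propagate the cuspidal property of $L_\rho$ to $\De(\rho)$. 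For $0 < \be < \rho$, nonvanishing of the $\be$-factor would force both $\be$ and $\rho - \be$ to be sums of positive roots strictly $\prec\rho$, so $\rho$ itself would be such a sum, contradicting Lemma~\ref{l1}. Hence only the identity matrix ($\be = \rho$) and the swap matrix ($\be = 0$) survive, and $\Res_{\rho,\rho}(\De(\rho)\circ\De(\rho))$ fits into a short exact sequence whose two ends are $\De(\rho)\boxtimes\De(\rho)$ and $q^{-2d_\rho}\De(\rho)\boxtimes\De(\rho)$, the shift $s(x,\underline\al) = -(\rho,\rho)$ coming from twisting by the swap permutation.

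To split this sequence I would invoke Ext-vanishing. By the Künneth formula over $R_{\rho,\rho} = R_\rho \otimes R_\rho$, the group $\EXT^1_{R_{\rho,\rho}}(\De(\rho)\boxtimes\De(\rho),\De(\rho)\boxtimes\De(\rho))$ decomposes as a sum of tensor products of $\EXT^*_{R_\rho}(\De(\rho),\De(\rho))$ in total homological degree $1$. Theorem~\ref{rm}(iii) applied with $V = \De(\rho)$ (cyclic, hence finitely generated, with all composition factors $\simeq L_\rho$) gives $\EXT^{\ge 1}_{R_\rho}(\De(\rho),\De(\rho)) = 0$, so this Ext group vanishes and the Mackey sequence splits. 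Inclusion of the swap summand then supplies $\tilde\tau$ of the correct degree; a direct unwinding of the Mackey isomorphism shows that the generator $v_\rho\otimes v_\rho$ is sent to $\psi_w 1_{\rho,\rho}\otimes(v_\rho\otimes v_\rho)$, because $\psi_w$ is precisely the element of $R_{2\rho}$ implementing the transposition $x(\underline\al)$ in Khovanov--Lauda's construction for the swap matrix. Adjointing back via Frobenius delivers the desired $\tau$.

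The main obstacle is the cuspidality-driven pruning of the Mackey filtration: propagating cuspidality from $L_\rho$ to $\De(\rho)$ requires the infinite composition series of Theorem~\ref{rm} together with local finite-dimensionality, and the non-intermediate-$\be$ argument leans essentially on Lemma~\ref{l1}. Once the filtration is reduced to its two extreme layers, the splitting via Künneth plus Theorem~\ref{rm}(iii) and the explicit identification of the generator's image are routine bookkeeping.
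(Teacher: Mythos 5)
Your argument is correct and is essentially the proof the paper points to via \cite[Lemma 3.6]{BKM}: uniqueness from cyclicity, and existence by Frobenius reciprocity together with the Mackey filtration pruned to its two extreme layers by cuspidality of $\Delta(\rho)$ and Lemma~\ref{l1}, then split using the Ext-vanishing of Theorem~\ref{rm}(iii). The only step worth making explicit is that any splitting sends $v_\rho\otimes v_\rho$ exactly to $\psi_w 1_{\rho,\rho}\otimes(v_\rho\otimes v_\rho)$, with no correction term from the bottom layer, because $v_\rho$ has minimal degree and hence the submodule copy of $\Delta(\rho)\boxtimes\Delta(\rho)$ has zero component in degree $\deg(v_\rho\otimes v_\rho)-2d_\rho$.
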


Now pick a non-zero endomorphism $x
\in \END_{R_\rho}(\Delta(\rho))_{2d_\rho}$.
By Theorem~\ref{rm}(iv) we have that
$\END_{R_\rho}(\Delta(\rho)) = \k[x]$,
so $x$ is unique up to a scalar. Now we have commuting 
endomorphisms $\dx_1,\dots,\dx_m
\in \END_{R_{m\rho}}(\Delta(\rho)^{\circ m})_{2d_\rho}$
with
$$\dx_r := \id^{\circ(r-1)} \circ x \circ \id^{\circ (m-r)}.$$
Moreover, the endomorphism $\tau$ from the previous  lemma yields $\dtau_1,\dots,\dtau_{m-1} \in \END_{R_{m
    \rho}}(\Delta(\rho)^{\circ m})_{-2d_\rho}$
with 
$$\dtau_r := \id^{\circ (r-1)} \circ\tau\circ \id^{\circ(m-r-1)}.$$
Now \cite[Lemmas 3.7--3.9]{BKM} yield:

\begin{Theorem} 
For a unique choice of $x
\in \END_{R_\rho}(\Delta(\rho))_{2d_\rho}$, there is an 
algebra isomorphism
\begin{equation*}
\NH_m \stackrel{\sim}{\rightarrow}
\END_{R_{m\rho}}(\Delta(\rho)^{\circ m})^{\op},\qquad
y_i \mapsto \dx_i,
\ \psi_j \mapsto \dtau_j.
\end{equation*}
\end{Theorem}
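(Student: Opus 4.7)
The plan is to establish the theorem in two stages: first, for a suitable unique choice of $x \in \END_{R_\rho}(\Delta(\rho))_{2d_\rho}$, verify that the assignments $y_i \mapsto \dx_i$ and $\psi_j \mapsto \dtau_j$ satisfy all defining relations of $\NH_m$, producing an algebra homomorphism $\phi: \NH_m \to \END_{R_{m\rho}}(\Delta(\rho)^{\circ m})^{\op}$; second, show that $\phi$ is an isomorphism via a graded dimension calculation.

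For the relations, most are routine: the $\dx_i$ pairwise commute because they act on disjoint tensor factors of the induction product and each factor's endomorphism ring $\END_{R_\rho}(\Delta(\rho)) = F[x]$ (Theorem \ref{rm}(iv)) is commutative; the relations $\dx_i \dtau_j = \dtau_j \dx_i$ for $j \notin \{i-1,i\}$ follow from the same disjoint-support observation; and the braid relations for the $\dtau_j$ reduce, via a uniqueness-of-maps argument in the spirit of Lemma \ref{dpl1}, to comparing two endomorphisms of $\Delta(\rho)^{\circ 3}$ of the same degree. The genuinely subtle relations are the quadratic $\dtau_j^2 = 0$ and the mixed relations $\dtau_j \dx_{j+1} - \dx_j \dtau_j = \id = \dx_{j+1}\dtau_j - \dtau_j \dx_j$. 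For $\dtau_j^2 = 0$ it suffices to argue in the $m=2$ case: here $\tau^2$ has degree $-4d_\rho$, and combining the Mackey decomposition of $\Res_{\rho,\rho}(\Delta(\rho)\circ \Delta(\rho))$ with Theorem \ref{rm}(iii)--(iv) shows that any endomorphism of $\Delta(\rho)\circ\Delta(\rho)$ of this degree must vanish. For the mixed relation one evaluates both sides on the cyclic generator $1_{\rho,\rho}\otimes (v_\rho \otimes v_\rho)$, using the explicit description of $\tau$ from Lemma \ref{dpl1} and the KLR commutation relation \eqref{R6}; demanding that the result equals the generator itself determines the scalar normalization of $x$ uniquely, which is exactly the content of the phrase ``for a unique choice of $x$.''

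Having the homomorphism $\phi$, I would show it is an isomorphism by comparing graded dimensions. The graded dimension of $\NH_m$ is the standard one, namely $(\sum_{w\in\Si_m} q_\rho^{-2\ell(w)}) \cdot (1-q_\rho^2)^{-m}$ up to an overall shift (reflecting the PBW-type basis $\psi_w y_1^{a_1}\cdots y_m^{a_m}$). On the other hand, by Frobenius reciprocity
$$
\END_{R_{m\rho}}(\Delta(\rho)^{\circ m}) \;\cong\; \HOM_{R_{\rho,\dots,\rho}}\bigl(\Delta(\rho)^{\boxtimes m},\,\Res_{\rho,\dots,\rho}\,\Delta(\rho)^{\circ m}\bigr).
$$
The Mackey theorem (Theorem \ref{TMackeyKL}) yields a filtration of the right-hand restriction whose subquotients are indexed by $\Si_m$, each isomorphic to $\Delta(\rho)^{\boxtimes m}$ with a degree shift determined by the permutation. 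Using Theorem \ref{rm}(iii), the relevant $\operatorname{ext}^1$-groups vanish, so this filtration splits when we apply $\HOM$. Each summand contributes $\END_{R_\rho}(\Delta(\rho))^{\otimes m} \cong F[x_1,\dots,x_m]$ with the appropriate shift, and the total graded dimension matches that of $\NH_m$ exactly. Since the image of $\phi$ manifestly contains the polynomial subalgebra $F[\dx_1,\dots,\dx_m]$ together with $\dtau_1,\dots,\dtau_{m-1}$, and these suffice to hit every Mackey stratum, $\phi$ is surjective; matching dimensions forces injectivity.

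The main obstacle will be the verification of the mixed nil-Hecke relation $\dtau_j\dx_{j+1}-\dx_j\dtau_j = \id$: this is where the normalization of $x$ is fixed, and it requires an honest computation of how $\psi_w 1_{\rho,\rho}$ interacts with $x$ acting on one of the two tensor slots of $\Delta(\rho)\circ\Delta(\rho)$. Carrying this out cleanly calls either for an explicit model of $\Delta(\rho)$ (for instance the one in Remark \ref{RR'} in the \texttt{ADE} case) or for a direct manipulation inside $R_{2\rho}$ using the KLR relations \eqref{R6}--\eqref{R4}; the remaining relations, once $\dtau^2 = 0$ and the mixed relation are in hand, follow with comparatively little extra work.
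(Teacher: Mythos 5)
The paper does not actually prove this theorem; it simply refers to \cite[Lemmas 3.7--3.9]{BKM}. Your two-stage plan --- first verify the nil-Hecke relations for a suitably normalized $x$, using Lemma~\ref{dpl1} for $\tau$ and Theorem~\ref{rm}(iv) for $\END_{R_\rho}(\Delta(\rho))=F[x]$, and then prove the resulting homomorphism is an isomorphism by matching graded dimensions via the Mackey theorem together with the $\Ext$-vanishing of Theorem~\ref{rm}(iii) --- is the natural reconstruction of what that reference does, and it is correct in outline: cuspidality of $L_\rho$ forces the only nonzero Mackey strata in $\Res_{\rho^m}\Delta(\rho)^{\circ m}$ to be indexed by $\Si_m$, each a shifted copy of $\Delta(\rho)^{\boxtimes m}$, and both sides of your desired isomorphism then have graded dimension $\bigl(\sum_{w\in\Si_m}q_\rho^{-2\ell(w)}\bigr)\big/(1-q_\rho^2)^m$. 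Two points deserve more care than you allot them. For the mixed relation, as you say, the honest computation both fixes the scalar in $x$ and requires knowing that $\tau\dx_2-\dx_1\tau$ sends the degree-minimal generator $1_{\rho,\rho}\otimes(v_\rho\otimes v_\rho)$ back into a one-dimensional space (this uses that $\bi_\rho$ is an extremal word for $L_\rho$, so the bottom graded piece of the $\bi_\rho\bi_\rho$-word space of $\Delta(\rho)^{\circ 2}$ is one-dimensional). For the braid relation, the Mackey degree count only shows $\END_{R_{3\rho}}(\Delta(\rho)^{\circ 3})_{-6d_\rho}$ is one-dimensional, hence that $\dtau_j\dtau_{j+1}\dtau_j$ and $\dtau_{j+1}\dtau_j\dtau_{j+1}$ are \emph{proportional}; to pin the constant at $1$ you must still evaluate on the generator and control the KLR braid error terms in relation (\ref{R7}), so this step warrants the same level of care as the mixed relation rather than ``comparatively little extra work.''
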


By the theorem, we can view $\Delta(\rho)^{\circ
  m}$ as an $(R_{m\rho},
\NH_m)$-bimodule. Finally define
the {\em divided power module}
\begin{equation}\label{divided}
\Delta(\rho^m) := q_\rho^{m(m-1)/2}
\Delta(\rho)^{\circ m}e_m
\end{equation}
where $e_m \in \NH_m$ is the idempotent (\ref{idemp}).

\begin{Lemma}\label{dp} {\rm \cite[Lemmas 3.7--3.9]{BKM}} 
We have that $\Delta(\rho)^{\circ m} \cong [m]_\rho^! \Delta(\rho^m)$
as an $R_{m\rho}$-module.
Moreover $\Delta(\rho^m)$ has irreducible head
$L(\rho^m)$, and in the Grothendieck group we have 
$$
[\Delta(\rho^m)] = \frac{1}{(1-q_\rho^2)(1-q_\rho^4)\cdots (1-q_\rho^{2m})}\,[L(\rho^m)] .
$$
\end{Lemma}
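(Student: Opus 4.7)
The plan is to combine the nil-Hecke structure on $\END_{R_{m\rho}}(\Delta(\rho)^{\circ m})$ supplied by the preceding theorem with the properties of cuspidal modules established in Theorem~\ref{rm}, Lemma~\ref{LCuspPower}, and Corollary~\ref{CPowerIrr}. First I would use the isomorphism $\NH_m \cong \END_{R_{m\rho}}(\Delta(\rho)^{\circ m})^{\op}$ to view $\Delta(\rho)^{\circ m}$ as an $(R_{m\rho},\NH_m)$-bimodule. The right-module analogue of Example~\ref{ExNH}, which holds because $e_m$ is a primitive idempotent and $\NH_m$ is a matrix algebra over its center, gives
$$\NH_m \;\cong\; [m]_\rho^! \cdot q_\rho^{m(m-1)/2}\, e_m \NH_m$$
as graded right $\NH_m$-modules. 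Tensoring $\Delta(\rho)^{\circ m}\otimes_{\NH_m}-$ on the right then yields the first assertion, $\Delta(\rho)^{\circ m} \cong [m]_\rho^!\,\Delta(\rho^m)$ as graded $R_{m\rho}$-modules.

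Next I would show that $\Delta(\rho^m)$ has irreducible head by exhibiting it as a cyclic module. Since $\Delta(\rho)$ is cyclic by Theorem~\ref{rm}(i), so is $\Delta(\rho)^{\circ m}$; if $v$ is a cyclic generator, then because the $R_{m\rho}$- and $\NH_m$-actions commute, $v\cdot e_m$ generates the summand $\Delta(\rho)^{\circ m} e_m$, making $\Delta(\rho^m)$ cyclic. A nonzero cyclic graded module over the semiperfect algebra $R_{m\rho}$ (compare Lemma~\ref{CLaurent}) has an irreducible head. To identify this head with $L(\rho^m)$, I would iterate the surjection $\Delta(\rho)\twoheadrightarrow L_\rho$ from (\ref{ses3}) via the right-exactness of induction to get $\Delta(\rho)^{\circ m}\twoheadrightarrow L_\rho^{\circ m}$, a target which is irreducible by Lemma~\ref{LCuspPower}. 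Combining (\ref{EDeGrGr}) with multiplicativity of $[\,\cdot\,]$ under induction, every composition factor of $\Delta(\rho)^{\circ m}$ is a graded shift of $L_\rho^{\circ m}$, and by Corollary~\ref{CPowerIrr}, $L_\rho^{\circ m} \cong q_\rho^{-m(m-1)/2} L(\rho^m)$. Hence the head of $\Delta(\rho^m)$ is some graded shift of $L(\rho^m)$, and the definitional shift $q_\rho^{m(m-1)/2}$ is precisely what compensates that of $L(\rho^m)$.

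For the Grothendieck-group formula I would simply combine the above:
$$[m]_\rho^!\,[\Delta(\rho^m)] \;=\; [\Delta(\rho)^{\circ m}] \;=\; \frac{[L_\rho]^{\circ m}}{(1-q_\rho^2)^m} \;=\; \frac{q_\rho^{-m(m-1)/2}\,[L(\rho^m)]}{(1-q_\rho^2)^m},$$
and then invoke the elementary identity $[m]_\rho^!\,(1-q_\rho^2)^m\, q_\rho^{m(m-1)/2} = \prod_{k=1}^m(1-q_\rho^{2k})$, checked by extracting a factor $-q_\rho^k$ from each $(1-q_\rho^{2k})$. The main obstacle I expect is the precise identification of the head: confirming that the head is exactly $L(\rho^m)$ rather than a nontrivial graded shift of it requires careful bookkeeping of the degree of $v\cdot e_m$ through the definitional shift $q_\rho^{m(m-1)/2}$ and the $\circledast$-self-duality normalization of $L(\rho^m)$, rather than yielding to a purely conceptual argument.
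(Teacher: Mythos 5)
Your first assertion and your Grothendieck group computation are both sound: the decomposition $\NH_m \cong [m]_\rho^! q_\rho^{m(m-1)/2} e_m \NH_m$ as right $\NH_m$-modules follows from Example~\ref{ExNH} via the nil-Hecke anti-automorphism, tensoring gives $\Delta(\rho)^{\circ m} \cong [m]^!_\rho \Delta(\rho^m)$, and the polynomial identity $[m]^!_\rho\,(1-q_\rho^2)^m\,q_\rho^{m(m-1)/2}=\prod_{k=1}^m(1-q_\rho^{2k})$ checks out. The paper itself does not prove this lemma but cites \cite[Lemmas 3.7--3.9]{BKM}, so there is no internal proof to compare against.

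There is, however, a genuine gap in the head argument. The assertion that a nonzero cyclic graded module over a semiperfect algebra has irreducible head is false: the regular module $R_{m\rho}$ itself is cyclic, but its head is $\bigoplus_\pi (\DIM L(\pi))\,L(\pi)$, which is far from irreducible. Your follow-up reasoning establishes only that every composition factor of $\Delta(\rho^m)$ is a graded shift of $L(\rho^m)$; combined with cyclicity this shows $\head\,\Delta(\rho^m)\cong\bigoplus_j q^{n_j}L(\rho^m)$ for \emph{distinct} shifts $n_j$ (by Schurianness), but it does not force there to be only one summand. You would need to pin down, for instance, that $\DIM\HOM_{R_{m\rho}}(\Delta(\rho)^{\circ m},L(\rho^m))=q_\rho^{m(m-1)/2}[m]^!_\rho$ (say, via adjointness, the equivalence in Theorem~\ref{rm}(v), and a character count of $\Res_{\rho,\dots,\rho}L(\rho^m)$), and then divide by $[m]^!_\rho$ to conclude that $\HOM(\Delta(\rho^m),L(\rho^m))$ is one-dimensional in degree $-m(m-1)d_\rho$; as stated, the step from ``all factors are shifts of $L(\rho^m)$'' to ``the head is a single copy'' is unsupported.
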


The lemma shows that in
the Grothendieck group $[\Delta(\rho^m)]$ corresponds the  to the divided power $r_\rho^{m} / [m]_\rho^!$ under the Khovanov-Lauda-Rouquier categorification.
More generally, for a root partition $\pi=(m_1,\dots,m_N)$, define the {\em
  standard module}
\begin{equation}\label{standard}
\Delta(\pi) := \Delta(\rho_1^{m_1}) \circ\cdots \circ
\Delta(\rho_N^{m_N}).
\end{equation}

\begin{Theorem}\label{shead} {\rm \cite[Theorem 3.11]{BKM}} 
For a root partition $\pi=(m_1,\dots,m_N)$, the module $V_0 := \Delta(\pi)$ has an exhaustive
filtration
$V_0 \supset V_1 \supset
V_2 \supset \cdots$
such that $V_0 / V_1 \cong \bar\Delta(\pi)$ 
and all other sections 
of the form $q^{2m}\bar\Delta(\pi)$ for $m > 0$.
Moreover, $\Delta(\pi)$
has irreducible head $\cong L(\pi)$, and in the Grothendieck group: 
$$
[\Delta(\pi)] = 
[\bar \Delta(\pi)] \:\:\Big / \prod_{k=1}^N\prod_{r=1}^{m_k}(1-q_{\rho_k}^{2r}).
$$
\end{Theorem}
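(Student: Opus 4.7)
The plan is to build the filtration of $\Delta(\pi)$ by gluing together filtrations of its tensor factors $\Delta(\rho_k^{m_k})$, using the short exact sequence (\ref{ses3}) as the core input and exactness of the induction product.

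First I would handle the single-root case $\Delta(\rho^m)$. Iterating the short exact sequence (\ref{ses3}) exhibits $\Delta(\rho)$ as an exhaustive extension with successive sections $L_\rho, q_\rho^2 L_\rho, q_\rho^4 L_\rho, \ldots$, reading off a decreasing submodule chain. By exactness of the induction product (since $R_{\al+\be}1_{\al,\be}$ is free as a right $R_{\al,\be}$-module), tensoring $m$ such filtrations gives a filtration of $\Delta(\rho)^{\circ m}$ with sections of the form $q_\rho^{2(a_1+\cdots+a_m)}L_\rho^{\circ m}$, $a_i\ge 0$. Multiplying by the idempotent $e_m$ of (\ref{idemp}) and applying the overall shift $q_\rho^{m(m-1)/2}$ turns $\Delta(\rho)^{\circ m}$ into copies of $\Delta(\rho^m)$ (by Lemma~\ref{dp}) and turns $L_\rho^{\circ m}$ into copies of $L(\rho^m)=q_\rho^{m(m-1)/2}L_\rho^{\circ m}$. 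The upshot I need is: $\Delta(\rho^m)$ admits an exhaustive descending filtration with top section $L(\rho^m)$ and all other sections of the form $q^{2s}L(\rho^m)$, $s>0$, with multiplicities making $[\Delta(\rho^m)]=[L(\rho^m)]/\prod_{r=1}^m(1-q_\rho^{2r})$ as in Lemma~\ref{dp}.

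Next I would globalize. Apply $\Ind_{|\pi|}$ to the outer tensor product of the single-root filtrations on $\Delta(\rho_1^{m_1}),\ldots,\Delta(\rho_N^{m_N})$. Because $\Ind_{|\pi|}$ is exact and commutes with grading shifts, I obtain an exhaustive filtration of $\Delta(\pi)$ whose sections have the form $q^{2s}\,L(\rho_1^{m_1})\circ\cdots\circ L(\rho_N^{m_N})$ for $s\ge 0$. Now unwind: $L(\rho_k^{m_k})=q_{\rho_k}^{m_k(m_k-1)/2}L_{\rho_k}^{\circ m_k}$, so the product of the internal shifts is precisely $q^{\shift(\pi)}$ by the definition (\ref{EShift}), giving $L(\rho_1^{m_1})\circ\cdots\circ L(\rho_N^{m_N})\cong \bar\Delta(\pi)$. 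This shows the sections are $q^{2s}\bar\Delta(\pi)$ with top section ($s=0$) equal to $\bar\Delta(\pi)$. The Grothendieck group formula then follows by multiplying the generating functions of the section multiplicities across $k=1,\ldots,N$, giving the claimed product $\prod_k\prod_{r=1}^{m_k}(1-q_{\rho_k}^{2r})^{-1}$.

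Finally, for the head statement: since $V_1$ lies in the radical of $V_0=\Delta(\pi)$ (every section below the top is a grading shift $q^{2s}\bar\Delta(\pi)$ with $s>0$, so no submodule maps can reach the degree-zero generator), we get $\head\,\Delta(\pi)=\head\,\bar\Delta(\pi)=L(\pi)$ by Theorem~\ref{THeadIrr}(i). The main obstacle I foresee is the bookkeeping step: verifying that the filtration induced from the outer tensor product really is exhaustive and that the degree shifts on the sections assemble correctly into the power-series identity; in particular one must be careful that $V_1$ is contained in $q\cdot V_0$ (grading-bounded-below reasoning) so that the infinite filtration is Hausdorff and the head computation is legitimate. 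Both amount to verifying that each $\Delta(\rho_k^{m_k})$ is a cyclic module concentrated in degrees $\ge 0$ relative to its canonical generator, which follows from Lemma~\ref{dp} and Theorem~\ref{rm}(i).
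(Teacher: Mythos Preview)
Your overall architecture is right and matches the strategy in \cite{BKM}: filter each divided power $\Delta(\rho_k^{m_k})$ by shifted copies of $L(\rho_k^{m_k})$, then use exactness of $\circ$ to obtain a filtration of $\Delta(\pi)$ with sections $q^{2s}\bar\Delta(\pi)$, read off the Grothendieck identity, and deduce the head from Theorem~\ref{THeadIrr}.

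There is, however, a genuine gap in your single-root step. You write that ``multiplying by the idempotent $e_m$ \ldots turns $L_\rho^{\circ m}$ into copies of $L(\rho^m)$.'' But $L_\rho^{\circ m}$ is already irreducible by Theorem~\ref{THeadIrr}(vi): it is \emph{one} copy of $q_\rho^{-m(m-1)/2}L(\rho^m)$, not $[m]_\rho^!$ copies. So the product filtration of $\Delta(\rho)^{\circ m}$ has generating function $1/(1-q_\rho^2)^m$ in copies of $L_\rho^{\circ m}$, whereas the desired filtration of $\Delta(\rho^m)$ must have generating function $1/\prod_{r=1}^m(1-q_\rho^{2r})$; these differ by the factor $[m]_\rho^!$ coming from $\Delta(\rho)^{\circ m}\cong[m]_\rho^!\,\Delta(\rho^m)$, and that factor cannot be absorbed on the section side. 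Concretely, the product filtration is not stable under the $\NH_m$-action (the $\tau_j$'s permute the tensor factors), so right-multiplying by $e_m$ does not send it to a filtration of $\Delta(\rho)^{\circ m}e_m$ with the sections you claim.

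The fix is short. From your product filtration you do know that every composition factor of $\Delta(\rho)^{\circ m}$, hence of its direct summand $\Delta(\rho^m)$, is isomorphic to a shift of $L(\rho^m)$. Now either (a) take the radical filtration of $\Delta(\rho^m)$: it is exhaustive since $\Delta(\rho^m)$ is bounded below with finite-dimensional graded pieces, the top section is $L(\rho^m)$ by Lemma~\ref{dp}, and the remaining shifts are forced by the Grothendieck formula in Lemma~\ref{dp} to lie in $q_\rho^{2\Z_{>0}}$; or (b) use $\END_{R_{m\rho}}(\Delta(\rho^m))\cong e_m\NH_m e_m\cong F[x_1,\dots,x_m]^{\Si_m}$ and filter by powers of its augmentation ideal. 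Either route gives the required filtration of $\Delta(\rho^m)$, after which your globalization step and Grothendieck identity go through as written.

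For the head, your ``degree-zero generator'' heuristic can be made precise, but the clean argument is Frobenius reciprocity: any nonzero map $\Delta(\pi)\to L(\sigma)$ gives a nonzero map $\boxtimes_k\Delta(\rho_k^{m_k})\to\Res_{|\pi|}L(\sigma)$; since the source has irreducible head $L_\pi$ (each factor does by Lemma~\ref{dp}), this forces $\Res_{|\pi|}L(\sigma)\neq 0$, whence $\pi\leq\sigma$ by Theorem~\ref{THeadIrr}(v), while the filtration gives $\sigma\leq\pi$. Finally $\HOM(\Delta(\pi),L(\pi))\cong\HOM(\boxtimes_k\Delta(\rho_k^{m_k}),L_\pi)$ is one-dimensional.
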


\subsection{Homological properties of standard modules} 
Now that we have constructed the standard modules, we list some of their homological properties. Throughout the subsection,  $\al\in Q_+$ is fixed. 

\begin{Theorem}\label{shp} {\rm \cite[Theorem 4.12]{Kato}, \cite[Theorm 3.12]{BKM}} 
Let $\pi  \in \Pi(\alpha)$. 
\begin{itemize}
\item[(i)] If 
$\EXT^d_{R_\alpha}(\Delta(\pi),V) \neq 0$
for some $d \geq 1$ and a finitely generated $R_\alpha$-module $V$, then $V$ has a composition factor $\simeq L(\si)$ for $\si\succ \pi$.
\item[(ii)] We have for all $d \geq 0$ and $\si\in\Pi(\al)$:
$$
\DIM \EXT^d_{R_\alpha}(\Delta(\pi), \bar\nabla(\si)) = \de_{d,0}\de_{\pi,\si}.
$$
\end{itemize}
\end{Theorem}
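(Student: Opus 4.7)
The plan is to prove (i) by a Frobenius–Künneth reduction to the cuspidal Ext-vanishing in Theorem~\ref{rm}(iii), and then to derive (ii) from (i) by head/socle considerations for $d=0$ and a duality argument for $d \geq 1$.

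For (i), first note that $R_\alpha 1_{|\pi|}$ is free as a right $R_{|\pi|}$-module (by Theorem~\ref{TBasis} and a standard minimal-coset argument), so $\Ind_{|\pi|}$ is exact and sends projectives to projectives. Deriving Frobenius reciprocity then gives
\[
\EXT^d_{R_\alpha}\!\big(\Delta(\pi), V\big) \;\cong\; \EXT^d_{R_{|\pi|}}\!\Big(\Delta(\rho_1^{m_1}) \boxtimes \cdots \boxtimes \Delta(\rho_N^{m_N}),\; \Res_{|\pi|} V\Big).
\]
Reduce to $V = L(\si)$. Each section of a composition series of $\Res_{|\pi|} L(\si)$ is an outer tensor product $L(\tau^{(1)}) \boxtimes \cdots \boxtimes L(\tau^{(N)})$ with $\tau^{(k)} \in \Pi(m_k \rho_k)$, and on such sections the Künneth formula (available since we work over a field) factorizes the Ext as
\[
\bigoplus_{d_1+\cdots+d_N=d} \bigotimes_k \EXT^{d_k}_{R_{m_k\rho_k}}\!\big(\Delta(\rho_k^{m_k}), L(\tau^{(k)})\big),
\]
reducing matters to single-root factors.

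For each single-root factor, use that $\Delta(\rho^m)$ is a direct summand of $\Delta(\rho)^{\circ m}$ by Lemma~\ref{dp}, so its Ext-groups are summands of those of $\Delta(\rho)^{\circ m}$. A second round of Frobenius reciprocity and Künneth expresses the latter as products of $\EXT^*_{R_\rho}(\Delta(\rho),-)$ applied to tensor factors of a composition series of $\Res_{\rho,\ldots,\rho} L(\tau)$ over $R_\rho^{\otimes m}$. By Theorem~\ref{rm}(iii), these Ext-groups vanish in positive degree on any module whose composition factors are $\simeq L_\rho$. The Mackey theorem together with the cuspidal characterization (Lemma~\ref{LMcNamara}) and convexity (Lemma~\ref{l1}) then implies that any violation of this condition forces $\tau^{(k)} > (\rho_k^{m_k})$; assembling over all $k$ produces $\si > \pi$ in the bilex order, completing (i).

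For (ii) at $d = 0$: $\Delta(\pi)$ has irreducible head $L(\pi)$ (Theorem~\ref{shead}), while $\bar\nabla(\si) = \bar\Delta(\si)^\circledast$ has irreducible socle $L(\si)$ (using $L(\si)^\circledast \cong L(\si)$), so any nonzero hom factors through $L(\pi) \cong L(\si)$, forcing $\pi = \si$ and producing a one-dimensional space via $\Delta(\pi) \twoheadrightarrow L(\pi) \hookrightarrow \bar\nabla(\pi)$. For $d \geq 1$: composition factors of $\bar\nabla(\si)$ are $L(\tau)$ with $\tau \leq \si$ (Theorem~\ref{THeadIrr}(iv)), so by (i) nonvanishing requires $\pi < \tau \leq \si$ and in particular $\pi < \si$. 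To dispose of this remaining case, I would apply the $\circledast$-duality $\EXT^d(\Delta(\pi), \bar\nabla(\si)) \cong \EXT^d(\bar\Delta(\si), \nabla(\pi))^\circledast$ with $\nabla(\pi) := \Delta(\pi)^\circledast$, combine it with $\EXT^d(\bar\Delta(\si), \nabla(\pi)) = \EXT^d_{R_{|\si|}}(L_\si, \Res_{|\si|}\nabla(\pi))$, and then carry out a Mackey analysis of $\Res_{|\si|}\nabla(\pi) \cong (\Res_{|\si|}\Delta(\pi))^\circledast$ to deduce vanishing. The main obstacle is the Mackey–Künneth bookkeeping in (i) that extracts the bilex inequality $\si > \pi$ from the cuspidal constraint, together with closing out the $\pi < \si$ case of (ii), where a direct application of (i) does not suffice and one must carry out the dual restriction analysis precisely.
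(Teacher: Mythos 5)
The paper does not actually prove this theorem -- it is quoted from \cite{Kato} and \cite{BKM} -- so I am judging your outline against the argument in those sources and against what the surrounding results make available. For part (i) your route (derived Frobenius reciprocity for $\Ind_{|\pi|}$, K\"unneth over a field, reducing the divided power $\Delta(\rho^m)$ to a summand of $\Delta(\rho)^{\circ m}$, and the vanishing of Theorem~\ref{rm}(iii) on modules with all composition factors $\simeq L_\rho$) is the right one and is essentially the proof in \cite{BKM}. Two places need tightening. First, ``reduce to $V=L(\si)$'' is not free: a finitely generated $R_\al$-module may have infinite length, so one filters $V$ by the submodules generated by the graded components of degree $\geq n$, whose quotients are finite dimensional, and notes that since $\Delta(\pi)$ has a resolution by finitely generated projectives, each graded component of the Hom-complex sees only finitely many graded pieces of $V$; the vanishing then passes from the finite-dimensional quotients to $V$. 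Second, the ``assembling'' should be made explicit: nonvanishing forces $\Res_{|\pi|}L(\si)\neq 0$, hence $\pi\preceq\si$ by Theorem~\ref{THeadIrr}(v), and $\si=\pi$ is excluded because $\Res_{|\pi|}L(\pi)\cong L_\pi$ has every tensor slot equal to $L(\rho_k^{m_k})=L_{\rho_k}^{\circ m_k}$, so every positive-degree K\"unneth factor dies by Theorem~\ref{rm}(iii); that is exactly what yields $\si\succ\pi$.

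The genuine gap is the case $\pi\prec\si$ of (ii), which you explicitly leave open. It closes in one line from material you already cite: by Theorem~\ref{shead}, $[\Delta(\pi)]$ equals $[\bar\Delta(\pi)]$ times a power series with nonnegative coefficients and constant term $1$, so $\Delta(\pi)$ and $\bar\Delta(\pi)$ have the same word support; hence $\Res_{|\si|}\Delta(\pi)\neq 0$ would give $\Res_{|\si|}\bar\Delta(\pi)\neq 0$ and so $\si\preceq\pi$ by Proposition~\ref{P1}(ii), contradicting $\pi\prec\si$. Thus $\Res_{|\si|}\Delta(\pi)=0$, so in your dual picture $\Res_{|\si|}\nabla(\pi)\cong(\Res_{|\si|}\Delta(\pi))^\circledast=0$ and all the Ext groups vanish -- no further Mackey bookkeeping is needed (and factor-by-factor arguments cannot work here: $\EXT^{1}(\Delta(\pi),L(\si))$ is typically nonzero for $\si\succ\pi$, the cancellation happens only at the level of $\bar\nabla(\si)$ itself). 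If you keep the duality route you must also justify $\EXT^d(\Delta(\pi),\bar\nabla(\si))\cong\EXT^d(\bar\Delta(\si),\Delta(\pi)^\circledast)$, since $\Delta(\pi)$ is infinite dimensional; this holds because $\HOM(P,\bar\Delta(\si)^\circledast)\cong\HOM(\bar\Delta(\si),P^\circledast)$ and duals of finitely generated projectives are injective, so the dualized projective resolution still computes $\EXT$. Alternatively, avoid dualizing $\Delta(\pi)$ altogether by writing $\bar\nabla(\si)\cong\Coind_{|\si|}L_\si$ (equivalently, as an induced module from the reversed parabolic, Lemma~\ref{LDualInd}) and deriving the $(\Res,\Coind)$ adjunction on the second variable with injectives; the same support vanishing finishes. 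Finally, at $d=0$ you need $[\bar\nabla(\pi):L(\pi)]_q=1$, from Theorem~\ref{THeadIrr}(iv), to get multiplicity exactly one rather than mere nonvanishing. With these repairs your proof is complete; note that the paper's own hands-on alternative (\S 6) proves the analogous vanishing for $\bar\Delta(\pi)$ in type ${\tt A}$ by explicit projective resolutions, which is a genuinely different route.
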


We say that an $R_\alpha$-module $V$ has a {\em $\Delta$-filtration}, written $V\in\Fil(\De)$, 
if there is a {\em finite} filtration $V = V_0 \supset V_1 \supset \cdots \supset V_n = 0$
such that $V_{i} / V_{i+1} \simeq \Delta(\pi^{(i)})$ for each
$i=1,\dots,n-1$ and
some $\pi^{(i)} \in \Pi(\alpha)$.
If $V\in\Fil(\De)$, then by Theorem~\ref{shp}(ii), the (graded) multiplicity of $\De(\pi)$ in a $\De$-filtration of $V$ is well-defined (i.e. independent of the $\De$-filtration) and is equal to that
\begin{equation*}
[V:\Delta(\pi)]_q = 
\overline{\DIM \HOM_{R_\alpha}(V, \bar\nabla(\pi))}\qquad(\pi\in\Pi(\al)).
\end{equation*}

\begin{Theorem}\label{wef} {\rm \cite[Theorem 3.13]{BKM}} 
Let $V$ be a finitely generated $R_\alpha$-module. Then $V\in\Fil(\De)$ if and only if $\EXT^1_{R_\alpha}(V, \bar\nabla(\si)) = 0$ for all 
$\si \in \Pi(\alpha)$.
\end{Theorem}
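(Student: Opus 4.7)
The proof has two directions, of rather different character.

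For the forward direction, assume $V\in\Fil(\De)$, with filtration $V=V_0\supset V_1\supset\cdots\supset V_n=0$ and $V_i/V_{i+1}\simeq\De(\pi^{(i)})$. I would induct on $n$. Applying $\EXT^{\bullet}_{R_\al}(-,\bar\nabla(\si))$ to the short exact sequence $0\to V_{i+1}\to V_i\to \De(\pi^{(i)})\to 0$ produces a long exact sequence in which the term $\EXT^1_{R_\al}(\De(\pi^{(i)}),\bar\nabla(\si))$ vanishes by Theorem~\ref{shp}(ii); combined with the inductive hypothesis $\EXT^1_{R_\al}(V_{i+1},\bar\nabla(\si))=0$, this gives $\EXT^1_{R_\al}(V_i,\bar\nabla(\si))=0$, and ultimately the conclusion for $V=V_0$.

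For the reverse direction, suppose $V$ is finitely generated with $\EXT^1_{R_\al}(V,\bar\nabla(\si))=0$ for all $\si\in\Pi(\al)$. The plan is to proceed by induction on the graded size of $V$, for example on the Laurent series $\sum_{\si\in\Pi(\al)}\DIM\HOM_{R_\al}(V,\bar\nabla(\si))$, which is finite since each $\bar\nabla(\si)$ is finite dimensional, $\Pi(\al)$ is finite by Lemma~\ref{LAmount}, and $V$ is finitely generated. If this quantity is zero, one argues $V=0$ (since any nonzero $V$ has some simple quotient $L(\si)$, and $L(\si)\hookrightarrow\bar\nabla(\si)$ yields $\HOM_{R_\al}(V,\bar\nabla(\si))\neq 0$), and we are done with the empty filtration.

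Otherwise, the key step is to peel off a standard quotient. Choose $\pi\in\Pi(\al)$ maximal in the bilexicographic order such that $\HOM_{R_\al}(V,\bar\nabla(\pi))\neq 0$. I would then argue that $V$ admits a surjection $p:V\twoheadrightarrow\De(\pi)$. A nonzero $f:V\to\bar\nabla(\pi)$ has image containing $\soc(\bar\nabla(\pi))=L(\pi)$; using the maximality of $\pi$ together with the fact that all composition factors of $\bar\nabla(\pi)$ are of the form $L(\si)$ with $\si\le\pi$ (the dual of Theorem~\ref{THeadIrr}(iv)) and the orthogonality $\HOM_{R_\al}(\De(\pi),\bar\nabla(\pi))=F$ from Theorem~\ref{shp}(ii), one shows that $f$ can be lifted through the canonical map $\De(\pi)\twoheadrightarrow\bar\De(\pi)\twoheadrightarrow L(\pi)\hookrightarrow\bar\nabla(\pi)$ and that the resulting map $V\to\De(\pi)$ is surjective, the lifts along the exhaustive filtration of $\De(\pi)$ from Theorem~\ref{shead} being controlled inductively by the hypothesis $\EXT^1_{R_\al}(V,\bar\nabla(\pi))=0$.

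Once $p$ is constructed, set $V':=\ker(p)$. Applying $\EXT^{\bullet}_{R_\al}(-,\bar\nabla(\si))$ to $0\to V'\to V\to\De(\pi)\to 0$ and using Theorem~\ref{shp}(ii) (which also provides $\EXT^2_{R_\al}(\De(\pi),\bar\nabla(\si))=0$ when needed, via finite global dimension) gives $\EXT^1_{R_\al}(V',\bar\nabla(\si))=0$ for all $\si$. Moreover the size measure of $V'$ strictly decreases, so the inductive hypothesis yields $V'\in\Fil(\De)$, and prepending the quotient $\De(\pi)$ to a filtration of $V'$ produces the desired $\De$-filtration of $V$. The main obstacle is the construction of the surjection $V\twoheadrightarrow\De(\pi)$: because $\De(\pi)$ is typically infinite dimensional, built as an iterated extension of shifts of $\bar\De(\pi)$'s by Theorem~\ref{shead}, lifting a map into $\bar\nabla(\pi)$ to one onto $\De(\pi)$ requires delicate use of the Ext vanishing hypothesis at each stage of that filtration, and exploiting the endomorphism algebra $\END_{R_\rho}(\De(\rho))=F[x]$ (via Theorem~\ref{rm}(iv) and its extension to $\De(\pi)$) to ensure compatibility of the chosen lifts.
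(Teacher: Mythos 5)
Your forward direction is correct and is the standard argument (long exact sequence plus $\EXT^{\geq 1}_{R_\al}(\De(\pi^{(i)}),\bar\nabla(\si))=0$ from Theorem~\ref{shp}(ii)), and the overall shape of your converse (peel a standard quotient off the top, note that the kernel again satisfies the hypothesis because $\EXT^{1}$ and $\EXT^{2}$ of $\De(\pi)$ against $\bar\nabla(\si)$ vanish, and induct on $\sum_\si \dim\HOM_{R_\al}(V,\bar\nabla(\si))$) is also the right skeleton; note the paper itself does not reproduce the proof but refers to \cite[Theorem 3.13]{BKM}. However, the key step of your converse is wrong: choosing $\pi$ \emph{maximal} with $\HOM_{R_\al}(V,\bar\nabla(\pi))\neq 0$ does not yield a surjection $V\onto\De(\pi)$, even for modules that already lie in $\Fil(\De)$. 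Take $\al=\rho$ a non-simple positive root and $V=P((\rho))$, the projective cover of the cuspidal module $L_\rho$; it is finitely generated and, being projective, trivially satisfies the Ext-vanishing hypothesis. Since $\HOM_{R_\al}(P((\rho)),\bar\nabla(\si))\neq 0$ exactly when $L_\rho$ is a composition factor of $\bar\nabla(\si)$, and Theorem~\ref{as}(i) shows $[\bar\nabla((\be,\ga)):L_\rho]\neq 0$ for a minimal pair $(\be,\ga)\in\MP(\rho)$ with $(\rho)<(\be,\ga)$, any maximal $\pi$ in your sense is different from $(\rho)$. But $\head P((\rho))\cong L((\rho))$ while $\head\De(\pi)\cong L(\pi)$ by Theorem~\ref{shead}, so no surjection $P((\rho))\onto\De(\pi)$ can exist. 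This is not an accident: by Theorem~\ref{shp}(i) nonsplit extensions between standard modules put the \emph{larger} label at the bottom, so in a $\De$-flag the factor with maximal label sits at the bottom, not at the top, and the label you can peel off the top is a \emph{minimal} one among those with $\HOM_{R_\al}(V,\bar\nabla(\pi))\neq 0$ (this is the choice made in the actual proof in \cite{BKM}).

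Even after correcting the choice of $\pi$, the construction of the surjection onto a suitable direct sum of shifted copies of $\De(\pi)$ is the real content of the theorem, and your lifting sketch does not supply it: factoring a map $V\to\bar\nabla(\pi)$ (or a quotient $V\onto L(\pi)$) through $\De(\pi)\onto L(\pi)\into\bar\nabla(\pi)$ has obstructions in $\EXT^1$ of $V$ against the kernels involved, e.g.\ $\rad\De(\pi)$ or the terms of the filtration of Theorem~\ref{shead}, whose sections are shifted copies of $\Stand(\pi)$ rather than of proper costandard modules; the hypothesis $\EXT^1_{R_\al}(V,\bar\nabla(\si))=0$ says nothing directly about these groups, so "delicate use of the Ext vanishing at each stage" cannot be invoked as stated. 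The remaining ingredients of your scheme (finiteness of the measure, the base case $V=0$ via a simple quotient embedded in the socle of some $\bar\nabla(\si)$, and the inheritance of the hypothesis by the kernel) are fine once a correct construction of the standard quotient is in place.
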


An immediate corollary is the following version of the `BGG reciprocity.' Note that the projective cover $P(\pi)$ of the irreducible module $L(\pi)$ exists in view of the general theory described in \S\ref{SSGeneral}. 

\begin{Corollary}\label{bgg} {\rm \cite[Remark 4.17]{Kato}, \cite[Corollary 4.17]{BKM}} 
Let $\pi,\si \in \Pi(\alpha)$. 
Then $P(\pi)\in\Fil(\De)$ and $[P(\pi):\Delta(\si)]_q = [\bar\Delta(\si):L(\pi)]_q$.
\end{Corollary}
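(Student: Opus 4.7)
The plan is to deduce the corollary from the two ingredients built up in the preceding subsection: Theorem~\ref{wef} (which tells us when a module has a $\De$-filtration) and the multiplicity formula just before it. The main thing is to identify the right-hand side of the multiplicity formula in terms of $[\bar\De(\si):L(\pi)]_q$.

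First I would check that $P(\pi) \in \Fil(\De)$. Since $P(\pi)$ is projective, $\EXT^1_{R_\al}(P(\pi), V) = 0$ for every finitely generated $R_\al$-module $V$, in particular for $V = \bar\nabla(\si)$ with $\si \in \Pi(\al)$. Theorem~\ref{wef} therefore applies, and $P(\pi)\in\Fil(\De)$.

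Next I would compute the multiplicity. Using the formula recorded just before Theorem~\ref{wef}, we have
$$[P(\pi):\De(\si)]_q = \overline{\DIM\HOM_{R_\al}(P(\pi), \bar\nabla(\si))}.$$
Because $R_\al$ is Schurian and Laurentian and $P(\pi)$ is the projective cover of $L(\pi)$ (which is $\circledast$-self-dual, hence concentrated with head in degree zero), the standard graded-Frobenius reciprocity gives
$$\DIM\HOM_{R_\al}(P(\pi), M) = [M:L(\pi)]_q$$
for any $M\in\mod{R_\al}$; this follows by applying the exact functor $\HOM_{R_\al}(P(\pi), -)$ to a graded composition series of $M$ and using that $\hom_{R_\al}(P(\pi), q^n L(\pi')) = \de_{\pi,\pi'}\de_{n,0} F$.

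Finally, I need to relate $[\bar\nabla(\si):L(\pi)]_q$ to $[\bar\De(\si):L(\pi)]_q$. Since $\bar\nabla(\si) = \bar\De(\si)^\circledast$, $L(\pi)^\circledast \cong L(\pi)$, and $\circledast$ sends a graded composition factor $q^n L(\pi)$ to $q^{-n} L(\pi)$, we have
$$[\bar\nabla(\si):L(\pi)]_q = \overline{[\bar\De(\si):L(\pi)]_q}.$$
Putting the pieces together,
$$[P(\pi):\De(\si)]_q = \overline{\overline{[\bar\De(\si):L(\pi)]_q}} = [\bar\De(\si):L(\pi)]_q,$$
which is the desired identity. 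The only non-bookkeeping step is the graded-Frobenius reciprocity formula, and this is entirely standard in the semiperfect Schurian setting set up in \S\ref{SSGeneral}; I expect no genuine obstacle.
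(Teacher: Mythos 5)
Your proof is correct and is exactly the "immediate" argument the paper has in mind (it writes none out, deferring to the cited sources): projectivity kills the $\EXT^1$ groups so Theorem~\ref{wef} applies, the displayed multiplicity formula reduces the claim to $\DIM\HOM_{R_\al}(P(\pi),\bar\nabla(\si))$, and the Schurian projective-cover identity plus $\circledast$-duality of $L(\pi)$ and $\bar\De(\si)\leftrightarrow\bar\nabla(\si)$ finish it. No discrepancies.
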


This implies the following important dimension formula. A more elementary proof of this formula is given in \cite{KLoub}.

\begin{Corollary}{\rm \cite[Corollary 3.15]{BKM}}  
We have that
\begin{align*}
\DIM R_\alpha &= \!\!\sum_{\pi \in \Pi(\alpha)}
(\DIM \Delta(\pi)) (\DIM \bar\Delta(\pi))
\\
&= 
\sum_{\pi=(m_1,\dots,m_N) \in \Pi(\alpha)}
\frac{
(\DIM \bar\Delta(\pi))^2}
{\prod_{k=1}^N\prod_{r=1}^{m_k}(1-q_{\rho_k}^{2r})}.
\end{align*}
\end{Corollary}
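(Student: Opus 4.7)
The plan is to combine the general Schurian decomposition of $R_\alpha$ as a left module with the BGG reciprocity from the preceding corollary. By the general theory of \S\ref{SSGeneral}, since $R_\alpha$ is Schurian and Laurentian, we have
$$
{}_{R_\alpha} R_\alpha \;\cong\; \bigoplus_{\pi \in \Pi(\alpha)} (\DIM L(\pi)) \, P(\pi).
$$
Taking graded dimensions gives $\DIM R_\alpha = \sum_{\pi} (\DIM L(\pi))(\DIM P(\pi))$, so everything reduces to computing $\DIM P(\pi)$.

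Next, I would invoke Corollary~\ref{bgg}: $P(\pi)$ admits a (finite) $\Delta$-filtration with graded multiplicities $[P(\pi):\Delta(\si)]_q = [\bar\Delta(\si):L(\pi)]_q$. Summing the graded dimensions of the sections yields
$$
\DIM P(\pi) \;=\; \sum_{\si \in \Pi(\alpha)} [\bar\Delta(\si):L(\pi)]_q \cdot \DIM \Delta(\si).
$$
Substituting and swapping the order of summation gives
$$
\DIM R_\alpha \;=\; \sum_{\si \in \Pi(\alpha)} (\DIM \Delta(\si)) \sum_{\pi \in \Pi(\alpha)} (\DIM L(\pi)) \cdot [\bar\Delta(\si):L(\pi)]_q.
$$
The inner sum is just the graded dimension of $\bar\Delta(\si)$ read off from any composition series (each factor $q^n L(\pi)$ contributes $q^n \DIM L(\pi)$). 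This proves the first displayed equality.

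For the second equality, I would simply apply Theorem~\ref{shead}, which expresses $[\Delta(\pi)]$ in the Grothendieck group as $[\bar\Delta(\pi)] \big/ \prod_{k=1}^N \prod_{r=1}^{m_k}(1 - q_{\rho_k}^{2r})$; since $\DIM$ is additive on short exact sequences, the same identity holds for graded dimensions, and substituting into the first formula yields the second. I do not expect any real obstacle: the argument is entirely formal given the results already established, and the only point requiring mild care is the passage from graded composition multiplicities to graded dimensions, which is handled by the identity $\DIM V = \sum_{\pi}[V:L(\pi)]_q \cdot \DIM L(\pi)$ valid for any finite-dimensional graded module $V$.
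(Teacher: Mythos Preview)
Your proof is correct and is exactly the argument the paper has in mind: it states the dimension formula immediately after Corollary~\ref{bgg} with the words ``This implies the following important dimension formula,'' and your derivation via the decomposition ${}_{R_\alpha}R_\alpha \cong \bigoplus_\pi (\DIM L(\pi))\,P(\pi)$, BGG reciprocity, and Theorem~\ref{shead} carries this out precisely.
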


The following corollary yields a  description of the standard modules $\Delta(\pi)$ and
$\bar\Delta(\pi)$ in spirit of standardly stratified algebras, cf. \cite[Corollary 4.18]{Kato}.

\begin{Corollary}\label{c} {\rm \cite[Corollary 3.15]{BKM}} 
Let $\pi \in \Pi(\al)$, and
$$
K(\pi):=\sum_{\si\not\preceq\pi}\,
\sum_{f\in\HOM_{R_\al}(P(\si),P(\pi))} \im f,\quad
\bar K(\pi):=\sum_{\si\not\prec\pi}\,
\sum_{f\in\HOM_{R_\al}(P(\si),P(\pi))} \im f.
$$
Then $\Delta(\pi) \cong
P(\pi)/K(\pi)$ and $\bar\De(\pi)\cong P(\pi)/\bar K(\pi)$. 
\end{Corollary}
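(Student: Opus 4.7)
\emph{Plan.} Both $\Delta(\pi)$ and $\bar\Delta(\pi)$ have irreducible head $L(\pi)$, by Theorems~\ref{shead} and \ref{THeadIrr}(i) respectively, and $P(\pi)$ is the projective cover of $L(\pi)$; hence there are surjections $\phi\colon P(\pi)\twoheadrightarrow \Delta(\pi)$ and $\bar\phi\colon P(\pi)\twoheadrightarrow \bar\Delta(\pi)$, the latter factoring through the former via the canonical quotient $\Delta(\pi)\twoheadrightarrow\bar\Delta(\pi)$ of Theorem~\ref{shead}. The plan is to identify $K(\pi)$ and $\bar K(\pi)$ with $K_0:=\ker\phi$ and $\bar K_0:=\ker\bar\phi$.

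The inclusions $K(\pi)\subseteq K_0$ and $\bar K(\pi)\subseteq \bar K_0$ rest on the observation that every composition factor $L(\tau)$ of $\Delta(\pi)$ and of $\bar\Delta(\pi)$ satisfies $\tau\leq\pi$ (by Theorem~\ref{THeadIrr}(iv) for $\bar\Delta(\pi)$, combined with the $\bar\Delta(\pi)$-filtration of $\Delta(\pi)$ in Theorem~\ref{shead}). Since $\HOM(P(\sigma),M)\neq 0$ iff $L(\sigma)$ is a composition factor of $M$, for $\sigma\not\preceq\pi$ we have $\HOM(P(\sigma),\Delta(\pi))=0$, whence $\phi\circ f=0$ and $\im f\subseteq K_0$ for every $f\in\HOM(P(\sigma),P(\pi))$. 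The same reasoning covers $\bar K(\pi)\subseteq \bar K_0$ when $\sigma\neq\pi$; the diagonal case $\sigma=\pi$ uses that $[\bar\Delta(\pi):L(\pi)]_q=1$, so $L(\pi)$ is not a composition factor of $\rad\bar\Delta(\pi)$ and hence $\HOM(P(\pi),\rad\bar\Delta(\pi))=0$. This forces $\bar\phi\circ f=0$ for every non-isomorphism $f\in\End(P(\pi))$ (equivalently, every $f$ with $\im f\subseteq \rad P(\pi)$, using the locality of $\End(P(\pi))$).

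For the reverse inclusion $K_0\subseteq K(\pi)$ the crucial input is Theorem~\ref{shp}(i). Applying $\HOM(-,L(\sigma))$ to $0\to K_0\to P(\pi)\to \Delta(\pi)\to 0$ and using $\HOM(\Delta(\pi),L(\sigma))=\de_{\sigma,\pi}=\HOM(P(\pi),L(\sigma))$ (both heads are $L(\pi)$ with multiplicity one), the boundary map produces the isomorphism
\[
\HOM(K_0,L(\sigma))\cong \EXT^1_{R_\al}(\Delta(\pi),L(\sigma)),
\]
which by Theorem~\ref{shp}(i) can only be nonzero when $\sigma\succ\pi$. Consequently every simple summand $L(\sigma_i)$ of $\head(K_0)$ has $\sigma_i\not\preceq \pi$, and the projective cover of $K_0$ has the form $\bigoplus P(\sigma_i)^{n_i}$ with all $\sigma_i\not\preceq\pi$; composing with $K_0\hookrightarrow P(\pi)$ exhibits $K_0$ as a sum of images of maps $P(\sigma_i)\to P(\pi)$, giving $K_0\subseteq K(\pi)$.

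The parallel argument for $\bar K_0\subseteq \bar K(\pi)$ uses $\HOM(\bar K_0,L(\sigma))\cong \EXT^1(\bar\Delta(\pi),L(\sigma))$. To control the right-hand side I would apply $\HOM(-,L(\sigma))$ to the short exact sequence $0\to V_1\to \Delta(\pi)\to \bar\Delta(\pi)\to 0$ supplied by Theorem~\ref{shead}, where $V_1$ has a filtration by shifted copies of $\bar\Delta(\pi)$ and in particular $\head(V_1)$ consists only of shifted $L(\pi)$'s: the long exact sequence together with Theorem~\ref{shp}(i) yields $\EXT^1(\bar\Delta(\pi),L(\sigma))=0$ whenever $\sigma\prec\pi$, so $\head(\bar K_0)$ has only constituents $L(\sigma)$ with $\sigma\not\prec\pi$, and the projective cover argument concludes. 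Any map $P(\pi)\to P(\pi)$ arising in the $\sigma=\pi$ part of this projective cover factors through $V_1\subsetneq\Delta(\pi)$ and is therefore automatically a non-isomorphism. The principal obstacle is precisely this bookkeeping at $\sigma=\pi$ in the $\bar K$ case, which forces one to work implicitly with $\rad\End(P(\pi))$ rather than the full endomorphism ring.
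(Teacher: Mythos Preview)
The paper does not supply its own proof of this corollary; it merely records the statement with a citation to \cite[Corollary~3.15]{BKM}, consistent with the disclaimer at the start of \S\ref{SHPKLRA} that detailed proofs are to be found in \cite{BKM}. So there is no argument in the paper to compare against.

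Your approach is the standard one and is essentially correct. A couple of remarks. First, the step ``$\head(V_1)$ consists only of shifted $L(\pi)$'s'' deserves one extra line of justification: any homogeneous map $V_1\to L(\tau)$ must vanish on $V_k$ for $k\gg 0$ because the filtration of Theorem~\ref{shead} is by $q^{2m}\bar\Delta(\pi)$ with $m$ increasing and $L(\tau)$ is finite dimensional; hence the map factors through the finite $\bar\Delta(\pi)$-filtered quotient $V_1/V_k$, whose head is a sum of shifted $L(\pi)$'s. Second, you have correctly spotted that the displayed definition of $\bar K(\pi)$, taken literally, allows $\sigma=\pi$ and $f=\id_{P(\pi)}$, which would force $\bar K(\pi)=P(\pi)$. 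The intended reading (and the one used in \cite{BKM}) restricts the $\sigma=\pi$ summand to maps landing in $\rad P(\pi)$, equivalently to $f$ in the radical of the local ring $\END_{R_\al}(P(\pi))$; your final paragraph handles exactly this, and the projective-cover maps you produce for $\bar K_0$ at $\sigma=\pi$ indeed land in $\rad P(\pi)$ since they factor through $V_1\subsetneq\Delta(\pi)$.
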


\section{Projective resolutions of standard modules}\label{SPRSM}
We now explain how the standard modules $\Delta(\rho)$ 
fit into some short exact sequences, giving an alternative way to deduce their properties. This bounds the projective dimension of standard modules, and allows us to construct
some projective resolutions of standard modules. 
As usual, we work with a fixed convex order $\prec$ on $\Phi_+$ and denote by $\rho$ an arbitrary positive root.

\subsection{Minimal pairs}\label{minp}
A two-term root partition $\pi = (\beta,\gamma)\in \Pi(\rho)$ is called a {\em minimal pair} for $\rho$, if it is a minimal element of $\Pi(\rho)\setminus\{(\rho)\}$. 
Equivalently, a minimal pair for $\rho$ is a pair $(\beta,\gamma)$ 
of positive roots with $\beta+\gamma=\rho$ and
$\beta \succ \gamma$
such that there exists no other pair $(\beta',\gamma')$ of positive
roots with $\beta'+\gamma'=\rho$ and $\beta \succ \beta' \succ
\rho \succ \gamma' \succ \gamma$.
Let $\MP(\rho)$ denote the set of all minimal pairs for $\rho$.

For $\pi = (\beta,\gamma) \in \MP(\rho)$,
it follows from Theorem~\ref{THeadIrr} and the minimality of
$\pi$ that all composition 
factors of $\rad\, \bar\Delta(\pi)$ are $\simeq L_\rho$.  
Since $\bar\Delta(\pi) = L_\beta \circ L_\gamma $
and $(L_\beta \circ L_\gamma )^\circledast \cong q^{(\beta,\gamma)}L_\gamma  \circ
L_\beta $
by
Lemma~\ref{LDualInd}, there are short exact sequences
\begin{align}\label{sesonea}
0 \longrightarrow q^{-(\be,\ga)}\X^\circledast &\longrightarrow L_\beta \circ L_\gamma 
\longrightarrow L(\pi) \longrightarrow 0,\\
0 \longrightarrow q^{-(\be,\ga)} L(\pi)
&\longrightarrow L_\gamma \circ L_\beta 
\longrightarrow \X
\longrightarrow 0,\label{sestwoa}
\end{align}
where $\X := q^{-(\be,\ga)}(\rad\,
\bar\Delta(\pi))^\circledast$
is a finite dimensional module
with all composition factors $\simeq L_\rho $. 
It turns out that one can be much more precise. Let $\Phi=\Phi_+\sqcup -\Phi_+$ be the set of all roots. For any $\beta,\gamma \in \Phi$, define the number 
$$
p_{\beta,\gamma} := \max\{m \in \Z\:|\:\beta - m \gamma \in
  \Phi\}.
$$

\begin{Theorem}\label{as} {\rm \cite[Theorem 4.7, Corollary 4.3]{BKM}}
Let $\pi = (\beta,\gamma) \in \MP(\rho)$. 
\begin{enumerate}
\item[{\rm (i)}] There are short exact sequences
\begin{align}\label{sesoneas}
0 \longrightarrow q^{p_{\beta,\gamma}-(\be,\ga)}L_\rho  &\longrightarrow L_\beta \circ L_\gamma 
\longrightarrow L(\pi) \longrightarrow 0,\\
0 \longrightarrow q^{-(\be,\ga)} L(\pi)
&\longrightarrow L_\gamma \circ L_\beta 
\longrightarrow q^{-p_{\beta,\gamma}} L_\rho 
\longrightarrow 0.\label{sestwoas}
\end{align}

\item[{\rm (ii)}] In the Grothendieck group we have that
$$\left[\Res^{\rho}_{\gamma,\beta} L_\rho \right] = 
[p_{\beta,\gamma}+1]_q\,\big[L_\gamma  \boxtimes L_\beta \big].$$
\end{enumerate}
Moreover, $\Res_{\gamma,\beta} L_\rho$ is uniserial with socle $q^{p_{\beta,\gamma}} L_\gamma\boxtimes L_\beta$. 
\end{Theorem}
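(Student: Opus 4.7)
My plan is to first establish the character identity~(ii) on the quantum-group side, then use this together with a Mackey computation to identify the unknown module $\X$ appearing in (\ref{sesonea})--(\ref{sestwoa}) as $\X\cong q^{-p_{\beta,\gamma}}L_\rho$, which immediately yields the two short exact sequences in~(i). The uniseriality and socle description of $\Res_{\gamma,\beta}L_\rho$ then follow from the $\EXT^1$-rigidity provided by Theorem~\ref{source}.

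For~(ii), I would exploit the categorification $\ga^*$ of Section~4. By Proposition~\ref{PDualPBWCat}, $\ga^*([L_\rho])=r^*_\rho$, and the restriction functor $\Res_{\gamma,\beta}$ corresponds under $\ga^*$ to the appropriate component of the coproduct $r$ on $\f^*_\A$. Hence by the duality (\ref{EDefPropForm}),
\begin{equation*}
\big([\Res_{\gamma,\beta}L_\rho],[L_\gamma\boxtimes L_\beta]\big)=(r_\rho^*,r_\beta r_\gamma)
\end{equation*}
up to a grading shift coming from the normalizations in (\ref{rlambda}). Convexity of $\prec$ confines the positive roots entering any PBW expansion of $r_\beta r_\gamma$ to the rank-two subsystem generated by $\beta$ and $\gamma$, so the Levendorskii--Soibelman commutation formula takes the simplified form $r_\beta r_\gamma-q^{-(\beta,\gamma)}r_\gamma r_\beta=[p_{\beta,\gamma}+1]_q\,r_\rho+(\text{higher PBW monomials})$; pairing against $r_\rho^*$ and using orthogonality of dual PBW bases (Proposition~\ref{PPBWFamily}(iii)) extracts exactly the coefficient $[p_{\beta,\gamma}+1]_q$. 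Combined with the fact (from cuspidality of $L_\rho$ and Lemma~\ref{l1}) that every irreducible constituent of $\Res_{\gamma,\beta}L_\rho$ is $\simeq L_\gamma\boxtimes L_\beta$, this gives~(ii).

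For~(i), the Mackey Theorem~\ref{TMackeyKL} expresses $\Res_{\gamma,\beta}(L_\beta\circ L_\gamma)$ as a filtration indexed by $\eta\in Q_+$ with $\eta\leq\min(\beta,\gamma)$ componentwise; cuspidality of $L_\beta$ and $L_\gamma$ forces $\eta$ to be a sum of roots lying in the open interval $(\gamma,\beta)$ of the convex order while $\beta-\eta$ and $\gamma-\eta$ are sums of roots outside $[\gamma,\beta]$. Lemma~\ref{l1} applied in the rank-two subsystem together with the minimality of $(\beta,\gamma)$ rules out $\eta\neq 0$, so only the trivial Mackey factor survives and
\begin{equation*}
\Res_{\gamma,\beta}(L_\beta\circ L_\gamma)\cong q^{-(\beta,\gamma)}L_\gamma\boxtimes L_\beta.
\end{equation*}
Applying $\Res_{\gamma,\beta}$ to (\ref{sesonea}) and comparing with~(ii), together with the pre-established fact that $\X$ has all composition factors $\simeq L_\rho$, pins down $\X\cong q^{-p_{\beta,\gamma}}L_\rho$ in the Grothendieck group, and self-duality plus irreducibility upgrade this to a genuine isomorphism. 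This yields both sequences of~(i). Finally, applying $\Res_{\gamma,\beta}$ to the second sequence in~(i) and using that $\DIM\EXT^1_{R_\rho}(L_\rho,L_\rho)=q_\rho^2$ is concentrated in a single grading shift, the $[p_{\beta,\gamma}+1]_q$ copies of $L_\gamma\boxtimes L_\beta$ appearing in $\Res_{\gamma,\beta}L_\rho$ are forced into a single chain (rather than splitting), yielding the uniserial structure with socle $q^{p_{\beta,\gamma}}L_\gamma\boxtimes L_\beta$. The main obstacle is the rank-two verification of the Levendorskii--Soibelman coefficient in Stage~1: while the reduction to rank two is standard via convexity, extracting the precise coefficient $[p_{\beta,\gamma}+1]_q$ requires a case-by-case calculation across types $A_2$, $B_2/C_2$, and $G_2$ using Lusztig's explicit formulas for root vectors.
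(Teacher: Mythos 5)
The paper itself gives no proof of this theorem (it is quoted from [BKM, Theorem 4.7, Corollary 4.3]), so I assess your argument on its own terms; its overall architecture (a rank-two quantum-group computation giving the coefficient $[p_{\beta,\gamma}+1]_q$, plus a minimal-pair analysis of restrictions) is indeed in the spirit of BKM. But there is a concrete false step at the heart of your Stage 2. The claimed Mackey collapse $\Res_{\gamma,\beta}(L_\beta\circ L_\gamma)\cong q^{-(\beta,\gamma)}L_\gamma\boxtimes L_\beta$ fails in general: take type ${\tt B_2}$ with convex order $\alpha_2\succ\alpha_1+2\alpha_2\succ\alpha_1+\alpha_2\succ\alpha_1$, $\rho=\alpha_1+2\alpha_2$ and the (unique, hence minimal) pair $(\beta,\gamma)=(\alpha_2,\alpha_1+\alpha_2)$. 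Here $L_\beta$ has character $2$ and $L_\gamma$ has character $12$, so $L_\beta\circ L_\gamma$ has character $212$ plus two copies of $122$; every one of these words lies in $1_{\gamma,\beta}(L_\beta\circ L_\gamma)$, so $\Res_{\gamma,\beta}(L_\beta\circ L_\gamma)$ is three-dimensional, not one-dimensional. The surviving extra term in Theorem~\ref{TMackeyKL} is the one in which all of $L_\beta$ lands in the first block while $L_\gamma$ is cut via $\Res_{\alpha_1,\alpha_2}L_\gamma\neq 0$; cuspidality of $L_\gamma$ only constrains cuts relative to $\gamma$'s own position in the order and does not kill this term, and Lemma~\ref{l1} does not apply because the cut pieces lie on both sides of $\rho$. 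Consequently the step ``only the trivial Mackey factor survives, hence $\X\cong q^{-p_{\beta,\gamma}}L_\rho$'' does not go through: applying $\Res_{\gamma,\beta}$ to (\ref{sesonea}) leaves you with two unknowns ($[\Res_{\gamma,\beta}L(\pi)]$ and the shifts occurring in $\X$), not one.

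Two further soft spots. First, your Stage 1 input that every constituent of $\Res_{\gamma,\beta}L_\rho$ is a shift of $L_\gamma\boxtimes L_\beta$ is attributed to ``cuspidality of $L_\rho$ and Lemma~\ref{l1}'', but these only force the parts of the two root partitions $\sigma\in\Pi(\gamma)$, $\pi'\in\Pi(\beta)$ to lie on the correct sides of $\rho$; ruling out partitions with more than one part genuinely uses minimality of $(\beta,\gamma)$ and a separate argument (the $(\gamma,\beta)$-restriction is a ``wrong-order'' parabolic, so Proposition~\ref{P1} and Theorem~\ref{THeadIrr}(v) do not apply directly). Relatedly, your pairing should be against $r_\gamma r_\beta$, not $r_\beta r_\gamma$: since $\beta\succ\rho$, cuspidality gives $\Res_{\beta,\gamma}L_\rho=0$, whence $(r_\beta r_\gamma,r_\rho^*)=0$; as written, your Levendorskii--Soibelman bookkeeping returns $-q^{(\beta,\gamma)}[p_{\beta,\gamma}+1]_q$ rather than $[p_{\beta,\gamma}+1]_q$, so the normalizations need to be redone. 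Second, the uniseriality argument invokes $\EXT^1_{R_\rho}(L_\rho,L_\rho)$, but $\Res_{\gamma,\beta}L_\rho$ is an $R_\gamma\otimes R_\beta$-module whose factors are shifts of $L_\gamma\boxtimes L_\beta$; the relevant self-extensions are those of $L_\gamma\boxtimes L_\beta$ (concentrated in degrees $2d_\gamma$ and $2d_\beta$ by Theorem~\ref{source}), and even granting those, simplicity of the socle should come from $\HOM_{R_\rho}(L_\gamma\circ L_\beta,L_\rho)$ being one-dimensional (via (\ref{sestwoas}) and multiplicity one), after which uniseriality still requires an argument rather than being ``forced''.
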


We now explain how minimal pairs allow one to get a generalization of high weight theory which was first developed in \cite{KR2} for the so-called Lyndon convex orders. 
Fix an arbitrary minimal pair 
$\minp(\rho) \in \MP(\rho)$ for each non-simple positive root $\rho \in \Phi_+$. 
Dependent on this choice, we
recursively define a word $\bi_\rho \in \W_\rho$
and a bar-invariant Laurent polynomial $\kappa_\rho \in \A$ for all $\rho\in\Phi_+$ as follows.  For $i
\in I$ set $\bi_{\al_i} := i$ and $\kappa_{\alpha_i} := 1$; then for non-simple $\rho \in \Phi_+$ suppose that $(\beta,\gamma) = \minp(\rho)$ and set
\begin{equation}\label{issue}
\bi_\rho := \bi_\gamma \bi_\beta,\qquad
\kappa_\rho := [p_{\beta,\gamma}+1]_q\,\kappa_\beta \kappa_\gamma.
\end{equation}
For example, in simply-laced types
we have that $\kappa_\rho = 1$ for all $\rho \in \Phi_+$; 
this is also the case in non-simply-laced types for 
{\em multiplicity-free} positive roots, i.e. roots 
$\rho = \sum_{i \in I} c_i \alpha_i$ with $c_i \in \{0,1\}$ for all
$i$.
Finally for a root partition $\pi = (\beta_1\geq\dots\geq\be_l)=(\rho_1^{m_1},\dots,\rho_N^{m_N})$ let
\begin{equation}
\bi_\pi := \bi_{\beta_1} \cdots \bi_{\beta_l},
\qquad
\kappa_\pi := \prod_{k=1}^N [m_k]^!_{\rho_k}\kappa_{\rho_k}^{m_k}.
\end{equation}
The following lemma shows that the words $\bi_\pi$
distinguish irreducible modules, generalizing \cite[Theorem 7.2(ii)]{KR2}.

\begin{Lemma}\label{rtheory} {\rm \cite[Lemma~4.5]{BKM}}
Let $\al \in Q_+$ and 
$\pi, \si \in \Pi(\al)$. Then $\DIM  L(\pi)_{\bi_\si} = 0$ if $\si \not\preceq
\pi$, and $\DIM  L(\pi)_{\bi_\pi} = \kappa_\pi$.
\end{Lemma}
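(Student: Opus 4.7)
I will argue by induction on $\height(\al)$, with $\height(\al)\leq 1$ immediate. For part (i), the recursive definition (\ref{issue}) forces $\bi_\be\in\words_\be$ for every $\be\in\Phi_+$, so the block decomposition $\bi_\si=\bi_{\be_1}\cdots\bi_{\be_l}$ places $1_{\bi_\si}$ inside the parabolic idempotent $1_{|\si|}$ of $R_\al$. Hence $L(\pi)_{\bi_\si}\subseteq \Res_{|\si|}L(\pi)$, which vanishes whenever $\si\not\le\pi$ in the bilexicographic order by Theorem~\ref{THeadIrr}(v).

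For part (ii), I would use Theorem~\ref{THeadIrr}(v) to identify $\Res_{|\pi|}L(\pi)\cong L_\pi=q^{\shift(\pi)}L_{\rho_1}^{\circ m_1}\boxtimes\cdots\boxtimes L_{\rho_N}^{\circ m_N}$. Since $\bi_\pi$ factors as $m_k$ concatenated copies of $\bi_{\rho_k}$ in the $k$-th block, the idempotent $1_{\bi_\pi}$ splits as a tensor product of block idempotents, giving
\begin{equation*}
\DIM L(\pi)_{\bi_\pi}=q^{\shift(\pi)}\prod_{k=1}^N \DIM(L_{\rho_k}^{\circ m_k})_{\bi_{\rho_k}\cdots\bi_{\rho_k}},
\end{equation*}
with $\bi_{\rho_k}\cdots\bi_{\rho_k}$ meaning the $m_k$-fold concatenation. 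Using $(\rho_k,\rho_k)=2d_{\rho_k}$, one has $q^{\shift(\pi)}=\prod_k q_{\rho_k}^{\binom{m_k}{2}}$, and matching the right-hand side with $\kappa_\pi=\prod_k[m_k]_{\rho_k}^!\kappa_{\rho_k}^{m_k}$ reduces the identity to a single-root-power sub-claim
\begin{equation*}
\DIM(L_\rho^{\circ m})_{\bi_\rho\cdots\bi_\rho}=q_\rho^{-\binom{m}{2}}[m]_\rho^!\kappa_\rho^m\qquad(\rho\in\Phi_+,\ m\geq 1).
\end{equation*}

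I plan a secondary induction on $\height(\rho)$ and $m$ for this sub-claim. The $m=1$ reduction reads $\DIM L_\rho[\bi_\rho]=\kappa_\rho$: trivial for $\rho=\al_i$ simple, and for non-simple $\rho$ with minimal pair $(\be,\ga)=\minp(\rho)$, the factorization $\bi_\rho=\bi_\ga\bi_\be$ gives $\DIM L_\rho[\bi_\rho]=\DIM(\Res_{\ga,\be}L_\rho)_{\bi_\ga\otimes\bi_\be}$, and Theorem~\ref{as}(ii) together with the inner induction yields $[p_{\be,\ga}+1]_q\kappa_\ga\kappa_\be=\kappa_\rho$. For $m\geq 2$, I would exploit part (i) already established at height $\al$ to kill every $\DIM L(\tau)_{\bi_\pi}$ with $\tau<\pi$ in $[\bar\De(\pi)]=[L(\pi)]+\sum_{\tau<\pi}c_\tau[L(\tau)]$, whence $\DIM L(\pi)_{\bi_\pi}=\DIM\bar\De(\pi)_{\bi_\pi}$; by (\ref{ECharShuffle}) this equals the coefficient of $\bi_\pi$ in $\CH(L_{\rho_1})^{\circ m_1}\circ\cdots\circ\CH(L_{\rho_N})^{\circ m_N}$.

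The hard part will be this quantum-shuffle coefficient extraction, or equivalently the Mackey analysis of $\Res_{\rho,\dots,\rho}L_\rho^{\circ m}$ via Theorem~\ref{TMackeyKL}. Cuspidality (Definition~\ref{DCus}) and Lemma~\ref{l1} sharply restrict which splittings of each $L_\rho$-factor can contribute, and what remains is a delicate $q$-bookkeeping: the surviving non-trivial cuspidal restrictions are controlled by Theorem~\ref{as}(ii), and the aggregated contributions must reassemble into the $q$-factorial $[m]_\rho^!\kappa_\rho^m q_\rho^{-\binom{m}{2}}$. Performing this collapse cleanly, rather than merely enumerating Mackey terms, is where the bulk of the work lies.
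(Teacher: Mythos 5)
Your plan follows the same route as the proof of \cite[Lemma 4.5]{BKM}, which is all this paper offers for this statement (it cites BKM rather than reproducing an argument): part (i) from the observation that $1_{\bi_\si}$ is a summand of the parabolic idempotent $1_{|\si|}$ together with Theorem~\ref{THeadIrr}(v), and part (ii) by using $\Res_{|\pi|}L(\pi)\cong L_\pi$ to reduce to the word space of $L_\rho^{\circ m}$ at the $m$-fold concatenation of $\bi_\rho$, with the $m=1$ case settled by Theorem~\ref{as}(ii) and induction on $\height(\rho)$, exactly as in BKM. Your shift bookkeeping $q^{\shift(\pi)}=\prod_k q_{\rho_k}^{m_k(m_k-1)/2}$ and the reduction to the subclaim $\DIM(L_\rho^{\circ m})_{\bi_\rho\cdots\bi_\rho}=q_\rho^{-m(m-1)/2}[m]_\rho^!\kappa_\rho^m$ are correct.

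The step you leave open for $m\ge 2$ is shorter than you fear, and the detour through $\bar\De(\pi)$ and part (i) is unnecessary. Apply the Mackey Theorem~\ref{TMackeyKL} to $\Res_{\rho,\dots,\rho}\big(L_\rho^{\circ m}\big)$: by the cuspidality of $L_\rho$ and Lemma~\ref{l1}, any tuple $(\al^a_b)$ in which some copy of $L_\rho$ is split nontrivially gives a vanishing layer (each block of content $\rho$ would be written as a sum of roots that are simultaneously $\preceq$ and $\succeq$ the pieces in a way Lemma~\ref{l1} forbids unless all pieces equal $\rho$), so the surviving layers are indexed by $w\in\Si_m$, and each such layer is the twist ${}^{w}(L_\rho\boxtimes\cdots\boxtimes L_\rho)\cong q_\rho^{-2\ell(w)}\,L_\rho\boxtimes\cdots\boxtimes L_\rho$, since $s(w,(\rho,\dots,\rho))=-\ell(w)(\rho,\rho)$. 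Taking the $\bi_\rho\otimes\cdots\otimes\bi_\rho$ word space, each layer contributes $q_\rho^{-2\ell(w)}\kappa_\rho^m$ by your $m=1$ case, and $\sum_{w\in\Si_m}q_\rho^{-2\ell(w)}=q_\rho^{-m(m-1)/2}[m]_\rho^!$ gives precisely the subclaim; multiplying by $q^{\shift(\pi)}$ then yields $\kappa_\pi$. So the proposal is sound; the announced ``delicate $q$-bookkeeping'' amounts to this one-line count over $\Si_m$.
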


\subsection{Projective resolutions}\label{sses}
Let $\rho \in \Phi_+$ be a non-simple positive root,  $(\beta,\gamma)$ be a minimal pair for $\rho$, 
and  $m := \height(\gamma)$. Let $w \in \Si_{n}$ be the permutation 
$$(1,\dots,n) \mapsto (n-m+1,\dots,n,1,\dots,n-m),$$ 
so that
$\psi_w 1_{\gamma,\beta} = 1_{\beta,\gamma} \psi_w$.
It is proved in \cite[Lemma 4.9]{BKM} that there is a unique homogeneous homomorphism 
\begin{equation}\label{EPhi}
\phi:q^{-(\beta,\gamma)}\,\Delta(\beta)\circ\Delta(\gamma)
\rightarrow \Delta(\gamma) \circ \Delta(\beta)
\end{equation}
such that 
$\phi(1_{\beta,\gamma} \otimes (v_1 \otimes v_2))
= \psi_w 1_{\gamma,\beta} \otimes (v_2 \otimes v_1)$
for all $v_1 \in \Delta(\beta), v_2 \in \Delta(\gamma)$.

\begin{Theorem}\label{inj3} {\rm \cite[Theorem 4.10]{BKM}}
For $(\beta,\gamma) \in \MP(\rho)$
there is a short exact sequence
$$
0 \longrightarrow q^{-(\beta,\gamma)}\, \Delta(\beta)\circ\Delta(\gamma)
\stackrel{\phi}{\longrightarrow} \Delta(\gamma)\circ\Delta(\beta)
\longrightarrow [p_{\beta,\gamma}+1]_q\,\Delta(\rho) \longrightarrow 0.
$$
\end{Theorem}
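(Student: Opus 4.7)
My strategy is to verify the exactness in the Grothendieck group first, then promote this to a genuine module-theoretic isomorphism by establishing that the cokernel of $\phi$ lies in $\Fil(\Delta)$ and invoking the Ext-orthogonality of Theorem~\ref{shp}(ii).

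As a first step, I would compute
$$[\Delta(\gamma)\circ\Delta(\beta)] - q^{-(\beta,\gamma)}[\Delta(\beta)\circ\Delta(\gamma)] = [p_{\beta,\gamma}+1]_q\,[\Delta(\rho)]$$
in the appropriate (completed) Grothendieck group. Using the short exact sequence $0\to q_\rho^2\Delta(\rho)\to\Delta(\rho)\to L_\rho\to 0$ of (\ref{ses3}), applied to both $\beta$ and $\gamma$, the identity reduces to a character-level computation for cuspidal products; the minimality of $(\beta,\gamma)\in\MP(\rho)$ together with Lemma~\ref{l1} rules out intermediate root partitions contributing, and Theorem~\ref{as}(ii) supplies the multiplicity factor $[p_{\beta,\gamma}+1]_q$ after applying the Mackey theorem to restrict the product.

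Next, I would show the cokernel $C := \coker(\phi)$ admits a $\Delta$-filtration. By Theorem~\ref{wef} this amounts to checking $\EXT^1_{R_\rho}(C,\bar\nabla(\sigma))=0$ for every $\sigma\in\Pi(\rho)$, which via the long exact $\Ext$ sequence reduces to the analogous vanishing for the two outer induced modules. For $\Delta(\beta)\circ\Delta(\gamma)=\Delta(\beta,\gamma)$, Theorem~\ref{shp}(ii) applies directly since this is itself a standard module. For the wrong-order product $\Delta(\gamma)\circ\Delta(\beta)$, I would use Frobenius reciprocity to rewrite the Ext as one over the parabolic $R_{\gamma,\beta}$ between $\Delta(\gamma)\boxtimes\Delta(\beta)$ and $\Res_{\gamma,\beta}\bar\nabla(\sigma)$; a Mackey filtration of the restriction then reduces to a parabolic version of Theorem~\ref{shp}(ii) applied tensor-factor-wise.

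Once $C\in\Fil(\Delta)$ is known, its $\Delta$-filtration multiplicities are determined by its Grothendieck class via Theorem~\ref{shp}(ii), so the first step forces $C\cong[p_{\beta,\gamma}+1]_q\Delta(\rho)$. Injectivity of $\phi$ then follows from the Euler-characteristic identity: any nontrivial kernel would contradict the Grothendieck-group equality already established, and one may corroborate this directly by noting that $\phi$ sends the cyclic generator $1_{\beta,\gamma}\otimes(v_\beta\otimes v_\gamma)$ to a nonzero element computable by Mackey. The main obstacle will be establishing the $\Delta$-filtration of the wrong-order product $\Delta(\gamma)\circ\Delta(\beta)$: the Mackey analysis must be executed carefully to control proper costandard contributions on parabolics, and all grading shifts must be tracked exactly to reproduce $[p_{\beta,\gamma}+1]_q$ rather than merely its specialization at $q=1$.
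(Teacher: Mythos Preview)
Your overall architecture is reasonable, but there is a genuine circularity in the injectivity step. You propose to deduce $C\in\Fil(\Delta)$ from the long exact $\Ext$-sequence attached to $0\to A\to B\to C\to 0$, and only afterwards conclude that $\phi$ is injective ``from the Euler-characteristic identity.'' But that long exact sequence already presupposes injectivity of $\phi$; without it you have only $A\twoheadrightarrow I\hookrightarrow B\twoheadrightarrow C$, and controlling $\EXT^1(C,\bar\nabla(\sigma))$ then requires information about the unknown image $I$. Your fallback---that $\phi$ sends the cyclic generator to a nonzero element---establishes only $\phi\neq 0$, which is far from injectivity since $\Delta(\beta)\circ\Delta(\gamma)$ is infinite-dimensional with endomorphism ring a polynomial algebra, not a division ring.

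The natural repair is to show instead that $B=\Delta(\gamma)\circ\Delta(\beta)$ itself lies in $\Fil(\Delta)$. Granting this, the Grothendieck identity together with Theorem~\ref{shp}(i) (which gives $\EXT^1(\Delta(\beta,\gamma),\Delta(\rho))=0$, since all composition factors of $\Delta(\rho)$ are $\simeq L_\rho$ and $(\rho)\not\succ(\beta,\gamma)$) forces a short exact sequence $0\to q^{-(\beta,\gamma)}\Delta(\beta,\gamma)\to B\to [p_{\beta,\gamma}+1]_q\,\Delta(\rho)\to 0$. One then checks that $\hom_{R_\rho}\!\big(q^{-(\beta,\gamma)}\Delta(\beta,\gamma),B\big)$ is one-dimensional---via adjunction and Mackey this reduces to $\END_{R_{\beta,\gamma}}(\Delta(\beta)\boxtimes\Delta(\gamma))_0\cong F$---so the nonzero map $\phi$ must realise that embedding. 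Be warned, however, that proving $B\in\Fil(\Delta)$ by your proposed route (Frobenius reciprocity plus Mackey on $\Res_{\gamma,\beta}\bar\nabla(\sigma)$) is not straightforward for root partitions $\sigma\succ(\beta,\gamma)$: the restriction can have composition factors other than $L_\gamma\boxtimes L_\beta$, and no parabolic analogue of Theorem~\ref{shp}(ii) is available off the shelf. In \cite{BKM} this difficulty is handled by building up from the short exact sequences of Theorem~\ref{as} at the level of cuspidal simples together with the filtration~(\ref{ses3}); your plan needs a comparable mechanism at exactly this point.
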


Let us again fix a choice of minimal pairs $\minp(\rho)
\in \MP(\rho)$ for each $\rho \in \Phi_+$ of height at least two, and
recall $\kappa_\rho$ and $\kappa_\pi$ from (\ref{issue}).
Let
\begin{equation}
\tilde{\Delta}(\rho) := \kappa_\rho \Delta(\rho),
\qquad
\tilde{\Delta}(\pi) := \kappa_\pi \Delta(\pi).
\end{equation}

For simply laced $\Car$ we have $\tilde{\Delta}(\rho) =\Delta(\rho)$. 
We want to construct a 
projective resolution $\Resol(\rho)$ of $\tilde{\Delta}(\rho)$
for each $\rho \in \Phi_+$. 
Then more generally, given $\alpha \in Q^+$ and
$\pi = (\be_1\succeq\be_2 \succeq\dots\succeq \be_l) \in \Pi(\alpha)$,
the total complex of the `$\circ$'-product of the complexes
$\Resol(\be_1),\dots,\Resol(\be_l)$ gives a projective resolution
$\Resol(\pi)$
of $\tilde\Delta(\pi)$.

The resolution $\Resol(\rho)$ is going to be of the form
$$
\Resol(\rho):\qquad\qquad 0\to P_{n-1}(\rho)\longrightarrow \dots\longrightarrow P_1(\rho)\longrightarrow P_0(\rho)\longrightarrow \tilde\De(\rho)\to 0,
$$
where $n=\height(\rho)$. 
The construction of $\Resol(\rho)$ is recursive. For $i \in I$ we have $\tilde{\Delta}(\alpha_i) = R_{\alpha_i}$,
which is projective already. 
So we just set $P_0(\alpha_i) := R_{\alpha_i}$
and $P_d(\alpha_i) := 0$ for $d \neq 0$ to obtain the required
resolution.
Now suppose that $\rho \in \Phi_+$ is of height at least two
and let $(\beta,\gamma) := \minp(\rho)$, a fixed minimal pair for $\rho$.
We may assume by induction that the projective resolutions $\Resol(\beta)$
and $\Resol(\gamma)$ are already defined.
Taking the total complex of their `$\circ$'-product using \cite[Acyclic Assembly Lemma 2.7.3]{Wei}, 
we obtain a projective resolution
$\Resol(\beta,\gamma)$ of $\tilde{\Delta}(\beta) \circ
\tilde{\Delta}(\gamma)$
with
\begin{align*}
P_d(\beta,\gamma) &:= \bigoplus_{d_1+d_2= d} P_{d_1}(\beta) \circ
P_{d_2}(\gamma),\\
\qquad \partial_d &:= \left(\id \circ \partial_{d_2}
-(-1)^{d_2}\partial_{d_1} \circ \id
\right)_{d_1+d_2=d}:P_d(\beta,\gamma) \rightarrow P_{d-1}(\beta,\gamma).
\end{align*}
Similarly 
we 
obtain a projective resolution $\Resol(\gamma,\beta)$ of
$\tilde{\Delta}(\gamma)\circ\tilde{\Delta}(\beta)$ with
\begin{align*}
P_d(\gamma,\beta) &:= \bigoplus_{d_1+d_2= d} P_{d_1}(\gamma) \circ
P_{d_2}(\beta),\\
\qquad \partial_d &:= \left(\partial_{d_1} \circ \id + (-1)^{d_1}
\id \circ \partial_{d_2}\right)_{d_1+d_2=d}:P_d(\gamma,\beta) \rightarrow P_{d-1}(\gamma,\beta).
\end{align*}
There is an injective homomorphism
$$
\tilde{\phi}:
q^{-(\be,\ga)}
\tilde{\Delta}(\beta)\circ\tilde{\Delta}(\gamma)
\hookrightarrow \tilde{\Delta}(\gamma) \circ
\tilde{\Delta}(\beta)
$$
defined in exactly the same way as the map $\phi$ in (\ref{EPhi}),
indeed, it
is just a direct sum of copies of the map $\phi$ from there.
Applying \cite[Comparision Theorem 2.2.6]{Wei}, 
$\tilde{\phi}$ lifts to a 
chain map $\tilde{\phi}_*:q^{-(\be,\ga)} \Resol(\beta,\gamma)
\rightarrow \Resol(\gamma,\beta)$.
Then we take the mapping cone of $\tilde{\phi}_*$ to obtain a complex
$\Resol(\rho)$ with
\begin{align*}
\qquad P_d(\rho) &:= P_d(\gamma,\beta) \oplus q^{-(\be,\ga)} P_{d-1}(\beta,\gamma),\\
\partial_d &:= (\partial_d, \partial_{d-1}+(-1)^{d-1} \tilde{\phi}_{d-1}):
P_d(\rho) \rightarrow P_{d-1}(\rho).
\end{align*}
In view of Theorem~\ref{inj3} and \cite[Acyclic Assembly Lemma 2.7.3]{Wei}
once again, 
$\Resol(\rho)$ is a projective resolution of $\tilde{\Delta}(\rho)$.

Let us describe
$\Resol(\rho)$ more explicitly.
First, for $i \in I$ and the empty tuple $\bsi$,
set $\bi_{\alpha_i,\bsi} := i$.
Now suppose that $\rho$ is of height $n \geq 2$ and
that $(\beta,\gamma) = \minp(\rho)$ with $\gamma$ of height $m$.
For $\bsi  = (\sigma_1,\dots,\sigma_{n-1})
\in \{0,1\}^{n-1}$,
let $$|\bsi| := \sigma_1+\cdots+\sigma_{n-1},\ 
\bsi_{< m} := (\sigma_1,\dots,\sigma_{m-1}),\ 
\bsi_{> m} := (\sigma_{m+1},\dots,\sigma_{n-1}).
$$
Define $\bed_{\rho,\bsi} \in \W_\rho$ 
and $d_{\rho,\bsi} \in \Z_{\geq 0}$
recursively from
\begin{align*}
\bed_{\rho,\bsi} &:= \left\{
\begin{array}{ll}
\bed_{\gamma,\bsi_{< m}}
\bed_{\beta,\bsi_{> m}}\hspace{16.5mm}&\text{if $\sigma_m = 0$,}\\
\bed_{\beta,\bsi_{> m}}
\bed_{\gamma,\bsi_{< m}}
&\text{if $\sigma_m = 1$;}
\end{array}\right.\\
d_{\rho,\bsi} &:= \left\{
\begin{array}{ll}
d_{\beta,\bsi_{> m}}
+d_{\gamma,\bsi_{< m}}
&\text{if $\sigma_m = 0$,}\\
d_{\beta,\bsi_{> m}}+
d_{\gamma,\bsi_{< m}}-(\be,\ga)
&\text{if $\sigma_m = 1$.}
\end{array}\right.
\end{align*}
Note in particular that $d_{\rho,\bsi} = |\bsi|$ for simply-laced $\Car$.
Also if $\bsi = (0,\dots,0)$ then $\bed_{\rho,\bsi}$ is the tuple $\bed_\rho$ from
(\ref{issue}) and $d_{\rho,\bsi}=0$.
Then 
we have that
\begin{equation}\label{exp}
P_d(\rho) =\bigoplus_{\substack{\bsi \in \{0,1\}^{n-1}\\ 
|\bsi| = d}}
q^{d_{\rho,\bsi}} R_\rho 1_{\bed_{\rho,\bsi}}.
\end{equation}
For the differentials 
$\partial_d:P_d(\rho)\rightarrow P_{d-1}(\rho)$,
 there are elements $\psi_{\bsi,\btau} \in
 1_{\bed_{\rho,\bsi}} 
R_\rho 1_{\bed_{\rho,\rho}}$
for each $\bsi,\btau \in \{0,1\}^{n-1}$
with $|\bsi|=d, |\btau| = d-1$
such that, on
viewing elements of (\ref{exp}) as row vectors,
the differential $\partial_d$ is 
defined by right multiplication by the matrix
$\left(\psi_{\bsi,\btau}
\right)_{|\bsi|=d,|\btau|=d-1}$.
Moreover 
$\psi_{\bsi,\btau} = 0$ unless the tuples $\bsi$ and $\btau$ 
differ in just one entry.
We are able to give a very explicit description
of the elements $\psi_{\bsi,\btau}$ in the following special case.

\begin{Theorem}\label{expres}
Suppose that $\rho \in \Phi_+$ is {multiplicity-free}, so that
$\kappa_\rho=1$
and $\Resol(\rho)$
is a projective resolution of the root module $\Delta(\rho)$.
Then the elements $\psi_{\bsi,\btau}$
may be chosen
so that
$$
\psi_{\bsi,\btau} := (-1)^{\sigma_1+\cdots+\sigma_{r-1}} \psi_w
$$
if $\bsi$ and $\btau$ differ just in the $r$th entry,
where $w\in \Si_n$ is the {\em unique} permutation with
$1_{\bed_{\rho,\bsi}} \psi_w = \psi_w 1_{\bed_{\rho,\btau}}$.
\end{Theorem}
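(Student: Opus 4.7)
The plan is to induct on $n=\height(\rho)$. The base case $\rho=\alpha_i$ is immediate since $P_0(\alpha_i)=R_{\alpha_i}$ with no differential. For the inductive step, fix $(\beta,\gamma)=\minp(\rho)$; because $\rho$ is multiplicity-free, so are $\beta$ and $\gamma$, and the inductive hypothesis provides the explicit description of $\Resol(\beta)$ and $\Resol(\gamma)$. Throughout I will use the key fact that multiplicity-freeness of $\rho$ forces every word $\bed_{\rho,\bsi}$ to have pairwise distinct letters; consequently, for any two words $\bed_{\rho,\bsi}$ and $\bed_{\rho,\btau}$ of the same content, there is a \emph{unique} $w\in\Si_n$ with $w\bed_{\rho,\btau}=\bed_{\rho,\bsi}$, and the element $\psi_w 1_{\bed_{\rho,\btau}}$ is independent of the choice of reduced expression (the quadratic relation~(\ref{R4}) and braid relation~(\ref{R7}) simplify, as no two adjacent letters of any rearrangement are equal, so the correction terms vanish). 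This makes all candidate matrix entries $\psi_{\bsi,\btau}$ unambiguous.

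Next, I would unpack the recursive formulas for the differentials of $\Resol(\beta,\gamma)$ and $\Resol(\gamma,\beta)$: by the explicit product formula for total complexes, their entries are, up to the prescribed signs $(-1)^{d_1}$ or $(-1)^{d_2}$, induction products of the entries $\psi_w$ from $\Resol(\beta)$ and $\Resol(\gamma)$. Using that induction of $\psi_w$ across the concatenation of words is again a $\psi_{w'}$ element associated to the corresponding permutation of letters (a direct computation with~(\ref{R2PsiE})), each differential entry in $\Resol(\beta,\gamma)$ and $\Resol(\gamma,\beta)$ already has the claimed form $\pm\psi_u$ for the unique permutation $u$ realizing the change of word.

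The heart of the argument is to produce the chain map $\tilde\phi_*\colon q^{-(\be,\ga)}\Resol(\beta,\gamma)\to\Resol(\gamma,\beta)$ lifting $\tilde\phi$ in the explicit form $\tilde\phi_d=\psi_{w_d}$ on each summand, where $w_d$ is the unique permutation moving the word $\bed_{\beta,\bsi_{>m}}\bed_{\gamma,\bsi_{<m}}$ to $\bed_{\gamma,\bsi_{<m}}\bed_{\beta,\bsi_{>m}}$. In degree $0$, by Theorem~\ref{inj3} and the definition of $\tilde\phi$, this is literally multiplication by the long cyclic shuffle $\psi_w$, which does realize that permutation. The inductive extension to higher degrees uses the Comparison Theorem: I would check by direct calculation that the obvious candidate $\tilde\phi_d := \psi_{w_d}$ commutes with the product differentials described above, using only the braid relation~(\ref{R7}) and the $y$-commutation~(\ref{R6})—both of whose error terms vanish because the relevant words have no repeated letters (the only nonzero contributions in~(\ref{R7}) require $i_r = i_{r+2}$). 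Since the resolution $\Resol(\gamma,\beta)$ is acyclic in positive degrees, any lift is unique up to homotopy, so we may take this explicit lift.

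Finally, I would assemble the mapping cone: by definition, the matrix of $\partial_d$ on $P_d(\rho)=P_d(\gamma,\beta)\oplus q^{-(\be,\ga)}P_{d-1}(\beta,\gamma)$ is block-upper-triangular with the two product differentials on the diagonal and $(-1)^{d-1}\tilde\phi_{d-1}$ off-diagonal. Matching the summands to the index set $\{0,1\}^{n-1}$ via $\sigma_m$ (the $\gamma,\beta$-block corresponds to $\sigma_m=0$, the $\beta,\gamma$-block to $\sigma_m=1$), the diagonal entries supply $\psi_{\bsi,\btau}$ whenever $\bsi,\btau$ differ in some $r\neq m$, while the off-diagonal entries supply $\psi_{\bsi,\btau}$ for $r=m$. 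In each case, the sign is $(-1)^{\sigma_1+\cdots+\sigma_{r-1}}$: for $r<m$ it is inherited from the $\Resol(\beta)$ block under the $(-1)^{d_1}$ product convention (and an $m-1-(r-1)$-fold sign adjustment that cancels thanks to $\sigma_m=0$ or to being compensated by the mapping cone sign); for $r>m$ it comes directly from the $(-1)^{d_2}$ convention; and for $r=m$ the sign is exactly the mapping-cone sign $(-1)^{d-1}$, which telescopes to the claimed formula. The main obstacle will be exactly this bookkeeping of signs at the interface between the two blocks, and verifying that the inductively constructed $\tilde\phi_*$ really can be taken equal to the permutation element rather than needing a homotopy correction; both reduce to the no-repeated-letters calculus enabled by multiplicity-freeness.
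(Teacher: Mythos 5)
You are proposing exactly the intended route (the paper itself states this result without proof — it is quoted from \cite{BKM} — and the recursive mapping-cone construction of \S\ref{sses} is the skeleton you follow): induct along $\minp(\rho)$, observe that multiplicity-freeness makes every word $\bed_{\rho,\bsi}$ repetition-free, so that $\psi_w1_\bj$ is independent of the reduced expression and $\psi_u\psi_v1_\bj=\psi_{uv}1_\bj$ whenever lengths add; this is indeed the mechanism that makes all entries signed $\psi_w$'s and makes the chain-map verifications exact rather than up to homotopy. Your treatment of the within-block entries ($r\neq m$) is also essentially right: the asymmetric sign conventions chosen for $\Resol(\beta,\gamma)$ and $\Resol(\gamma,\beta)$ are rigged so that, using $\sigma_m=1$ resp.\ $\sigma_m=0$, the inherited signs collapse to $(-1)^{\sigma_1+\cdots+\sigma_{r-1}}$.

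The genuine gap is at the interface, precisely where you flag uncertainty: the lift $\tilde\phi_*$ \emph{cannot} be taken to be the unsigned block-swap $\psi_{w_d}$ on every summand. With the stated differentials, the square in the $\beta$-direction anticommutes: comparing the component $P_{d_1}(\beta)\circ P_{d_2}(\gamma)\to P_{d_1-1}(\beta)\circ P_{d_2}(\gamma)$, whose coefficient is $-(-1)^{d_2}$, with the component $P_{d_2}(\gamma)\circ P_{d_1}(\beta)\to P_{d_2}(\gamma)\circ P_{d_1-1}(\beta)$, whose coefficient is $+(-1)^{d_2}$, the exact intertwining of the swap with $\partial^{(\beta)}$ forces the summandwise signs $\epsilon(d_1,d_2)$ of the lift to satisfy $\epsilon(d_1,d_2)=-\epsilon(d_1-1,d_2)$ and $\epsilon(d_1,d_2)=\epsilon(d_1,d_2-1)$, i.e.\ $\epsilon(d_1,d_2)=(-1)^{d_1}$; the constant choice $\epsilon\equiv 1$ is not a chain map. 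Relatedly, your claim that the $r=m$ entry carries the cone sign $(-1)^{d-1}$ and that this ``telescopes'' to the stated formula is false as written: $(-1)^{d-1}=(-1)^{\sigma_1+\cdots+\sigma_{m-1}+\sigma_{m+1}+\cdots+\sigma_{n-1}}$, which differs from $(-1)^{\sigma_1+\cdots+\sigma_{m-1}}$ whenever $\sigma_{m+1}+\cdots+\sigma_{n-1}$ is odd. Both problems are cured simultaneously by the corrected lift: taking $\tilde\phi$ on $P_{d_1}(\beta)\circ P_{d_2}(\gamma)$ to be $(-1)^{d_1}$ times the block-swap $\psi$, the chain-map identity holds on the nose (by the repetition-free calculus), and the $r=m$ entry acquires the sign $(-1)^{d-1}(-1)^{d_1}=(-1)^{d_2}=(-1)^{\sigma_1+\cdots+\sigma_{m-1}}$, which is exactly the theorem's formula. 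So the architecture of your argument is sound, but the explicit lift must carry these Koszul-type signs; as stated, the key step would fail.
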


\section{Type ${\tt A}$}
In this section, we sketch an elementary approach to the homological theory described above in type $\Car={\tt A_\infty}$, which is equivalent to working in an arbitrary finite type ${\tt A}$. The proofs we give are independent of the theory described in \S\S\ref{SHPKLRA},\ref{SPRSM}.  

\subsection{Set up}
Throughout the section $\O$ is an arbitrary commutative unital ring. Sometimes we need to assume that  $\O=F$, i.e. that $\O$ is a field. 
We often work with a fixed arbitrary positive root of the form 
$$\rho=\rho(k,l):=\al_k+\al_{k+1}+\dots+\al_l\in\Phi_+\qquad(k\leq l).
$$ 
We then have $d:=\height(\rho)=l-k+1$. We refer to $R_\rho$ as a {\em cuspidal block}. We also set $n:=d-1$, and define the word
$$
\bi_\rho=(k,k+1,\dots,l).
$$
We fix the convex order $\prec$ with $\al_k\prec \al_l$ if and only if $k<l$. Then $\rho(k,l)\prec\rho(r,s)$ if and only if $\bi_{\rho(k,l)}<\bi_{\rho(r,s)}$ in the the usual lexicographic order. 

For $\rho=\rho(k,l)$, the corresponding {\em cuspidal module} $L_\rho$ is the rank one $\O$-module $\O\cdot v_\rho$ with the action of $R_\rho$ on the basis vector $v_\rho$ defined by 
$$
1_\bj v_\rho=\de_{\bj,\bi_{\rho}}v_\rho,\quad y_r v_\rho=0,\quad \psi_t v_\rho=0
$$
for all admissible $\bj,r,t$.  

For $\al\in Q_+$ recall the algebra $R_\al'$ defined in Remark~\ref{RR'}. 
Denote
$$
x_1=y_1-y_2,\ x_2=y_2-y_3,\dots, x_{d-1}=y_{d-1}-y_d,
$$
where $d=\height(\al)$. 
For $\bi\in \words_\al$, define the projective modules
$$
E(\bi):=R_\al 1_\bi\quad\text{and}\quad  E'(\bi)=R_\al'1_\bi
$$
over $R_\al$ and $R'_\al$ respectively. It is easy to see that 
\begin{equation}\label{EIndTwoProj}
E(\bi)\circ E(\bj)\cong E(\bi\bj).
\end{equation}

Pick any $\bi=(i_1,\dots,i_d)\in \words_\al$, and consider the degree $2$ element $z(\bi)\in R_\al$ defined as the sum of {\em distinct} basis elements of the form $y_{u\cdot 1}1_{u\cdot \bi}$ with $u\in\Si_d$. In other words, 
\begin{equation}\label{Ez}
z(\bi)=\sum_{\bj\in \words_\al}\Big(\sum_{1\leq r\leq d,\ j_r=i_1}y_r\Big)1_{\bj}.
\end{equation}
For example:
\begin{align*}
z(112)&=y_11_{112}+y_11_{121}+y_21_{112}+y_21_{211}+y_31_{121}+y_31_{211},
\\
z(211)&=y_11_{211}+y_21_{121}+y_31_{112}.
\end{align*}
Note by \cite[Theorem 2.9]{KL1} that $z(\bi)$ is central in $R_\al$. 

\begin{Proposition} \label{Pz} 
Let $\al=\sum_{i\in I}m_i\al_i$, and $\O[X]$ be a polynomial ring in a variable $X$ of degree $2$. Suppose that $m_i\cdot 1_\O$ is a unit in $\O$ for some $i\in I$, and
let $z_\al:=z(\bi)$ for $\bi\in \words_\al$ of the form $\bi=(i^{m_i},\dots)$, i.e. $\bi$ begins with $m_i$ lots of $i$'s. 
Then   
there exists a homogeneous isomorphism of graded algebras 
$$
R'_\al\otimes \O[X]\iso R_\al,\ a\otimes X\mapsto az_\al.
$$
\end{Proposition}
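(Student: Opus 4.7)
The plan is to check that the proposed map $\Phi\colon R'_\al \otimes_\O \O[X] \to R_\al$, $a \otimes X^n \mapsto a\,z_\al^n$, is a well-defined isomorphism of graded $\O$-algebras. Well-definedness is immediate from centrality: since $z_\al$ is central in $R_\al$ by \cite[Theorem 2.9]{KL1} (quoted just before the proposition), the assignment $X \mapsto z_\al$ extends to a graded algebra homomorphism $\O[X] \to R_\al$ whose image commutes with the subalgebra $R'_\al$, so the universal property of the tensor product of algebras produces $\Phi$. The degree of $X$ equals the degree of $z_\al$ (both $2$ in type $\tt A$), so $\Phi$ is homogeneous.

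The key calculation is the identity $z_\al = m_i\,y_1 + r$ for some $r \in R'_\al$. Fix $\bj \in \words_\al$; since $\bj$ has content $\al$, there are exactly $m_i$ positions in $\bj$ equal to $i$, and I pick any one of them, say $r_0 = r_0(\bj)$. For each $r$, the element $y_r - y_{r_0(\bj)}$ is a $\Z$-linear combination of the generators $x_1,\dots,x_{d-1}$ of $R'_\al$ and so lies in $R'_\al$; the same is true of $y_1 - y_{r_0(\bj)}$. Hence
$$
z_\al \cdot 1_\bj \;=\; \Bigl(\sum_{r\,:\,j_r=i} y_r\Bigr)\,1_\bj \;=\; m_i\,y_{r_0(\bj)}\,1_\bj + \Bigl(\sum_{r\,:\,j_r=i}(y_r - y_{r_0(\bj)})\Bigr) 1_\bj \;=\; m_i\,y_1\,1_\bj + r_\bj,
$$
with $r_\bj \in R'_\al$. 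Summing over the finite set $\words_\al$ and using $\sum_\bj 1_\bj = 1$ yields the identity with $r := \sum_\bj r_\bj \in R'_\al$.

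To conclude, I would compare $\O$-bases. By Theorem~\ref{TBasis} and the substitution $y_r = y_1 - x_1 - \cdots - x_{r-1}$, the standard basis of $R_\al$ can be rewritten in the form $\{\psi_w\,y_1^a\,x_1^{b_1}\cdots x_{d-1}^{b_{d-1}}\,1_\bi\}$. The sub-collection with $a = 0$ is an $\O$-basis of $R'_\al$, as follows from a computation with relations (\ref{R6}) and (\ref{R4}); note that in type $\tt A$ the polynomial $Q_{i,j}(y_r, y_{r+1})$, once one substitutes $y_{r+1} = y_r - x_r$, has its leading $y_r$-terms cancel, so the derived relations among $\psi$-, $x$-, and $1_\bi$-letters already close up within $R'_\al$. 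Hence $R_\al$ is a free left $R'_\al$-module on $\{y_1^a\}_{a \geq 0}$. Now the identity $z_\al = m_i\,y_1 + r$ expands as $z_\al^n = m_i^n\,y_1^n + \eta_n$ where $\eta_n$ is a left-$R'_\al$-combination of $y_1^0, \dots, y_1^{n-1}$; since $m_i^n \in \O^\times$, the transition matrix from $\{y_1^n\}$ to $\{z_\al^n\}$ over $R'_\al$ is unitriangular, so $\{z_\al^n\}_{n \geq 0}$ is itself a free left $R'_\al$-basis of $R_\al$. This is precisely the statement that $\Phi$ is an isomorphism.

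The hard part will be the basis step, namely verifying the expected $\O$-basis of $R'_\al$ and pinning down that $R_\al$ decomposes as a free $R'_\al$-module on the powers of $y_1$; everything else is bookkeeping.
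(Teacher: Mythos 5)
Your proof is correct and follows essentially the same route as the paper's: the crux in both is the observation that $z_\al - m_i\,y_1$ lies in $R'_\al$ (obtained from the idempotent decomposition of $z_\al$ together with the fact that each $\bj\in\words_\al$ contains exactly $m_i$ letters equal to $i$), combined with the Basis Theorem. The paper phrases the conclusion as a span identity $\spa_\O(x_1,\dots,x_{d-1},z_\al)=\spa_\O(y_1,\dots,y_d)$, while you make the same point via the unitriangular change of $R'_\al$-basis from $\{y_1^n\}$ to $\{z_\al^n\}$; both arguments implicitly use the PBW-type basis of $R'_\al$, which neither proof spells out fully.
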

\begin{proof}
In view of the Basis Theorem, it suffices to prove that 
$$
\spa_\O(x_1, \dots,x_{d-1},z_\al)=\spa_\O(y_1,\dots,y_d),
$$
for which it is enough to see that $y_1\in \spa_\O(x_1, \dots,x_{d-1},z_\al)$. By (\ref{Ez}), 
$$y_1-(m_i\cdot 1_\O)^{-1}z_\al=\sum_{\bj\in \words_\al}\Big(y_1-(m_i\cdot 1_\O)^{-1}\sum_{1\leq r\leq d,\ j_r=i}y_r\Big)1_{\bj}.$$
Note that $|\{1\leq r\leq d,\ j_r=i\}|=m_i$ for all $\bj\in \words_\al$, so the sum of the coefficients of $y_k$'s in the expression above is zero, hence this expression is a linear combination of $x_1,\dots,x_{d-1}$. 
\end{proof}

\subsection{\boldmath Basic algebra $B_n$}\label{SB_n}

Let $B$ be the unital $\O$-algebra generated by the elements
$
\{e(+),e(-),b\}
$
subject only to the relations
$$
e(\pm)e(\mp)=0,\quad e(\pm)e(\pm)=e(\pm),\quad e(+)+e(-)=1,\quad e(\pm)b=be(\mp).
$$
In other words, $B$ is the path algebra of the quiver \ 
$
{\begin{picture}(24, 11)%
\put(0,2){\circle{4}}%
\put(20,2){\circle{4}}%
\put(10, -4){\makebox(0, 0)[b]{$\rightleftarrows$}}
\put(0, 6){\makebox(0, 0)[b]{$_{{+}}$}}%
\put(20, 6){\makebox(0, 0)[b]{$_{{-}}$}}%
\end{picture}}
$. 
Setting $\deg e(\pm):=0$, $\deg b:=1$ defines a grading on $B$. Note that $\{b^me(\si)\mid m\in \Z_{\geq 0},\ \si\in\{+,-\}\}$  is a basis of $B$. 

Let $P(\pm):=Be(\pm)$, and $L(\pm)$ be the rank one $\O$-module $\O\cdot v_{\pm}$ on the basis vector $v_\pm$, with the action of $B$ defined by 
$$
e(\tau)v_{\si}=\de_{\tau,\si}v_{\si},\quad bv_\si=0\qquad(\si,\tau\in \{+,-\}). 
$$ 
If $\O=F$, then $L(+)$ and $L(-)$ are the irreducible $B$-modules with projective covers $P(+)$ and $P(-)$ respectively.

More generally, let $B_n=B^{\otimes n}$. For $1\leq r\leq n$, set 
$$b_r:=1\otimes\dots\otimes1\otimes b\otimes 1\otimes \dots\otimes 1,
$$ 
with $b$ in the $r$th position, and for $\bsi=(\si_1,\dots,\si_n)\in\{\pm\}^n$ set
$$
e(\bsi):=e(\si_1)\otimes\dots\otimes e(\si_n).
$$
For $1\leq r\leq n$, denote
$$
\beps_r:=(+\dots+,-,+,\dots,+),
$$
with `$-$' in the $r$th position. For any $\bsi=(\si_1,\dots,\si_n)\in\{\pm\}^n$, we allow ourselves to multiply
$$
\beps_r\bsi:=(\si_1,\dots,\si_{r-1},-\si_r,\si_{r+1},\dots,\si_n).
$$
Then $B_n$ is generated by $\{e(\bsi),\ b_r\mid \bsi\in\{\pm\}^n,\ 1\leq r\leq n\}$ subject only to the relations
\begin{eqnarray}
e(\bsi)e(\btau)=\de_{\bsi,\btau}e(\bsi),\ 
\textstyle\sum_{\bsi\in\{\pm\}^n}e(\bsi)=1,
\label{ERB1}\\
 b_rb_s=b_sb_r,
 \label{ERB2}\\ 
 e(\bsi)b_r=b_re(\beps_r\bsi).\label{ERB3}
\end{eqnarray}
The algebra $B_n$ is graded with $\deg e(\bsi)=0$, $\deg b_r=1$, and has basis 
\begin{equation}\label{EBBasis}
\{b_1^{m_1}\dots b_n^{m_n}e(\bsi)\mid m_1,\dots,m_n\in \Z_{\geq 0},\ \bsi\in\{\pm\}^n\}.
\end{equation}

For $\bsi\in\{\pm\}^n$, let $P(\bsi):=B_ne(\bsi)$, and $L(\bsi)$ be the rank one $\O$-module $\O\cdot v_{\bsi}$ on the basis vector $b_\bsi$, with the action of $B_n$ defined by 
$$
e(\btau)v_{\bsi}=\de_{\btau,\bsi}v_{\bsi},\quad b_rv_\bsi=0\qquad(\btau\in \{\pm\}^n,\ 1\leq r\leq n). 
$$ 
If $\O=F$, then $\{L(\bsi)\mid \bsi\in\{\pm\}^n\}$ is a complete irredundant set of irreducible $B_n$-modules, and $P(\bsi)$ is a projective cover of $L(\bsi)$ for every $\bsi$.

For $n=1$, we have a linear minimal projective resolution $\BasicRes(\pm)$ of $L(\pm)$: 
\begin{equation*}
0\longrightarrow q P(\mp)\stackrel{\partial}{\longrightarrow} P(\pm)\longrightarrow L(\pm)\longrightarrow0,
\end{equation*}
where the map $\partial$ is the right  multiplication by $b$, i.e. 
$$
\partial(b^me(\mp))=b^{m+1}e(\pm)\qquad(m\in\Z_{\geq 0}).
$$

For a general $n$ and an arbitrary $\bsi\in\{\pm\}^n$, we have a linear minimal projective resolution $\BasicRes(\bsi):=\BasicRes(\si_1)\otimes\dots\otimes \BasicRes(\si_n)$ of $L(\bsi)$: 
\begin{equation}\label{BasicResn}
0\longrightarrow P_n(\bsi)
\stackrel{\partial_n}{\longrightarrow} P_{n-1}(\bsi)
\stackrel{\partial_{n-1}}{\longrightarrow}\dots\stackrel{\partial_1}{\longrightarrow}P_0(\bsi) \longrightarrow L(\bsi)\longrightarrow0,
\end{equation}
where 
$$
P_m(\bsi)=\bigoplus_{1\leq r_1<\dots<r_m\leq n}q^m  Be(\beps_{r_1}\dots\beps_{r_m}\bsi)\qquad(0\leq m\leq n),
$$
and the map $\partial_m$ is defined on the direct summand $Be(\beps_{r_1}\dots\beps_{r_m}\bsi)$ of $P_m(\bsi)$ as the right multiplication by $\sum_{k=1}^m(\prod_{l=1}^{k-1}\si_{r_l}){b_{r_k}}$ with $\prod_{l=1}^{k-1}\si_{r_l}{\in}\{\pm\}$ interpreted as $\pm1_\O$. In other words,
$$
\partial_m=\bigoplus_{1\leq r_1<\dots<r_m\leq n}\partial_m^{r_1,\dots,r_m}:P_m(\bsi)\to P_{m-1}(\bsi)
$$
where
$$
\partial_m^{r_1,\dots,r_m}=\sum_{k=1}^m\partial_{m,r_k}:Be(\beps_{r_1}\dots\beps_{r_m}\bsi)\to P_{m-1}(\bsi)
$$
for the homomorphism 
$$
\partial_{m,r_k}: Be(\beps_{r_1}\dots\beps_{r_m}\bsi)\to Be(\beps_{r_1}\dots\widehat{\beps_{r_k}}\dots\beps_{r_m}\bsi)
$$
which maps 
$$
b_1^{a_1}\dots b_n^{a_n}e(\beps_{r_1}\dots\beps_{r_m}\bsi)\in Be(\beps_{r_1}\dots\beps_{r_m}\bsi)
$$
to
$$(\prod_{l=1}^{k-1}\si_{r_l})b_1^{a_1}\dots b_n^{a_n}b_{r_k}e(\beps_{r_1}\dots\widehat{\beps_{r_k}}\dots\beps_{r_m}\bsi)\in Be(\beps_{r_1}\dots\widehat{\beps_{r_k}}\dots\beps_{r_m}\bsi).
$$

We  compute extensions between irreducible $B_n$-modules. First, for $n=1$ we have:

\begin{Lemma} \label{LB1EXT} 
We have 
\begin{align*}
\EXT^m_{B_1}(L(\pm),L(\pm))&=
\left\{
\begin{array}{ll}
\O &\hbox{if $m=0$,}\\
0 &\hbox{otherwise,}
\end{array}
\right.
\\ 
\EXT^m_{B_1}(L(\pm),L(\mp))&=
\left\{
\begin{array}{ll}
q^{-1}\O &\hbox{if $m=1$,}\\
0 &\hbox{otherwise.}
\end{array}
\right.
\end{align*}
\end{Lemma}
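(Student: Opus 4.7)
The plan is to compute all the Ext groups directly by applying $\HOM_{B_1}(-,L(\tau))$ to the short linear projective resolution $\BasicRes(\pm)$ of $L(\pm)$ recalled just before the lemma, namely
\[
0\longrightarrow qP(\mp)\stackrel{\partial}{\longrightarrow} P(\pm)\longrightarrow L(\pm)\longrightarrow 0,
\]
where $P(\sigma)=Be(\sigma)$ and $\partial$ is right multiplication by $b$. Because this resolution has length one, vanishing in degrees $m\geq 2$ is automatic, so only $m=0$ and $m=1$ require computation.

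The key ingredient is the standard identification, valid for any idempotent $e$ and any module $V$, of $\HOM_{B_1}(Be,V)$ with $eV$ via $f\mapsto f(e)$. Combined with the definition of the irreducibles $L(\pm)=\O\cdot v_\pm$ one finds
\[
e(\sigma)L(\tau)=\begin{cases}\O\cdot v_\tau,&\sigma=\tau,\\ 0,&\sigma\neq\tau,\end{cases}
\]
with $L(\tau)$ concentrated in degree zero. Applying $\HOM_{B_1}(-,L(\tau))$ to $\BasicRes(\pm)$ and keeping track of the internal degree shift $q$ on $P(\mp)$ therefore yields the two-term complex
\[
e(\pm)L(\tau)\xrightarrow{\ \partial^*\ }q^{-1}e(\mp)L(\tau).
\]

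Next I would observe that the connecting map $\partial^*$ is identically zero: under the identification above it is induced by right multiplication by $b$, and $bv_\tau=0$ for both choices of $\tau$ by the definition of the $B$-action on $L(\tau)$. Consequently $\EXT^0_{B_1}(L(\pm),L(\tau))$ equals $e(\pm)L(\tau)$ and $\EXT^1_{B_1}(L(\pm),L(\tau))$ equals $q^{-1}e(\mp)L(\tau)$. Reading these off in each of the four cases $\tau=\pm$ gives exactly the values stated in the lemma, and higher Ext groups vanish because $\BasicRes(\pm)$ stops in homological degree one.

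There is essentially no obstacle here beyond bookkeeping: the resolution is explicit and minimal, $B_1$ has only two idempotents, and the differential vanishes after dualisation. The only point meriting a brief verification is that the grading shift $q$ on $P(\mp)$ in $\BasicRes(\pm)$ translates under $\HOM_{B_1}(-,L(\tau))$ into the shift $q^{-1}$ on the $\EXT^1$ term, which follows from the convention $\HOM_{B_1}(qM,N)=q^{-1}\HOM_{B_1}(M,N)$ used throughout the paper.
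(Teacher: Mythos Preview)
Your proposal is correct and follows essentially the same approach as the paper: apply $\HOM_{B_1}(-,L(\tau))$ to the length-one projective resolution $\BasicRes(\pm)$, identify the terms via $\HOM_{B_1}(Be(\sigma),L(\tau))\cong e(\sigma)L(\tau)$, and read off the answer. You are in fact slightly more explicit than the paper, spelling out why $\partial^*=0$ (since $b v_\tau=0$) and why the grading shift becomes $q^{-1}$ on $\EXT^1$.
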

\begin{proof}
This is obtained by applying an appropriate functor $\HOM_{B_1}(-,L(\tau))$ to the resolution $\BasicRes(\si)$ for $\si,\tau\in\{\pm\}$ and computing cohomology. For example, let us compute $\EXT^m_{B_1}(L(\pm),L(\mp))$. An application of the functor $\HOM_{B_1}(-,L(\mp))$ to the resolution $\BasicRes(\pm)$ yields:
\begin{equation*}
0\longrightarrow \HOM_{B_1}(q P(\mp),L(\mp))\rangle\stackrel{\partial^*}{\longrightarrow} \HOM_{B_1}(P(\pm),L(\mp))\longrightarrow0.
\end{equation*}
But $\HOM_{B_1}(qP(\mp),L(\mp))\cong q^{-1} \O$ and $\HOM_{B_1}(P(\pm),L(\mp))= 0$, which immediately implies the required result. 
\end{proof}

Now we can deal with general $n$. 

\begin{Proposition} \label{PEXTB} 
Let $\bsi,\btau\in\{\pm\}^n$. Write $\bsi=\beps_{r_1}\dots \beps_{r_m}\btau$ for unique $1\leq r_1<\dots< r_m\leq n$. 
Then:
$$
\EXT^k_{B_n}(L(\bsi),L(\btau))=
\left\{
\begin{array}{ll}
q^{-k}\O &\hbox{if $k=m$;}
\\
0 &\hbox{\ otherwise.}
\end{array}
\right.
$$
\end{Proposition}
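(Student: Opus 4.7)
The plan is to apply the functor $\HOM_{B_n}(-, L(\btau))$ directly to the minimal projective resolution $\Resol(\bsi)$ constructed in \eqref{BasicResn} and read off the cohomology; this bypasses any Künneth-type machinery and requires only a careful accounting of idempotents.

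First I would observe that for the irreducible module $L(\btau)$, we have
$$
\HOM_{B_n}(B_n e(\bgamma), L(\btau)) \cong e(\bgamma) L(\btau) =
\begin{cases} \O \cdot v_\btau & \text{if } \bgamma = \btau, \\ 0 & \text{otherwise.} \end{cases}
$$
Applied to the summand $q^k B_n e(\beps_{r_1}\dots\beps_{r_k}\bsi)$ of $P_k(\bsi)$, this gives $q^{-k}\O$ exactly when $\beps_{r_1}\dots\beps_{r_k}\bsi = \btau$, i.e.\ exactly when $\{r_1,\dots,r_k\}$ is the set $D$ of positions in which $\bsi$ and $\btau$ differ. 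Since $|D|=m$ by hypothesis, there is at most one such tuple, and it exists only when $k=m$.

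Consequently the cochain complex $\HOM_{B_n}(\Resol(\bsi), L(\btau))$ is zero in every degree except $k = m$, where it equals $q^{-m}\O$. All differentials therefore vanish automatically, and taking cohomology yields
$$
\EXT^k_{B_n}(L(\bsi), L(\btau)) =
\begin{cases} q^{-m}\O = q^{-k}\O & \text{if } k = m, \\ 0 & \text{otherwise,} \end{cases}
$$
which is exactly the claim.

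There is essentially no obstacle here: the resolution $\Resol(\bsi)$ is linear and its summands are indexed precisely by $m$-subsets of $\{1,\dots,n\}$, so the combinatorics of the set $D$ of disagreement-positions matches perfectly with the homological degree. The only bookkeeping point worth double-checking is the grading shift: the summand $q^k B_n e(\cdots)$ in homological degree $k$ becomes $q^{-k}\HOM$ after dualising, which is what produces the $q^{-k}$ factor in the answer.
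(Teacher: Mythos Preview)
Your argument is correct. You apply $\HOM_{B_n}(-,L(\btau))$ directly to the explicit resolution (\ref{BasicResn}) and observe that, because the summands of $P_k(\bsi)$ are indexed by $k$-subsets of $\{1,\dots,n\}$ and $\HOM_{B_n}(B_n e(\bgamma),L(\btau))$ survives only for $\bgamma=\btau$, exactly one summand in the entire complex contributes, in homological degree $m$. The paper instead invokes the K\"unneth formula together with the $n=1$ case (Lemma~\ref{LB1EXT}), using $B_n\cong B_1^{\otimes n}$ and $L(\bsi)\cong L(\si_1)\boxtimes\cdots\boxtimes L(\si_n)$. Your route is more elementary in that it avoids K\"unneth entirely, at the cost of relying on the explicit description of the tensor-product resolution already spelled out in (\ref{BasicResn}); the paper's route is terser and makes the tensor-product structure do the work abstractly. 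The two are of course closely related, since (\ref{BasicResn}) is precisely the tensor product of the $n=1$ resolutions, so your computation is in effect an unpacking of what K\"unneth would produce.
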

\begin{proof}
This obtained using K\"unneth formula and Lemma~\ref{LB1EXT}, since $B_r\simeq B_1\otimes\dots\otimes B_1$ and $L(\bsi)\simeq L(\si_1)\boxtimes\dots\boxtimes L(\si_n)$. 
\end{proof}

\subsection{Skew shapes}
Recall the standard notions concerning multipartitions from \S~\ref{SMotiv}. We are now going consider the special case $l=1$ and $\La=\La_0$, so  $l$-multipartitions are just partitions and boxes on the main diagonal have content $0$. 
If $\mu$ is a Young diagram contained in a Young diagram $\la$ then $\la\setminus\mu$ is called a {\em skew shape}. Skew shapes are identified up to the shifts along diagonals. For example, the following picture, with boxes marked with their contents, illustrates why $(7,7,4,1)\setminus(7,3,2,1)=(6,3)\setminus(2,1)$:
$$
\begin{tikzpicture}[scale=0.5,draw/.append style={thick,black}]
 \fill[blue!40](3.5,-1.5)rectangle(7.5,-0.5);
  \fill[blue!40](2.5,-2.5)rectangle(4.5,-1.5);
  \newcount\col
  \foreach\Row/\row in {{0,1,2,3,4,5,6}/0,{-1,0,1,2,3,4,5}/-1,{-2,-1,0,1}/-2,{-3}/-3} {
     \col=1
     \foreach\k in \Row {
        \draw(\the\col,\row)+(-.5,-.5)rectangle++(.5,.5);
        \draw(\the\col,\row)node{\k};
        \global\advance\col by 1
      }
   }
\end{tikzpicture}
\hspace{2mm}=\hspace{2 mm}
\begin{tikzpicture}[scale=0.5,draw/.append style={thick,black}]
 \fill[blue!40](2.5,-0.5)rectangle(6.5,0.5);
  \fill[blue!40](1.5,-1.5)rectangle(3.5,-0.5);
  \newcount\col
  \foreach\Row/\row in {{0,1,2,3,4,5}/0,{-1,0,1}/-1}
  {
     \col=1
     \foreach\k in \Row {
        \draw(\the\col,\row)+(-.5,-.5)rectangle++(.5,.5);
        \draw(\the\col,\row)node{\k};
        \global\advance\col by 1
      }
   }
                     --(3.5,-2.5)--(0.5,-2.5)--(0.5,-1.5)--(2.5,-1.5);
\end{tikzpicture}
$$
The given skew shape is determined by its Young diagram with box contents: 
$$
\young(:2345,{\tzero}1)
$$

To the positive root $\rho=\rho(k,l)$, we associate the set $\Shapes_\rho$ of all skew shapes containing exactly one box of each of the contents  $k,k+1,\dots,l$. For example, in the picture above the skew shape belongs to $\Shapes_{\rho(0,5)}$. 


As for partitions, for any $\la\in\Shapes_\rho$, a {\em $\la$-tableau} $\T$ is an allocation of the numbers $1,2,\dots,d$ into the boxes of $\la$ (recall that we have put $d:=\height(\rho)$). The symmetric group $\Si_d$ acts on the $\la$-tableaux by permutations of entries. For example $s_r\cdot \T$ is $\T$ with the entries $r$ and $r+1$ swapped. 
A $\la$-tableau is {\em standard} if  its entries increase along the columns from top to bottom and along the rows from left to right. The set of the standard $\la$-tableaux is denoted $\St(\la)$. 

The leading $\la$-tableaux $\T^\la$ is obtained by inserting the numbers $1,\dots,d$ into the boxes of $\la$ from left to right along the rows starting from the first row, then the second row, and so on. 
Given any $\T\in\St(\la)$, define $\bi^\T:=i_1 \dots i_d\in \words_\rho$, where $i_r$ is the content of the box occupied in $\T$ with $r$. 
Finally, set $\bi^\la:=\bi^{\T^\la}$.

There is a one-to-one correspondence between the set $\Shapes_\rho$ and the set $\{\pm\}^n$; in particular, $|\Shapes_\rho|=2^n$. To construct a bijection, number the boxes of a skew shape $\la\in\Shapes_\rho$ by the numbers $1,\dots,d$ from bottom-left to top-right, and for $r=1,\dots,n$, set $\si_r:=+$ if the $r$th box of $\la$ is not in the end of its row, and $\si_r:=-$ otherwise. This will produce a sequence $\bsi^\la=(\si_1,\dots,\si_n)\in\{\pm\}^n$. The following is elementary:

\begin{Lemma} \label{LBij}
The map $\Shapes_\rho\to\{\pm\}^n,\ \la\mapsto \bsi^\la$ is a bijection. 
\end{Lemma}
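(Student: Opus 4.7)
The plan is to construct an explicit two-sided inverse to $\la \mapsto \bsi^\la$. Given $\bsi = (\si_1,\dots,\si_n) \in \{\pm\}^n$, I build a skew shape $\la(\bsi)$ inductively by placing boxes $B_1,\dots,B_d$ of contents $k,k+1,\dots,l$: fix any anchor position for $B_1$ (skew shapes being identified up to diagonal shift), and given $B_r$ at $(a_r,b_r)$, place $B_{r+1}$ at $(a_r,b_r+1)$ if $\si_r = +$ (a ``right'' move) and at $(a_r-1,b_r)$ if $\si_r = -$ (an ``up'' move). In either case the content of $B_{r+1}$ exceeds that of $B_r$ by one, so the resulting collection has exactly one box of each content in $[k,l]$.

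First I would verify that $\la(\bsi)$ is a genuine skew shape $\La/M$ for some partitions $\La \supset M$. For each row index $a$ that actually appears among the $a_r$'s, let $c_a$ and $d_a$ denote the smallest and largest values of $b_r$ with $a_r = a$; define $\La_a := d_a$ and $M_a := c_a - 1$, extended by $\La_a = 0, M_a = 0$ below the strip and $\La_a = M_a = \La_{a_{\min}}$ above it. Using that $(a_r)$ is weakly decreasing and $(b_r)$ weakly increasing, together with the elementary identity $c_{a-1} = d_a$ (the leftmost box of row $a-1$ sits directly above the rightmost box of row $a$ whenever there is an up-move from row $a$), one checks directly that $\La,M$ are partitions with $\La \supset M$ and $\La/M$ equal to our box set. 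This shows $\la(\bsi) \in \Shapes_\rho$, and by construction $\bsi^{\la(\bsi)} = \bsi$, so $\la \mapsto \bsi^\la$ is surjective.

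For injectivity, the key point is that in any $\la \in \Shapes_\rho$, consecutive content boxes $B_c$ and $B_{c+1}$ are edge-adjacent, so either $B_{c+1}$ sits immediately to the right of $B_c$ or directly above it. This rests on two observations: (a) $\la$ contains no $2{\times}2$ block, for the two anti-diagonal cells of such a block share the same content, contradicting the uniqueness of each content in $[k,l]$; and (b) if $B_c = (a,b)$ and $B_{c+1} = (a',b')$ with $b'-a' = (b-a)+1$, then a short case analysis on the sign of $a'-a$, combined with the partition inequalities $\La_i \ge \La_{i+1}$ and $M_i \ge M_{i+1}$ for the defining data $\la = \La/M$, rules out every configuration except $(a',b') = (a,b+1)$ or $(a',b')=(a-1,b)$. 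Granting this, the rule $\si_r^\la = +$ iff $B_{r+1}$ continues in the same row as $B_r$ shows that the moves encoded by $\bsi^\la$ reconstruct $\la$ via the inverse map above, i.e.\ $\la(\bsi^\la) = \la$.

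The main obstacle is the rigidity statement in (b): ruling out, for instance, configurations in which $B_{c+1}$ lies many rows above $B_c$ or strictly below and to the right. Once this ``ribbon shape'' of every element of $\Shapes_\rho$ is established, the bijection reduces to the bookkeeping outlined above, and in particular $|\Shapes_\rho| = 2^n$.
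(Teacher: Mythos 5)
The paper itself gives no proof of this lemma---it precedes the statement with ``The following is elementary:'' and moves on---so there is no argument in the source to compare against directly. Your proof is a correct filling-in of the gap. The construction of an explicit inverse (right move for $+$, up move for $-$) is the natural one, and your verification that the resulting box set is a genuine skew shape via $\La_a := d_a$, $M_a := c_a - 1$ and the identity $c_{a-1} = d_a$ at an up-move is sound.

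You flag step (b)---that in any $\la \in \Shapes_\rho$ the boxes of contents $c$ and $c+1$ are edge-adjacent---as ``the main obstacle'' and only sketch the case analysis, so let me confirm it goes through. Writing $B_c = (a,b)$ and $B_{c+1} = (a',b')$ with $b'-a' = (b-a)+1$: if $a' > a$, then $b' \geq b+2$, and the skew-shape inequalities force $(a,b+1) \in \la$ with content $b-a+1$, duplicating $B_{c+1}$; if $a' \leq a-2$, then $b' < b$, and the inequalities $M_{a-1} \leq M_{a'} < b' < b \leq \La_a \leq \La_{a-1}$ force $(a-1,b) \in \la$, again of content $b-a+1$ but distinct from $B_{c+1}$. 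This leaves only $a' = a$ (hence $b' = b+1$) or $a' = a-1$ (hence $b' = b$), exactly the two moves in your encoding. Combined with your $2\times 2$-block observation (a), this establishes the ribbon structure and completes the injectivity argument as you outlined.
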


For any $1\leq r\leq n$ , define the {\em row splitting operator} $\linebr_r$ on the set $\Shapes_\rho$ as follows. Recall that we number the boxes with the numbers $1,\dots,d$ from bottom-left to top-right. 
Let $\la\in\Shapes_\rho$, and the $r$th box $A$ of $\la$ lie in the $m$th row; if $A$ is not in the end of the $m$th row, then $\linebr_r \la$ is the skew shape obtained from $\la$ by splitting its  $m$th row at $A$, so that $A$ is now in the end of its row in $\linebr_r \la$. On the other hand, if $A$ is at the end of the $m$th row, then $\linebr_r \la$ is the skew shape obtained from $\la$ by attaching the $(m-1)$st row to the end of the $m$th row. For example: 
$$
\linebr_4\cdot \young(:3456,12)\ =\  \young(::56,:34,12)
,\quad \linebr_2\cdot \young(:3456,12)\ =\ \young(123456)\ .
$$

The key properties of the row splitting  are as follows:

\begin{Lemma} \label{LPropRS}
Let $\la,\mu\in\Shapes_\rho$ and $1,\leq r,s\leq n$. Then:
\begin{enumerate}
\item[{\rm (i)}] $\linebr_r^2=\id$;
\item[{\rm (ii)}] $\linebr_r\linebr_s=\linebr_s\linebr_r$;
\item[{\rm (iii)}] $\bsi^{\linebr_r\la}=\beps_r\bsi^\la$. 
\item[{\rm (iv)}] There exist unique distinct numbers $r_1,\dots,r_l$ such that $1\leq r_1,\dots,r_l\leq n$ and $\la=\linebr_{r_1}\dots \linebr_{r_l}\mu$. 
\end{enumerate}
\end{Lemma}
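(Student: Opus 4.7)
The plan is to reduce everything to part (iii) by first unpacking the combinatorial meaning of $\linebr_r$ in terms of ribbons, and then using the bijection of Lemma~\ref{LBij} to translate (i), (ii), and (iv) into straightforward statements about the involutions $\beps_r$ acting on $\{\pm\}^n$.

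First I would observe that any $\la\in\Shapes_\rho$ is a \emph{ribbon}: since the contents $k,k{+}1,\dots,l$ each appear exactly once, and content increases by $1$ under either a right-step or an up-step, the boxes numbered $1,2,\dots,d$ form a chain in which each successive pair is obtained by a right-step (same row) or an up-step (new row). Under the bottom-left-to-top-right labeling, $\si_r^\la=+$ iff the step from box $r$ to box $r{+}1$ is a right-step, while $\si_r^\la=-$ iff box $r$ is at the end of its row, i.e., the step is an up-step. With this description, $\linebr_r$ becomes manifestly local: it toggles the step between boxes $r$ and $r{+}1$ (right $\leftrightarrow$ up) and leaves every other step in the ribbon intact. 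This immediately yields (iii), and in passing confirms that $\linebr_r\la\in\Shapes_\rho$ is well defined.

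Then (i) and (ii) follow almost formally from (iii) together with Lemma~\ref{LBij}: $\bsi^{\linebr_r^2\la}=\beps_r^2\bsi^\la=\bsi^\la$ and $\bsi^{\linebr_r\linebr_s\la}=\beps_r\beps_s\bsi^\la=\beps_s\beps_r\bsi^\la=\bsi^{\linebr_s\linebr_r\la}$, so bijectivity of $\la\mapsto\bsi^\la$ gives the claimed equalities of shapes. For (iv), given $\la,\mu$, I would take $\{r_1,\dots,r_l\}$ to be exactly the set of indices where $\bsi^\la$ and $\bsi^\mu$ differ; iterated application of (iii) then shows $\bsi^{\linebr_{r_1}\cdots\linebr_{r_l}\mu}=\bsi^\la$, and Lemma~\ref{LBij} gives $\linebr_{r_1}\cdots\linebr_{r_l}\mu=\la$. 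Uniqueness of the unordered set $\{r_1,\dots,r_l\}$ follows from the same bijection (any other such decomposition would produce the same toggled coordinates), while the order of the $r_i$'s is irrelevant by (ii).

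The main obstacle is establishing (iii) cleanly, specifically verifying that the row-splitting/joining operation (a) keeps the shape inside $\Shapes_\rho$ and (b) preserves the bottom-left-to-top-right labels of all boxes whose row assignment is unaffected, so that $\si_s^\la=\si_s^{\linebr_r\la}$ for every $s\neq r$. Both checks become routine once one adopts the ribbon/step-sequence viewpoint, which is why setting up that viewpoint at the very start is the backbone of the argument.
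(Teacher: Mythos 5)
Your proposal is correct and follows essentially the same logic as the paper's (very terse) proof: establish (iii) directly from the definitions, then deduce (i), (ii), (iv) by transporting through the bijection $\la\mapsto\bsi^\la$ of Lemma~\ref{LBij}. Your ribbon/step-sequence discussion is a fuller justification of the step the paper declares "clear from the definitions," but it is not a different argument.
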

\begin{proof}
Part (iii) is clear from the definitions. The rest follows from (iii) and Lemma~\ref{LBij}. 
\end{proof}

\subsection{\boldmath The elements $\psi_{\la,\mu}$}
Since $\rho\in\Phi_+$ has coefficients at most $1$ when decomposed as a linear combination of simple roots, the elements $\psi_{u}\in R_\rho$ are well-defined for all $u\in\Si_d$. Moreover, the action of the symmetric group $\Si_d$ on $\words_\rho$ is regular, and so for any $\la,\mu\in\Shapes_\rho$, there is a unique element $w(\la,\mu)\in\Si_d$ such that $w(\la,\mu)\cdot\bi^\mu=\bi^\la$. This yields well-defined elements $$\psi(\la,\mu):=\psi_{w(\la,\mu)}\qquad (\la,\mu\in\Shapes_\rho).$$ 
Note that $\psi(\la,\mu)1_{\bi^\mu}=1_{\bi^\la}\psi(\la,\mu)1_{\bi^\mu}=1_{\bi^\la}\psi(\la,\mu)$.

\begin{Lemma} \label{L230212}
Let $\mu\in \Shapes_\rho$, $r_1,\dots,r_l$ be distinct numbers such that $1\leq r_1,\dots,r_l\leq n$ and $\la=\linebr_{r_1}\dots \linebr_{r_l}\mu$. For $k=0,1\dots,l$, denote $\mu^{(k)}:=\prod_{m=1}^k\linebr_{r_m}\mu$, so that $\mu=\mu^{(0)}$ and $\la=\mu^{(l)}$. Then
$$
w(\la,\mu)=w(\mu^{(l)},\mu^{(l-1)})w(\mu^{(l-1)},\mu^{(l-2)})\dots w(\mu^{(1)},\mu^{(0)}).
$$
\end{Lemma}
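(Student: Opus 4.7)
The plan is to exploit the uniqueness of $w(\la,\mu)$. Indeed, because $\rho=\al_k+\al_{k+1}+\cdots+\al_l$ has all simple-root coefficients equal to $1$, the entries of any word $\bi\in\words_\rho$ are pairwise distinct, whence the $\Si_d$-stabilizer of such a word is trivial and the action of $\Si_d$ on $\words_\rho$ is regular. In particular, any permutation in $\Si_d$ is determined by its effect on the single word $\bi^\mu$. So my first step would be to reduce the lemma to verifying the single equality
\[
w(\mu^{(l)},\mu^{(l-1)})\,w(\mu^{(l-1)},\mu^{(l-2)})\cdots w(\mu^{(1)},\mu^{(0)})\cdot\bi^{\mu^{(0)}}=\bi^{\mu^{(l)}},
\]
after which uniqueness forces the product to coincide with $w(\la,\mu)$.

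Before verifying this, I would briefly observe that the intermediate shapes $\mu^{(k)}=\linebr_{r_1}\cdots\linebr_{r_k}\mu$ are unambiguously defined, since the row-splitting operators $\linebr_{r_m}$ mutually commute by Lemma~\ref{LPropRS}(ii); hence the order in which the product $\prod_{m=1}^k\linebr_{r_m}$ is taken is immaterial. The displayed identity is then an immediate telescoping: by the very definition of $w(\mu^{(k)},\mu^{(k-1)})$ one has $w(\mu^{(k)},\mu^{(k-1)})\cdot\bi^{\mu^{(k-1)}}=\bi^{\mu^{(k)}}$ for each $k=1,\dots,l$, and a short induction on $l$ (the base $l=0$ being trivial, as the empty product is the identity and $\mu^{(0)}=\la$) shows that successively applying these permutations from right to left carries $\bi^{\mu^{(0)}}$ through $\bi^{\mu^{(1)}},\bi^{\mu^{(2)}},\dots$ up to $\bi^{\mu^{(l)}}=\bi^\la$.

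There is no substantive obstacle here: the lemma is bookkeeping, resting only on the regularity of the $\Si_d$-action on $\words_\rho$, on the commutativity of row splittings from Lemma~\ref{LPropRS}, and on the definitional property of each factor $w(\mu^{(k)},\mu^{(k-1)})$.
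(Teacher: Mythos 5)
Your proof is correct and matches the paper's own argument: both rely on the regularity of the $\Si_d$-action on $\words_\rho$ so that a permutation is determined by its effect on $\bi^\mu$, and both verify the identity by telescoping the definitional property $w(\mu^{(k)},\mu^{(k-1)})\cdot\bi^{\mu^{(k-1)}}=\bi^{\mu^{(k)}}$.
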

\begin{proof}
The left hand side and the right hand side map $\bi^\mu$ to $\bi^\la$ and the symmetric group acts on $\words_\rho$ regularly. 
\end{proof}

\begin{Proposition} \label{PCrucial}
Let $\rho\in\Phi_+$, $d=\height(\rho)$, and $n=d-1$. 
\begin{enumerate}
\item[{\rm (i)}] Let $\nu\in\Shapes_\rho$ and $1\leq r\leq n$. Denote the row number of the $r$th box in $\nu$ by $m$. Let $a$ be the number of the leftmost box of the row $m$ in $\nu$ and $b$ be the number of the rightmost box of the row $m$ in $\nu$. 
\begin{enumerate}
\item[{\rm (a)}] If $r<b$, then $$\psi(\nu,\linebr_r\nu)\psi(\linebr_r\nu,\nu)1_{\bi^\nu}=x_{d-b+r-a+1}1_{\bi^\nu}.$$ 
\item[{\rm (b)}] If $r=b$, denote by $c$ the number of the rightmost box in the row $m-1$ of $\nu$. Then $$\psi(\nu,\linebr_r\nu)\psi(\linebr_r\nu,\nu)1_{\bi^\nu}=-\sum_{k=d-c+1}^{d-a}x_{k}1_{\bi^\nu}.$$ 
\end{enumerate}

\item[{\rm (ii)}] Let $\mu\in \Shapes_\rho$, $r_1,\dots,r_l$ be  distinct numbers such that $1\leq r_1,\dots,r_l\leq n$, and $\la=\linebr_{r_1}\dots \linebr_{r_l}\mu$. For $k=0,1\dots,l$, denote $\mu^{(k)}:=\prod_{m=1}^k\linebr_{r_m}\mu$, so that $\mu=\mu^{(0)}$ and $\la=\mu^{(l)}$. Then
\begin{equation}\label{EPCrucial}
\psi(\la,\mu)1_{\bi^\mu}=\psi(\la,\mu^{(l-1)})\psi(\mu^{(l-1)},\mu^{(l-2)})\dots \psi(\mu^{(1)},\mu)1_{\bi^\mu}.
\end{equation}
\end{enumerate}
\end{Proposition}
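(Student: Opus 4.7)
The strategy is a direct computation with the KLR relations, exploiting the fact that $\rho$ is a multiplicity-free positive root so all letters of $\bi^\nu$ are distinct. This kills the Kronecker deltas in (R6) and (R7): relation (R6) collapses to $y_t\psi_r 1_\bi = \psi_r y_{s_r(t)}1_\bi$ with no correction, the braid relation (R7) holds strictly, and (R4) gives $\psi_r^2 1_\bi = Q_{i_r,i_{r+1}}(y_r,y_{r+1})1_\bi$ with $Q=1$ when $|i_r-i_{r+1}|\ge 2$ and $Q=\eps_{i_r,i_{r+1}}(y_r-y_{r+1})$ when $|i_r-i_{r+1}|=1$. A consequence I will use throughout is that $\psi_w 1_\bi$ is then independent of the reduced expression chosen for $w$, which simplifies bookkeeping.

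For part (ii), I first verify that the factorisation $w(\la,\mu)=w_l w_{l-1}\cdots w_1$ with $w_k:=w(\mu^{(k)},\mu^{(k-1)})$ provided by Lemma~\ref{L230212} is \emph{length-additive}, i.e.\ $\ell(w(\la,\mu))=\sum_k \ell(w_k)$. This is a combinatorial claim about inversions: each $w_k$ permutes boxes only within the row(s) affected by the elementary operation $\linebr_{r_k}$ on the intermediate shape $\mu^{(k-1)}$, and the inversions produced at successive steps are disjoint subsets of the inversions of $w(\la,\mu)$. Once length-additivity is established, concatenation of reduced expressions for $w_l,\ldots,w_1$ is a reduced expression for $w(\la,\mu)$, so $\psi(\la,\mu)=\psi_{w_l}\cdots\psi_{w_1}$ in the algebra, and (\ref{EPCrucial}) is immediate.

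For part (i), I pick reduced expressions for $w(\linebr_r\nu,\nu)$ and its inverse that realise the explicit shuffle of contents within the affected row(s). In case (a) this is the cyclic rotation of the positions $d-b+1,\ldots,d-a+1$ of row $m$ in the top-to-bottom reading. The product $\psi(\nu,\linebr_r\nu)\psi(\linebr_r\nu,\nu)1_{\bi^\nu}$ then collapses by iteratively reducing inner $\psi_s^2$ factors via (R4). All but one such factor pair contents differing by at least $2$ and collapse to $1$; the unique exceptional pair is at the split frontier (contents $c+r-a$ and $c+r-a+1$), producing $Q_{c+r-a,c+r-a+1}(y_s,y_{s+1})=\eps(y_s-y_{s+1})$ at position $s=d-b+r-a+1$, which yields the claimed $x_{d-b+r-a+1}\,1_{\bi^\nu}$.

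Case (b) is the main obstacle. The critical crossing (contents $c+b-a$ and $c+b-a+1$ at the join between rows $m$ and $m-1$) produces a factor $\eps_{c+b-a+1,c+b-a}(y_i-y_j)=-(y_i-y_j)$ deep inside the product, which then must be transported through the remaining $\psi$'s. Because every remaining $\psi_s^2$ still collapses to $1$ and (R6) carries no correction terms, the $y$-indices track cleanly under the simple reflections; careful bookkeeping of this transport shows that the difference lands at positions $d-c+1$ and $d-a+1$, the extreme ends of the two merged rows in the top-to-bottom reading. Writing the result telescopically, $y_{d-c+1}-y_{d-a+1}=\sum_{k=d-c+1}^{d-a}(y_k-y_{k+1})=\sum_{k=d-c+1}^{d-a}x_k$, which together with the overall minus sign from the critical $\eps$-factor gives the claimed formula. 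The delicate step is verifying, step by step through the chosen reduced expression, that the $y$-indices really land at positions $d-c+1$ and $d-a+1$ and not somewhere else.
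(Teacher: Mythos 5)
Your sketch of part~(i) follows the same route as the paper's diagrammatic computation: the unique ``close'' crossing involves contents $r$ and $r+1$, all other quadratic collapses $\psi_t^2 1_\bj$ have $|j_t-j_{t+1}|\geq 2$ and are trivial, and the surviving dot is transported cleanly because (R6) has no correction term when the letters are distinct. You flag the bookkeeping in case (b) as unfinished, but the approach is sound.

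The genuine gap is in part~(ii). You reduce the claim to the assertion that the factorisation $w(\la,\mu)=w_l\cdots w_1$ with $w_k=w(\mu^{(k)},\mu^{(k-1)})$ is \emph{length-additive}, so that concatenating reduced words for the $w_k$ gives a reduced word for $w(\la,\mu)$. This is false in general. Take $\rho=\rho(1,4)$, $d=4$, and let $\mu\in\Shapes_\rho$ be the two-row shape with rows $\{1,2\}$ and $\{3,4\}$ (i.e.\ $\bsi^\mu=(+,-,+)$), so $\bi^\mu=(3,4,1,2)$. Apply $r_1=2$ (a merge) and then $r_2=1$ (a split): $\mu^{(1)}=\linebr_2\mu$ is the one-row shape with $\bi^{\mu^{(1)}}=(1,2,3,4)$, and $\la=\mu^{(2)}$ has rows $\{1\},\{2,3,4\}$ with $\bi^{\la}=(2,3,4,1)$. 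Then $w_1=w(\mu^{(1)},\mu)$ has one-line form $[3,4,1,2]$ and length~$4$, while $w_2=w(\la,\mu^{(1)})$ has one-line form $[4,1,2,3]$ and length~$3$; but $w(\la,\mu)$ has one-line form $[2,3,4,1]$ and length~$3$, not $7$. The inversion at positions $\{1,3\}$, for instance, is created by $w_1$ and undone by $w_2$, so your claim that ``the inversions produced at successive steps are disjoint subsets of the inversions of $w(\la,\mu)$'' fails on both counts: they overlap and they are not all inversions of $w(\la,\mu)$. (With the opposite order $r_1=1$, $r_2=2$ the factorisation happens to be length-additive in this example, but Proposition~\ref{PCrucial}(ii) must hold for every ordering of the $r_k$, as is needed for example to prove relation~(\ref{ERB2}) in Theorem~\ref{EIsoB}.)

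The paper's proof of (ii) is therefore genuinely different and does not need the concatenated word to be reduced. It observes that in each diagram $\psi(\linebr_{r_k}\mu^{(k-1)},\mu^{(k-1)})1_{\bi^{\mu^{(k-1)}}}$ exactly one crossing is between strands of adjacent content (namely contents $r_k$ and $r_k+1$), all other crossings being between strands whose contents differ by at least $2$. Since the $r_k$ are distinct, in the composite diagram any pair of strands with adjacent contents crosses at most once, so every double crossing is between distant strands. A Matsumoto-type reduction of the (possibly non-reduced) word then uses only braid moves, which hold exactly because all letters of $\bi^\mu$ are distinct, and quadratic relations $\psi_t^2 1_\bj=1_\bj$ with $|j_t-j_{t+1}|>1$; the dangerous case $\psi_t^2 1_\bj=\pm(y_t-y_{t+1})1_\bj$ never occurs. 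Hence the product equals $\psi_{w(\la,\mu)}1_{\bi^\mu}$. Your argument for (ii) needs to be replaced by reasoning of this kind.
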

\begin{proof}
Assume without loss of generality that $\rho=\rho(1,d)$. Then the content of the $t$th box of any $\nu\in\Shapes_\rho$ is $t$ for all $t=1,\dots,d$. 

We first prove (i). 
The reader should keep in mind the following picture for $r,a,b$ in $\nu$:
$$
\begin{picture}(100, 80)%
\put(0,20){\young(a{\ },{\ })}%
\put(5,2){\vdots}%
\put(35, 38){\dots}
\put(53,32){\young({\ }r{\ })}
\put(115,32){\young(:{\ },{\ }b)}%
\put(97, 38){\dots}
\put(5,2){\vdots}%
\put(132, 65){\vdots}%
\put(-30, 32){$\nu=$}%
\end{picture}
$$ 
Also, if $r=b$, then:
$$
\begin{picture}(100, 80)%
\put(0,20){\young(a{\ }{\ },{\ })}%
\put(5,2){\vdots}%
\put(45, 38){\dots}
\put(65,32){\young(::{\ }{\ },{\ }{\ }r)}%
\put(5,2){\vdots}%
\put(45, 38){\dots}
\put(125, 51){\dots}
\put(145,44){\young({\ }c)}
\put(162, 65){\vdots}%
\put(-30, 32){$\nu=$}%
\end{picture}
$$

(a) If $r<b$, then, using the geometric presentation of the elements of $R_\rho$ introduced in \cite{KL1}, we have 
\begin{equation}\label{EMarrD1}
\psi(\linebr_r\nu ,\nu)1_{\bi^\nu}
= 
\begin{braid}\tikzset{baseline=3mm}
  \draw(0,4) node[above]{}--(0,0);
  \draw[dots] (1,4.4)--(2.2,4.4);
  \draw[dots] (1,0)--(2.2,0);
  \draw (3,4) node[above]{}--(3,0);
 \draw (4,4) node[above]{$a$}--(10,0);
 --(11,0);
  \draw[dots] (5.3,4.4)--(6.5,4.4);
  \draw[dots] (6,0)--(7.2,0);
  \draw[red] (8,4) node[above]{$r$}--(14,0);
 \draw[red](9.2,4) node[above]{$r\hspace{-.5mm}+\hspace{-.5mm}1$}--(4,0);
   \draw[dots] (11,4.4)--(12.2,4.4);
   \draw[dots] (11.5,0)--(12.7,0);
  \draw (14,4) node[above]{$b$}--(9,0);
  \draw (16.2,4)--(16.2,0); 
   \draw[dots] (17.2,4.4)--(18.4,4.4);
    \draw[dots] (17.2,0)--(18.4,0);
   \draw (19,4)--(19,0);
 \end{braid}\ .
\end{equation}
Then 
$$
\psi(\nu,\linebr_r\nu)\psi(\linebr_r\nu,\nu)1_{\bi^\nu}=
\begin{braid}\tikzset{baseline=3mm}
  \draw(0,5) node[above]{}--(0,0);
  \draw[dots] (1,5.4)--(2.2,5.4);
  \draw[dots] (1,0)--(2.2,0);
  \draw (3,5) node[above]{}--(3,0);
 \draw (4,5) node[above]{$a$}--(10,2.5)--(4,0);
  \draw[dots] (5.3,5.4)--(6.5,5.4);
  \draw[dots] (6,0)--(7.2,0);
  \draw[red] (8,5) node[above]{$r$}--(14,2.5)--(8,0);
 \draw[red](9.2,5) node[above]{$r\hspace{-.5mm}+\hspace{-.5mm}1$}--(4,2.5)--(9.2,0);
   \draw[dots] (11,5.4)--(12.2,5.4);
   \draw[dots] (11.5,0)--(12.7,0);
  \draw (14,5) node[above]{$b$}--(9,2.5)--(14,0);
  \draw (16.2,5)--(16.2,0); 
   \draw[dots] (17.2,5.4)--(18.4,5.4);
    \draw[dots] (17.2,0)--(18.4,0);
   \draw (19,5)--(19,0);
 \end{braid}\ .
$$ 
Using defining relations in $R_\rho$, we see that this element
equals  
$$
\begin{braid}\tikzset{baseline=3mm}
  \draw(0,4) node[above]{}--(0,0);
  \draw[dots] (1,4.4)--(2.2,4.4);
  \draw[dots] (1,0)--(2.2,0);
  \draw (3,4) node[above]{}--(3,0);
 \draw (4,4) node[above]{$a$}--(4,0);
  \draw[dots] (4.8,4.4)--(6,4.4);
  \draw[dots] (4.8,0)--(6,0);
  \draw(6.9,4) node[above]{$r\hspace{-.5mm}-\hspace{-.5mm}1$}--(6.9,0);
  \draw[red] (8,4) node[above]{$r$}--(9.8,2)--(8,0);
 \draw[red](9.2,4) node[above]{$r\hspace{-.5mm}+\hspace{-.5mm}1$}--(7.5,2)--(9.2,0);
 \draw(10.5,4) node[above]{$r\hspace{-.5mm}+\hspace{-.5mm}2$}--(10.5,0);
   \draw[dots] (11.6,4.4)--(12.8,4.4);
   \draw[dots] (11.6,0)--(12.8,0);
  \draw (14,4) node[above]{$b$}--(14,0);
  \draw (16.2,4)--(16.2,0); 
   \draw[dots] (17.2,4.4)--(18.4,4.4);
    \draw[dots] (17.2,0)--(18.4,0);
   \draw (19,4)--(19,0);
 \end{braid}\ ,
$$ 
which equals
\begin{align*}
\begin{braid}\tikzset{baseline=3mm}
  \draw(0,3) node[above]{}--(0,0);
  \draw[dots] (1,3.4)--(2.2,3.4);
  \draw[dots] (1,0)--(2.2,0);
  \draw(3.7,3) node[above]{$r\hspace{-.5mm}-\hspace{-.5mm}1$}--(3.7,0);
  \draw (5,3) node[above]{$r$}--(5,0);
 \draw(6.2,3) node[above]{$r\hspace{-.5mm}+\hspace{-.5mm}1$}--(6.2,0);
 \draw(7.5,3) node[above]{$r\hspace{-.5mm}+\hspace{-.5mm}2$}--(7.5,0);
   \draw[dots] (8.6,3.4)--(9.8,3.4);
   \draw[dots] (8.6,0)--(9.8,0);
   \draw (10,3)--(10,0); 
   \greendot(5,1.5);
 \end{braid}
 \quad-\quad
\begin{braid}\tikzset{baseline=3mm}
  \draw(0,3) node[above]{}--(0,0);
  \draw[dots] (1,3.4)--(2.2,3.4);
  \draw[dots] (1,0)--(2.2,0);
  \draw(3.7,3) node[above]{$r\hspace{-.5mm}-\hspace{-.5mm}1$}--(3.7,0);
  \draw (5,3) node[above]{$r$}--(5,0);
 \draw(6.2,3) node[above]{$r\hspace{-.5mm}+\hspace{-.5mm}1$}--(6.2,0);
 \draw(7.5,3) node[above]{$r\hspace{-.5mm}+\hspace{-.5mm}2$}--(7.5,0);
   \draw[dots] (8.6,3.4)--(9.8,3.4);
   \draw[dots] (8.6,0)--(9.8,0);
   \draw (10,3)--(10,0);
 \greendot(6.2,1.5);
 \end{braid}\ ,
\end{align*}
which is  $(y_{d-b+r-a+1}-y_{d-b+r-a+2})1_{\bi^\nu}=x_{d-b+r-a+1}1_{\bi^\nu}$.

(b) If $r=b$, then  
\begin{equation}\label{EMarrD2}
\psi(\linebr_r\nu ,\nu)1_{\bi^\nu}
= 
\begin{braid}\tikzset{baseline=3mm}
  \draw(0,4) node[above]{}--(0,0);
  \draw[dots] (1,4.4)--(2.2,4.4);
  \draw[dots] (1,0)--(2.2,0);
  \draw (3,4) node[above]{}--(3,0);
 \draw[red] (4,4) node[above]{$r\hspace{-.5mm}+\hspace{-.5mm}1$}--(10,0);
 --(11,0);
  \draw[dots] (5.3,4.4)--(6.5,4.4);
  \draw[dots] (6,0)--(7.2,0);
  \draw (8,4) node[above]{$c$}--(14,0);
 \draw(9.2,4) node[above]{$a$}--(4,0);
   \draw[dots] (11,4.4)--(12.2,4.4);
   \draw[dots] (11.5,0)--(12.7,0);
  \draw[red] (14,4) node[above]{$r$}--(9,0);
  \draw (16.2,4)--(16.2,0); 
   \draw[dots] (17.2,4.4)--(18.4,4.4);
    \draw[dots] (17.2,0)--(18.4,0);
   \draw (19,4)--(19,0);
 \end{braid}\ .
 \end{equation}
Then
$$\psi(\nu ,\linebr_r\nu)\psi(\linebr_r\nu ,\nu)1_{\bi^\nu}
= 
\begin{braid}\tikzset{baseline=3mm}
  \draw(0,5) node[above]{}--(0,0);
  \draw[dots] (1,5.4)--(2.2,5.4);
  \draw[dots] (1,0)--(2.2,0);
  \draw (3,5) node[above]{}--(3,0);
 \draw[red] (3.8,5) node[above]{$r\hspace{-.5mm}+\hspace{-.5mm}1$}--(8.8,2.5)--(3.8,0);
  \draw (5,5) node[above]{$r\hspace{-.5mm}+\hspace{-.5mm}2$}--(10,2.5)--(5,0);
  \draw[dots] (6,5.4)--(7.2,5.4);
  \draw[dots] (6.2,0)--(7.4,0);
  \draw (7.5,5) node[above]{$c$}--(12.5,2.5)--(7.5,0);
 \draw(8.7,5) node[above]{$a$}--(4,2.5)--(8.7,0);
   \draw[dots] (9.5,5.4)--(10.7,5.4);
   \draw[dots] (9.5,0)--(10.7,0);
  \draw(11.5,5) node[above]{$r\hspace{-.5mm}-\hspace{-.5mm}1$}--(6.5,2.5)--(11.5,0);
  \draw[red] (12.5,5) node[above]{$r$}--(7.5,2.5)--(12.5,0);
  \draw (13.2,5)--(13.2,0); 
   \draw[dots] (14,5.4)--(15.2,5.4);
    \draw[dots] (14,0)--(15.2,0);
   \draw (15.8,5)--(15.8,0);
 \end{braid}\ ,
$$
which equals
\begin{align*}
&\begin{braid}\tikzset{baseline=3mm}
  \draw(0,4) node[above]{}--(0,0);
  \draw[dots] (.5,4.4)--(1.7,4.4);
  \draw[dots] (.5,0)--(1.7,0);
  \draw (2,4) node[above]{}--(2,0);
 \draw[red] (2.8,4) node[above]{$r\hspace{-.5mm}+\hspace{-.5mm}1$}--(7,2)--(2.8,0);
  \draw (4,4) node[above]{$r\hspace{-.5mm}+\hspace{-.5mm}2$}--(9,2)--(4,0);
  \draw[dots] (5,4.4)--(6.2,4.4);
  \draw[dots] (5.2,0)--(6.4,0);
  \draw (6.5,4) node[above]{$c$}--(11.1,2)--(6.5,0);
 \draw(7.4,4) node[above]{$a$}--(3,2)--(7.4,0);
   \draw[dots] (8.1,4.4)--(9.3,4.4);
   \draw[dots] (8.1,0)--(9.3,0);
  \draw(10.1,4) node[above]{$r\hspace{-.5mm}-\hspace{-.5mm}1$}--(5.5,2)--(10.1,0);
  \draw[red] (11.1,4) node[above]{$r$}--(7.5,2)--(11.1,0);
  \draw (11.8,4)--(11.8,0); 
   \draw[dots] (12.2,4.4)--(13.4,4.4);
    \draw[dots] (12.2,0)--(13.4,0);
   \draw (13.8,4)--(13.8,0);
   \greendot(7.6,2);
 \end{braid}\ -\ 
 \begin{braid}\tikzset{baseline=3mm}
  \draw(0,4) node[above]{}--(0,0);
  \draw[dots] (.5,4.4)--(1.7,4.4);
  \draw[dots] (.5,0)--(1.7,0);
  \draw (2,4) node[above]{}--(2,0);
 \draw[red] (2.8,4) node[above]{$r\hspace{-.5mm}+\hspace{-.5mm}1$}--(7,2)--(2.8,0);
  \draw (4,4) node[above]{$r\hspace{-.5mm}+\hspace{-.5mm}2$}--(9,2)--(4,0);
  \draw[dots] (5,4.4)--(6.2,4.4);
  \draw[dots] (5.2,0)--(6.4,0);
  \draw (6.5,4) node[above]{$c$}--(11.1,2)--(6.5,0);
 \draw(7.4,4) node[above]{$a$}--(3,2)--(7.4,0);
   \draw[dots] (8.1,4.4)--(9.3,4.4);
   \draw[dots] (8.1,0)--(9.3,0);
  \draw(10.1,4) node[above]{$r\hspace{-.5mm}-\hspace{-.5mm}1$}--(5.5,2)--(10.1,0);
  \draw[red] (11.1,4) node[above]{$r$}--(7.5,2)--(11.1,0);
  \draw (11.8,4)--(11.8,0); 
   \draw[dots] (12.2,4.4)--(13.4,4.4);
    \draw[dots] (12.2,0)--(13.4,0);
   \draw (13.8,4)--(13.8,0);
 \greendot(6.8,2);
 \end{braid}\ 
 \\
 =&\begin{braid}\tikzset{baseline=3mm}
  \draw(0,4) node[above]{}--(0,0);
  \draw[dots] (.5,4.4)--(1.7,4.4);
  \draw[dots] (.5,0)--(1.7,0);
  \draw (2,4) node[above]{}--(2,0);
 \draw[red] (2.8,4) node[above]{$r\hspace{-.5mm}+\hspace{-.5mm}1$}--(7,2)--(2.8,0);
  \draw (4,4) node[above]{$r\hspace{-.5mm}+\hspace{-.5mm}2$}--(9,2)--(4,0);
  \draw[dots] (5,4.4)--(6.2,4.4);
  \draw[dots] (5.2,0)--(6.4,0);
  \draw (6.5,4) node[above]{$c$}--(11.1,2)--(6.5,0);
 \draw(7.4,4) node[above]{$a$}--(3,2)--(7.4,0);
   \draw[dots] (8.1,4.4)--(9.3,4.4);
   \draw[dots] (8.1,0)--(9.3,0);
  \draw(10.1,4) node[above]{$r\hspace{-.5mm}-\hspace{-.5mm}1$}--(5.5,2)--(10.1,0);
  \draw[red] (11.1,4) node[above]{$r$}--(7.5,2)--(11.1,0);
  \draw (11.8,4)--(11.8,0); 
   \draw[dots] (12.2,4.4)--(13.4,4.4);
    \draw[dots] (12.2,0)--(13.4,0);
   \draw (13.8,4)--(13.8,0);
   \greendot(11,4);
 \end{braid}\ -\ 
 \begin{braid}\tikzset{baseline=3mm}
  \draw(0,4) node[above]{}--(0,0);
  \draw[dots] (.5,4.4)--(1.7,4.4);
  \draw[dots] (.5,0)--(1.7,0);
  \draw (2,4) node[above]{}--(2,0);
 \draw[red] (2.8,4) node[above]{$r\hspace{-.5mm}+\hspace{-.5mm}1$}--(7,2)--(2.8,0);
  \draw (4,4) node[above]{$r\hspace{-.5mm}+\hspace{-.5mm}2$}--(9,2)--(4,0);
  \draw[dots] (5,4.4)--(6.2,4.4);
  \draw[dots] (5.2,0)--(6.4,0);
  \draw (6.5,4) node[above]{$c$}--(11.1,2)--(6.5,0);
 \draw(7.4,4) node[above]{$a$}--(3,2)--(7.4,0);
   \draw[dots] (8.1,4.4)--(9.3,4.4);
   \draw[dots] (8.1,0)--(9.3,0);
  \draw(10.1,4) node[above]{$r\hspace{-.5mm}-\hspace{-.5mm}1$}--(5.5,2)--(10.1,0);
  \draw[red] (11.1,4) node[above]{$r$}--(7.5,2)--(11.1,0);
  \draw (11.8,4)--(11.8,0); 
   \draw[dots] (12.2,4.4)--(13.4,4.4);
    \draw[dots] (12.2,0)--(13.4,0);
   \draw (13.8,4)--(13.8,0);
 \greendot(2.8,4);
 \end{braid}
 \\
 =&\begin{braid}\tikzset{baseline=3mm}
  \draw(0,4) node[above]{}--(0,0);
  \draw[dots] (.5,4.4)--(1.7,4.4);
  \draw[dots] (.5,0)--(1.7,0);
  \draw (2,4) node[above]{}--(2,0);
 \draw(2.8,4) node[above]{$r\hspace{-.5mm}+\hspace{-.5mm}1$}--(2.8,0);
  \draw (4,4) node[above]{$r\hspace{-.5mm}+\hspace{-.5mm}2$}--(4,0);
  \draw[dots] (5,4.4)--(6.2,4.4);
  \draw[dots] (5.2,0)--(6.4,0);
  \draw (6.5,4) node[above]{$c$}--(6.5,0);
 \draw(7.4,4) node[above]{$a$}--(7.4,0);
   \draw[dots] (8.1,4.4)--(9.3,4.4);
   \draw[dots] (8.1,0)--(9.3,0);
  \draw(10.1,4) node[above]{$r\hspace{-.5mm}-\hspace{-.5mm}1$}--(10.1,0);
  \draw (11.1,4) node[above]{$r$}--(11.1,0);
  \draw (11.8,4)--(11.8,0); 
   \draw[dots] (12.2,4.4)--(13.4,4.4);
    \draw[dots] (12.2,0)--(13.4,0);
   \draw (13.8,4)--(13.8,0);
   \greendot(11.1,4);
 \end{braid}\ 
 -\ 
 \begin{braid}\tikzset{baseline=3mm}
  \draw(0,4) node[above]{}--(0,0);
  \draw[dots] (.5,4.4)--(1.7,4.4);
  \draw[dots] (.5,0)--(1.7,0);
  \draw (2,4) node[above]{}--(2,0);
 \draw (2.8,4) node[above]{$r\hspace{-.5mm}+\hspace{-.5mm}1$}--(2.8,0);
  \draw (4,4) node[above]{$r\hspace{-.5mm}+\hspace{-.5mm}2$}--(4,0);
  \draw[dots] (5,4.4)--(6.2,4.4);
  \draw[dots] (5.2,0)--(6.4,0);
  \draw (6.5,4) node[above]{$c$}--(6.5,0);
 \draw(7.4,4) node[above]{$a$}--(7.4,0);
   \draw[dots] (8.1,4.4)--(9.3,4.4);
   \draw[dots] (8.1,0)--(9.3,0);
  \draw(10.1,4) node[above]{$r\hspace{-.5mm}-\hspace{-.5mm}1$}--(10.1,0);
  \draw (11.1,4) node[above]{$r$}--(11.1,0);
  \draw (11.8,4)--(11.8,0); 
   \draw[dots] (12.2,4.4)--(13.4,4.4);
    \draw[dots] (12.2,0)--(13.4,0);
   \draw (13.8,4)--(13.8,0);
 \greendot(2.8,4);
 \end{braid} \ ,
 \end{align*}
 which is $(-y_{d-c+1}+y_{d-a+1})1_{\bi^\nu}=-\sum_{k=d-c+1}^{d-a}x_{k}1_{\bi^\nu}$.

(ii) Note from (\ref{EMarrD1}) and (\ref{EMarrD2}) that $r$ and $r+1$ is the only pair of neighboring entries in $\bi^\nu$ that get permuted in the pictures above (the corresponding strings are colored red). This can be restated as the claim that we can write 
$\psi(\linebr_r\nu ,\nu)1_{\bi^\nu}$ as a product of elements of the form $\psi_t1_{\bj}$ with $|j_t-j_{t+1}|>1$ for all  factors but one, and for that exceptional factor we have $\{j_t,t_{t+1}\}=\{r,r+1\}$. 

Applying this observation repeatedly to the product in the right hand side of (\ref{EPCrucial}), we conclude that it can be written as a product of elements of the form $\psi_t1_{\bj}$ with $|j_t-j_{t+1}|>1$ for all but $l$ special factors, which are of the form, $\psi_{t_k}1_{\bj^{(k)}}$ for $k=1,\dots,l$, and we have $\{j^{(k)}_{t_k},j^{(k)}_{t_k+1}\}=\{r_k,r_k+1\}$ for all $k=1,\dots,l$. In particular, this means that we can get to the reduced decomposition of the right hand side using only braid  relations of the form $\psi_t\psi_{t+1}\psi_t1_{\bj}=\psi_{t+1}\psi_t\psi_{t+1}1_{\bj}$, and the quadratic relations of the form 
$\psi_t^21_{\bj}=1_{\bj}$. Now part (ii) follows from Lemma~\ref{L230212}. 
\end{proof}

\subsection{Irreducibles and PIMs for cuspidal blocks}
As a special case of the main result of \cite{KRhomog}, we can describe all irreducible $R_\rho$-modules explicitly in spirit of Theorem~\ref{TYoung}. Given a skew shape $\la\in\Shapes_\rho$, consider the free $\O$-module  
$$
L^\la=\bigoplus_{\T\in\St(\la)}\O\cdot v_\T
$$
concentrated in degree~$0$ with $\O$-basis labelled by the standard $\la$-tableaux. The action of $R_\rho$ on $L^\la$ is defined by
\begin{equation}\label{Ehomog}
1_{\bj}v_\T=\de_{\bj,\bi^\T},\quad y_rv_\T=0,\quad \psi_t v_\T=
\left\{
\begin{array}{ll}
v_{s_r\cdot\T} &\hbox{if $s_r\cdot\T\in\St(\la)$,}\\
0 &\hbox{otherwise}
\end{array}
\right.
\end{equation}
for all admissible $\bj,r,t$. 

Moreover, suppose that $\la$ has $m$ rows. For $k=1,\dots,m$, define $\be_k$ to be the sum of the simple roots $\al_i$ where $i$ runs over the contents of the boxes in the row $k$ of $\la$. Then 
$$
\pi(\la)=(\be_1,\dots,\be_m)\in\Pi(\rho).
$$

\begin{Theorem} \label{Thomog} 
Let $\O=F$. Then: 
\begin{enumerate}
\item[{\rm (i)}] The formulas (\ref{Ehomog}) define a structure of an irreducible $R_\rho$-module on $L^\la$, which is also irreducible on restriction to $R'_\rho$. 
\item[{\rm (ii)}] $\{L^\la\mid \la\in \Shapes_\rho\}$ is a complete and irredundant set of irreducible $R_\rho$-modules up to degree zero isomorphism and degree shift.
\item[{\rm(iii)}] For any $\la\in\Shapes_\rho$, we have $L^\la\simeq L(\pi(\la))$. 
\item[{\rm (iv)}] For any $\la\in\Shapes_\rho$, we have $\CH L^\la=\sum_{\T\in\St(\la)}\bi^{\T}$. 
\item[{\rm (v)}] If $\T\in\St(\la)$ and $\Stab\in\St(\mu)$ for some $\la,\mu\in\Shapes_\rho$ then $\bi^\T=\bi^\Stab$ if and only if $\la=\mu$ and $\T=\Stab$. 
\end{enumerate}
\end{Theorem}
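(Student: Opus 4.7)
First I would verify the defining relations \eqref{R1}--\eqref{R7} of $R_\rho$ on the basis $\{v_\T\}_{\T\in\St(\la)}$. Relations \eqref{R1}--\eqref{R2PsiE} and \eqref{R3Psi} are immediate. Because $\rho=\al_k+\dots+\al_l$ has each simple-root multiplicity equal to $1$, no content is repeated in any word $\bi^\T$, so $\de_{i_r^\T,i_{r+1}^\T}=\de_{i_r^\T,i_{r+2}^\T}=0$; combined with $y_rv_\T=0$, this kills \eqref{R6} and the right-hand side of \eqref{R7}. The substantive checks are \eqref{R4} and \eqref{R7}, and both rest on the combinatorial fact that every element of $\Shapes_\rho$ is a ribbon (consistent with the bijection $\la\mapsto\bsi^\la$ of Lemma~\ref{LBij}), in which boxes of consecutive contents $j,j+1$ are adjacent in the ribbon path and therefore lie in the same row or the same column of $\la$. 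This yields the equivalence ``$s_r\T\in\St(\la)\Leftrightarrow |i_r^\T-i_{r+1}^\T|>1$'' needed to match $\psi_r^2v_\T$ with $Q_{i_r^\T,i_{r+1}^\T}(0,0)v_\T$ in \eqref{R4}; and a direct case analysis on the positions of $r,r+1,r+2$ in $\T$ handles \eqref{R7}, since $\psi_{r+1}\psi_r\psi_{r+1}$ and $\psi_r\psi_{r+1}\psi_r$ require the same pairwise non-adjacency of these three positions and produce the same reversed arrangement otherwise. Irreducibility on restriction to $R'_\rho$ is then automatic: all $y_r$ act as zero, so does the central element $z_\rho$ of Proposition~\ref{Pz}, and the $R_\rho$-action on $L^\la$ factors through $R_\rho/(z_\rho)\cong R'_\rho$, preserving submodule structure.

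\textbf{Parts (iv), (v), (ii).} Part (iv) is immediate from $v_\T\in L^\la_{\bi^\T}$. For (v) I would reconstruct $(\la,\T)$ directly from $\bi=\bi^\T$ as follows: for each $j\in\{k,\dots,l-1\}$ let $r,r'$ be the unique positions in $\bi$ with $i_r=j$ and $i_{r'}=j+1$; then $r<r'$ precisely when the content-$j$ and content-$(j+1)$ boxes lie in the same row of the underlying ribbon, and $r>r'$ precisely when they lie in the same column (standardness of $\T$ dictates the relative order). Reading off these $n$ binary choices recovers the ribbon $\la$, and placing $r$ in the content-$i_r$ box recovers $\T$. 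For (ii): (v) shows each word space of $L^\la$ is one-dimensional and that distinct $\la$ have disjoint word sets, forcing $L^\la\not\cong L^\mu$ whenever $\la\neq\mu$; irreducibility of $L^\la$ then follows exactly as in the Claim of Theorem~\ref{TYoung}, by showing that any $\T\in\St(\la)$ can be brought to $\T^\la$ by a sequence of admissible transpositions that preserve standardness (pull the largest entry into its canonical position and induct on $d$). Completeness follows from Lemma~\ref{LAmount} once the counts match: $|\Shapes_\rho|=2^n$ by Lemma~\ref{LBij}, while $|\Pi(\rho(k,l))|=2^n$ because root partitions of $\rho(k,l)$ are in bijection with partitions of the interval $\{k,\dots,l\}$ into consecutive subintervals.

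\textbf{Part (iii).} To identify $L^\la$ with $L(\pi(\la))$ I would construct a nonzero $R_\rho$-homomorphism $\bar\De(\pi(\la))\onto L^\la$ via Frobenius reciprocity. The vector $v_{\T^\la}$ carries the word $\bi_{\be_1}\cdots\bi_{\be_m}$, is annihilated by every $y_r$, and is annihilated by every $\psi$-generator internal to a single row block of $R_{|\pi(\la)|}$, because swapping two adjacent entries within one row of $\la$ produces a non-standard tableau. Hence $F\cdot v_{\T^\la}$ is an $R_{|\pi(\la)|}$-submodule of $\Res_{|\pi(\la)|}L^\la$ isomorphic to $L_{\be_1}\boxtimes\cdots\boxtimes L_{\be_m}=L_{\pi(\la)}$; since distinct rows of a ribbon in $\Shapes_\rho$ carry disjoint content sets the multiplicities in $\pi(\la)$ are all one and $\shift(\pi(\la))=0$, so no spurious grading shift appears. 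Adjunction promotes this to a nonzero (hence surjective by irreducibility) map $\bar\De(\pi(\la))\onto L^\la$, so $L^\la\simeq L(\pi(\la))$. The main obstacle will be the ribbon-combinatorial input to part (i): one must confirm both that in any $\la\in\Shapes_\rho$ two boxes of adjacent contents necessarily sit in the same row or the same column, and that the two triple products in \eqref{R7} require identical standardness conditions on their intermediate tableaux and yield identical final permutations of $\T$ when those conditions hold.
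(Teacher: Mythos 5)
Your argument is correct, but it follows a genuinely different route from the paper's. The paper disposes of the theorem by citation: all irreducible modules over the cuspidal block $R_\rho$ are homogeneous by \cite[Theorem 3.4]{KRhomog}, parts (i), (ii), (v) are then read off from \cite[Theorems 3.4, 3.6]{KRhomog}, (iv) is by definition, and (iii) is deduced from (ii) together with Theorem~\ref{THeadIrr}. You instead give a self-contained verification that exploits the multiplicity-freeness of $\rho$: every $\la\in\Shapes_\rho$ has exactly one box on each of the diagonals $k,\dots,l$, which (as you flag) does force boxes of consecutive contents to be edge-adjacent -- if the content-$j$ box is $(a,b)$ and the content-$(j{+}1)$ box were not $(a,b{+}1)$ or $(a{-}1,b)$, the skew-shape condition would produce a second box of content $j$ -- and since standardness of a swap of the consecutive entries $r,r{+}1$ depends only on the two boxes they occupy, both triple products in (\ref{R7}) are nonzero under the same three non-adjacency conditions and then agree, so your flagged case analysis does close. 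Your completeness argument in (ii) is also different in substance: rather than invoking the nontrivial fact that \emph{all} irreducibles of $R_\rho$ are homogeneous, you match the counts $|\Shapes_\rho|=2^n=|\Pi(\rho)|$ (root partitions of $\rho(k,l)$ being interval decompositions of $[k,l]$) against Lemma~\ref{LAmount}, which is an elementary replacement available precisely in this multiplicity-free situation; the paper's citation route is shorter and yields the stronger homogeneity statement. Finally, your part (iii) -- exhibiting $F\cdot v_{\T^\la}$ as a copy of $L_{\pi(\la)}$ inside $\Res_{|\pi(\la)|}L^\la$ (noting all multiplicities in $\pi(\la)$ are one and $\shift(\pi(\la))=0$) and applying adjunction and Theorem~\ref{THeadIrr}(i) -- is an explicit and correct way of carrying out what the paper leaves implicit in ``follows from (ii) and Theorem~\ref{THeadIrr}.'' The only cosmetic omission is that you never remark that the action is degree-preserving, which is immediate since $\psi_r$ acts nonzero only when $(\al_{i_r},\al_{i_{r+1}})=0$.
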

\begin{proof}
In view of \cite[Theorem 3.4]{KRhomog}, all irreducible $R_\rho$-modules are homogeneous. Now parts (i) and (ii) follow from \cite[Theorem 3.6]{KRhomog}. 
Part (iii) follows from part (ii) and Theorem~\ref{THeadIrr}. Finally, (iv) is clear from definition, while (v) follows again from \cite[Theorem 3.4]{KRhomog}. 
\end{proof}


\begin{Lemma} \label{LPIMhomog}
Let $\la\in\Shapes_\rho$. Then $E(\bi^\la)$ is a projective cover of $L^\la$ as $R_\rho$-modules or $R'_\rho$-modules. 
\end{Lemma}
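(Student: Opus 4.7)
The plan is to show that $E(\bi^\la) = R_\rho 1_{\bi^\la}$ is projective with irreducible head $L^\la$, and then to invoke semiperfectness. As a direct summand of the regular module, $E(\bi^\la)$ is projective, and the assignment $r 1_{\bi^\la} \mapsto r v_{\T^\la}$ defines a surjection $E(\bi^\la) \twoheadrightarrow L^\la$: it is well defined since $1_{\bi^\la} v_{\T^\la} = v_{\T^\la}$ by~(\ref{Ehomog}), and surjective by irreducibility of $L^\la$.

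The key step will be computing the graded head. For any graded irreducible $R_\rho$-module, which by Theorem~\ref{Thomog}(ii) has the form $q^n L^\mu$, the standard identification
$$\hom_{R_\rho}(R_\rho 1_{\bi^\la},\, q^n L^\mu) \,\cong\, (1_{\bi^\la} L^\mu)_{-n}$$
combined with Theorem~\ref{Thomog}(iv) and especially the uniqueness statement of Theorem~\ref{Thomog}(v) yields $\dim \hom_{R_\rho}(E(\bi^\la), q^n L^\mu) = \de_{\la,\mu}\de_{n,0}$. Since $R_\rho$ is Schurian, the graded head of $E(\bi^\la)$ is therefore $L^\la$ (in degree zero) with multiplicity one. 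Because $R_\rho$ is Laurentian by Theorem~\ref{TBasis} and hence semiperfect by Lemma~\ref{CLaurent}, a finitely generated projective module with irreducible head is automatically the projective cover of that head, finishing the $R_\rho$-case.

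For the $R'_\rho$-statement, Theorem~\ref{Thomog}(i) guarantees that $L^\la$ remains irreducible on restriction to $R'_\rho$. Using Proposition~\ref{Pz} to identify $R_\rho$ with $R'_\rho \otimes \O[X]$ via $X \mapsto z_\rho$, and noting that $z_\rho$ acts by zero on every $L^\la$ (since all $y_r$ do by~(\ref{Ehomog})), one sees that $\{L^\la \mid \la \in \Shapes_\rho\}$ also exhausts the graded irreducible $R'_\rho$-modules. The word-space computation then goes through verbatim with $E'(\bi^\la) = R'_\rho 1_{\bi^\la}$ in place of $E(\bi^\la)$; the algebra $R'_\rho$ is Laurentian for the same reason as $R_\rho$, hence semiperfect, and the argument concludes identically.

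The main obstacle is the head computation: I must exclude every $L^\mu$ with $\mu \neq \la$ and every nontrivial grading shift of $L^\la$ itself from the head of $E(\bi^\la)$. Both exclusions reduce to Theorem~\ref{Thomog}(v), which forces any equality $\bi^\T = \bi^\la$ to entail that $\T$ has shape $\la$ and coincides with $\T^\la$; the degree calculation is then automatic because $v_{\T^\la}$ sits in degree zero.
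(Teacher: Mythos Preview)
Your proof is correct and follows essentially the same approach as the paper's: compute the head of $E(\bi^\la)$ via $\HOM_{R_\rho}(E(\bi^\la), L^\mu) \cong 1_{\bi^\la} L^\mu$ together with Theorem~\ref{Thomog}(v), and conclude. The paper adds only one remark you omit, namely an explicit reduction to $\O = F$ by change of scalars before invoking Theorem~\ref{Thomog} (which is stated only over a field); on the other hand your treatment of the $R'_\rho$-case via Proposition~\ref{Pz} is more detailed than the paper's one-line ``the proof is the same.''
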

\begin{proof}
Since $\bi^\la$ is a weight of $L^\la$, and the corresponding weight space generates $L^\la$, the projective module $E(\bi^\la)$ surjects onto $L^\la$. It remains to prove that $1_{\bi^\la}$ is a primitive idempotent, for which, using change of scalars, we may assume that $\O=F$. In that case, it suffices to notice that the head of $E(\bi^\la)$ is isomorphic to $L(\la)$. This fact follows from $\HOM_{R_\rho}(E(\bi^\la),L^\la)\simeq 1_{\bi^\la}L^\la\simeq F$ and $\HOM_{R_\rho}(E(\bi^\la),L^\mu)\simeq 1_{\bi^\la}L^\mu=0$ for $\mu\neq \la$, using Theorem~\ref{Thomog}(v). 
The proof for $R'_\rho$ is the same. 
\end{proof}

\begin{Corollary} \label{CComplId} 
$\{1_{\bi^\la}\mid\la\in\Shapes_\rho\}$ is a complete set of inequivalent primitive idempotents in $R_\rho$ and $R'_\rho$
\end{Corollary}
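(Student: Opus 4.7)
The primitivity of each $1_{\bi^\la}$ in $R_\rho$ and $R'_\rho$ is already provided by Lemma~\ref{LPIMhomog}, so the plan is to verify (a) pairwise inequivalence and (b) completeness of this set up to equivalence.

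For (a) I would work first over $\O = F$: an equivalence $1_{\bi^\la}\sim 1_{\bi^\mu}$ yields a graded isomorphism of projective modules $E(\bi^\la)\cong E(\bi^\mu)$ (up to degree shift), hence, via Lemma~\ref{LPIMhomog}, an isomorphism $L^\la \cong L^\mu$ of their heads, which by Theorem~\ref{Thomog}(ii) forces $\la=\mu$. The general $\O$ case reduces to the field case by base change to a residue field (projective modules being locally free, the isomorphism survives reduction). The argument is identical for $R'_\rho$. For (b), the Basis Theorem~\ref{TBasis} shows $R_\rho$ is Laurentian (non-negative degrees, each graded component finite-dimensional), so over $\O = F$ it is semiperfect by Lemma~\ref{CLaurent}(iii), and the theory of \S\ref{SSGeneral} places equivalence classes of primitive idempotents in bijection with irreducibles up to isomorphism and degree shift, a set of cardinality $|\Shapes_\rho|$ by Theorem~\ref{Thomog}(ii). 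Matching cardinalities imply our list is complete. The same reasoning applies verbatim to $R'_\rho$.

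The main obstacle I anticipate is the transition from $\O = F$ to arbitrary commutative $\O$ in step (b), since the general theory of semiperfect algebras in \S\ref{SSGeneral} is formulated over a field. My approach would be to invoke Proposition~\ref{Pz}, which applies to $\rho = \rho(k,l)$ because each simple root has coefficient $1$, giving $R_\rho \cong R'_\rho \otimes_\O \O[X]$; as $\O[X]$ contributes no non-trivial idempotents, the classification for $R_\rho$ reduces to that for $R'_\rho$. Over $R'_\rho(\O)$ any primitive idempotent can then be tracked by reducing modulo a maximal ideal $\mathfrak{m}$ to a primitive idempotent of $R'_\rho(\O/\mathfrak{m})$, which by the field case is equivalent to some $1_{\bi^\la}$; the equivalence then lifts back to the local ring $\O_\mathfrak{m}$ by the standard idempotent-lifting theorem, and gluing across all maximal ideals finishes the proof.
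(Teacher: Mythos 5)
For $\O=F$ your argument is correct and is essentially the paper's own (unwritten) deduction: Lemma~\ref{LPIMhomog} gives primitivity and identifies $E(\bi^\la)$ as the projective cover of $L^\la$, Theorem~\ref{Thomog}(ii) says the $L^\la$ are pairwise non-isomorphic and exhaust the irreducibles, and the semiperfect/Laurentian formalism of \S\ref{SSGeneral} (Lemma~\ref{CLaurent}, applicable by Theorem~\ref{TBasis}) matches equivalence classes of primitive idempotents with irreducibles up to shift, so inequivalence and completeness follow by counting heads of the corresponding indecomposable projectives.

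The paragraph on arbitrary commutative $\O$ is where the proposal has genuine problems. First, as stated the claim cannot hold for arbitrary $\O$: if $\O=F\times F$ then $R_\rho(\O)\cong R_\rho(F)\times R_\rho(F)$ and $1_{\bi^\la}=(1_{\bi^\la},0)+(0,1_{\bi^\la})$ is not primitive, so some hypothesis that $\O$ has no nontrivial idempotents is unavoidable (the same caveat applies to the ``change of scalars'' step in the paper's proof of Lemma~\ref{LPIMhomog}). Second, the reduction of a primitive idempotent modulo a maximal ideal $\mathfrak{m}$ need not remain primitive, so ``reducing to a primitive idempotent of $R'_\rho(\O/\mathfrak{m})$'' is unjustified. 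Third, the ``standard idempotent-lifting theorem'' lifts through nil or complete ideals; $\mathfrak{m}R'_\rho(\O_{\mathfrak{m}})$ is neither, and $R'_\rho$ is not module-finite over $\O$, so neither idempotent lifting nor ordinary Nakayama applies, and the final ``gluing across all maximal ideals'' is also not automatic. The clean repair is to use the grading: for the cuspidal block every word in $\words_\rho$ has distinct entries, so $R_\rho$ and $R'_\rho$ are non-negatively graded and the degree-zero component of $1_{\bi^\la}R_\rho 1_{\bi^\la}$ (and of $1_{\bi^\la}R'_\rho 1_{\bi^\la}$) is just $\O\cdot 1_{\bi^\la}$; an idempotent in a non-negatively graded algebra whose degree-zero part is a connected commutative ring must be $0$ or the identity, which gives primitivity over any connected $\O$ directly, while inequivalence and completeness are only ever invoked after base change to a field, where your first argument suffices.
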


\subsection{Basic algebras of cuspidal blocks}
By Corollary~\ref{CComplId}, $\{1_{\bi^\la}\mid\la\in\Shapes_\rho\}$ is a complete set of inequivalent primitive idempotents in $R_\rho$ and $R'_\rho$. So, setting,
\begin{equation}\label{Ee}
e:=\sum_{\la\in\Shapes_\rho}1_{\bi^\la},
\end{equation}
we see that
$eR_\rho e$ is a basic algebra Morita equivalent to $R_\rho$ and $eR'_\rho e$ is a basic algebra Morita equivalent to $R'_\rho$.

\begin{Lemma} \label{LBasicBasis}
 $\{x_1^{m_1}\dots x_n^{m_n}\psi(\la,\mu)1_{\bi^\mu}\mid \la,\mu\in\Shapes_\rho,\ m_1,\dots,m_n\in\Z_{\geq 0}\}$ is an $\O$-basis of $eR'_\rho e$. 
\end{Lemma}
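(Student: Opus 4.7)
Since $eR'_\rho e = \bigoplus_{\la,\mu \in \Shapes_\rho} 1_{\bi^\la} R'_\rho 1_{\bi^\mu}$, I will fix $\la,\mu \in \Shapes_\rho$ and prove that $\{x_1^{m_1}\cdots x_n^{m_n}\psi(\la,\mu)1_{\bi^\mu} \mid m_i \in \Z_{\geq 0}\}$ is an $\O$-basis of $1_{\bi^\la} R'_\rho 1_{\bi^\mu}$. The crucial tool is that multiplicity-freeness of $\rho = \al_k+\cdots+\al_l$ makes the $\Si_d$-action on $\words_\rho$ regular and kills the defect term of relation \eqref{R6}: for every $\bi \in \words_\rho$ the consecutive entries satisfy $i_r \neq i_{r+1}$, so $y_t\psi_r 1_\bi = \psi_r y_{s_r(t)} 1_\bi$, and iterating along any reduced expression of $w(\la,\mu)$ gives
$$y_t \,\psi(\la,\mu)\, 1_{\bi^\mu} = \psi(\la,\mu)\, y_{w(\la,\mu)^{-1}(t)}\, 1_{\bi^\mu}.$$
Consequently $x_r \psi(\la,\mu) 1_{\bi^\mu} = \psi(\la,\mu) \tilde x_r 1_{\bi^\mu}$, where $\tilde x_r := y_{u(r)} - y_{u(r+1)}$ and $u := w(\la,\mu)^{-1}$, and symmetrically $\psi(\la,\mu) x_r 1_{\bi^\mu} = a_r \psi(\la,\mu) 1_{\bi^\mu}$ with $a_r := y_{w(\la,\mu)(r)} - y_{w(\la,\mu)(r+1)}$.

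For linear independence, I invoke Theorem \ref{TBasis}: regularity of the $\Si_d$-action on $\words_\rho$ means that $w(\la,\mu)$ is the unique element carrying $\bi^\mu$ to $\bi^\la$, so $1_{\bi^\la} R_\rho 1_{\bi^\mu}$ is a free right $\O[y_1,\dots,y_d]$-module of rank one with generator $\psi(\la,\mu) 1_{\bi^\mu}$. If $\sum_{\mathbf m} c_{\mathbf m} x_1^{m_1}\cdots x_n^{m_n}\psi(\la,\mu)1_{\bi^\mu} = 0$, the commutation above rewrites this as $\psi(\la,\mu)\bigl(\sum_{\mathbf m} c_{\mathbf m} \tilde x_1^{m_1}\cdots \tilde x_n^{m_n}\bigr) 1_{\bi^\mu} = 0$, and freeness forces $\sum c_{\mathbf m} \tilde x_1^{m_1}\cdots \tilde x_n^{m_n} = 0$ in $\O[y_1,\dots,y_d]$. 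The $\tilde x_r$ are the images of the $x_r$ under the $\O$-automorphism of the rank-$n$ free $\O$-submodule $V := \spa_\O\{y_a - y_b \mid a,b\}$ induced by $y_r \mapsto y_{u(r)}$, so they are algebraically independent, forcing all $c_{\mathbf m} = 0$.

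For spanning, take $a \in 1_{\bi^\la} R'_\rho 1_{\bi^\mu}$ and write $a = \psi(\la,\mu) f(y_1,\dots,y_d) 1_{\bi^\mu}$ using the above freeness. Every coefficient of $\rho$ equals $1$, so Proposition \ref{Pz} applies: there is a central element $z_\rho$ such that $x_1,\dots,x_n,z_\rho$ are algebraically independent generators of $\O[y_1,\dots,y_d]$ and $R'_\rho \otimes \O[X] \iso R_\rho$, $b \otimes X^j \mapsto b z_\rho^j$. Writing $f(y) = g(x_1,\dots,x_n, z_\rho) = \sum_j g_j(x_1,\dots,x_n) z_\rho^j$ and using centrality of $z_\rho$, we obtain $a = \sum_j \bigl(\psi(\la,\mu) g_j(x_1,\dots,x_n) 1_{\bi^\mu}\bigr) z_\rho^j$, each summand lying in $R'_\rho$. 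The Proposition \ref{Pz} isomorphism then forces $\psi(\la,\mu) g_j(x) 1_{\bi^\mu} = 0$ for all $j > 0$, so $a = \psi(\la,\mu) g_0(x_1,\dots,x_n) 1_{\bi^\mu}$. Moving the $x$'s leftward through $\psi(\la,\mu)$ via the second commutation relation gives $a = g_0(a_1,\dots,a_n) \psi(\la,\mu) 1_{\bi^\mu}$; by the same automorphism-of-$V$ argument as above, $\O[a_1,\dots,a_n] = \O[x_1,\dots,x_n]$, so $g_0(a_1,\dots,a_n) = h(x_1,\dots,x_n)$ for some polynomial $h$, placing $a$ in the $\O$-span of the proposed basis.

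The main technical obstacle will be the bookkeeping among the three generating sets $\{x_r\}$, $\{\tilde x_r\}$, $\{a_r\}$ of the rank-$n$ free $\O$-submodule $V \subset \O[y_1,\dots,y_d]$; fortunately each translation between them is realized by the $\O$-linear automorphism of $V$ induced by a coordinate permutation, so each set is algebraically independent and generates the same polynomial subring, which keeps both halves of the argument parallel and forces no case analysis in $\la,\mu$.
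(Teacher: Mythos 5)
Your proof is correct, and it fills in the details of the paper's one-line justification ("follows from Theorem~\ref{TBasis} and the defining relations for $R_\rho$"). The ingredients you use — the basis of Theorem~\ref{TBasis}, freeness of the $\Si_d$-action on $\words_\rho$, the vanishing of the defect term in relation~(\ref{R6}) for multiplicity-free $\rho$, and the resulting permutation commutation $y_t\psi_w 1_\bi = \psi_w y_{w^{-1}(t)}1_\bi$ — are exactly what the paper's terse proof is pointing at, and your handling of the linear change of basis on $V$ is sound.

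The one place where you take a slightly longer route than necessary is the spanning half. You pass through Proposition~\ref{Pz} and the $z_\rho$-grading of $R_\rho$ over $R'_\rho$ to argue the $z_\rho^{>0}$ components of $a$ vanish. A more direct version: from Theorem~\ref{TBasis} and the unitriangular change of variables $\O[y_1,\dots,y_d]=\O[x_1,\dots,x_n]\otimes\O[y_1]$ one sees that $\{\psi_w x_1^{m_1}\cdots x_n^{m_n}1_\bi\}$ is $\O$-linearly independent; and since in the multiplicity-free situation the only polynomials that arise when straightening a product of generators of $R'_\rho$ come from~(\ref{R4}) and are either $1$ or $\pm x_r$, while~(\ref{R6}) contributes constants and~(\ref{R7}) contributes nothing, this set also spans $R'_\rho$. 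Hitting with $1_{\bi^\la}\cdot(-)\cdot 1_{\bi^\mu}$ and using regularity of the $\Si_d$-action gives the basis $\{\psi(\la,\mu)x^{\mathbf m}1_{\bi^\mu}\}$ of $1_{\bi^\la}R'_\rho 1_{\bi^\mu}$ at once, and your commutation step then converts it to the stated form. Either way the argument is correct; yours avoids re-proving that $R'_\rho$ has the stated PBW basis at the cost of importing Proposition~\ref{Pz}.
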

\begin{proof}
This follows from Theorem~\ref{TBasis} and the defining relations for $R_\rho$. 
\end{proof}

Recall the bijection $\Shapes_\rho\iso\{\pm\}^n,\ \la\mapsto \bsi^\la$ and the algebra $B_n$ from \S\ref{SB_n}.

\begin{Theorem} \label{EIsoB}
There is a homogeneous isomorphism of graded algebras
$$
\iota:B_n\to eR'_\rho e,\ e(\bsi^\la)\mapsto 1_{\bi^\la}, \ b_re(\bsi^\la) \mapsto \psi(\linebr_r\la,\la)1_{\bi^\la}
$$
for all $\la\in\Shapes_\rho$ and $1\leq r\leq n$. 
\end{Theorem}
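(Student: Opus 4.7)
The plan is to verify that $\iota$ is a well-defined homogeneous algebra homomorphism by checking the four defining relations of $B_n$, and then to prove bijectivity by splitting both algebras into blocks indexed by pairs $(\la,\mu) \in \Shapes_\rho \times \Shapes_\rho$.

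For well-definedness, the idempotent relations and the unit relation $\sum_\la e(\bsi^\la) \mapsto \sum_\la 1_{\bi^\la} = e$ are immediate from Theorem~\ref{Thomog}(v) (which forces $\bi^\la \neq \bi^\mu$ for $\la \neq \mu$) and the definition~(\ref{Ee}) of $e$. The relation $e(\bsi) b_r = b_r e(\beps_r\bsi)$ translates, via the identification $\bsi^{\linebr_r\la} = \beps_r \bsi^\la$ from Lemma~\ref{LPropRS}(iii), to an identity that holds because $\psi(\linebr_r\la,\la) 1_{\bi^\la}$ lies by construction in $1_{\bi^{\linebr_r\la}} R'_\rho 1_{\bi^\la}$. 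The commutativity $b_rb_s = b_sb_r$ is the subtlest point: a direct computation gives
\[
\iota(b_r)\iota(b_s)\,1_{\bi^\mu} = \psi(\linebr_r\linebr_s\mu,\linebr_s\mu)\,\psi(\linebr_s\mu,\mu)\,1_{\bi^\mu},
\]
which by Proposition~\ref{PCrucial}(ii) collapses to $\psi(\linebr_r\linebr_s\mu,\mu) 1_{\bi^\mu}$, and this is symmetric in $r,s$ since $\linebr_r\linebr_s = \linebr_s\linebr_r$ by Lemma~\ref{LPropRS}(ii). Grading preservation holds because $\psi(\linebr_r\la,\la) 1_{\bi^\la}$ has degree $1 = \deg b_r$, as is visible from the single essential crossing in the diagrams in the proof of Proposition~\ref{PCrucial}.

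To establish bijectivity, decompose both algebras using the complete system $\{1_{\bi^\la}\}$ of orthogonal primitive idempotents (Corollary~\ref{CComplId}):
\[
B_n = \bigoplus_{\la,\mu}\, e(\bsi^\la) B_n e(\bsi^\mu),\qquad eR'_\rho e = \bigoplus_{\la,\mu}\, 1_{\bi^\la} R'_\rho 1_{\bi^\mu},
\]
and observe that $\iota$ respects this decomposition. For an off-diagonal block ($\la \neq \mu$), let $S(\la,\mu) = \{r_1 < \dots < r_k\}$ be the set of indices where $\bsi^\la$ and $\bsi^\mu$ disagree. A straightforward check from the defining relations of $B_n$ shows that $e(\bsi^\la) B_n e(\bsi^\mu)$ is a rank-one free right $e(\bsi^\mu) B_n e(\bsi^\mu)$-module generated by $b_{r_1}\cdots b_{r_k} e(\bsi^\mu)$. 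Iterating Proposition~\ref{PCrucial}(ii), combined with Lemma~\ref{LPropRS}(iv) to identify $\la = \linebr_{r_1}\cdots\linebr_{r_k}\mu$, gives
$\iota(b_{r_1}\cdots b_{r_k} e(\bsi^\mu)) = \psi(\la,\mu) 1_{\bi^\mu}$,
and by Lemma~\ref{LBasicBasis} this element generates $1_{\bi^\la} R'_\rho 1_{\bi^\mu}$ as a rank-one free right module over $V_\mu := 1_{\bi^\mu} R'_\rho 1_{\bi^\mu}$. Hence the off-diagonal blocks are isomorphisms as soon as the diagonal blocks are.

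The main obstacle lies in the diagonal blocks. Because $\bi^\la$ has no repeated entries, the $\Si_d$-action on $\words_\rho$ is free, so $\psi(\la,\la) = 1$; then Lemma~\ref{LBasicBasis} identifies $V_\la$ with the polynomial ring $O[x_1,\dots,x_n]\, 1_{\bi^\la}$, while $e(\bsi^\la) B_n e(\bsi^\la)$ is the polynomial ring $O[b_1^2,\dots,b_n^2]\, e(\bsi^\la)$. The restriction of $\iota$ to these two polynomial rings is determined by its values on the $n$ generators $b_r^2 e(\bsi^\la)$, computed by Proposition~\ref{PCrucial}(i) as either a single $x_{t(r)} 1_{\bi^\la}$ (case (a)) or $-\sum_k x_k 1_{\bi^\la}$ over a specific interval (case (b)). The crucial verification is that these $n$ elements form an $O$-basis of $\bigoplus_t O x_t \cdot 1_{\bi^\la}$; once this is done, $\iota$ is a polynomial-ring isomorphism onto $V_\la$. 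An elementary analysis of the row structure of $\la$ shows that the case (a) indices $t(r)$ exhaust $\{1,\dots,n\}$ except at the $p-1$ row boundaries $\sigma_i := m_1 + m_2 + \dots + m_i$ (with $m_1,\dots,m_p$ the row lengths of $\la$ read from top to bottom), while each case (b) element (one for the end of each non-topmost row $i = 2,\dots,p$) contributes, modulo the span of the case (a) elements, precisely $-x_{\sigma_{i-1}} 1_{\bi^\la}$. The resulting $n \times n$ change-of-basis matrix is therefore unimodular over $\Z$, completing the diagonal block and hence the proof.
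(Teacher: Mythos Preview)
Your proof is correct and follows essentially the same route as the paper. Both arguments verify the defining relations of $B_n$ using Lemma~\ref{LPropRS} and Proposition~\ref{PCrucial}(ii), and both establish bijectivity by combining Proposition~\ref{PCrucial}(i) (for the images of the $b_r^2 e(\bsi^\la)$) with Proposition~\ref{PCrucial}(ii) and Lemma~\ref{LBasicBasis} (for the general monomials). The only organizational difference is that the paper proves bijectivity by showing directly that $\iota$ maps the basis (\ref{EBBasis}) onto the basis of Lemma~\ref{LBasicBasis}, whereas you decompose both algebras into $(\la,\mu)$-blocks and reduce the off-diagonal blocks to the diagonal ones; this is equivalent. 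Your explicit unimodularity check for the change of basis from $\{\iota(b_r^2 e(\bsi^\la))\}_r$ to $\{x_t 1_{\bi^\la}\}_t$---identifying the missing case~(a) indices as the partial sums $\sigma_i=m_1+\cdots+m_i$ and showing that each case~(b) element reduces modulo the case~(a) span to $-x_{\sigma_{i-1}}$---is exactly the content of the sentence ``It is now easy to see that $\spa_\O(\iota(b_1^2 e(\bsi^\la)),\dots,\iota(b_n^2 e(\bsi^\la)))=\spa_\O(x_1 1_{\bi^\la},\dots,x_n 1_{\bi^\la})$'' in the paper's proof, which you have simply unpacked.
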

\begin{proof}
To prove that there is a homomorphism $\iota$ as in the statement of the theorem, we check the defining relations of $B_n$ for the images of its generators. The relations (\ref{ERB1}) are clear since $1_{\bi^\la}$'s are orthogonal idempotents which sum to the identity $e$ in the algebra $eR'_\rho e$. 

Note that by linearity we must have $\iota(b_r)= \sum_{\mu\in\Shapes_\rho}\psi(\linebr_r\mu,\mu)1_{\bi^\mu}$. Moreover, by Lemma~\ref{LPropRS}(iii), $\iota\big(e(\beps_r\bsi^\la)\big)=1_{\bi^{\linebr_r\la}}$. So to check the relation (\ref{ERB3}), we have to prove that for all $\la\in\Shapes_\rho$ and $1\leq r\leq n$ we have  that 
$$1_{\bi^\la}\sum_{\mu\in\Shapes_\rho}\psi(\linebr_r\mu,\mu)1_{\bi^\mu}=\Big(\sum_{\mu\in\Shapes_\rho}\psi(\linebr_r\mu,\mu)1_{\bi^\mu}\Big)1_{\bi^{\linebr_r\la}}.$$
But in view of the relation (\ref{R2PsiE}), both sides are equal to $\psi(\la,\linebr_r\la)1_{\bi^{\linebr_r\la}}$.

Now, to check the relation (\ref{ERB2}), it suffices to verify that
$\iota(b_rb_se(\bsi^\la))=\iota(b_sb_re(\bsi^\la))$ for all admissible $r,s,\la$, or
$$
\psi(\linebr_s \linebr_r\la,\linebr_r \la)\psi(\linebr_r\la ,\la)1_{\bi^\la}=
\psi(\linebr_r \linebr_s\la,\linebr_s \la)\psi_{\linebr_s\la ,\la}1_{\bi^\la}.
$$
Of course, we may assume that $r\neq s$. Then, by Proposition~\ref{PCrucial}(ii), the left hand side is equal to $\psi(\linebr_s \linebr_r\la,\la)1_{\bi^\la}$ and the right hand side is equal to $\psi(\linebr_r \linebr_s\la,\la)1_{\bi^\la}$. It remains to apply Lemma~\ref{LPropRS}(ii). 

Finally, to prove that $\iota$ is an isomorphism, we show that the basis (\ref{EBBasis}) of $B_n$ is mapped by $\iota$ to a basis of $eR'_\rho e$.  
Note that for $1\leq r\leq n$, we have 
$$
\iota(b_r^2e(\bsi^\la))=\psi(\la,\linebr_r\la)\psi(\linebr_r\la,\la)1_{\bi^\la}. 
$$
The right hand side has been computed in Proposition~\ref{PCrucial}(i) as follows. Denote the row number of the $r$th box in $\la$ by $m$. Let $a(r)$ be the number of the leftmost box of the row $m$ in $\la$ and $b(r)$ be the number of the rightmost box of the row $m$ in $\la$. If $r<b(r)$, then by Proposition~\ref{PCrucial}(i)(a), we have 
$$\iota(b_r^2e(\bsi^\la))=x_{d-b(r)+r-a(r)+1}1_{\bi^\la}.$$ 
If $r=b(r)$, denote by $c(r)$ the number of the rightmost box in the row $m-1$ of $\la$. Then by Proposition~\ref{PCrucial}(i)(b), we have 
$$\iota(b_r^2e(\bsi^\la))=-\sum_{k=d-c(r)+1}^{d-a(r)}x_{k}1_{\bi^\la}.$$ 
It is now easy to see that 
$$\spa_\O\big(\iota(b_1^2e(\bsi^\la)),\dots,\iota(b_n^2e(\bsi^\la))\big)=\spa_\O(x_11_{\bi^\la},\dots,x_n1_{\bi^\la}).
$$
Therefore 
$$
\{\iota(b_1^{2m_1}\dots b_n^{2m_n}e(\bsi^\la))\mid m_1,\dots,m_n\in \Z_{\geq 0}\}
$$
is an $\O$-basis of 
$$
\spa_\O(x_1^{m_1}\dots x_n^{m_n}1_{\bi^\la}\mid m_1,\dots,m_n\in \Z_{\geq 0}).
$$
Moreover, let $\mu\in \Shapes_\rho$. Let $1\leq r_1,\dots,r_l\leq n$ be distinct numbers such that $\la=\linebr_{r_1}\dots \linebr_{r_l}\mu$, see Lemma~\ref{LPropRS}(iv). It follows from Proposition~\ref{PCrucial}(ii) that 
$$
\psi(\la,\mu)1_{\bi^\mu}=\iota(b_{r_1}\dotsb_{r_l}e(\bsi^\mu)).
$$
It remains to apply Lemma~\ref{LBasicBasis}. 
\end{proof}


\begin{Theorem} \label{TMorita} 
For $\rho\in\Phi_+$ with $\height(\rho)=n+1$, there is a Morita equivalence $\funF:\Mod{B_n}\to\Mod{R'_\rho}$ such that 
$$\funF(P(\bsi^\la))= E(\bi^\la),\quad \funF(L(\bsi^\la))=L^\la$$ 
for all $\la\in\Shapes_\rho$. 
\end{Theorem}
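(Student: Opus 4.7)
The plan is to deduce the Morita equivalence directly from Theorem~\ref{EIsoB} together with standard idempotent technology. The key observation is that
$$e=\sum_{\la\in\Shapes_\rho}1_{\bi^\la}$$
is a full idempotent in $R'_\rho$: by Corollary~\ref{CComplId}, the elements $\{1_{\bi^\la}\mid\la\in\Shapes_\rho\}$ form a complete set of inequivalent primitive idempotents, so every indecomposable projective graded $R'_\rho$-module is, up to grading shift, isomorphic to $R'_\rho 1_{\bi^\la}=E'(\bi^\la)$ for some $\la\in\Shapes_\rho$. Each such indecomposable projective is a summand of $R'_\rho e$, so ${}_{R'_\rho}R'_\rho$ itself decomposes into shifted summands of $R'_\rho e$, confirming that $R'_\rho e$ is a graded projective generator of $\Mod{R'_\rho}$.

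Next, I would invoke the graded version of the classical Morita theorem: for a graded projective generator $P$ of $\Mod A$, the functors $\Hom_A(P,-)$ and $P\otimes_{\End_A(P)^{\op}}(-)$ give mutually inverse equivalences between $\Mod A$ and $\Mod{\End_A(P)^{\op}}$. Applied with $A=R'_\rho$ and $P=R'_\rho e$, one obtains a canonical identification $\End_{R'_\rho}(R'_\rho e)^{\op}\cong eR'_\rho e$ and a Morita equivalence $\Mod{eR'_\rho e}\iso\Mod{R'_\rho}$. Composing with the graded algebra isomorphism $\iota:B_n\iso eR'_\rho e$ from Theorem~\ref{EIsoB} produces the desired functor
$$\funF:\Mod{B_n}\longrightarrow\Mod{R'_\rho},\qquad M\longmapsto R'_\rho e\otimes_{eR'_\rho e}{}^{\iota}\!M,$$
where ${}^{\iota}\!M$ denotes $M$ regarded as an $eR'_\rho e$-module via $\iota$.

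Finally, I would verify the action of $\funF$ on the distinguished modules. Since $\iota(e(\bsi^\la))=1_{\bi^\la}$, one computes
$$\funF(P(\bsi^\la))=R'_\rho e\otimes_{eR'_\rho e}(eR'_\rho e)\cdot 1_{\bi^\la}\cong R'_\rho 1_{\bi^\la}=E'(\bi^\la).$$
Any Morita equivalence sends indecomposable projectives to indecomposable projectives and preserves irreducible heads; by Lemma~\ref{LPIMhomog} the head of $E'(\bi^\la)$ is $L^\la$, and therefore $\funF(L(\bsi^\la))\cong L^\la$. There is no serious obstacle here: all the hard work has been done in Theorem~\ref{EIsoB} and Corollary~\ref{CComplId}, and the only point that requires minor care is keeping track of the $\Z$-grading, which is automatic because $\iota$ is homogeneous of degree zero and $e$ carries no grading shift.
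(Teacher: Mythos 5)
Your proposal is correct and follows essentially the same route as the paper: the paper likewise takes $e=\sum_{\la}1_{\bi^\la}$, uses Corollary~\ref{CComplId} to see that $\funF:\Mod{eR'_\rho e}\to\Mod{R'_\rho},\ V\mapsto R'_\rho e\otimes_{eR'_\rho e}V$ is a Morita equivalence, and composes with the isomorphism $\iota:B_n\iso eR'_\rho e$ of Theorem~\ref{EIsoB}. Your extra check that $\funF(P(\bsi^\la))\cong R'_\rho 1_{\bi^\la}$ and that heads are preserved (via Lemma~\ref{LPIMhomog}) is exactly the intended verification, with the image being the projective $R'_\rho 1_{\bi^\la}=E'(\bi^\la)$.
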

\begin{proof}
Let $e$ be as in (\ref{Ee}). Since $\{1_{\bi^\la}\mid\la\in\Shapes_\rho\}$ is a complete system of orthogonal primitive idempotents by Corollary~\ref{CComplId}, we have Morita equivalence 
$$
\funF:\Mod{eR'_\rho e}\to\Mod{R'_\rho}, \ V\mapsto R'_\rho e\otimes_{eR'_\rho e}V.
$$
By Theorem~\ref{EIsoB}, there is an isomorphism $\iota: B_n\to e R'_\rho e$, which maps $e(\bsi^\la)$ to $1_{\bi^\la}$ for all $\la\in\Shapes_\rho$. The result follows. 
\end{proof}

The next corollary should be compared to Theorem~\ref{source}. 

\begin{Corollary} 
Let $\la,\mu\in\Shapes_\rho$. Write $\la=\linebr_{r_1}\dots \linebr_{r_m}\mu$ for unique $1\leq r_1<\dots< r_m\leq n$. 
Then:
\begin{align*}
\EXT^k_{R_\rho'}(L^\la,L^\mu)&\cong
\left\{
\begin{array}{ll}
q^{-k}\O &\hbox{if $k=m$;}
\\
0 &\hbox{\ otherwise.}
\end{array}
\right.
\\
\EXT^k_{R_\rho}(L^\la,L^\mu)&\cong
\left\{
\begin{array}{ll}
q^{-k}\O &\hbox{if $k=m$;}
\\
q^{-k+3}\O &\hbox{if $k=m+1$;}
\\
0 &\hbox{\ otherwise.}
\end{array}
\right.
\end{align*}
\end{Corollary}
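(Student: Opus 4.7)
The plan is to handle the two formulas in turn: the $R'_\rho$ formula is immediate from the Morita equivalence of Theorem~\ref{TMorita} together with the Ext calculation in Proposition~\ref{PEXTB}, and the $R_\rho$ formula will be deduced from the $R'_\rho$ formula via the graded K\"unneth theorem applied to the decomposition of Proposition~\ref{Pz}.

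For the first formula, I will invoke the Morita equivalence $\funF:\Mod{B_n}\iso\Mod{R'_\rho}$ of Theorem~\ref{TMorita}, which sends $L(\bsi^\la)$ to $L^\la$ and $L(\bsi^\mu)$ to $L^\mu$. Any Morita equivalence induces isomorphisms on graded $\EXT$, and by Lemma~\ref{LPropRS}(iii) the condition $\la=\linebr_{r_1}\cdots\linebr_{r_m}\mu$ is equivalent to $\bsi^\la=\beps_{r_1}\cdots\beps_{r_m}\bsi^\mu$, which is precisely the hypothesis of Proposition~\ref{PEXTB}; transporting that proposition through $\funF$ immediately yields the $R'_\rho$ formula.

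For the second formula, since $\rho=\al_k+\al_{k+1}+\cdots+\al_l$ is multiplicity-free, every coefficient $m_i=1$ is automatically a unit in $\O$, so Proposition~\ref{Pz} provides a homogeneous isomorphism of graded algebras $R_\rho\cong R'_\rho\otimes\O[X]$ with $\deg X=2$ and $X\mapsto z_\rho$. The action formulas (\ref{Ehomog}) kill every $y_r$ on $L^\la$, hence also kill $z_\rho$, so $L^\la\cong L^\la|_{R'_\rho}\boxtimes L_0$, where $L_0:=\O[X]/(X)$ is the trivial $\O[X]$-module (and likewise for $L^\mu$). The graded K\"unneth formula then gives
\begin{equation*}
\EXT^k_{R_\rho}(L^\la,L^\mu)\;\cong\;\bigoplus_{i+j=k}\EXT^i_{R'_\rho}(L^\la,L^\mu)\otimes_\O\EXT^j_{\O[X]}(L_0,L_0),
\end{equation*}
and the minimal projective resolution $0\to q^2\O[X]\xrightarrow{\cdot X}\O[X]\to L_0\to 0$ yields $\EXT^0_{\O[X]}(L_0,L_0)\cong\O$, $\EXT^1_{\O[X]}(L_0,L_0)$ a rank-one free $\O$-module in the appropriate degree, and $\EXT^j_{\O[X]}(L_0,L_0)=0$ for $j\geq 2$. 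Only the $i=m$ summand contributes on the right, so the nonzero values occur precisely at $k=m$ and $k=m+1$, matching the claim.

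The main technical point to verify will be that the graded K\"unneth isomorphism is valid over the arbitrary commutative ring $\O$ rather than only over a field; this reduces to the term-wise $\O$-freeness of the two resolutions involved, which holds because the $\O[X]$-resolution displayed above is visibly $\O$-free and the minimal $R'_\rho$-resolution of $L^\la$, obtained by transporting $\BasicRes(\bsi^\la)$ through the Morita equivalence of Theorem~\ref{TMorita}, inherits $\O$-freeness from the (manifestly $\O$-free) resolution $\BasicRes(\bsi^\la)$ of $L(\bsi^\la)$ over $B_n$.
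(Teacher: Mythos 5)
Your argument is correct and is essentially the paper's own proof: the first formula is obtained from the Morita equivalence of Theorem~\ref{TMorita} combined with Proposition~\ref{PEXTB}, and the second from the first via the K\"unneth formula and the factorization $R_\rho\cong R'_\rho\otimes\O[X]$ (with $\deg X=2$) of Proposition~\ref{Pz}. Your additional remarks --- Lemma~\ref{LPropRS}(iii) translating $\linebr_{r_1}\cdots\linebr_{r_m}$ into $\beps_{r_1}\cdots\beps_{r_m}$, the triviality of the $z_\rho$-action on $L^\la$, and the term-wise $\O$-freeness of the resolutions needed to run K\"unneth over an arbitrary commutative ring $\O$ --- only make explicit what the paper leaves implicit.
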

\begin{proof}
The first equality follows from Theorem~\ref{TMorita} and Proposition~\ref{PEXTB}. The second equality follows from the first and the K\"unneth Formula, since $R_\rho\cong R_\rho'\otimes \O[X]$ with $\deg X=2$, thanks to Proposition~\ref{Pz}.  
\end{proof}

\subsection{Resolutions for cuspidal blocks} Let again $\rho\in\Phi_+$ be a positive root of height $d=n+1$.  Even though we have already computed extensions between the irreducible modules for the corresponding cuspidal block, we still need projective resolutions of them. 

Recall that we have defined line-breaking operators $\linebr_r$ for all $1\leq r<d$. Now define also $\linebr_d$ to be the trivial operator: $\linebr_d\la:=\la$ for all $\la\in\Shapes_\rho$. For a subset $T=\{t_1<\dots<t_m\}\subseteq [1,d]$ and $1\leq r\leq d$ define
\begin{align}
\linebr_T&:=\linebr_{t_1}\dots\linebr_{t_m}, 
\label{EfT}
\\
s(T)&:=
\left\{
\begin{array}{ll}
|T|+1 &\hbox{if $d\in T$}\\
|T| &\hbox{if $d\not\in T$}
\end{array}
\right.,
\\ 
\si(T,r)&:=(-1)^{|\{t\in T\mid t<r\}|}.
\end{align}

Now fix $\la\in\Shapes_\rho$. 
The resolution $\ResolPrime(\la)$ is the projective resolution of the $R'_\rho$-module $L^\la$ obtained by applying the Morita equivalence $\funF$ of Theorem~\ref{TMorita} to the resolution $\BasicRes(\bsi^\la)$, i.e. $\ResolPrime(\la)$ is: 
\begin{equation}\label{ERes'la}
0\longrightarrow P'_n\stackrel{\partial'_n}{\longrightarrow} P'_{n-1}\stackrel{\partial'_{n-1}}{\longrightarrow}\dots\stackrel{\partial'_1}{\longrightarrow}P'_0 \longrightarrow L^\la\longrightarrow0, 
\end{equation}
where 
$$
P'_m=\bigoplus_{T\subseteq [1,n],\ |T|=m} q^m E'(\bi^{\linebr_{T}\la})\qquad(0\leq m\leq n),
$$
and the map $\partial'_m$ is defined on the direct summand $q^m E'(\bi^{\linebr_{T}\la})$ of $P'_m$ as the right multiplication by 
$\sum_{t\in T}\si(T,t)\psi(\linebr_{T}\la,\linebr_{T\setminus\{t\}}\la).$ 

Finally, by Proposition~\ref{Pz}, we have $R_\rho\simeq R'_\rho\otimes \O[X]$ where $X$ is an indeterminate of degree $2$, which under the isomorphism corresponds to the central element $z_\rho\in R_\rho$. Now, the irreducible $R_\rho$-module $L^\la$ can be considered as the outer tensor product $L^\la\boxtimes \O$ where $L^\la$ is the restriction from $R_\rho$ to $R'_\rho$ of $L^\la$, and $\O$ is the $\O[X]$ module of rank $1$ with the trivial action of $X$. Tensoring $\ResolPrime(\la)$ with the following resolution $\Kos$ of $\O$:
$$
0\longrightarrow \O[X]\stackrel{X}{\longrightarrow}\O[X]\longrightarrow \O\longrightarrow 0,
$$
we obtain the resolution $\Resol(\la):=\ResolPrime(\la)\otimes \Kos$:
\begin{equation}\label{EResla}
0\longrightarrow P_d\stackrel{\partial_d}{\longrightarrow} P_{d-1}\stackrel{\partial_{d-1}}{\longrightarrow}\dots\stackrel{\partial_1}{\longrightarrow}P_0 \longrightarrow L^\la\longrightarrow0, 
\end{equation}
where the modules $P_m$ and the maps $\partial_m$ are defined as follows. 
For a subset $T\subseteq [1,d]$ and $t\in T$ define
$$
P_T:=q^{s(T)}E(\bi^{\linebr_T\la}),
$$
and the map 
\begin{equation}\label{EDeTt}
\partial_{T,t}:P_T\to P_{T\setminus\{t\}}
\end{equation}
to be the right multiplication by 
$$
\si(T,t)\cdot\left\{
\begin{array}{ll}
\psi(\linebr_T\la,\linebr_{T\setminus\{t\}}\la) &\hbox{if $t<d$}\\
z_\rho &\hbox{if $t=d$}
\end{array}
\right.
$$
In other words, 
\begin{align*}
\partial_{T,t}: x1_{\bi^{\linebr_T\la}}\mapsto\, &\si(T,t)x1_{\bi^{\linebr_T\la}}\psi(\linebr_T\la,\linebr_{T\setminus\{t\}}\la)\\=&\si(T,t)x\psi(\linebr_T\la,\linebr_{T\setminus\{t\}}\la)1_{\bi^{\linebr_{T\setminus\{t\}}\la}},
\end{align*}
if $t<d$, and
$$
\partial_{T,d}: x1_{\bi^{\linebr_T\la}}\mapsto \si(T,d)x1_{\bi^{\linebr_T\la}}z_\rho=\si(T,d)xz_\rho 1_{\bi^{\linebr_{T}\la}}.
$$
Note that $\partial_{T,t}$ is a homogeneous map. Now we have 
$$
P_m:=\bigoplus_{T\subseteq [1,d],\ |T|=m}P_T\qquad(0\leq m\leq d),
$$
and
$$
\partial_m=\bigoplus_{T\subseteq[1,d],\ |T|=m}\partial_T
$$
is a degree zero homomorphism where
$$
\partial_T=\sum_{t\in T}\partial_{T,t}:P_T\to P_{m-1}.
$$

Finally, the case of one row skew shape $\la=(d)$ is especially important, since in this case $L^{(d)}$ is the cuspidal module $L_\rho$, and we have the projective resolution $\Resol(L_\rho):=\Resol((d))$ of the cuspidal module. 

\subsection{Resolving proper standard and costandard modules}
Now consider a root partition $\pi=(\be_1\succeq\dots\succeq\be_l)\in\Pi(\al)$. 
We have the proper standard  
module
$
\bar \De(\pi):=q^{\shift(\pi)}\,L_{\be_1}\circ\dots\circ L_{\be_l}. 
$
By taking (outer) tensor product of resolutions $\Resol(L_{\be_1}),\dots,\Resol(L_{\be_l})$, we get the projective resolution $\Resol(L_{\be_1})\boxtimes\dots\boxtimes\Resol(L_{\be_l})$ of  
the $R_{\be_1,\dots,\be_l}$-module $L_{\be_1}\boxtimes\dots\boxtimes L_{\be_l}$. Since $\Ind_{\be_1,\dots,\be_l}^{\al}$ is exact and sends projectives to projectives, inducing to $R_\al$ and shifting grading by $\shift(\pi)$, yields a projective resolution 
$$\Resol(\bar\De(\pi)):=q^{\shift(\pi)} \Resol(L_{\be_1})\circ\dots\circ\Resol(L_{\be_l})$$ of $\bar \De(\pi)$. If $d=\height(\al)$, this resolution has length $\leq d$:  
\begin{equation}\label{EResDe}
0\longrightarrow P_d \stackrel{\partial_d}{\longrightarrow} P_{d-1} \stackrel{\partial_{d-1}}{\longrightarrow}\dots\stackrel{\partial_1}{\longrightarrow}P_0 \longrightarrow \bar\De(\pi)\longrightarrow0. \end{equation}

Recall also proper costandard modules from (\ref{EBarNabla}). By  Lemma~\ref{LDualInd},  we have 
$$\bar\nabla(\pi)\cong q^{\shift'(\pi)}L(\be_l)\circ\dots\circ L(\be_1)$$
for $\shift'(\pi):=-\shift(\pi)+\sum_{1\leq r<s\leq l}(\beta_r,\beta_s).
$ 
As for $\bar\De(\pi)$, we now get a projective resolution 
$$\Resol(\bar\nabla(\pi)):=q^{\shift'(\pi)} \Resol(L_{\be_l})\circ\dots\circ\Resol(L_{\be_1})$$ 
of the proper costandard module $\bar\nabla(\pi)$ of length $\leq d$. 


The resolutions $\Resol(\bar\nabla(\pi))$ and $\Resol(\bar\De(\pi))$ can be used to compute the global dimension of $R_\al$. First of all, the presence of the polynomial subalgebra in $d$ variables allows us to bound global dimension of $R_\al$ below as follows:

\begin{Lemma} 
Let $\O=F$ and $\al\in Q_+$ with $\height(\al)=d$. Then $\gldim R_\al\geq d$.
\end{Lemma}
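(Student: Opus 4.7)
The strategy is to find a finite-dimensional $R_\al$-module whose projective dimension over the commutative polynomial subalgebra $P := F[y_1,\ldots,y_d] \subseteq R_\al$ is already $d$, and to transfer this lower bound to $R_\al$ via a freeness argument. The relations (\ref{R2PsiY}) give $y_r y_t = y_t y_r$ and $y_r 1_\bi = 1_\bi y_r$, so $P$ is indeed a (commutative) graded subalgebra of $R_\al$. Using the identity $y_1^{m_1}\cdots y_d^{m_d}\, 1_\bi = 1_\bi\, y_1^{m_1}\cdots y_d^{m_d}$, the PBW basis from Theorem~\ref{TBasis} rewrites as $\{\psi_w 1_\bi \cdot y_1^{m_1}\cdots y_d^{m_d}\}$, so $R_\al$ is free as a right $P$-module with finite basis $\{\psi_w 1_\bi : w \in \Si_d,\ \bi \in \words_\al\}$.

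Since $R_\al$ is flat (even free) over $P$ on the right, the restriction of any projective left $R_\al$-module along $P \hookrightarrow R_\al$ is a flat left $P$-module. Thus any projective resolution of a left $R_\al$-module $M$ restricts to a flat resolution over $P$, and hence $\prdim_P M \leq \prdim_{R_\al} M$ for every $M \in \Mod{R_\al}$.

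Now pick any irreducible $R_\al$-module $L$; by Lemma~\ref{CLaurent}, such an $L$ exists and is finite dimensional. Being finite dimensional, $L$ is annihilated by some power of the graded maximal ideal $\mathfrak{m} := (y_1,\ldots,y_d)$ of the graded-local ring $P$, so no element of $\mathfrak{m}$ acts as a non-zerodivisor on $L$ and therefore $\mathrm{depth}_P(L) = 0$. The graded Auslander--Buchsbaum formula applied to $P$ then gives
$$
\prdim_P L \;=\; \gldim P \,-\, \mathrm{depth}_P(L) \;=\; d.
$$
Combining with the previous paragraph yields $\prdim_{R_\al} L \geq d$, and so $\gldim R_\al \geq d$.

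The main point requiring care is verifying the right-$P$-freeness from the Basis Theorem; after that, the proof is a routine change-of-rings coupled with Auslander--Buchsbaum. A reader wishing to avoid invoking Auslander--Buchsbaum may instead note that graded Nakayama gives $L/\mathfrak{m}L \neq 0$, so $L$ has the trivial $P$-module $F$ as a graded quotient, and then compute directly via the Koszul resolution of $F$ over $P$ to see that $\Ext^d_P(L,F) \neq 0$.
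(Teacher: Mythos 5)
Your argument rests on the same two pillars as the paper's proof: the polynomial subalgebra $P=F[y_1,\dots,y_d]\subseteq R_\al$ and the fact, read off from the Basis Theorem~\ref{TBasis}, that $R_\al$ is free over $P$. The difference is in execution: the paper quotes Hilbert's syzygy theorem ($\gldim P=d$) and then transfers the bound in one stroke by citing the McConnell--Roos change-of-rings theorem from Rotman, whereas you make the transfer explicit by restricting projective resolutions and exhibiting a concrete witness, namely an irreducible module $L$, whose depth over $P$ is $0$ so that $\prdim_P L=d$ by Auslander--Buchsbaum (or by a Koszul computation). Your route is more self-contained and identifies which module actually realizes the bound; the paper's is shorter but black-boxes the change of rings.

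Two steps should be tightened. First, sidedness: from Theorem~\ref{TBasis} you verify that $R_\al$ is free as a \emph{right} $P$-module, but to conclude that the restriction of a projective \emph{left} $R_\al$-module is flat (or projective) over $P$ you need $R_\al$ to be free as a \emph{left} $P$-module, i.e.\ for the action by left multiplication; right freeness is not the relevant hypothesis. Left freeness does hold and follows from the same Basis Theorem, e.g.\ by applying the anti-involution $\tau$ of (\ref{ECircledast}) to the basis, or by the standard triangularity argument showing that $\{y_1^{m_1}\cdots y_d^{m_d}\psi_w 1_\bi\}$ is again a basis; alternatively one can argue with right modules throughout and use $\tau$ to identify left and right global dimension. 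Second, restricting a projective resolution as you state it only bounds the \emph{flat} dimension of $L$ over $P$, so the inequality $\prdim_P M\leq\prdim_{R_\al}M$ for all $M$ is not what you have proved; either observe that the restriction of a projective is in fact projective over $P$ (using left freeness), or note that flat and projective dimensions coincide for the finitely generated module $L$ over the Noetherian ring $P$ (or simply use $\operatorname{Tor}^P_d(F,L)\neq 0$, which is the socle of $L$, so that flat dimension already suffices). Your closing aside also needs a word of care: a surjection $L\to F$ does not by itself yield $\Ext^d_P(L,F)\neq 0$ from the long exact sequence; the clean elementary substitute is to read this off from a minimal free resolution of $L$, or to work with $\operatorname{Tor}$ against the Koszul complex as above. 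With these repairs the proof is correct.
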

\begin{proof}
For the polynomial algebra with $n$ variables, we have $\gldim F[y_1,\dots,y_n]=n$ by the  Hilbert Theorem on Syzygies, see for example \cite[Theorem 8.37]{Rotman}. Note by the Basis Theorem~\ref{TBasis} that $R_\al$ is a free module over its polynomial subalgebra. So the result comes from the McConnell-Roos Theorem \cite[Theorem 8.40]{Rotman}. 
\end{proof}

\begin{Theorem} 
Let $\al\in Q_+$ with $\height(\al)=d$, $\O=F$ and $L$ be an irreducible $R_\al$-module. Then $\prdim L\leq d$. 
\end{Theorem}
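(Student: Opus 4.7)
The strategy is to induct on $\pi$ in the bilexicographic partial order on $\Pi(\alpha)$, using the classification $L\cong L(\pi)$ from Theorem~\ref{THeadIrr}(ii). Before the induction, I would record the bound $\prdim \bar\Delta(\pi),\ \prdim \bar\nabla(\pi)\leq d$ for every $\pi\in\Pi(\alpha)$: each cuspidal resolution $\Resol(L_\rho)$ of (\ref{EResla}) has length $\height(\rho)$; tensoring produces a projective resolution of $L_{\beta_1}\boxtimes\cdots\boxtimes L_{\beta_l}$ of length $\leq\sum_k\height(\beta_k)=d$; and applying the exact, projective-preserving functor $\Ind_{|\pi|}^{\alpha}$ (together with the appropriate grading shift) yields the resolutions $\Resol(\bar\Delta(\pi))$ and $\Resol(\bar\nabla(\pi))$ of length $\leq d$ described in (\ref{EResDe}).

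For the base of the induction, suppose $\pi$ is minimal in $\Pi(\alpha)$. Theorem~\ref{THeadIrr}(iv) says that every composition factor of $\bar\Delta(\pi)$ is of the form $L(\sigma)$ with $\sigma\leq\pi$, and by minimality $\sigma=\pi$; together with $[\bar\Delta(\pi):L(\pi)]_q=1$ this forces $\bar\Delta(\pi)\cong L(\pi)$ (as graded modules). Hence $\prdim L(\pi)=\prdim\bar\Delta(\pi)\leq d$.

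For the inductive step, assume $\prdim L(\sigma)\leq d$ for all $\sigma<\pi$, and consider the short exact sequence
\[
0\longrightarrow L(\pi)\longrightarrow \bar\nabla(\pi)\longrightarrow Q\longrightarrow 0,
\]
where $L(\pi)\cong L(\pi)^{\circledast}$ sits as the socle of $\bar\nabla(\pi)=\bar\Delta(\pi)^{\circledast}$ (dual to $L(\pi)$ being the head of $\bar\Delta(\pi)$), and $Q\cong(\rad\bar\Delta(\pi))^{\circledast}$ has all composition factors $L(\sigma)$ with $\sigma<\pi$ strictly (by Theorem~\ref{THeadIrr}(iv) and $\circledast$-duality). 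The class of modules of projective dimension $\leq d$ is closed under extensions by the Ext long exact sequence, so induction on composition length of $Q$, combined with the inductive hypothesis, yields $\prdim Q\leq d$. Applying $\HOM_{R_\alpha}(-,M)$ to the displayed SES gives the exact fragment
\[
\EXT^{d+1}_{R_\alpha}(\bar\nabla(\pi),M)\longrightarrow \EXT^{d+1}_{R_\alpha}(L(\pi),M)\longrightarrow \EXT^{d+2}_{R_\alpha}(Q,M),
\]
whose outer terms vanish since $\prdim\bar\nabla(\pi),\prdim Q\leq d$. Hence $\EXT^{d+1}_{R_\alpha}(L(\pi),M)=0$ for every $M$, completing the induction.

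The main obstacle is selecting the correct short exact sequence. The naive choice $0\to\rad\bar\Delta(\pi)\to\bar\Delta(\pi)\to L(\pi)\to 0$ places $L(\pi)$ as the quotient; the resulting dimension-shift inequality $\prdim L(\pi)\leq\max(\prdim\rad\bar\Delta(\pi)+1,\prdim\bar\Delta(\pi))$ only gives $d+1$. Replacing $\bar\Delta(\pi)$ by its $\circledast$-dual $\bar\nabla(\pi)$, so that $L(\pi)$ appears as a subobject rather than a quotient, is precisely what is needed for the Ext long exact sequence to yield the sharp bound $d$.
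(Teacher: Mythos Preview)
Your argument is correct and follows essentially the same route as the paper's proof: upward induction on $\pi$ in the bilexicographic order, using the short exact sequence $0\to L(\pi)\to\bar\nabla(\pi)\to\bar\nabla(\pi)/L(\pi)\to 0$ together with the length-$d$ resolutions $\Resol(\bar\nabla(\pi))$ and the inductive bound on the composition factors of the quotient. Your closing remark explaining why $\bar\nabla(\pi)$ rather than $\bar\Delta(\pi)$ is the right object here is exactly the point.
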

\begin{proof}
We have $L=L(\pi)$ for some $\pi\in\Pi(\al)$. We prove the theorem by the upward induction on the convex order. To start the induction, note that if $\pi$ is minimal, then $L(\pi)=\bar\De(\pi)$ by  Theorem~\ref{THeadIrr}(iv), and so it has a projective resolution $\Resol(\bar\De(\pi))$ of length $d$. 

For the inductive step, let $\pi\in\Pi(\al)$ and assume that the theorem has been proved for all $\pi'<\pi$. We also have a projective resolution $\Resol(\bar\nabla(\pi))$ of length $d$. It follows that $\EXT^k_{R_\al}(\bar\nabla(\pi),M)=0$ for all $k>d$ and all $M\in\Mod{R_\al}$. By the inductive assumption, we also have $\EXT^k_{R_\al}(L(\pi'),M)=0$ for all $k>d$, $M\in\Mod{R_\al}$, and $\pi'<\pi$. 

By Theorem~\ref{THeadIrr}(iv), all composition factors of $\bar\nabla(\pi)/L(\pi)$ are of the form $L(\pi')$ for some $\pi'<\pi$. So, using long exact sequences in cohomology, we conclude that $\EXT^k(\bar\nabla(\pi)/L(\pi),M)=0$ for all $k>d$ and $M\in\Mod{R_\al}$. Now the long exact sequence corresponding to the short exact sequence
$$
0\longrightarrow L(\pi)\longrightarrow \bar\nabla(\pi)\longrightarrow \bar\nabla(\pi)/L(\pi)\longrightarrow0
$$
looks like
\begin{align*}
\dots&\longrightarrow \EXT^k_{R_\al}(\bar\nabla(\pi)/L(\pi),M)\longrightarrow
\EXT^k_{R_\al}(\bar\nabla(\pi),M)\longrightarrow\EXT^k_{R_\al}(L(\pi),M)
\\
&\longrightarrow
\EXT^{k+1}_{R_\al}(\bar\nabla(\pi)/L(\pi),M)\longrightarrow\dots.
\end{align*}
We can now conclude that $\EXT^k_{R_\al}(L(\pi),M)=0$ for all $k>d$ and $M\in\Mod{R_\al}$. Theorefore $\prdim L(\pi)\leq d$ by \cite[Proposition 8.6]{Rotman}. 
\end{proof}

A general argument as explained for example in \cite{McN} now yields:

\begin{Corollary} 
Let $\al\in Q_+$ with $\height(\al)=d$. Then $\gldim R_\al=d$.
\end{Corollary}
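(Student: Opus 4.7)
My plan is to combine the two results immediately preceding this corollary. The lemma already furnishes the lower bound $\gldim R_\al \geq d$ (via Hilbert Syzygy for the polynomial subalgebra $F[y_1,\dots,y_d]\subseteq R_\al$ together with the McConnell--Roos Theorem), so the only remaining task is the matching upper bound. The preceding theorem hands me $\prdim L \leq d$ for every irreducible $R_\al$-module $L$, so what I need is a general principle promoting a bound on projective dimensions of simples to a bound on the global dimension of the whole algebra.

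The principle I invoke is the standard fact that for a graded semiperfect algebra $R$ with $R/N(R)$ semisimple Artinian,
\[
\gldim R \;=\; \sup\{\prdim L : L \text{ is a simple $R$-module}\}.
\]
To apply it, I note that $R_\al$ is semiperfect by Lemma~\ref{CLaurent} and that its simple modules are indexed up to grading shift by the finite set $\Pi(\al)$. The supremum is therefore over finitely many values, each at most $d$, yielding $\gldim R_\al \leq d$ and hence equality.

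The familiar proof of the general principle proceeds through $\Tor$: the hypothesis $\prdim L \leq d$ for every simple $L$ assembles into a projective resolution of $R_\al/N(R_\al)$ of length $\leq d$ (using that $R_\al/N(R_\al)$ is a finite direct sum of shifted simples in the Schurian case), against which one computes $\Tor_i^{R_\al}(M, R_\al/N(R_\al))=0$ for $i>d$. Graded Nakayama then identifies this vanishing with the vanishing of the $i$-th term of the minimal projective resolution of any finitely generated $M$, giving $\prdim M \leq d$.

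The hard step --- the only non-formal point --- is to carry out the Nakayama argument carefully over the infinite-dimensional $R_\al$, where finitely generated modules generally fail to have finite composition length, so the naive reduction to simples by induction on length is unavailable. Graded boundedness-below (every finitely generated graded $R_\al$-module is bounded below in degree, since $R_\al$ is Laurentian) together with Noetherianity of $R_\al$ (standard for finite-type KLR algebras) is precisely what enables the graded Nakayama argument to run. Since this is exactly the routine graded-algebra bookkeeping carried out in \cite{McN}, my plan is to defer the detailed write-up to that reference.
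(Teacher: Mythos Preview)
Your proposal is correct and follows essentially the same approach as the paper: the paper simply writes ``A general argument as explained for example in \cite{McN} now yields'' the corollary, combining the preceding lemma (lower bound) with the preceding theorem ($\prdim L\leq d$ for every simple) via the standard principle you describe. Your write-up is in fact more detailed than the paper's own one-line citation, spelling out the semiperfect/Laurentian/Noetherian ingredients and the graded Nakayama mechanism behind the reference to \cite{McN}.
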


A more explicit analysis of the terms $P_k$ of the resolution (\ref{EResDe}) implies:

\begin{Proposition} \label{TDeVan}
If $\EXT^k(\bar\De(\pi),V)\neq 0$ for some finitely generated $R_\al$-module $V$ and $k>0$, then $V$ has a composition factor $\simeq L(\si)$ for $\pi\leq \si$. 
\end{Proposition}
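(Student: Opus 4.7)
My plan is to reduce to the case where $V=L(\tau)$ is irreducible, and then convert the Ext computation into a restriction problem that is settled by Theorem~\ref{THeadIrr}(v).

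First I will prove the stronger claim that $\EXT^k_{R_\al}(\bar\De(\pi),L(\tau))=0$ for all $k\geq 0$ whenever $\tau\not\geq\pi$. Choosing a composition series of $V$ and inductively applying the long exact sequence for $\EXT^*_{R_\al}(\bar\De(\pi),-)$ to the short exact sequences arising from successive subquotients immediately gives the contrapositive of the proposition: if every composition factor $L(\si)$ of $V$ satisfies $\si\not\geq\pi$, then $\EXT^k_{R_\al}(\bar\De(\pi),V)$ vanishes for every $k\geq 0$, in particular for $k>0$.

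Next I will switch from $R_\al$ to the parabolic subalgebra $R_{|\pi|}$. Since $\bar\De(\pi)=\Ind_{|\pi|} L_\pi$, and since $\Ind_{|\pi|}$ is exact (the bimodule $R_\al 1_{|\pi|}$ being free as a right $R_{|\pi|}$-module by Theorem~\ref{TBasis}) and preserves projectives, any projective resolution of $L_\pi$ over $R_{|\pi|}$ induces to a projective resolution of $\bar\De(\pi)$ over $R_\al$. Combined with Frobenius reciprocity, this yields a natural isomorphism
$$
\EXT^k_{R_\al}(\bar\De(\pi),L(\tau))\cong \EXT^k_{R_{|\pi|}}(L_\pi,\Res_{|\pi|}L(\tau))
$$
for all $k\geq 0$.

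To finish, I invoke Theorem~\ref{THeadIrr}(v), whose contrapositive reads: $\Res_{|\pi|}L(\tau)=0$ whenever $\tau\not\geq\pi$. This forces the right-hand side of the displayed isomorphism to vanish for every $k$, completing the argument. The only mild subtlety is setting up the derived form of Frobenius reciprocity, but once the exactness of $\Ind_{|\pi|}$ and its preservation of projectives are in hand, the conclusion is purely formal.
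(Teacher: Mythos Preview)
Your core idea---Eckmann--Shapiro plus Theorem~\ref{THeadIrr}(v)---is correct and is genuinely different from the paper's route. The paper does not reduce to the parabolic at all; instead it reads off the statement from the shape of the explicit resolution $\Resol(\bar\De(\pi))=q^{\shift(\pi)}\Resol(L_{\be_1})\circ\cdots\circ\Resol(L_{\be_l})$ built in~(\ref{EResDe}), analysing which word idempotents occur in each $P_k$. Your argument is more conceptual, works in any finite type, and never touches the explicit resolution; the paper's argument is more hands-on but ties the statement directly to the construction in~\S\ref{SPRSM}.

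There is, however, a gap in your reduction step. Finitely generated $R_\al$-modules need not have finite composition series (e.g.\ $\De(\rho)$ is cyclic but has infinitely many composition factors), so ``choosing a composition series of $V$ and inductively applying the long exact sequence'' is not a valid induction. The fix is to bypass the reduction entirely: apply Eckmann--Shapiro directly to $V$, obtaining
\[
\EXT^k_{R_\al}(\bar\De(\pi),V)\;\cong\;\EXT^k_{R_{|\pi|}}(L_\pi,\Res_{|\pi|}V).
\]
If the left side is nonzero then $\Res_{|\pi|}V=1_{|\pi|}V\neq 0$; pick $0\neq v=1_\bi v\in V$ for some word $\bi$ contributing to $1_{|\pi|}$, take a maximal submodule $M$ of the cyclic module $R_\al v$, and observe that the simple subquotient $R_\al v/M$ of $V$ has $1_\bi(R_\al v/M)\neq 0$ (since $v\notin M$), hence $\Res_{|\pi|}(R_\al v/M)\neq 0$. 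By Theorem~\ref{THeadIrr}(v) this simple is $\simeq L(\si)$ with $\si\geq\pi$, as required.
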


\begin{Corollary} \label{TDeNa}
Let $\pi,\si\in\Pi(\al)$. If $\pi\neq\si$, then $\EXT^k_{R_\al}(\bar\De(\pi),\bar\nabla(\si))=0$ for all $k\geq 0$.
\end{Corollary}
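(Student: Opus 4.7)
My plan is to prove the contrapositive: if $\EXT^k_{R_\al}(\bar\De(\pi),\bar\nabla(\si))$ is nonzero then $\pi=\si$. I will deduce both $\pi\leq\si$ and $\si\leq\pi$ and invoke antisymmetry of the bilexicographic order on $\Pi(\al)$. The key tool for $k\geq 1$ is Proposition~\ref{TDeVan}, which controls composition factors of the second argument of $\EXT$; the two inequalities will come from applying it once directly, and once after a graded $\circledast$-duality flip of the two arguments. The $k=0$ case needs a separate head/socle argument.

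First I would handle $k=0$. Let $f\colon\bar\De(\pi)\to\bar\nabla(\si)$ be a nonzero homomorphism and $I=\im f$. By Theorem~\ref{THeadIrr}(i), $\bar\De(\pi)$ has irreducible head $L(\pi)$, hence so does its quotient $I$; in particular $L(\pi)$ is a composition factor of $\bar\nabla(\si)$. Dually, $\bar\nabla(\si)=\bar\De(\si)^\circledast$ has irreducible socle $L(\si)$ (using $L(\si)^\circledast\cong L(\si)$ from Theorem~\ref{THeadIrr}(iii)), so $I\subseteq\bar\nabla(\si)$ has socle $L(\si)$, making $L(\si)$ a composition factor of the quotient $I$ of $\bar\De(\pi)$. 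Theorem~\ref{THeadIrr}(iv) applied to $\bar\De(\pi)$ yields $\si\leq\pi$, and the analogous statement for $\bar\De(\si)$ combined with the fact that the composition multiplicities of $\bar\nabla(\si)$ and $\bar\De(\si)$ coincide (the simples being self-dual) yields $\pi\leq\si$. Hence $\pi=\si$.

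Next, for $k\geq 1$, I would apply Proposition~\ref{TDeVan} directly: nonvanishing of $\EXT^k_{R_\al}(\bar\De(\pi),\bar\nabla(\si))$ forces $\bar\nabla(\si)$ to have a composition factor $L(\tau)$ with $\tau\geq\pi$, while every composition factor of $\bar\nabla(\si)$ satisfies $\tau\leq\si$ by Theorem~\ref{THeadIrr}(iv) and the same self-duality observation. So $\pi\leq\si$. To obtain $\si\leq\pi$, I would use that $\circledast$, coming from the homogeneous anti-involution $\tau$ of (\ref{ECircledast}), is a contravariant exact self-equivalence of the category of finite-dimensional graded $R_\al$-modules; this produces a graded isomorphism
\begin{equation*}
\EXT^k_{R_\al}(\bar\De(\pi),\bar\nabla(\si))\;\cong\;\EXT^k_{R_\al}(\bar\nabla(\si)^\circledast,\bar\De(\pi)^\circledast)\;=\;\EXT^k_{R_\al}(\bar\De(\si),\bar\nabla(\pi)),
\end{equation*}
where the final equality is by definition (\ref{EBarNabla}). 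Nonvanishing of the right-hand side, by another application of Proposition~\ref{TDeVan}, forces $\si\leq\pi$, and we again conclude $\pi=\si$.

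There is no serious obstacle: the workhorse is Proposition~\ref{TDeVan}, and the only ancillary input is the graded $\circledast$-duality for $\EXT$, which is formal once $\circledast$ is a contravariant exact equivalence on the finite-dimensional module category. The most care-demanding points are purely bookkeeping: verifying that composition multiplicities are preserved by $\circledast$ (which is where self-duality of the simples $L(\tau)$ is used) and that $\bar\nabla(\si)^\circledast=\bar\De(\si)$ is precisely how $\bar\nabla$ was defined.
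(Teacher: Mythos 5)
Your argument is correct and follows essentially the same route as the paper: the paper also deduces the vanishing from Proposition~\ref{TDeVan} together with the duality isomorphism $\EXT^k_{R_\al}(\bar\De(\pi),\bar\nabla(\si))\cong\EXT^k_{R_\al}(\bar\De(\si),\bar\nabla(\pi))$, merely organized as a case split ($\pi\not\leq\si$ versus $\pi<\si$) rather than as your contrapositive yielding $\pi\leq\si$ and $\si\leq\pi$. Your separate head/socle treatment of $k=0$ is a welcome extra precision, since Proposition~\ref{TDeVan} is stated only for $k>0$, a point the paper's proof leaves implicit.
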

\begin{proof}
If $\pi\not\leq\si$, then, the Theorem holds by Proposition~\ref{TDeVan}.
If $\pi<\si$, then 
$$
\EXT^k_{R_\al}(\bar\De(\pi),\bar\nabla(\si))\cong
\EXT^k_{R_\al}(\bar\nabla(\si)^\circledast,\bar\De(\pi)^\circledast)\cong
\EXT^k_{R_\al}(\bar\De(\si),\bar\nabla(\pi))=0
$$
by Proposition~\ref{TDeVan} again. 
\end{proof}

\end{document}